\newcommand \rrad {\mathfrak{r}}
\newcommand \la {\lambda}
\DeclareMathOperator{\Span}{span}
\DeclareMathOperator{\tr}{tr}
\newcommand \Mat {{\mathrm{Mat}}}
\newcommand \Matreg{{\mathrm{Mat}_{\mathrm{reg}}}}
\newcommand \Prob {{\mathbb P}}
\DeclareMathOperator{\Leb}{Leb}
\newcommand  \const {\mathrm{const}}
\newcommand  \loc {{\mathrm{loc}}}
\newcommand  \fin {{\mathrm{fin}}}
\newcommand  \Mfin {\mathfrak{M}_\fin}
\newcommand \Conf {{\mathrm {Conf}}}
\newcommand \conf {{\mathrm {conf}}}
\newcommand\scrI{{\mathscr I}}
\newtheorem{theorem}{Theorem}[section]
\newtheorem*{teorema}{Theorem}
\newtheorem{lemma}[theorem]{Lemma}
\newtheorem{assumption}{Assumption}
\newtheorem{corollary}[theorem]{Corollary}
\newtheorem{proposition}[theorem]{Proposition}
\begin{document}
\title[The Ergodic Decomposition of Infinite Pickrell Measures. I]{Infinite Determinantal Measures and
The Ergodic Decomposition of Infinite Pickrell Measures I. Construction of infinite determinantal
measures}

\author{Alexander I. Bufetov}

\date{}
\address{ Aix-Marseille Universit{\'e}, Centrale Marseille, CNRS,   I2M}
\address{UMR 7373, rue F. Joliot Curie , Marseille, France}
\address{Steklov Institute of Mathematics,
Moscow, Russia}
\address{Institute for Information Transmission Problems,
 Moscow, Russia}
\address{National Research University Higher School of Economics,
 Moscow, }
 \address{Russia}

\begin{abstract}This paper is the first in a series of three. The main result, Theorem 1.11,  gives an explicit description of the ergodic decomposition for infinite Pickrell measures on spaces of infinite complex matrices. The main construction is that of sigma-finite analogues of determinantal measures on spaces of configurations. An example is the infinite Bessel point process, the scaling limit of sigma-finite analogues of Jacobi orthogonal polynomial ensembles. The statement of Theorem 1.11 is that the infinite Bessel point process (subject to an appropriate change of variables) is precisely the ergodic decomposition measure for infinite Pickrell measures. 
\end{abstract}

\maketitle
\tableofcontents
\section{Introduction}

\subsection{Informal outline of the main results}
The Pickrell family of measures is given by the formula
\begin{equation*}
{\mu}_n^{(s)}=\const_{n,s}\det(1+{z}^*{z})^{-2n-s}dz.
\end{equation*}
Here $n$ is a natural number, $s$  a real number, $z$ a square $n\times n$ matrix   with complex entries,
$dz$ the Lebesgue measure on the space of such matrices, and $\const_{n,s}$ a normalization constant whose precise choice will be explained later. The measure $\mu_n^{(s)}$ is finite if $s>-1$ and infinite if $s\leq -1$. By definition, the measure ${\mu}_n^{(s)}$ is invariant under the actions of the unitary group $U(n)$ by multiplication on the left and on the right.

If the constants $\const_{n,s}$ are chosen appropriately, then  the Pickrell family of measures has
the Kolmogorov property of consistency  under natural projections:  the push-forward of the Pickrell measure
${\mu}_{n+1}^{(s)}$ under the natural projection of cutting the $n\times n$-corner of a $(n+1)\times (n+1)$-matrix is precisely the Pickrell measure ${\mu}_n^{(s)}$. This consistency property is also verified for infinite Pickrell measures provided $n$ is sufficiently large;
see Proposition \ref{rel-consis} for the precise formulation. The consistency property and the Kolmogorov Theorem allows one  to define the Pickrell family of
measures $\mu^{(s)}$, $s\in {\mathbb R}$, on the space of infinite complex matrices.
The Pickrell measures are invariant by the action of the infinite unitary group on the left and on the right, and
the Pickrell family of measures is the natural analogue, in infinite dimension, of the canonical unitarily-invariant measure on a Grassmann  manifold, see Pickrell \cite{Pickrell3}.

What is the ergodic decomposition of Pickrell measures with respect to the action of the Cartesian square of the infinite unitary group?
The ergodic unitarily-invariant probability measures on the space of infinite complex matrices
 admit an explicit classification
due to Pickrell \cite{Pickrell1} and to which Olshanski and Vershik \cite{OlshVershik} gave a different approach:
each ergodic measure is determined by an infinite array $x=(x_1, \dots, x_n, \dots)$ on the half-line,
satisfying $x_1\geq x_2\dots\geq 0$ and $x_1+\dots+x_n+\dots<+\infty$, and an additional parameter
${\tilde \gamma}$ that we call the {\it Gaussian parameter}. Informally, the parameters $x_n$ should be thought of as ``asymptotic singular values'' of an infinite complex matrix, while ${\tilde \gamma}$ is the difference between the ``asymptotic trace'' and the sum of asymptotic eigenvalues (this difference is positive, in particular, for a Gaussian measure).

Borodin and Olshanski \cite{BO} proved in 2000 that for finite Pickrell measures the Gaussian parameter vanishes almost surely, and the ergodic decomposition measure, considered as a measure on the space of configurations on the half-line $(0, +\infty)$, coincides with the Bessel point
process of Tracy and Widom  \cite{TracyWidom}, whose correlation functions are given as determinants
of the Bessel kernel.

Borodin and Olshanski \cite{BO} posed the  problem:
{\it Describe the ergodic decomposition of infinite Pickrell measures.}
This paper gives a solution to the problem of Borodin and Olshanski.

The first step  is the result of \cite{Buf-inferg} that almost all ergodic components of an infinite Pickrell measure are themselves {\it finite}: only the decomposition measure itself is infinite. Furthermore, it will develop that, just as for finite measures, the Gaussian parameter vanishes. The ergodic decomposition measure can thus be identified with a sigma-finite measure ${\mathbb B}^{(s)}$ on the space of configurations on the half-line  $(0, +\infty)$.

How to describe a sigma-finite measure on the space of configurations? Note that the formalism of correlation functions is completely inapplicable, since these can only be defined for a finite measure.

This paper gives, for the first time, an explicit method for constructing infinite measures on spaces of configurations; since these measures are very closely related to determinantal probability measures, they are called {\it infinite determinantal measures}.

We give  three descriptions of the measure ${\mathbb B}^{(s)}$;  the first two can be carried out in much greater generality.

\begin{enumerate}
\item {\it Inductive limit of determinantal measures.} By definition, the measure ${\mathbb B}^{(s)}$ is supported on the set of configurations $X$
whose particles only  accumulate to zero, not to infinity. ${\mathbb B}^{(s)}$-almost every
 configuration $X$ thus admits a maximal particle $x_{max}(X)$. Now, if one takes an arbitrary $R>0$ and restricts the measure ${\mathbb B}^{(s)}$ onto the set $\{X: x_{max}(X)<R\}$, then
the resulting  restricted measure is finite and, after normalization, determinantal. The corresponding operator is an orthogonal projection operator whose range is found explicitly for any $R>0$.   The measure ${\mathbb B}^{(s)}$
is thus obtained as an inductive limit of finite determinantal measures along an exhausting family of subsets of the space of configurations.

\item {\it A determinantal measure times a multiplicative functional.}
More generally, one  reduces the measure ${\mathbb B}^{(s)}$ to a finite determinantal measure
 by taking the product with a suitable multiplicative functional. A {\it multiplicative functional} on the space of configurations is obtained by taking the product of the values of a fixed nonnegative function over all particles of a configuration:
$$
\Psi_g(X)=\prod\limits_{x\in X} g(x).
$$
If $g$ is suitably chosen, then the measure
\begin{equation}\label{psigbsdef}
\Psi_g{\mathbb B}^{(s)}
\end{equation}
is finite and, after normalization, determinantal. The corresponding operator is an orthogonal projection operator whose range is found explicitly. Of course, the previous description is a particular case of this one with $g=\chi_{(0, R)}$. It is often convenient to take a positive function, for example, the function $g^{\beta}(x)=\exp(-\beta x)$ for $\beta>0$.  While the range of the orthogonal projection operator inducing the measure \eqref{psigbsdef} is found explicitly for a wide class of functions $g$, it seems possible  to give a formula for its kernel for only very few functions; these computations will appear in the sequel to this paper.

\item  {\it A skew-product. } As was noted above,  ${\mathbb B}^{(s)}$-almost every configuration $X$
 admits a maximal particle $x_{max}(X)$, and it is natural to consider conditional measures of the measure
 ${\mathbb B}^{(s)}$ with respect to the position of the maximal particle $x_{max}(X)$. One obtains  a  well-defined determinantal probability measure induced by a projection operator  whose range, again, is found explicitly using the description of Palm measures of determinantal point processes due to Shirai and Takahashi \cite{ShirTaka2}. The sigma-finite distribution of the maximal particle is also explicitly found:
the ratios of the measures of intervals are obtained as ratios of suitable Fredholm determinants.
The measure ${\mathbb B}^{(s)}$ is thus represented as a skew-product whose base is
an explicitly found  sigma-finite measure on the half-line, and whose fibres are explicitly found determinantal probability measures. See Subsection~\ref{subsec-skewproduct} for a detailed presentation.
\end{enumerate}

The key r{\^o}le in the construction of infinite determinantal measures is played by the result of \cite{Buf-umn} (see also \cite{Buf-CIRM}) that a determinantal probability measure times an integrable  multiplicative functional is, after normalization, again a determinantal probability measure whose operator is found explicitly. In particular, if $\Prob_{\Pi}$ is a determinantal point process induced by a projection operator $\Pi$ with range $L$,
then, under certain additional assumptions, the measure  $\Psi_g\Prob_{\Pi}$ is, after normalization, a determinantal point process induced by the projection operator onto the subspace $\sqrt{g}L$; the precise statement is recalled in Proposition \ref{pr1-bis}.

Informally, if $g$ is such that the subspace $\sqrt{g}L$  no longer lies in $L_2$, then the measure $\Psi_g\Prob_{\Pi}$ ceases to be finite, and one obtains, precisely, an infinite determinantal measure corresponding to a subspace of locally square-integrable functions, one of the main constructions of this paper, see Theorem \ref{infdet-he}.

The Bessel point process of Tracy and Widom, which governs the ergodic decomposition of finite Pickrell measures, is the scaling limit of Jacobi orthogonal polynomial ensembles. In the problem of ergodic decomposition of infinite Pickrell measures one is led to investigating the scaling limit of infinite analogues of
Jacobi orthogonal polynomial ensembles. The resulting scaling limit, an infinite determinantal measure, is computed in the paper and called the infinite Bessel point process;
see Subsection~\ref{subsec-infbessel} for the precise definition.

The main result of the paper, Theorem \ref{mainthm}, identifies the ergodic decomposition measure of an infinite Pickrell measure with the infinite Bessel point process.

\subsection{Historical remarks}
Pickrell measures were introduced by Pickrell \cite{Pickrell3}  in 1987.
Borodin and Olshanski \cite{BO} studied in 2000 a closely related two-parameter family of  measures on the
space of infinite Hermitian matrices invariant  with respect to the natural action of the infinite unitary group by conjugation;
since the existence of such measures, as well as that of the original family considered by Pickrell,  is proved by a computation that goes back to the work of Hua Loo-Keng \cite{Hua}, Borodin and Olshanski gave to the measures of their family the name of {\it Hua-Pickrell measures}. For various generalizations of Hua-Pickrell measures, see e.g. Neretin \cite{Neretin}, Bourgade-Nikehbali-Rouault \cite{BNR}.
While Pickrell only considered values of the parameter for which the resulting measures are finite,
Borodin and Olshanski \cite{BO} showed that the inifnite Pickrell and Hua-Pickrell measures are also well-defined.
Borodin and Olshanski \cite{BO} proved that the ergodic decomposition of Hua-Pickrell probability measures  is given by determinantal point processes that arise as scaling limits of pseudo-Jacobian orthogonal polynomial ensembles and posed the problem of describing the ergodic decomposition of infinite Hua-Pickrell measures.

The aim of this paper, devoted to Pickrell's original model, is to give an explicit description for the ergodic decomposition of infinite Pickrell measures on spaces of infinite complex matrices.

\subsection{Organization of the paper}

This paper is the first of the cycle of three papers giving the explicit construction of the ergodic decomposition of infinite Pickrell measures.
Quotes to the other parts of the paper \cite{infdet2, infdet3} are organized as follows: Proposition II.2.3, equation (III.9), etc.

The paper is organized as follows.
In the Introduction, we proceed by illustrating the main construction of the paper, that of infinite determinantal measures, on the specific example of the infinite Bessel point process. Next we recall the construction of Pickrell measures and the Olshanski-Vershik approach to Pickrell's classification of ergodic unitarily-invariant measures on the space of infinite complex matrices. We then formulate the main result of the paper, Theorem \ref{mainthm}, which identifies the ergodic decomposition measure of an infinite Pickrell measure with the infinite Bessel point process
(subject to the change of variable $y=4/x$). We conclude the Introduction by giving an outline of the proof of Theorem \ref{mainthm}: the ergodic decomposition measures of  Pickrell measures are obtained as scaling limits
of their finite-dimensional approximations, the radial parts of finite-dimensional projections of Pickrell measures.  First, Lemma \ref{main-lemma} states that the rescaled radial parts, multiplied by a certain positive  density,  converge to the desired ergodic decomposition measure
multiplied by the same density.  Second, it will develop that the normalized
products of the push-forwards of rescaled radial parts to the space of configurations on the half-line with a suitably chosen multiplicative functional on the space of configurations, converge weakly in the space of measures on the space of configurations. Combining these statements will allow to conclude the proof of Theorem \ref{mainthm}.

Section 2 is devoted to the general construction of infinite determinantal measures on the space $\Conf (E)$ of configurations on a locally compact complete metric space $E$ endowed with a sigma-finite Borel measure $\mu$.

Start with a space $H$ of functions on $E$ locally square-integrable with respect to $\mu$ and an increasing collection of subsets
$$
E_0 \subset E_1 \subset \dots \subset E_n \subset \dots
$$
in $E$ such that for any $n \in \mathbb{N}$ the restricted subspace $\chi_{E_n}H$ is a closed subspace in $L_2(E,\mu)$. If the corresponding projection operator $\Pi_n$ is locally-trace class, then, by the Macch{\`\i}-Soshnikov Theorem, the projection operator $\Pi_n$ induces a determinantal measure $\Prob_n$ on $\Conf(E)$. Under certain additional assumptions  it follows from the result of \cite{Buf-umn} (see Corollary \ref{indsubset}) that the measures $\Prob_n$ satisfy the following consistency property: if $\Conf(E,E_n)$ stands for the subset of those configurations all whose particles lie in $E_n$, then for any $n \in \mathbb{N}$ we have
\begin{equation} \label{en-cons}
\frac{ \left. \Prob_{n+1} \right|_{\Conf(E,E_n)} }{\Prob_{n+1}(\Conf(E,E_n))} = \Prob_n
\end{equation}
The consistency property \eqref{en-cons} implies that there exists a sigma-finite measure $\mathbb{B}$ such that for any $n \in \mathbb{N}$ we have $0 < \mathbb{B}(\Conf(E,E_n)) < +\infty$ and
$$
\frac{ \left. \mathbb{B} \right|_{\Conf(E,E_n)} }{ \mathbb{B}(\Conf(E,E_n))} = \Prob_n
$$
The measure $\mathbb{B}$ is called an infinite determinantal measure. An alternative description of infinite determinantal measures uses the formalism of multiplicative functionals. In \cite{Buf-umn} it is proved (see also \cite{Buf-CIRM} and  Proposition \ref{pr1-bis}) that a determinantal measure times an integrable multiplicative functional is, after normalization, again a determinantal measure. Now, if one takes the product of a determinantal measure by a convergent, but not integrable, multiplicative functional, then one obtains an infinite determinantal measure. This reduction of infinite determinantal measure to usual ones by taking the product with a multiplicative functional is essential for the proof of Theorem \ref{mainthm}. Section 2 is concluded by the proof of the existence of the infinite Bessel point process.

The paper has three appendices. In Appendix~A, we collect the needed facts about the Jacobi orthogonal polynomials, including the recurrence relation between the $n$-th Christoffel-Darboux kernel corresponding to parameters $(\alpha, \beta)$ and the $n-1$-th Christoffel-Darboux kernel corresponding to parameters $(\alpha+2, \beta)$.

Appendix B is devoted to determinantal point processes on spaces of configurations. We start by recalling the definition of the space of configurations, its Borel structure and its topology; we next introduce determinantal point processes, recall the Macch{\`{\i}}-Soshnikov Theorem and the rule of transformation of kernels under a change of variables.
We next recall the definition of multiplicative functionals on the space of configurations, formulate the result of \cite{Buf-umn} (see also \cite{Buf-CIRM}) that a determinantal point process times a multiplicative functional is again a determinantal point process and give an explicit representation of the resulting kernels; in particular, we recall the representation  from \cite{Buf-umn}, \cite{Buf-CIRM}  for kernels of induced processes.

In Appendix C we recall the construction of Pickrell measures following a computation of Hua Loo-Keng \cite{Hua} as well as the observation  of Borodin and Olshanski \cite{BO} in the infinite case.

\subsection{The Infinite Bessel Point Process}
\label{subsec-infbessel}
\subsubsection{Outline of the construction}
Take $n\in {\mathbb N}$, $s\in {\mathbb R}$,  and endow the cube $(-1,1)^n$
with the measure
\begin{equation}
\label{sjacens}
\prod\limits_{1\leq i<j\leq n} (u_i-u_j)^2 \prod\limits_{i=1}^n (1-u_i)^sdu_i.
\end{equation}

For $s>-1$, the measure \eqref{sjacens} is a particular case of the Jacobi orthogonal polynomial ensemble, a determinantal point process induced by the $n$-th Christoffel-Darboux projection operator for Jacobi polynomials.
The classical Heine-Mehler asymptotics of Jacobi polynomials  implies
an asymptotics for the Christoffel-Dabroux kernels and, consequently, also for the corresponding determinantal point processes,
whose scaling limit,
with respect to the scaling
\begin{equation}\label{scaling}
u_i=1-\frac{y_i}{2n^2}, i=1, \dots, n,
\end{equation}
 is the Bessel point process of Tracy and Widom \cite{TracyWidom}. Recall  that the Bessel point process is governed by the projection operator, in $L_2((0, +\infty), \Leb)$,
onto the subspace of functions whose Hankel transform is supported in $[0,1]$.

For $s\leq -1$, the measure \eqref{sjacens} is infinite. To describe its scaling limit,
we start by recalling a recurrence relation between Christoffel-Darboux kernels of Jacobi polynomials and the consequent relation between the corresponding orthogonal polynomial ensembles: namely, the $n$-th Christoffel-Darboux kernel of the  Jacobi ensemble
with parameter $s$ is a rank one perturbation of the $n-1$-th  Christoffel-Darboux kernel of the Jacobi ensemble corresponding to parameter $s+2$.

This recurrence relation motivates the following construction.  Consider the range of the Christoffel-Darboux projection operator. It is a finite-dimensional subspace of polynomials of degree less than $n$ multiplied by the weight
$(1-u)^{s/2}$. Consider the same subspace for $s\leq -1$. The resulting space is no longer a subspace of $L_2$; it is nonetheless a well-defined space of {\it locally}
square-integrable functions. In view of the recurrence relation, furthermore, our subspace corresponding to the parameter $s$  is a  rank one perturbation of a similar subspace corresponding to parameter $s+2$, and so on, until we arrive at a value of the parameter, denoted $s+2n_s$ in what follows, for which the subspace becomes part of $L_2$. Our initial subspace is thus a finite-rank perturbation of a closed subspace in $L_2$ such that the rank of the perturbation depends on $s$ but not on $n$. Now we take this representation to the scaling limit and obtain
a subspace of locally square-integrable functions on $(0, +\infty)$,
which, again,  is a finite-rank perturbation of the range of the Bessel projection operator
corresponding to the parameter $s+2n_s$.

 To such a  subspace of locally square-integable functions we next assign a sigma-finite measure on the space of configurations, the {\it infinite Bessel point process}.
The infinite Bessel point process is the scaling limit  of the measures \eqref{sjacens}
under the scaling \eqref{scaling}.

\subsubsection{The Jacobi orthogonal polynomial ensemble}

First let $s>-1$.  Let $P_n^{(s)}$ be the
standard Jacobi orthogonal polynomials corresponding to
the weight $(1-u)^s$. Let $\tilde K_n^{(s)} (u_1, u_2)$ the $n$-th Christoffel-Darboux kernel of the Jacobi orthogonal  polynomial ensemble, see formulas
\eqref{CDJ}, \eqref{CDJ2} in the Appendix~A.
We now have the following well-known determinantal representation for the measure \eqref{sjacens} in the case $s>-1$:
\begin{equation}\label{det-CDJ}
\displaystyle{\const_{n, s}\prod\limits_{1\leq i<j\leq n} (u_i-u_j)^2 \prod\limits_{i=1}^n (1-u_i)^sdu_i=
\frac{1}{n!} \det \tilde K_{n}^{(s)} (u_i, u_j) \cdot \prod \limits_{i=1}^n du_i},
\end{equation}
where the normalization constant $\const_{n, s}$ is chosen in such a way that the left-hand side be a probability measure .

\subsubsection{The recurrence relation for Jacobi orthogonal polynomial ensembles}

We write $\Leb$ for the usual Lebesgue measure on the real line or on its subset.
Given a finite family of functions $f_1, \dots, f_N$ on the real line, let $\Span(f_1, \dots, f_N)$ stand for the vector space these functions span.
The Christoffel-Darboux kernel $\tilde K_{n}^{(s)}$ is the kernel of the operator of orthogonal projection, in the space $L_2([-1,1], \Leb)$,
onto the subspace
\begin{multline*}
L_{Jac}^{(s,n)}=\Span\left((1-u)^{s/2}, (1-u)^{s/2}u
, \dots, (1-u)^{s/2}u^{n-1}\right)=\\
=\Span\left((1-u)^{s/2}, (1-u)^{s/2+1}
, \dots, (1-u)^{s/2+n-1}\right).
\end{multline*}
By definition, we have a direct-sum decomposition
$$
L_{Jac}^{(s,n)}={\mathbb C}(1-u)^{s/2}\oplus  L_{Jac}^{(s+2,n-1)}
$$
By Proposition \ref{rec-form-jac},  for any $s>-1$ we have the recurrence relation
\begin{multline*}
\tilde K_{n}^{(s)}(u_1, u_2)=
\frac{s+1}{2^{s+1}}P_{n-1}^{(s+1)}(u_1)(1-u_1)^{s/2} P_{n-1}^{(s+1)}(u_2)(1-u_2)^{s/2}+\\+ \tilde K_{n}^{(s+2)}(u_1, u_2)
\end{multline*}
and, consequently, an orthogonal direct-sum decomposition
$$
L_{Jac}^{(s,n)}={\mathbb C}P_{n-1}^{(s+1)}(u) (1-u)^{s/2}\oplus  L_{Jac}^{(s+2,n-1)}.
$$
We now pass to the case $s\leq -1$.
Define a natural number $n_s$  by the relation
$$
\frac{s}{2}+n_s\in\left(-\frac12, \frac12\right]
$$
and introduce the subspace
\begin{equation}\label{pertus}
\tilde V^{(s,n)}=\Span\left( (1-u)^{s/2},  (1-u)^{s/2+1}, \dots,        P_{n-n_s}^{(s+2n_s-1)}(u) (1-u)^{s/2+n_s-1}\right).
\end{equation}

By definition, we have
is a direct sum decomposition
\begin{equation}\label{ljsns}
L_{Jac}^{(s,n)}=\tilde V^{(s,n)}\oplus L_{Jac}^{(s+2n_s, n-n_s )}.
\end{equation}
Note here that
$$
L_{Jac}^{(s+2n_s, n-n_s )}\subset L_2([-1,1], \Leb),
$$
while
$$
\tilde V^{(s,n)}\cap  L_2([-1,1], \Leb)=0.
$$

\subsubsection{Scaling limits}
Recall  that the scaling limit, with respect to the scaling \eqref{scaling},  of Christoffel-Darboux kernels  ${\tilde K}_n^{(s)}$ of the Jacobi orthogonal polynomial ensemble, is given by the Bessel kernel ${\tilde J}_s$ of Tracy and Widom \cite{TracyWidom} (the definition of the Bessel kernel is recalled in the Appendix~A and the precise statement on the scaling limit is recalled in Proposition \ref{tilde-kernel-kns-js}).

It is clear that, for any $\beta$, under the scaling \eqref{scaling}, we have
$$
\lim\limits_{n\to\infty} (2n^2)^{\beta} (1-u_i)^{\beta}=y_i^{\beta}
$$
and, for any $\alpha>-1$, by the classical Heine-Mehler asymptotics for Jacobi polynomials, we have
$$
\lim\limits_{n\to\infty}2^{-\frac{\alpha+1}{2}}n^{-1}P_{n}^{(\alpha)}(u_i) (1-u_i)^{\frac{\alpha-1}{2}}=
\frac{ J_{\alpha}(\sqrt{y_i})}{\sqrt{y_i}}.
$$

It is therefore natural  to take the subspace
\begin{equation*}
{\tilde V}^{(s)}=\Span \left(y^{s/2}, y^{s/2+1}, \dots,  \frac{J_{s+2n_s-1}(\sqrt{y})}{\sqrt{y}}\right).
\end{equation*}
as the scaling limit of the subspace \eqref{pertus}.

Furthermore, we  already know that
the scaling limit  of the subspace \eqref{ljsns} is the subspace ${\tilde L}^{(s+2n_s)}$, the range
of the operator ${\tilde J}_{s+2n_s}$.

We arrive at the subspace ${\tilde H}^{(s)}$
\begin{equation*}
{\tilde H}^{(s)}={\tilde V}^{(s)}\oplus {\tilde L}^{(s+2n_s)}.
\end{equation*}
It is natural to consider  the subspace ${\tilde H}^{(s)}$ as the scaling limit of the subspaces $L_{Jac}^{(s,n)}$
under the scaling \eqref{scaling} as $n\to\infty$.

Note that the subspace ${\tilde H}^{(s)}$ consists of locally square-integrable functions, which, moreover, only fail to be square-integrable {\it at zero}: for any $\varepsilon>0$, the subspace
$\chi_{[\varepsilon, +\infty)} {\tilde H}^{(s)}$ is contained in $L_2$.

\subsubsection{Definition of the infinite Bessel point process}

We now proceed  to a precise description, in this specific case, of one of the main constructions of the paper: that of a sigma-finite measure
${\tilde {\mathbb B}}^{(s)}$, the scaling limit of infinite Jacobi ensembles \eqref{sjacens} under the scaling \eqref{scaling}.
Let $\Conf((0, +\infty))$ be the space of configurations on $(0, +\infty)$.
Given a Borel subset $E_0\subset (0, +\infty)$, we let $\Conf((0, +\infty), E_0)$ be the subspace of
configurations all whose particles lie in $E_0$.
Generally, given a measure $\mathbb{B}$ on a set $X$ and a measurable subset $Y\subset X$ such
that $0<\mathbb{B}(Y)<+\infty$, we let $\mathbb{B}|_Y$ stand for the
restriction of the measure $\mathbb{B}$ onto the subset $Y$.

 It will be proved in what follows that, for any $\varepsilon>0$, the subspace
$\chi_{(\varepsilon, +\infty)}{\tilde H}^{(s)}$ is a closed subspace of $L_2((0, +\infty), \Leb)$
 and  that the operator ${\tilde \Pi}^{(\varepsilon, s)}$
of orthogonal projection onto the subspace $\chi_{(\varepsilon, +\infty)}{\tilde H}^{(s)}$ is locally of trace class. By the Macch{\`i}-Soshnikov Theorem, the operator ${\tilde \Pi}^{(\varepsilon, s)}$ induces a determinantal measure $\Prob_{ {\tilde \Pi}^{(\varepsilon, s)}  }$ on $\Conf((0, +\infty))$.

\begin{proposition}\label{inf-bessel} Let $s\leq -1$. There exists a sigma-finite measure
$\mathbb{B}^{(s)}$ on $\Conf((0, +\infty))$ such that
we have
\begin{enumerate}
\item the particles of ${\mathbb B}$-almost every configuration do not accumulate at zero;
\item  for any $\varepsilon>0$
 we have $$0<\mathbb{B}(\Conf((0, +\infty); (\varepsilon, +\infty))<+\infty$$
and  $$
\frac {\mathbb{B}|_{\Conf((0, +\infty); (\varepsilon, +\infty))}}
{\mathbb{B}(\Conf((0, +\infty); (\varepsilon, +\infty))}=\Prob_{{\tilde \Pi}^{(\varepsilon, s)}}.$$

\end{enumerate}
\end{proposition}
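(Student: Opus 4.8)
The plan is to construct the measure $\mathbb{B}^{(s)}$ as a consistent inductive limit of the finite determinantal measures $\Prob_{\tilde \Pi^{(\varepsilon, s)}}$, following the general scheme of infinite determinantal measures described in Section 2. The skeleton of the argument is already provided by the consistency machinery: I must exhibit an exhausting family of subsets $\Conf((0,+\infty);(\varepsilon,+\infty))$ of the configuration space, verify that the projection operators $\tilde\Pi^{(\varepsilon,s)}$ give rise to determinantal measures that are consistent in the sense of (\ref{en-cons}), and then invoke the existence of the limiting sigma-finite measure guaranteed by that consistency.

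First I would establish the analytic prerequisites stated just before the proposition, namely that for every $\varepsilon>0$ the space $\chi_{(\varepsilon,+\infty)}\tilde H^{(s)}$ is a closed subspace of $L_2((0,+\infty),\Leb)$ and that the associated projection $\tilde\Pi^{(\varepsilon,s)}$ is locally trace-class; this is precisely the point where the observation that $\tilde H^{(s)}$ fails to be square-integrable only at zero does the work, since cutting off a neighborhood of zero removes the sole obstruction to square-integrability. Granting this, the Macch\`i--Soshnikov Theorem yields the determinantal measure $\Prob_{\tilde\Pi^{(\varepsilon,s)}}$. The decisive step is the consistency relation: for $0<\varepsilon_1<\varepsilon_2$, the restriction of $\Prob_{\tilde\Pi^{(\varepsilon_1,s)}}$ to $\Conf((0,+\infty);(\varepsilon_2,+\infty))$, after normalization, must coincide with $\Prob_{\tilde\Pi^{(\varepsilon_2,s)}}$. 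I would derive this from Corollary \ref{indsubset} of the Appendix, which governs the induced process obtained by conditioning a determinantal measure to place all particles in a sub-region; the range of the induced projection is exactly $\chi_{(\varepsilon_2,+\infty)}\tilde H^{(s)}$ because restricting the subspace $\chi_{(\varepsilon_1,+\infty)}\tilde H^{(s)}$ further by $\chi_{(\varepsilon_2,+\infty)}$ gives $\chi_{(\varepsilon_2,+\infty)}\tilde H^{(s)}$.

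With consistency in hand, the general existence statement from Section 2 produces a sigma-finite measure $\mathbb{B}^{(s)}$ satisfying $0<\mathbb{B}^{(s)}(\Conf((0,+\infty);(\varepsilon,+\infty)))<+\infty$ together with the claimed normalization identity (2), since the sets $\{\Conf((0,+\infty);(\varepsilon,+\infty))\}_{\varepsilon>0}$ exhaust, as $\varepsilon\to 0$, the set of configurations whose particles do not accumulate at zero. Assertion (1), that the particles of $\mathbb{B}^{(s)}$-almost every configuration do not accumulate at zero, then follows because the measure is supported on the union of these exhausting sets by construction: any configuration charged by $\mathbb{B}^{(s)}$ lies in some $\Conf((0,+\infty);(\varepsilon,+\infty))$, which is precisely the condition that its particles stay bounded away from zero.

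I expect the main obstacle to be the verification that $\tilde\Pi^{(\varepsilon,s)}$ is locally trace-class and that the induced-process formula applies with the correct range. The subspace $\tilde H^{(s)}=\tilde V^{(s)}\oplus \tilde L^{(s+2n_s)}$ is a finite-rank perturbation of the Bessel projection range $\tilde L^{(s+2n_s)}$, and one must control both the Bessel part, whose local trace-class property is classical, and the finitely many perturbing functions $y^{s/2},\dots,J_{s+2n_s-1}(\sqrt{y})/\sqrt{y}$, whose behavior near zero is singular when $s\le -1$. The care lies in confirming that after cutting by $\chi_{(\varepsilon,+\infty)}$ these perturbing directions genuinely land in $L_2$ and that the orthogonal projection onto the resulting finite-rank perturbation of a closed subspace remains locally trace-class; the finite-rank perturbation results assembled in Section 3 are designed exactly to handle this, so the argument should reduce to invoking them once the closedness and integrability near $\varepsilon$ are checked.
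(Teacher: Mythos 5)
Your overall strategy --- realizing $\mathbb{B}^{(s)}$ as the inductive limit of the consistent family $\Prob_{\tilde\Pi^{(\varepsilon,s)}}$ along the exhausting sets $\Conf((0,+\infty);(\varepsilon,+\infty))$, with consistency supplied by the induced-process formula of Corollary \ref{indsubset} --- is exactly the mechanism the paper packages into Theorem \ref{infdet-he}, and your reduction of claim (1) to the exhaustion is correct. The gap lies in the analytic verification you defer to the end, and in particular in your justification of closedness. You argue that $\chi_{(\varepsilon,+\infty)}\tilde H^{(s)}$ is closed because cutting off a neighborhood of zero ``removes the sole obstruction to square-integrability.'' That only shows $\chi_{(\varepsilon,+\infty)}\tilde H^{(s)}\subset L_2$; it does not show the restricted subspace is closed, and in general such a restriction need not be closed (cf.\ Proposition \ref{closed-sub}: one needs the restriction map to be bounded below). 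The ingredient that actually does the work, and which your proposal never names, is the \emph{unique extension property} of the Bessel subspace $\tilde L^{(s+2n_s)}$ from $(\varepsilon,+\infty)$: no nonzero function in the range of $\tilde J_{s+2n_s}$ can vanish on $(\varepsilon,+\infty)$. In the paper this is deduced from the uncertainty principle for the Hankel transform, and, combined with the bound $\int_0^\varepsilon \tilde J_{s+2n_s}(y,y)\,dy<+\infty$, it yields both the closedness of $\chi_{(\varepsilon,+\infty)}\tilde L^{(s+2n_s)}$ and the hypothesis of Corollary \ref{indsubset} required for your consistency step. Without it, neither the closedness nor the identification of the range of the induced projection is justified.

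The second verification you leave implicit is Proposition \ref{lin-ind}: every nonzero element of $\tilde V^{(s)}$ fails to be square-integrable on any interval $(0,R)$ (a nontrivial combination behaves like $y^{s/2+i}$ at zero, or reduces to the non-square-integrable Bessel term), and consequently the perturbing vectors, cut by $\chi_{(\varepsilon,+\infty)}$, are linearly independent of $\chi_{(\varepsilon,+\infty)}\tilde L^{(s+2n_s)}$. This is precisely what makes Assumption \ref{lve} hold for the pair $L=\tilde L^{(s+2n_s)}$, $V=\tilde V^{(s)}$, and it is the actual substance of the paper's proof; the remaining steps are, as you say, the general machinery of Section 2. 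A small misattribution: the finite-rank perturbation results you invoke live in Section 2 (Assumption \ref{lve}, Proposition \ref{lvehe}), not in Section 3, which concerns convergence of sequences of such measures rather than their existence.
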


These conditions define the measure ${\tilde {\mathbb{B}}^{(s)}}$ uniquely up to multiplication by a constant.

{\bf Remark.} For $s\neq -1, -3, \dots$, we can also write
$$
{\tilde H}^{(s)}=\Span(y^{s/2}, \dots, y^{s/2+n_s-1})\oplus {\tilde L}^{(s+2n_s)}
$$
and use the preceding construction otherwise without change.
For $s=-1$ note that the function $y^{1/2}$ fails to be square-integrable at infinity --- whence the need for the definition given above.
For $s>-1$, write ${\tilde {\mathbb B}}^{(s)}=\Prob_{{\tilde J}_{s}}$.
\begin{proposition}\label{tildebs-sing}
If $s_1\neq s_2$, then the measures ${\tilde {\mathbb B}}^{(s_1)}$ and ${\tilde {\mathbb B}}^{(s_2)}$ are mutually singular.
\end{proposition}
 The proof of Proposition \ref{tildebs-sing} will be derived from Proposition \ref{bs-sing}, which in turn, will be obtained as a corollary of the main result, Theorem \ref{mainthm}.

\subsection{The modified Bessel point process}

In what follows, we will need the Bessel point process subject to the change of variable $y=4/x$.  We thus consider the half-line $(0, +\infty)$ endowed with the standard Lebesgue measure
$\Leb$.
Take $s>-1$ and introduce  a kernel $J^{(s)}$  by the formula
\begin{multline*}
J^{(s)}(x_1,x_2) = \frac{J_s\left(\frac{2}{\sqrt{x_1}}\right)\frac{1}{\sqrt{x_2}}J_{s+1}\left(\frac{2}{\sqrt{x_2}}\right)- J_s\left(\frac{2}{\sqrt{x_2}}\right)\frac{1}{\sqrt{x_1}}J_{s+1}\left(\frac{2}{\sqrt{x_1}}\right)}{x_1-x_2}, \\ x_1>0,\;x_2>0\,.
\end{multline*}
or, equivalently,
\begin{equation*}
J^{(s)}(x_1,x_2) = \frac1{x_1x_2}\int\limits_0^1  J_s\left(2\sqrt{\frac{t}{x_1}}\right)
J_s\left(2\sqrt{\frac{t}{x_2}}\right)dt.
\end{equation*}

The change of variable $y=4/x$ reduces the kernel $J^{(s)}$ to the
 kernel ${\tilde J}_{s}$ of  the Bessel point process
of Tracy and Widom considered above (recall here that a change of variables $u_1=\rho(v_1)$, $u_2=\rho(v_2)$ transforms a kernel $K(u_1, u_2)$
to a kernel of the form $K(\rho(v_1), \rho(v_2))\sqrt{\rho'(v_1)\rho'(v_2)}$).
The kernel $J^{(s)}$ therefore  induces on the space $L_2((0,+\infty), \Leb)$
a locally trace class operator of orthogonal projection, for which, slightly abusing notation, we keep the symbol $J^{(s)}$; we denote ${ L}^{(s)}$ the range of $J^{(s)}$.
 By the Macch\`{\i}-Soshnikov Theorem, the operator $J^{(s)}$
 induces a determinantal measure $\Prob_{J^{(s)}}$ on $\Conf((0,+\infty))$.

\subsection{The modified infinite  Bessel point process}
The involutive homeomorphism
$$y=4/x$$ of the half-line $(0, +\infty)$ induces a corresponding change of variable homeomorphism of the space $\Conf((0, +\infty))$.
Let $\mathbb{B}^{(s)}$ be the image of $\tilde {\mathbb{B}}^{(s)}$ under our change of variables.
As we shall see below, the measure
$\mathbb{B}^{(s)}$ is precisely the ergodic decomposition measure for the infinite Pickrell measures.

A more  explicit description of the measure $\mathbb{B}^{(s)}$ can be given as follows.

By definition, we have
$$
L^{(s)}=\left\{\frac{\varphi(4/x)}{x}, \varphi\in {\tilde L}^{(s)}\right\}.
$$
(the behaviour of determinantal measures under a change of variables is recalled in the Subsection~\ref{subsec-ch-of-var}).

We similarly let $V^{(s)}, H^{(s)}\subset L_{2, \loc}((0, +\infty), \Leb)$ be the images of the subspaces ${\tilde V}^{(s)}$,  ${\tilde H}^{(s)}$
under our change of variables $y=4/x$:

$$V^{(s)}=\left\{\frac{\varphi(4/x)}{x}, \varphi\in {\tilde V}^{(s)}\right\}, \
H^{(s)}=\left\{\frac{\varphi(4/x)}{x}, \varphi\in {\tilde H}^{(s)}\right\}.
$$
 By definition,  we have
\begin{gather*}
V^{(s)}=\Span\left(x^{-s/2-1}, \dots, \frac{J_{s+2n_s-1}(\frac2{\sqrt{x}})}{\sqrt{x}}\right),\\
H^{(s)}=V^{(s)}\oplus L^{(s+2n_s)}.
\end{gather*}

It will develop that for all  $R>0$ the subspace $\chi_{(0, R)}H^{(s)}$ is a closed subspace in
$L_2((0, +\infty), \Leb)$; let $\Pi^{(s, R)}$ be the corresponding orthogonal projection operator.
By definition, the operator $\Pi^{(s, R)}$ is locally of trace-class and, by the Macch\`{\i}-Soshnikov Theorem, the operator $\Pi^{(s, R)}$
 induces a determinantal measure $\Prob_{\Pi^{(s, R)}}$ on $\Conf((0,+\infty))$.

The measure $\mathbb{B}^{(s)}$ is
 characterized  by the following conditions:
\begin{enumerate}
\item the set of  particles of $\mathbb{B}^{(s)}$-almost every configuration  is bounded;
\item for all $R>0$
 we have $$0<\mathbb{B}(\Conf((0, +\infty); (0, R))<+\infty$$
and  $$
\frac {\mathbb{B}|_{\Conf((0, +\infty); (0, R))}}
{\mathbb{B}(\Conf((0, +\infty); (0, R))}=\Prob_{{ \Pi}^{(s, R)}}.$$
\end{enumerate}
These conditions define the measure ${ {\mathbb{B}}^{(s)}}$ uniquely up to multiplication by a constant.

{\bf Remark.} For $s\neq -1, -3, \dots$, we can of course also write
$$
H^{(s)}=\Span(x^{-s/2-1}, \dots, x^{-s/2-n_s+1})\oplus L^{(s+2n_s)}.
$$
Let $\scrI_{1, \loc}((0, +\infty), \Leb)$ be the space of locally trace-class operators acting on the space  $L_2((0, +\infty), \Leb)$ (see Subsection~\ref{subsec-loc-tr-cls} for the detailed definition).
We have the following proposition describing the asymptotic behaviour of the operators ${ \Pi}^{(s, R)}$
as $R\to\infty$.
\begin{proposition}\label{asympt-R-bessel}
Let $s\leq -1$. Then
\begin{enumerate}
\item as $R\to\infty$ we have $${ \Pi}^{(s, R)}\to J^{(s+2n_s)}$$ in $\scrI_{1, \loc}((0, +\infty), \Leb)$;
\item Consequently, as $R\to\infty$, we have $$\Prob_{{ \Pi}^{(s, R)}}\to \Prob_{J^{(s+2n_s)}}$$
weakly in the space of probability measures on $\Conf((0, +\infty))$.
\end{enumerate}

\end{proposition}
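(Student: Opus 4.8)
The plan is to read $\Pi^{(s,R)}$ as the operator obtained by inducing, on the exhausting family $(0,R)\uparrow(0,+\infty)$, the infinite determinantal measure attached to the subspace $H^{(s)}=V^{(s)}\oplus L^{(s+2n_s)}$, and to show that the finite-dimensional datum $V^{(s)}$ is washed out in the limit because each of its functions fails to be square-integrable \emph{precisely at infinity}. In this way (1) becomes an instance of the general convergence result for finite-rank perturbations established at the end of Section 3, once the relevant sufficient condition is verified for the concrete $V^{(s)}$ and $L^{(s+2n_s)}$; part (2) is then immediate from the principle, recalled at the beginning of Section 3, that locally trace-class convergence of the inducing operators entails weak convergence of the associated determinantal measures.

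First I would fix $R$ and split the projection. Since $H^{(s)}=V^{(s)}\oplus L^{(s+2n_s)}$ in $L_{2,\mathrm{loc}}$, multiplication by $\chi_{(0,R)}$ gives $\chi_{(0,R)}H^{(s)}=\chi_{(0,R)}L^{(s+2n_s)}+\chi_{(0,R)}V^{(s)}$; writing $Q_R$ for the orthogonal projection onto the closed subspace $\chi_{(0,R)}L^{(s+2n_s)}$ and $W_R=\chi_{(0,R)}V^{(s)}$ for the finite-dimensional part, the sum is closed and decomposes orthogonally as $\chi_{(0,R)}L^{(s+2n_s)}\oplus W_R'$ with $W_R'=(I-Q_R)W_R$. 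Consequently
\begin{equation}
\Pi^{(s,R)}=Q_R+P_{W_R'},
\end{equation}
with $P_{W_R'}$ the orthogonal projection onto $W_R'$. Because the functions spanning $V^{(s)}$ are real-analytic and $V^{(s)}\cap L_2=0$, no nonzero element of $W_R$ lies in $\chi_{(0,R)}L^{(s+2n_s)}$, so $\dim W_R'=n_s$ and $P_{W_R'}$ has constant finite rank $n_s$ for every $R>0$.

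For the main term I would invoke the convergence of induced processes from Section 3: since $L^{(s+2n_s)}\subset L_2((0,+\infty),\Leb)$ is the range of the locally trace-class projection $J^{(s+2n_s)}$ and $(0,R)\uparrow(0,+\infty)$, the restricted projections $Q_R$ converge to $J^{(s+2n_s)}$ in $\mathscr{I}_{1,\mathrm{loc}}((0,+\infty),\Leb)$. It remains to prove that $P_{W_R'}\to 0$ in $\mathscr{I}_{1,\mathrm{loc}}$, and this is the heart of the matter. The key computation is that every generator of $V^{(s)}$ --- the powers $x^{-s/2-1},\dots$ in the generic case, together with the Bessel term $J_{s+2n_s-1}(2/\sqrt{x})/\sqrt{x}\sim x^{-(s/2+n_s)}$ as $x\to\infty$ --- satisfies $\|\chi_{(0,R)}v\|_{L_2}^2\to+\infty$ as $R\to\infty$, whereas $\|\chi_{[a,b]}v\|_{L_2}$ stays bounded on each fixed compact $[a,b]\subset(0,+\infty)$. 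Thus, after normalization, the mass of these functions escapes to infinity. Since $W_R'$ is obtained from $W_R$ by subtracting components lying in the $L_2$-bounded subspace $\chi_{(0,R)}L^{(s+2n_s)}$, this subtraction cannot restore the lost mass, so a normalized basis of $W_R'$ still concentrates away from every compact set; as $P_{W_R'}$ has fixed rank $n_s$, the trace norm of $\chi_{[a,b]}P_{W_R'}\chi_{[a,b]}$ reduces to that of finitely many such rank-one pieces and tends to $0$.

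The main obstacle is exactly this last estimate: one must check that the Gram--Schmidt orthogonalization of $W_R$ against $\chi_{(0,R)}L^{(s+2n_s)}$ does not spoil the escape to infinity, and that the concentration is strong enough to yield convergence in trace norm (not merely in operator or Hilbert--Schmidt norm) on compacts. This requires quantitative control of the inner products between the blowing-up functions of $V^{(s)}$ and the fixed subspace $L^{(s+2n_s)}$, for which the explicit behaviour of the spanning functions at infinity recorded above is precisely what is needed. Granting this, $\Pi^{(s,R)}=Q_R+P_{W_R'}\to J^{(s+2n_s)}$ in $\mathscr{I}_{1,\mathrm{loc}}$, which is (1); and then $\Prob_{\Pi^{(s,R)}}\to\Prob_{J^{(s+2n_s)}}$ weakly, which is (2).
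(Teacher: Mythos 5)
Your proposal is correct and follows essentially the same route as the paper: Proposition \ref{asympt-R-bessel} is deduced there as an immediate corollary of Proposition \ref{gn-conv} and Corollary \ref{cor-conv}, whose proof uses exactly your decomposition $\Pi^{g}=Q^{g}+\tilde{P}$ into the induced projection plus a finite-rank piece, with the latter killed because $V^{(s)}\cap L_2=0$ forces the normalized spanning functions to escape every compact set, and fixed finite rank upgrades strong convergence to $\mathscr{I}_{1,\mathrm{loc}}$-convergence. The obstacle you flag --- that orthogonalizing against $\chi_{(0,R)}L^{(s+2n_s)}$ must not spoil the escape to infinity --- is precisely the uniform lower bound on the angle $\alpha(\chi_{(0,R)}H^{(s)},\chi_{(0,R)}V^{(s)})$ appearing as the hypothesis of Corollary \ref{cor-conv}, which reduces the problem to the projections onto $\chi_{(0,R)}V^{(s)}$ alone, exactly as you argue.
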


As before, for $s>-1$, write ${ {\mathbb B}}^{(s)}=\Prob_{{ J}^{(s)}}$.
Proposition \ref{tildebs-sing} is equivalent to the following
\begin{proposition}\label{bs-sing}
If $s_1\neq s_2$, then the measures ${ {\mathbb B}}^{(s_1)}$ and ${ {\mathbb B}}^{(s_2)}$ are mutually singular.
\end{proposition}
 Proposition \ref{bs-sing}  will be obtained as the corollary of the main result, Theorem \ref{mainthm}, in the last section of the paper.

We now represent the measure ${\mathbb B}^{(s)}$ as the product of a
determinantal probability measure and a multiplicative functional.
Here we limit ourselves to  specific example of such a representation, but in what follows
we will see that they can be constructed in much greater generality.
Introduce a function $S$ on the space $\Conf((0, +\infty))$ by setting
$$
S(X)=\sum\limits_{x\in X} x.
$$
The function $S$ may, of course, assume value $\infty$, but the set of such configurations is ${\mathbb B}^{(s)}$-negligible, as is shown by the following
\begin{proposition}\label{expbetafin}
For any $s\in {\mathbb R}$ we have $S(X)<+\infty$ almost surely with respect to the measure ${\mathbb B}^{(s)}$ and for any $\beta>0$ we have
$$
\exp(-\beta S(X))\in L_1(\Conf((0, +\infty)), {\mathbb B}^{(s)}).
$$
\end{proposition}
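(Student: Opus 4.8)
The plan is to recognize $\exp(-\beta S(X))$ as the multiplicative functional $\Psi_{g^\beta}(X)$ associated with $g^\beta(x)=\exp(-\beta x)$, $0<g^\beta\le 1$, and to deduce the two assertions from the interplay between the exhaustion of $\Conf((0,+\infty))$ by the sets $\Conf((0,+\infty);(0,R))$ and the determinantal structure of the restrictions. Since $\mathbb{B}^{(s)}$-almost every configuration is bounded (condition (1) in the characterization of $\mathbb{B}^{(s)}$), these sets fill up $\Conf((0,+\infty))$ up to a $\mathbb{B}^{(s)}$-null set as $R\to\infty$, and on each of them the defining property gives $\mathbb{B}^{(s)}|_{\Conf((0,+\infty);(0,R))}=C_R\,\Prob_{\Pi^{(s,R)}}$ with $C_R=\mathbb{B}^{(s)}(\Conf((0,+\infty);(0,R)))\in(0,+\infty)$.

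For the almost sure finiteness of $S$, it suffices, by the boundedness just recalled, to prove $S(X)<+\infty$ almost surely for each of the probability measures $\Prob_{\Pi^{(s,R)}}$. I would do this by bounding the first moment $\int_0^R x\,\Pi^{(s,R)}(x,x)\,dx$, which equals $\int x\,d\rho_1$ for the intensity $\rho_1(x)=\Pi^{(s,R)}(x,x)$. The only candidate for divergence is $x\to 0$, where the weight $x$ vanishes while $\Pi^{(s,R)}(x,x)=O(x^{-3/2})$, exactly as for the limiting kernel $J^{(s+2n_s)}$ (Proposition \ref{asympt-R-bessel}); this order comes from the hard-edge asymptotics $J_\nu(2/\sqrt{x})\sim c\,x^{1/4}$ as $x\to 0$. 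Since $\int_0 x\cdot x^{-3/2}\,dx=\int_0 x^{-1/2}\,dx<+\infty$, one gets $S\in L_1(\Prob_{\Pi^{(s,R)}})$, hence $S<+\infty$ $\Prob_{\Pi^{(s,R)}}$-almost surely, and therefore $\mathbb{B}^{(s)}$-almost surely.

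For the integrability of $\Psi_{g^\beta}$ the decisive point is that $\Psi_{g^\beta}\mathbb{B}^{(s)}$ is finite precisely because $\sqrt{g^\beta}\,H^{(s)}\subset L_2$. I would check this inclusion on generators: $H^{(s)}=V^{(s)}\oplus L^{(s+2n_s)}$ with $L^{(s+2n_s)}\subset L_2$, and the generators of $V^{(s)}$ (the powers $x^{-s/2-1},\dots,x^{-s/2-n_s+1}$ together with the Bessel function $J_{s+2n_s-1}(2/\sqrt{x})/\sqrt{x}$) fail to be square-integrable only at $+\infty$, where they grow at most polynomially. Multiplication by $\sqrt{g^\beta}(x)=\exp(-\beta x/2)$ supplies exponential decay at infinity and so carries each generator into $L_2$, while $\sqrt{g^\beta}\le 1$ preserves the local trace-class property. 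Thus $\sqrt{g^\beta}H^{(s)}$ is a closed subspace of $L_2$ with locally trace-class projection, and the general construction of infinite determinantal measures by multiplicative functionals (Theorem \ref{infdet-he}, resting on the result of \cite{Buf-umn} recalled in Proposition \ref{pr1-bis}) shows that $\Psi_{g^\beta}\mathbb{B}^{(s)}$ is a finite measure, i.e. $\int \exp(-\beta S(X))\,d\mathbb{B}^{(s)}<+\infty$.

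I expect the main obstacle to be making the limit $R\to\infty$ quantitative if one wishes to avoid quoting the general theory. By monotone convergence one has $\int \exp(-\beta S)\,d\mathbb{B}^{(s)}=\lim_R C_R\,\det\!\big(1-(1-e^{-\beta x})\Pi^{(s,R)}\big)$, and the difficulty is to control the diverging normalizations $C_R$ against the vanishing Fredholm determinants. The ratios $C_R/C_{R'}$ are themselves gap probabilities $\det\!\big(1-\chi_{(R,R')}\Pi^{(s,R')}\chi_{(R,R')}\big)$, and one expects these to combine with the determinants $\det\!\big(1-(1-e^{-\beta x})\Pi^{(s,R)}\big)$ and telescope into the single Fredholm determinant attached to $\sqrt{g^\beta}H^{(s)}$; establishing this telescoping, together with the trace-norm convergence $\Pi^{(s,R)}\to J^{(s+2n_s)}$ of Proposition \ref{asympt-R-bessel}, is the delicate step, and it is exactly what the general multiplicative-functional machinery packages cleanly.
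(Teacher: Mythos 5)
Your proposal takes essentially the same route as the paper: the paper's proof of Proposition \ref{expbetafin} is a direct application of Corollary \ref{fin-rank-mult} (the finite-rank case of Proposition \ref{mult-he1}) to $g^{\beta}(x)=e^{-\beta x}$, $L=L^{(s+2n_s)}$, $V=V^{(s)}$, and the two hypotheses to be verified are exactly the two estimates you identify: $\sqrt{g^{\beta}}V^{(s)}\subset L_2$ (exponential decay at infinity beats the polynomial growth of the generators) and the trace-class condition $\sqrt{1-g^{\beta}}\,J^{(s+2n_s)}\sqrt{1-g^{\beta}}\in\mathscr{I}_1$, which reduces to $\int\min(x,1)J^{(s+2n_s)}(x,x)\,dx<+\infty$, i.e.\ to the $x^{-3/2}$ hard-edge asymptotics at $0$ and the decay at infinity (Propositions \ref{x-js} and \ref{infty-js}). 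The almost-sure finiteness of $S$ then comes for free as the almost-sure positivity clause of the same corollary, so your separate first-moment computation for each $\Prob_{\Pi^{(s,R)}}$ is not needed; as written it would also require an extra argument, since you assert $\Pi^{(s,R)}(x,x)=O(x^{-3/2})$ for the \emph{perturbed} kernel, whereas the cited asymptotics concern $J^{(s+2n_s)}$, and controlling the diagonal of the finite-rank correction takes a short additional estimate that the general corollary packages away.

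One inference in your integrability paragraph should be repaired: the fact that $\sqrt{g^{\beta}}H^{(s)}$ is a closed subspace of $L_2$ with locally trace-class projection does not by itself imply that $\Psi_{g^{\beta}}\mathbb{B}^{(s)}$ is a finite measure. The hypothesis that actually produces a finite, nonzero normalizing constant is precisely $\sqrt{1-g^{\beta}}\,J^{(s+2n_s)}\sqrt{1-g^{\beta}}\in\mathscr{I}_1$; without it the Fredholm determinants normalizing $\Psi_{g^{\beta}}\Prob_{\Pi^{(0,R)\cup B}}$ need not telescope to a positive limit. You do possess the required estimate --- it is the same $\int_0 x\cdot x^{-3/2}\,dx<+\infty$ computation from your first part together with $\int_R^{\infty}J^{(s+2n_s)}(x,x)\,dx<+\infty$ --- so this is a gap of attribution rather than of substance; note also that the relevant statements are Proposition \ref{mult-he1} and Corollary \ref{fin-rank-mult} rather than Theorem \ref{infdet-he} and Proposition \ref{pr1-bis}.
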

Furthermore, we shall now see that the measure
$$
\frac{\exp(-\beta S(X)){\mathbb B}^{(s)}}{\displaystyle \int \limits_{\Conf((0, +\infty))}\exp(-\beta S(X))d{\mathbb B}^{(s)}}
$$
is determinantal.

\begin{proposition} \label{expbetal}
For any $s\in\mathbb{R}$, $\beta>0$,
 the subspace
\begin{equation}\label{expaa}
\exp\left(-\beta x/2\right)H^{(s)}
\end{equation}
is a closed subspace of $L_2\bigl((0,+\infty),\Leb\bigr)$, and the operator of orthogonal projection onto the subspace \eqref{expaa} is locally of trace class.
\end{proposition}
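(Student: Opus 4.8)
The plan is to treat $\exp(-\beta x/2)$ as a bounded multiplier that tames the polynomial tails of $H^{(s)}$ at $+\infty$, and to reduce the closedness to a single coercivity (bounded-below) estimate for the multiplication operator restricted to the genuine $L_2$-subspace $L^{(s+2n_s)}$.

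Write $g(x)=\exp(-\beta x/2)$ and $\sigma=s+2n_s$, so that by (\ref{def-hs}) we have $H^{(s)}=V^{(s)}\oplus L^{(\sigma)}$ with $V^{(s)}$ finite-dimensional and $L^{(\sigma)}$ the range of the locally trace-class projection $J^{(\sigma)}$, a closed subspace of $L_2\bigl((0,+\infty),\Leb\bigr)$. First I would check that $gH^{(s)}\subset L_2$. Since $0<g\le 1$, multiplication by $g$ preserves $L_2$, so $gL^{(\sigma)}\subset L_2$; and the generators of $V^{(s)}$ listed in (\ref{def-vs}) are locally square-integrable on $(0,+\infty)$, are merely multiplied by a bounded factor near $0$, and decay only polynomially at $+\infty$, where the factor $g$ forces square-integrability. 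Hence $gV^{(s)}\subset L_2$ and $gH^{(s)}=gV^{(s)}+gL^{(\sigma)}\subset L_2$, a sum of a finite-dimensional subspace and the subspace $gL^{(\sigma)}$ that we must still prove closed.

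The heart of the matter is the coercivity estimate: there exists $c>0$ with
\begin{equation*}
\int_0^{+\infty}e^{-\beta x}\,|f(x)|^2\,dx\ \ge\ c\int_0^{+\infty}|f(x)|^2\,dx,\qquad f\in L^{(\sigma)}.
\end{equation*}
I would transport this to the Bessel side by the change of variable $y=4/x$ of Subsection 1.4, which carries $L^{(\sigma)}$ to $\tilde L^{(\sigma)}$, the range of the Bessel projection $\tilde J_\sigma$, that is, the space of functions whose Hankel transform is supported in $[0,1]$; the weight $e^{-\beta x}$ becomes $e^{-4\beta/y}$, and the same Jacobian multiplies both sides, so the estimate is equivalent to $\int_0^{+\infty}e^{-4\beta/y}|\varphi(y)|^2\,dy\ge c\|\varphi\|^2$ for $\varphi\in\tilde L^{(\sigma)}$. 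The key point is that $e^{-4\beta/y}$ is small only near $y=0$. I would argue by contradiction: if $\varphi_n\in\tilde L^{(\sigma)}$, $\|\varphi_n\|=1$, had $\int e^{-4\beta/y}|\varphi_n|^2\to 0$, then fixing any $\delta>0$ and using $e^{-4\beta/y}\ge e^{-4\beta/\delta}$ on $[\delta,+\infty)$ would give $\int_{[\delta,+\infty)}|\varphi_n|^2\to 0$, hence $\int_{[0,\delta]}|\varphi_n|^2\to 1$. This contradicts $\int_{[0,\delta]}|\varphi_n|^2\le\kappa(\delta):=\|\chi_{[0,\delta]}\tilde J_\sigma\|^2$, where $\kappa(\delta)<1$ because no nonzero function with Hankel transform supported in $[0,1]$ can be supported in the bounded interval $[0,\delta]$ (a Paley--Wiener/analyticity fact), strictness following since $\chi_{[0,\delta]}\tilde J_\sigma\chi_{[0,\delta]}$ is compact (as $\tilde J_\sigma$ is locally trace class) so its top eigenvalue is attained. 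This coercivity is the step I expect to be the main obstacle.

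Granting the estimate, $M_g$ restricted to $L^{(\sigma)}$ is bounded below and injective, hence has closed range: $gL^{(\sigma)}$ is closed. Consequently $gH^{(s)}=gV^{(s)}+gL^{(\sigma)}$, being the sum of a closed subspace and a finite-dimensional subspace, is closed, which proves the first assertion. For the second, I would realize the orthogonal projection $P$ onto $gL^{(\sigma)}$ as $P=A(A^*A)^{-1}A^*$ with $A=M_gJ^{(\sigma)}$ and $A^*A=J^{(\sigma)}M_{g^2}J^{(\sigma)}$, the inverse taken on $L^{(\sigma)}$, where it is invertible by the coercivity estimate. For any compact $K\subset(0,+\infty)$, local trace-classness of $J^{(\sigma)}$ means $\chi_KJ^{(\sigma)}\chi_K$ is trace class; since $J^{(\sigma)}$ is a projection, $\chi_KJ^{(\sigma)}\chi_K=(\chi_KJ^{(\sigma)})(\chi_KJ^{(\sigma)})^*$, so $\chi_KJ^{(\sigma)}$ is Hilbert--Schmidt. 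Hence $\chi_KA=M_g\,\chi_KJ^{(\sigma)}$ is Hilbert--Schmidt, and $\chi_KP\chi_K=(\chi_KA)(A^*A)^{-1}(\chi_KA)^*$ is the product of two Hilbert--Schmidt operators with a bounded middle factor, hence trace class; thus $P$ is locally of trace class. Finally, the orthogonal projection onto $gH^{(s)}$ differs from $P$ by a finite-rank operator (because $gV^{(s)}$ is finite-dimensional), so it too is locally of trace class, completing the proof.
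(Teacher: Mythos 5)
Your proof is correct, but it takes a genuinely different route from the paper's. The paper disposes of Propositions \ref{expbetafin}, \ref{expbetal} and \ref{expbetacharac} in one stroke at the end of Section 2, by feeding the decomposition $H^{(s)}=V^{(s)}\oplus L^{(s+2n_s)}$ into the general multiplicative-functional machinery (Proposition \ref{mult-he1}, Corollary \ref{fin-rank-mult}, resting on Propositions \ref{pr1-bis} and \ref{pr1-bis-one}): there the closedness of $\sqrt{g}L^{(s+2n_s)}$ and the local trace-class property of the projection come from the explicit resolvent formula $\Pi^g=\sqrt{g}\Pi\bigl(1+(g-1)\Pi\bigr)^{-1}\sqrt{g}$, whose applicability requires the global condition $\sqrt{1-g}\,J^{(s+2n_s)}\sqrt{1-g}\in\mathscr{I}_1$ (verified via the kernel asymptotics of Propositions \ref{x-js} and \ref{infty-js}) together with invertibility of $1+(g-1)\Pi$, automatic for strictly positive $g$ by compactness. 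You instead prove the coercivity bound $\|e^{-\beta x/2}f\|\geq \sqrt{c}\,\|f\|$ on $L^{(s+2n_s)}$ directly, by a concentration argument reducing to $\|\chi_{(0,\delta)}\tilde J_{s+2n_s}\|<1$, which is the same compactness-plus-Hankel-uncertainty fact the paper uses for its unique extension property; and you get local trace-classness from the pseudo-inverse factorization $P=A(A^*A)^{-1}A^*$ with $A=M_gJ^{(s+2n_s)}$ and $\chi_KA$ Hilbert--Schmidt. The two arguments are logically equivalent at bottom (your coercivity estimate is exactly the statement $\|(1-e^{-\beta x})\Pi\|<1$ in disguise), but yours is more self-contained and, for this particular proposition, needs only the \emph{local} trace-class property of the Bessel kernel rather than the global trace condition; the latter is of course still indispensable for the integrability statement \ref{expbetafin} and the identification \ref{expbetacharac}, which you are not claiming here.
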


Let ${\Pi}^{(s,\beta)}$ be the operator of orthogonal projection onto the subspace \eqref{expaa}.

By Proposition \ref{expbetal} and  the Macch{\`\i}-Soshnikov Theorem, the operator ${\Pi}^{(s,\beta)}$
induces a determinantal probability measure on the space $\Conf((0, +\infty))$.
\begin{proposition}\label{expbetacharac}
For any $s\in\mathbb{R}$, $\beta>0$, we have
\begin{equation}\label{pisb}
\frac{\exp(-\beta S(X)){\mathbb B}^{(s)}}{\displaystyle \int \limits_{\Conf((0, +\infty))}\exp(-\beta S(X))d{\mathbb B}^{(s)}}=\Prob_{{\Pi}^{(s,\beta)}}.
\end{equation}
\end{proposition}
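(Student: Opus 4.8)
The plan is to recognize $\exp(-\beta S(X))$ as a multiplicative functional, reduce the infinite measure $\mathbb{B}^{(s)}$ to its finite determinantal approximations $\mathbb{P}_{\Pi^{(s,R)}}$ by restriction to bounded configurations, apply the multiplicative–functional theorem on each approximation, and pass to the limit $R\to\infty$. First I would observe that, writing $g^{\beta}(x)=\exp(-\beta x)$, one has
$$
\exp(-\beta S(X))=\prod_{x\in X}\exp(-\beta x)=\Psi_{g^{\beta}}(X),
$$
so the weight in (\ref{pisb}) is exactly the multiplicative functional $\Psi_{g^{\beta}}$, with $0<g^{\beta}<1$ on $(0,+\infty)$ and hence $0<\Psi_{g^{\beta}}\le 1$. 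Fix a bounded continuous function $F\ge 0$ on $\Conf((0,+\infty))$. By condition (1) characterizing $\mathbb{B}^{(s)}$, almost every configuration is bounded, so the sets $\Conf((0,+\infty);(0,R))$ exhaust $\Conf((0,+\infty))$ up to a $\mathbb{B}^{(s)}$-null set; monotone convergence for the nonnegative integrand $F\Psi_{g^{\beta}}$ and condition (2) then give
$$
\int F\Psi_{g^{\beta}}\,d\mathbb{B}^{(s)}=\lim_{R\to\infty}\mathbb{B}^{(s)}(\Conf((0,+\infty);(0,R)))\int F\Psi_{g^{\beta}}\,d\mathbb{P}_{\Pi^{(s,R)}}.
$$

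Next I would apply the multiplicative–functional theorem (Proposition \ref{pr1-bis}) to the finite determinantal measure $\mathbb{P}_{\Pi^{(s,R)}}$, whose range is $\chi_{(0,R)}H^{(s)}$. Since $g^{\beta}$ is bounded and $\sqrt{g^{\beta}}\,\chi_{(0,R)}H^{(s)}=\exp(-\beta x/2)\chi_{(0,R)}H^{(s)}$ is a closed subspace of $L_2$ (the image of the closed subspace $\chi_{(0,R)}H^{(s)}$ under multiplication by $\exp(-\beta x/2)$, which is bounded above and below on $(0,R)$), the theorem yields
$$
\frac{\Psi_{g^{\beta}}\,\mathbb{P}_{\Pi^{(s,R)}}}{\displaystyle\int\Psi_{g^{\beta}}\,d\mathbb{P}_{\Pi^{(s,R)}}}=\mathbb{P}_{\Pi^{(s,R,\beta)}},
$$
where $\Pi^{(s,R,\beta)}$ is the orthogonal projection onto $\exp(-\beta x/2)\chi_{(0,R)}H^{(s)}$, and the normalizing Fredholm determinant is positive and finite because $0<\Psi_{g^{\beta}}\le 1$. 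Writing $C_R:=\mathbb{B}^{(s)}(\Conf((0,+\infty);(0,R)))\int\Psi_{g^{\beta}}\,d\mathbb{P}_{\Pi^{(s,R)}}$, the two displays combine to
$$
\int F\Psi_{g^{\beta}}\,d\mathbb{B}^{(s)}=\lim_{R\to\infty}C_R\int F\,d\mathbb{P}_{\Pi^{(s,R,\beta)}}.
$$
Taking $F\equiv 1$ and invoking Proposition \ref{expbetafin} shows $C_R\to Z:=\int\Psi_{g^{\beta}}\,d\mathbb{B}^{(s)}\in(0,+\infty)$.

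To finish I would show $\Pi^{(s,R,\beta)}\to\Pi^{(s,\beta)}$ in $\mathscr{I}_{1,\mathrm{loc}}((0,+\infty),\mathrm{Leb})$, where $\Pi^{(s,\beta)}$ projects onto $\exp(-\beta x/2)H^{(s)}$ (closed and locally trace class by Proposition \ref{expbetal}). By the results of Section 3, locally trace-class convergence of projections entails weak convergence $\mathbb{P}_{\Pi^{(s,R,\beta)}}\to\mathbb{P}_{\Pi^{(s,\beta)}}$; then $C_R\to Z$ together with $\int F\,d\mathbb{P}_{\Pi^{(s,R,\beta)}}\to\int F\,d\mathbb{P}_{\Pi^{(s,\beta)}}$ yields $\int F\Psi_{g^{\beta}}\,d\mathbb{B}^{(s)}=Z\int F\,d\mathbb{P}_{\Pi^{(s,\beta)}}$ for all such $F$, which upon dividing by $Z$ is precisely (\ref{pisb}). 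The hard part will be this last convergence of the perturbed, truncated projections: here I would exploit the decomposition $H^{(s)}=V^{(s)}\oplus L^{(s+2n_s)}$ with $\dim V^{(s)}=n_s$ finite, so that $\exp(-\beta x/2)\chi_{(0,R)}H^{(s)}$ is a finite-rank perturbation of $\exp(-\beta x/2)\chi_{(0,R)}L^{(s+2n_s)}$, and apply the Section 3 convergence theorems for induced processes (as $\chi_{(0,R)}\uparrow 1$) together with their stability under finite-rank perturbations. Controlling the finite-dimensional block then requires only that the explicit generators $\exp(-\beta x/2)\chi_{(0,R)}x^{-s/2-1},\dots$ of $\exp(-\beta x/2)\chi_{(0,R)}V^{(s)}$ converge in $L_2$ to their untruncated counterparts, which holds because the weight $\exp(-\beta x/2)$ renders each generator square-integrable at infinity.
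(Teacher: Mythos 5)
Your overall architecture is sound and coincides with the paper's up to the last step: both arguments reduce to the finite multiplicative--functional theorem (Proposition \ref{pr1-bis}) applied on the exhausting family $\Conf((0,+\infty);(0,R))$, where the restricted normalized measure is $\mathbb{P}_{\Pi^{(s,R)}}$ and the weighted, renormalized measure is $\mathbb{P}_{\Pi^{(s,R,\beta)}}$. The divergence is in how one passes from these restricted identities to (\ref{pisb}). The paper (whose proof is one line, via Proposition \ref{mult-he1} and Corollary \ref{fin-rank-mult}) never takes an operator limit: it observes that $\Psi_{g^{\beta}}\mathbb{B}^{(s)}$ and $\mathbb{P}_{\Pi^{(s,\beta)}}$ have the \emph{same} normalized restriction to each $\Conf((0,+\infty);(0,R))$ --- the former by the computation you carried out, the latter by Corollary \ref{indsubset}, since $\chi_{(0,R)}\bigl(e^{-\beta x/2}H^{(s)}\bigr)=e^{-\beta x/2}\chi_{(0,R)}H^{(s)}$ --- and then invokes the uniqueness statement of Proposition \ref{sigmf-gen}: two sigma-finite measures with identical normalized restrictions to an exhausting family are proportional, and both sides of (\ref{pisb}) are probability measures. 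You instead prove weak convergence $\mathbb{P}_{\Pi^{(s,R,\beta)}}\to\mathbb{P}_{\Pi^{(s,\beta)}}$, which requires the $\mathscr{I}_{1,\mathrm{loc}}$-convergence $\Pi^{(s,R,\beta)}\to\Pi^{(s,\beta)}$. That convergence is true and provable with the Section 3 machinery (it is the $\beta$-weighted analogue of Proposition \ref{asympt-R-bessel} and Corollary \ref{cor-conv}), but it is strictly more work than the uniqueness argument, which you could substitute with no loss.

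One concrete correction: your justification of the positivity of the normalizing constant $\int\Psi_{g^{\beta}}\,d\mathbb{P}_{\Pi^{(s,R)}}$ --- ``because $0<\Psi_{g^{\beta}}\le 1$'' --- is not valid. A multiplicative functional with factors in $(0,1]$ can vanish almost surely: $\Psi_{g^{\beta}}(X)=\exp(-\beta S(X))=0$ whenever $S(X)=+\infty$, and under $\mathbb{P}_{\Pi^{(s,R)}}$ almost every configuration is infinite with all particles in $(0,R)$, hence accumulating at $0$, so finiteness of $S$ is precisely the nontrivial point. What is actually needed (and is the hypothesis (\ref{gkint}) of Proposition \ref{pr1-bis}) is $\sqrt{1-g^{\beta}}\,\Pi^{(s,R)}\sqrt{1-g^{\beta}}\in\mathscr{I}_1$, in substance $\int_0^R x\,\Pi^{(s,R)}(x,x)\,dx<+\infty$; note that $\Pi^{(s,R)}$ itself is \emph{not} trace class, exactly because of the accumulation at $0$. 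The bound does hold --- it follows from writing $\Pi^{(s,R)}$ as a finite-rank perturbation of the projection induced by $J^{(s+2n_s)}$ on $(0,R)$ and invoking Propositions \ref{x-js} and \ref{infty-js} together with Corollary \ref{cor1-bis-one} --- but it must be checked rather than dismissed; the same verification is what yields $Z>0$ and is, in essence, the content of Proposition \ref{expbetafin}.
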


\subsection{Unitarily-Invariant  Measures on Spaces of Infinite Matrices}

\subsubsection{Pickrell Measures}\label{sec:Pickrell_measures}

 Let $\Mat(n, \mathbb C)$ be the space of $n \times n$ matrices with complex entries:
\[
\Mat (n, \mathbb C) = \{ z = (z_{ij}),\; i=1,\dots,n;j=1,\dots, n\}
\]

Let $\Leb=dz$ be the Lebesgue measure on $\Mat (n, \mathbb C)$.
For $n_1<n$, let $$\pi^n_{n_1}:\ \Mat(n,\mathbb{C})\to\Mat(n_1,\mathbb{C})$$ be the natural projection map that to a matrix ${z}=({z}_{ij}),i,j=1,\dots,n,$ assigns its upper left corner, the matrix $\pi_{n_1}^n({z})=({z}_{ij}),i,j=1,\dots,n_1.$

Following Pickrell \cite{Pickrell1}, take $s\in\mathbb{R}$ and introduce a measure $\widetilde{\mu}_n^{(s)}$ on $\Mat(n,\mathbb{C})$ by the formula $$\widetilde{\mu}_n^{(s)}=\det(1+{z}^*{z})^{-2n-s}dz.$$

The measure $\widetilde{\mu}_n^{(s)}$ is finite if and only  if $s>-1$.

The measures $\widetilde{\mu}_n^{(s)}$ have the following property of consistency with respect to the projections $\pi_{n_1}^n$.

\begin{proposition}\label{rel-consis}
Let $s\in{\mathbb R}$, $n\in {\mathbb N}$ satisfy $n+s>0$.
Then for any ${\tilde z}\in \Mat(n, {\mathbb C})$ we have
\begin{multline*}
\int\limits_{(\pi^{n+1}_n)^{-1}({\tilde z})}
\det(1+{z}^*{z})^{-2n-2-s}dz=\\
=\frac{\pi^{2n+1}(\Gamma(n+1+s))^2}{\Gamma(2n+2+s)\cdot\Gamma(2n+1+s)}
\det(1+{{\tilde z}}^*{{\tilde z}})^{-2n-s}.
\end{multline*}
\end{proposition}

Now let $\Mat(\mathbb{N},\mathbb{C})$ be the space of infinite matrices whose rows and columns are indexed by natural numbers and whose entries are complex:
$$\Mat(\mathbb{N},\mathbb{C})=\{z=(z_{ij}),i,j\in\mathbb{N},z_{ij}\in\mathbb{C}\}.$$
Let $\pi_n^{\infty}:\Mat(\mathbb{N},\mathbb{C})\to\Mat(n,\mathbb{C})$ be the natural projection map that to an infinite matrix $z\in \Mat(\mathbb{N},\mathbb{C})$ assigns its  upper left $n\times n$-``corner'', the matrix $(z_{ij}), i,j=1,\dots,n.$

For $s>-1$, Proposition \ref{rel-consis} together with the Kolmogorov Existence Theorem \cite{Kolmogorov} implies that there exists a unique probability measure $\mu^{(s)}$ on $\Mat(\mathbb{N},\mathbb{C})$ such that for any $n\in {\mathbb N}$ we have
the relation
$$(\pi_{n}^{\infty})_*\mu^{(s)}=\pi^{-n^2}\prod\limits_{l=1}^n
\frac{\Gamma(2l+s)\Gamma(2l-1+s)}{(\Gamma(l+s))^2}\widetilde{\mu}_n^{(s)}.$$

If $s\leq -1$, then  Proposition \ref{rel-consis} together with the Kolmogorov Existence Theorem \cite{Kolmogorov} implies that for any $\lambda>0$  there exists a unique infinite measure $\mu^{(s,\lambda)}$ on $\Mat(\mathbb{N},\mathbb{C})$ such that
\begin{enumerate}\item for any $n\in {\mathbb N}$ satisfying $n+s>0$
and any compact subset $Y\subset \Mat(n,\mathbb{C})$ we have
$\mu^{(s,\lambda)}(Y)<+\infty$; the pushforwards $(\pi_{n}^{\infty})_*\mu^{(s,\lambda)}$
are consequently well-defined;
\item for any $n\in {\mathbb N}$ satisfying $n+s>0$
we have
\begin{equation*}
(\pi_{n}^{\infty})_*\mu^{(s,\lambda)}=\lambda\left(\prod\limits_{l=n_0}^n\pi^{-2n}
\frac{\Gamma(2l+s)\Gamma(2l-1+s)}{(\Gamma(l+s))^2}\right)\widetilde{\mu}^{(s)}.
\end{equation*}
\end{enumerate}

The measures $\mu^{(s,\lambda)}$ will be called {\it infinite Pickrell measures}.
Slightly abusing notation, we shall omit the super-script $\lambda$ and write $\mu^{(s)}$ for a measure defined up to a multiplicative constant. See p.116 in Borodin and Olshanski \cite{BO} for a detailed presentation of infinite Pickrell measures.
\begin{proposition} \label{singul}
For any $s_1, s_2\in {\mathbb R}$, $s_1\neq s_2$,  the Pickrell measures $\mu^{(s_1)}$ and
$\mu^{(s_2)}$ are mutually singular.
\end{proposition}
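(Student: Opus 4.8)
The plan is to apply the Kakutani dichotomy for the Hellinger affinity along the natural filtration given by the corner projections $\pi_n^\infty$, exactly as Borodin and Olshanski do for Hua--Pickrell measures. For each $n$ the two finite-dimensional projections $(\pi_n^\infty)_*\mu^{(s_1)}$ and $(\pi_n^\infty)_*\mu^{(s_2)}$ are, by their very definition, mutually equivalent: both are, up to an explicit constant, the measure $\det(1+z^*z)^{-2n-s_i}\,dz$ on $\Mat(n,\mathbb{C})$, and the two densities differ only by the strictly positive factor $\det(1+z^*z)^{s_2-s_1}$. Hence the increasing $\sigma$-algebras generated by the maps $\pi_n^\infty$ exhaust the full Borel structure on $\Mat(\mathbb{N},\mathbb{C})$, and the Radon--Nikodym derivatives $\rho_n=d(\pi_n^\infty)_*\mu^{(s_1)}/d(\pi_n^\infty)_*\mu^{(s_2)}$ are well defined. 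First I would record the general fact that, the finite-dimensional projections being consistent, the Hellinger affinities
$$
H_n=\int \sqrt{\frac{d(\pi_n^\infty)_*\mu^{(s_1)}}{d\Leb}\;\frac{d(\pi_n^\infty)_*\mu^{(s_2)}}{d\Leb}}\;d\Leb
$$
form a non-increasing sequence whose limit $H_\infty$ is the affinity of the two infinite-dimensional measures; consequently $H_\infty=0$ forces $\mu^{(s_1)}\perp\mu^{(s_2)}$. This is the only implication of Kakutani's theorem that I need, and it requires no independence of increments.

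The heart of the matter is then the explicit evaluation of $H_n$. Using the normalization constants furnished by Proposition \ref{rel-consis} (products of Gamma functions), the prefactors $\pi^{-n^2}$ cancel and one obtains, writing $\bar s=(s_1+s_2)/2$,
$$
H_n=\prod_{l=1}^{n}a_l,\qquad
a_l=\frac{\left(\dfrac{\Gamma(2l+s_1)\Gamma(2l-1+s_1)}{\Gamma(l+s_1)^2}\cdot\dfrac{\Gamma(2l+s_2)\Gamma(2l-1+s_2)}{\Gamma(l+s_2)^2}\right)^{1/2}}{\dfrac{\Gamma(2l+\bar s)\Gamma(2l-1+\bar s)}{\Gamma(l+\bar s)^2}}.
$$
Setting $f_l(s)=\log\Gamma(2l+s)+\log\Gamma(2l-1+s)-2\log\Gamma(l+s)$, one has $\log a_l=\tfrac12 f_l(s_1)+\tfrac12 f_l(s_2)-f_l(\bar s)$. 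Since the trigamma function $\psi'=(\log\Gamma)''$ is strictly decreasing, $f_l''(s)=\psi'(2l+s)+\psi'(2l-1+s)-2\psi'(l+s)<0$, so $f_l$ is strictly concave and $a_l\le 1$. A second-order Taylor expansion gives $\log a_l=\tfrac18(s_1-s_2)^2 f_l''(\xi_l)$ for some intermediate $\xi_l$, and since $\psi'(x)\sim 1/x$ one finds $f_l''(\xi_l)\sim -1/l$, whence $-\log a_l\sim (s_1-s_2)^2/(8l)$. The series $\sum_l(-\log a_l)$ therefore diverges \emph{harmonically} whenever $s_1\ne s_2$, so $H_n\to 0$ and the two measures are mutually singular.

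Finally I would treat the infinite case $s\le -1$. Here the projections are no longer probability measures, so the global affinity must be replaced by its localized version on an exhausting family of cylinder sets of finite measure, using that $\mu^{(s_1)}$ and $\mu^{(s_2)}$ are $\sigma$-finite and mutually equivalent to Lebesgue on each corner. The relative consistency of Proposition \ref{rel-consis}, valid once $n+s>0$, supplies exactly the same Gamma-function ratios for the increments, so that $\rho_n$ is a non-negative martingale under $\mu^{(s_2)}$ whose limit $\rho_\infty$ I must show vanishes almost surely; the tail divergence computed above carries over verbatim, since only the behaviour $l\to\infty$ of the product matters. I expect the main obstacle to lie precisely in the asymptotic analysis: controlling the error terms in the Stirling/digamma expansion uniformly enough to guarantee genuine harmonic divergence of $\sum_l(-\log a_l)$, and, in the $\sigma$-finite setting, justifying that the localized affinities still govern the dichotomy in the absence of a global normalization.
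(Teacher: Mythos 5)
Your computation of the corner-marginal Hellinger affinity $H_n=\prod_{l\le n}a_l$ and the asymptotics $-\log a_l\sim (s_1-s_2)^2/(8l)$ are correct, and for $s_1,s_2>-1$ the likelihood-ratio martingale argument along the corner filtration does deliver mutual singularity. The genuine gap is the infinite case, which you defer to the last paragraph and do not resolve. If $\bar s=(s_1+s_2)/2\le -1$ (in particular whenever both $s_i\le -1$), the integral $\int_{\Mat(n,\mathbb{C})}\det(1+z^*z)^{-2n-\bar s}\,dz$ diverges, so $H_n=+\infty$ for every $n$ and the quantity on which your whole argument rests does not exist; moreover for $s_i\le -1$ the marginals are only defined up to an overall constant $\lambda$ and the product of Gamma-ratios runs from $l=n_0$ rather than $l=1$, so your formula for $a_l$ does not literally apply either. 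Your proposed remedy --- localizing the affinity to an exhausting family of sets of finite measure and asserting that the tail still governs the dichotomy --- is precisely the nontrivial step, and it is left as a hope rather than a proof.

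The paper avoids this obstacle by a different decomposition. Instead of the corner filtration it changes variables to the quantities $r^{(m+1,n)}$, $\widetilde r^{(m,n+1)}$ that appear when a row or a column is added; under the map $\widetilde\pi_n$ the Pickrell measure factorizes as the ($\sigma$-finite) corner measure $\mu^{(s)}_{n,n}$ times a genuine \emph{infinite product of probability measures} $P^{(m,n,s)}$ on $\mathbb{R}_+$, valid as soon as $n+s>0$. Classical Kakutani then applies to the probability-measure tail alone: the per-factor Hellinger integrals behave like $1-(s_1-s_2)^2/(8n)+O(1/n^2)$ (the same harmonic divergence you found), so the infinite product parts are mutually singular, and singularity of one factor of a product measure forces singularity of the whole, the infinite corner factor playing no role. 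To salvage your route you would need to actually prove the $\sigma$-finite localization lemma you allude to; the paper's factorization is the cleaner way around it.
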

Proposition \ref{singul}  is obtained from Kakutani's Theorem in the spirit of \cite{BO}, see also \cite{Neretin}.

Let $U(\infty)$ be the infinite unitary group: an infinite matrix $u=(u_{ij})_{i,j\in {\mathbb N}}$ belongs to
$U(\infty)$ if there exists a natural number $n_0$ such that the matrix
$$
(u_{ij}), i,j\in [1,n_0]
$$
is unitary, while $u_{ii}=1$ if $i>n_0$ and $u_{ij}=0$ if $i\neq j$, $\max(i,j)>n_0$.

The group $U(\infty) \times U(\infty)$ acts on $\Mat({\mathbb N}, {\mathbb C})$
by multiplication on both sides:
\begin{equation*}
T_{(u_1,u_2)}z \;=\; u_1zu_2^{-1}.
\end{equation*}

The Pickrell measures $\mu^{(s)}$ are by definition $U(\infty)\times U(\infty)$-invariant.
For the r{\^o}le of Pickrell and related mesures in the representation theory of $U(\infty)$, see
\cite{Olsh1}, \cite{Olsh2}, \cite{OlshVershik}.

Theorem 1 and Corollary 1 in \cite{Buf-ergdec} imply that the measures $\mu^{(s)}$ admit an ergodic decomposition, while
Theorem 1 in \cite{Buf-inferg} implies that for any $s\in\mathbb{R}$ the ergodic components of the measure $\mu^{(s)}$ are almost surely finite. We now formulate this result in  greater detail. Recall that a $U(\infty)\times U(\infty)$-invariant probability measure
on $\Mat({\mathbb N}, {\mathbb C})$ is called {\it ergodic} if every $U(\infty)\times U(\infty)$invariant Borel subset of $\Mat({\mathbb N}, {\mathbb C})$ either has measure zero or has complement of measure zero. Equivalently, ergodic probability
measures are extremal points of the convex set of all $U(\infty)\times U(\infty)$-invariant probability measures on $\Mat({\mathbb N}, {\mathbb C})$.
Let ${\mathfrak M}_{\mathrm{erg}}(\Mat({\mathbb N}, {\mathbb C})$ stand for the set of all ergodic
$U(\infty)\times U(\infty)$-invariant probability measures on $\Mat({\mathbb N}, {\mathbb C})$. The set ${\mathfrak M}_{\mathrm{erg}}(\Mat({\mathbb N}, {\mathbb C}))$ is a Borel subset of
the set of all probability measures on $\Mat({\mathbb N}, {\mathbb C})$ (see, e.g., \cite{Buf-ergdec}). Theorem 1 in \cite{Buf-inferg} implies that for any $s\in\mathbb{R}$
there exists a unique sigma-finite Borel measure ${\overline \mu}^{(s)}$ on the set ${\mathfrak M}_{\mathrm{erg}}(\Mat({\mathbb N}, {\mathbb C}))$ such that we have
\begin{equation*}
\mu^{(s)}=\int\limits_{{\mathfrak M}_{\mathrm{erg}}(\Mat({\mathbb N}, {\mathbb C})}\eta d{\overline \mu}^{(s)}(\eta).
\end{equation*}

 The main result of this paper is an explicit description of the measure ${\overline \mu}^{(s)}$
and its identification, after a change of variable,  with the infinite Bessel point process considered above.

\subsection{Classification of ergodic measures}

First, we recall the classification of ergodic probability $U(\infty)\times U(\infty)$-invariant measures on
$\Mat(\mathbb{N},\mathbb{C})$. This classification has been obtained
by Pickrell \cite{Pickrell1}, \cite{Pickrell2}; Vershik \cite{Vershik} and Olshanski and Vershik  \cite{OlshVershik} proposed a different approach to this classification in the case of unitarily-invariant measures on the space of infinite Hermitian matrices, and Rabaoui \cite{Rabaoui1}, \cite{Rabaoui2} adapted the Olshanski-Vershik approach to the initial problem of Pickrell.
In this note, the Olshanski-Vershik approach is followed as well.

Take $z\in \Mat(\mathbb{N},\mathbb{C})$, denote $z^{(n)}=\pi^{\infty}_{n}z,$
and let
\begin{equation*}
\lambda_1^{(n)}\geq\dots\geq\lambda_n^{(n)}\geq0
\end{equation*}
be the eigenvalues of
the matrix
$$
\left(z^{(n)}\right)^*z^{(n)},
$$
counted with multiplicities, arranged in non-increasing order.
To stress dependence on $z$, we write $\lambda_i^{(n)}=\lambda_i^{(n)}(z)$.

\begin{teorema}
\begin{enumerate}
\item Let $\eta$ be an ergodic Borel $U(\infty)\times U(\infty)$-invariant probability measure on $\Mat(\mathbb{N},\mathbb{C})$. Then there exist non-negative real numbers
$$\gamma\geq0,\ x_1\geq x_2\geq\dots\geq x_n\geq\dots\geq0,$$
satisfying $\displaystyle\gamma\geq\sum\limits_{i=1}^{\infty}x_i$, such that for $\eta$-almost every
$z\in\Mat(\mathbb{N},\mathbb{C})$ and any $i\in\mathbb{N}$ we have:
\begin{equation}\label{xgamma}
x_i=\lim\limits_{n\to\infty}\frac{\lambda_i^{(n)}(z)}{n^2},\ \
\gamma=\lim\limits_{n\to\infty}\frac{\tr\left(z^{(n)}\right)^*z^{(n)}}{n^2}.
\end{equation}
\item Conversely, given non-negative real numbers
$\gamma\geq0,\;\,x_1\geq x_2\geq\dots\geq x_n\geq\dots\geq0$ such that
$$\displaystyle\gamma\geq\sum\limits_{i=1}^{\infty}x_i\,,$$
there exists a unique $U(\infty)\times U(\infty)$-invariant ergodic Borel probability measure $\eta$ on
$\Mat(\mathbb{N},\mathbb{C})$ such that the relations \eqref{xgamma} hold for $\eta$-almost all $z\in \Mat(\mathbb{N},\mathbb{C})$.
\end{enumerate}
\end{teorema}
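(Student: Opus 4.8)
The plan is to follow the \emph{ergodic method} of Vershik and Olshanski--Vershik, exploiting that the only $U(n)\times U(n)$-invariants of the corner $z^{(n)}$ are the eigenvalues $\lambda_1^{(n)}\geq\cdots\geq\lambda_n^{(n)}\geq0$ of $(z^{(n)})^*z^{(n)}$. The first thing I would record is an equivariance of corners: if $(u_1,u_2)\in U(\infty)\times U(\infty)$ is supported on the first $n_0$ coordinates, then $\pi^\infty_n(u_1 z u_2^{-1})=u_1^{(n)}z^{(n)}(u_2^{(n)})^{-1}$ for all $n\geq n_0$, so each function $z\mapsto\lambda_i^{(n)}(z)$ is $U(n_0)\times U(n_0)$-invariant once $n\geq n_0$. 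Consequently, whenever the limits in (\ref{xgamma}) exist pointwise, they automatically define genuinely $U(\infty)\times U(\infty)$-invariant functions of $z$, which is the mechanism that will convert analytic convergence into the constants $x_i,\gamma$.

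For the direct statement (1), I would first establish the almost sure existence of $\lim_n \lambda_i^{(n)}(z)/n^2$ and $\lim_n \tr((z^{(n)})^*z^{(n)})/n^2$. The structural input is interlacing: passing from $z^{(n+1)}$ to $z^{(n)}$ deletes one row and one column, so the singular values interlace and $\lambda_i^{(n)}$ is nondecreasing in $n$ for each fixed $i$. Convergence of the normalized quantities I would extract from a backward (reverse) martingale argument on the decreasing tail filtration $\mathcal T_n=\sigma(\lambda^{(m)}:m\geq n)$, a suitable bounded statistic having conditional expectations that form a reverse martingale and hence converge almost surely; ergodicity of $\eta$ makes the tail trivial, so each invariant limit is almost surely constant. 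This yields $x_i=\lim_n\lambda_i^{(n)}/n^2$ and $\gamma=\lim_n\tr/n^2$, with the ordering $x_1\geq x_2\geq\cdots\geq0$ inherited from the $\lambda_i^{(n)}$. The crucial inequality $\gamma\geq\sum_i x_i$ is then exactly Fatou's lemma applied to $\tr((z^{(n)})^*z^{(n)})/n^2=\sum_{i=1}^n \lambda_i^{(n)}/n^2$: mass may escape to the bottom of the spectrum, and the deficit $\gamma-\sum_i x_i$ is precisely the Gaussian parameter.

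For the converse (2), I would construct $\eta$ as a weak limit of orbital measures: take $w_N\in\Mat(N,\mathbb C)$ whose scaled squared singular values approximate the prescribed data $(x_i)$, augmented by a block of small singular values (of Gaussian type) accounting for the excess $\gamma-\sum_i x_i$, embed $w_N$ into $\Mat(\mathbb N,\mathbb C)$, and push forward normalized Haar measure on $U(N)\times U(N)$ under $(u_1,u_2)\mapsto u_1 w_N u_2^{-1}$. The asymptotics of the Harish-Chandra--Itzykson--Zuber orbital integrals (the same input invoked later for Lemma \ref{main-lemma}) control the finite-dimensional distributions and yield weak convergence to a limit $\eta$ for which (\ref{xgamma}) holds with the prescribed constants. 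Ergodicity follows because $\eta$ is an extreme point of the invariant measures, being a limit of orbital measures concentrated on a single spectral type; uniqueness follows because the parameters $(\gamma,x_i)$ determine the entire limiting orbital spectrum and hence all joint distributions of the corners, fixing $\eta$.

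The main obstacle is the analytic core on both sides. On the direct side it is promoting interlacing and the reverse-martingale scheme to genuine almost sure convergence of the \emph{individual} rescaled eigenvalues $\lambda_i^{(n)}/n^2$, rather than mere tightness of the empirical spectra; on the converse side it is establishing the weak convergence of the orbital measures together with ergodicity and uniqueness of the limit. Both ultimately hinge on precise control of the Harish-Chandra--Itzykson--Zuber orbital integrals, which is the genuinely hard estimate underpinning the whole classification.
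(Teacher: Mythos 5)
The paper does not actually prove this theorem: it is recalled as the known classification of ergodic $U(\infty)\times U(\infty)$-invariant probability measures, with explicit attribution to Pickrell, Vershik, Olshanski--Vershik, and Rabaoui, so there is no in-paper proof to compare yours against. Your sketch reproduces the ergodic method of those references (which the paper states it follows), and the overall architecture is the right one: invariance plus tail-triviality to turn limits into constants, interlacing and Fatou for $\gamma\geq\sum_i x_i$, orbital measures with Harish-Chandra--Itzykson--Zuber asymptotics for the converse, and extremality for ergodicity and uniqueness.

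One caution on the direct part. The reverse-martingale step, as you describe it, does not apply directly to the statistics $\lambda_i^{(n)}(z)/n^2$: these are not conditional expectations of a fixed function with respect to the decreasing filtration of $U(n)\times U(n)$-invariant sigma-algebras, and interlacing only gives monotonicity of $\lambda_i^{(n)}$ in $n$, which says nothing about convergence after division by $n^2$. In the Olshanski--Vershik scheme the martingale argument is applied to the averaged characters $\mathcal{A}_n\Xi_{\zeta}(z)$ with $\Xi_{\zeta}(z)=\exp(i\Re\tr(\zeta^*z))$; the power-series asymptotics of the orbital integral then force the limiting Fourier transform to have the product form (\ref{fourier-la}), and the existence of the limits $x_i$ and $\gamma$ is \emph{extracted} from that identity rather than proved eigenvalue by eigenvalue. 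You flag this as the main obstacle, which is fair, but as written the ``suitable bounded statistic'' is precisely the missing ingredient, so the analytic core of part (1) remains open in your outline.
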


Introduce the {\it Pickrell set} $\Omega_P\subset\mathbb{R}_+\times\mathbb{R}_+^\mathbb{N}$ by the formula
$$\Omega_P=\left\{\omega=(\gamma,x)\colon x=(x_n),\;n\in\mathbb{N},\;x_n\geq x_{n+1}\geq0,\;
\gamma\geq\sum\limits_{i=1}^{\infty}x_i\right\}.$$
The set $\Omega_P$ is, by definition, a closed subset of $\mathbb{R}_+\times\mathbb{R}_+^\mathbb{N}$ endowed with the Tychonoff topology.
For $\omega\in\Omega_P$ we let $\eta_{\omega}$ be the corresponding ergodic probability measure.

The Fourier transform of the measure $\eta_{\omega}$ is explicitly described as follows.
 First,  for any $\lambda \in \mathbb{R}$ we have
\begin{equation}\label{fourier-la}
\int \limits _{\Mat(\mathbb{N}, \mathbb{C})}
 \exp(i\lambda \Re z_{11}) d\eta_{\omega} (z)=
\frac{ \exp(-4(\gamma - \sum \limits _{k=1}^{\infty}x_k)\lambda^2)}
 {\prod\limits_{k=1}^{\infty}(1+4x_k \lambda^2)}.
\end{equation}
Denote  $F_{\omega}(\la)$ the expression in the right-hand side of \eqref{fourier-la};
then, for any $\la_1, \dots, \la_m\in \mathbb{R}$ we have
\begin{equation*}
\int \limits _{\Mat(\mathbb{N}, \mathbb{C})}
 \exp(i(\lambda_1 \Re z_{11}+\dots+\la_m\Re z_{mm} )) d\eta_{\omega} (z)=
 F_{\omega}(\la_1)\cdot\dots\cdot F_{\omega}(\la_m).
\end{equation*}
The Fourier transform is fully defined, and the measure $\eta_{\omega}$ is completely described.
An explicit construction of the ergodic measures $\eta_{\omega}$ is given as follows. First, if one takes all entries of the matrix $z$ are independent identically distributed complex Gaussian random variables with expectation $0$ and
 variance ${\tilde \gamma}$, then the resulting Gaussian measure with parameter ${\tilde \gamma}$,  clearly unitarily invariant
and, by the Kolmogorov zero-one law, ergodic, corresponds to the parameter
$\omega=({\tilde \gamma}, 0, \dots, 0, \dots)$ --- all $x$-coordinates are equal to $0$ ( indeed,   singular values of a Gaussian matrix grow at rate $\sqrt{n}$ rather than $n$).

Next, let $(v_1, \dots, v_n, \dots)$, $(w_1, \dots, w_n, \dots)$ be two infinite independent vectors of independent identically distributed complex Gaussian random variables with variance $\sqrt{x}$, and set $z_{ij}=v_iw_j$. One thus obtains  a measure whose unitary invariance is clear and whose ergodicity is immediate from the Kolmogorov zero-one law.
This measure corresponds to the parameter $\omega\in\Omega_P$ such that $\gamma(\omega)=x$, $x_1(\omega)=x$, and all the other parameters are zero. Following Olshanski and Vershik \cite{OlshVershik}, such measures are called {\it Wishart measures} with parameter $x$.
 In the general case, set ${\tilde \gamma}=\gamma - \sum \limits _{k=1}^{\infty}x_k$.
The measure $\eta_{\omega}$ is then an infinite convolution of the Wishart measures with parameters $x_1, \dots, x_n, \dots$ and the Gaussian measure with parameter ${\tilde \gamma}$. Convergence of the series $x_1+\dots+x_n+\dots$ ensures that the convolution is well-defined.

The quantity ${\tilde \gamma}=\gamma - \sum \limits _{k=1}^{\infty}x_k$ will therefore be called the {\it Gaussian parameter} of the measure $\eta_{\omega}$.
It will develop that the Gaussian parameter vanishes for almost all ergodic components of Pickrell measures.

By Proposition 3 in \cite{Buf-ergdec}, the subset of ergodic $U(\infty)\times U(\infty)$-invariant measures
is a Borel subset of the space of all Borel probability measures on $\Mat(\mathbb{N},\mathbb{C})$ endowed
with the natural Borel structure (see, e.g., \cite{Bogachev}).
Furthermore, if one denotes $\eta_{\omega}$ the Borel ergodic probability measure corresponding to a point $\omega\in\Omega_P$, $\omega=(\gamma,x)$, then the correspondence
$$\omega\to\eta_{\omega}$$
is a Borel isomorphism of the Pickrell set $\Omega_P$ and the set of $U(\infty)\times U(\infty)$-invariant ergodic probability measures on $\Mat(\mathbb{N},\mathbb{C})$.

The Ergodic Decomposition Theorem (Theorem 1 and Corollary 1 of \cite{Buf-ergdec})  implies that
each Pickrell measure $\mu^{(s)}$, $s\in\mathbb{R}$, induces a unique decomposing measure $\overline{\mu}^{(s)}$ on $\Omega_P$ such that we have
\begin{equation}
\label{ergdecmubar}
\mu^{(s)}=\int\limits_{\Omega_P}\eta_{\omega}\,d\overline{\mu}^{(s)}(\omega)\;.
\end{equation}
The integral is understood in the usual weak sense, see \cite{Buf-ergdec}.

For $s>-1$, the measure $\overline{\mu}^{(s)}$ is a probability measure on $\Omega_P$,
while for $s\leq-1$ the measure $\overline{\mu}^{(s)}$ is infinite.

Set
$$\Omega_P^0=\{ (\gamma,\{x_n\})\in\Omega_P: x_n>0\text{\quad for all }n,\  \gamma=\sum\limits_{n=1}^{\infty} x_n \}.$$

The subset $\Omega_P^0$ is of course { not} closed in $\Omega_P$.

Introduce a map $$
\conf\colon\Omega_P\to  \Conf((0, +\infty))
$$
that to a point $\omega\in\Omega_P, \omega=(\gamma, \{x_n\})$ assigns
the configuration $$
\conf(\omega)=(x_1, \dots, x_n, \dots)\in
\Conf((0, +\infty)).
$$
The map $\omega\to \conf(\omega)$ is bijective in restriction to the subset
$\Omega_P^0$.

{\bf Remark.} In the definition of the map $\conf$, the ``asymptotic eigenvalues'' $x_n$ are  counted with multiplicities, while, if $x_{n_0}=0$ for some $n_0$, then $x_{n_0}$
and all subsequent terms are discarded, and the resulting configuration is finite. We shall see, however,
that, $\overline{\mu}^{(s)}$-almost surely, all configurations are infinite and  that,
$\overline{\mu}^{(s)}$-almost surely, all multiplicities are equal to one. It will also develop that the complement $\Omega_P\backslash\Omega_P^0$ is $\overline{\mu}^{(s)}$-negligible for all $s$.

\subsection{Formulation of the main result}

 We start by formulating the analogue of the Borodin-Olshanski Ergodic Decomposition Theorem \cite{BO}
for finite Pickrell measures.
\begin{proposition}\label{mainthm-fin} Let $s>-1$. Then $\overline{\mu}^{(s)}(\Omega_P^0)=1$ and
the  $\overline{\mu}^{(s)}$-almost sure bijection
$\omega\to \conf(\omega)$ identifies the measure $\overline{\mu}^{(s)}$ with the determinantal measure $\Prob_{J^{(s)}}$.
\end{proposition}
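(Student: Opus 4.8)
The plan is to read off $\overline{\mu}^{(s)}$ as the $\mu^{(s)}$-law of the asymptotic rescaled squared singular values, and then to compute that law as a scaling limit of finite-dimensional Jacobi ensembles. By the classification theorem recalled above, for $\overline{\mu}^{(s)}$-almost every $\omega$ and $\eta_\omega$-almost every $z$ one has $x_i=\lim_n \lambda_i^{(n)}(z)/n^2$ and $\gamma=\lim_n \tr(z^{(n)})^*z^{(n)}/n^2$; integrating over the decomposition (\ref{ergdecmubar}), these limits exist $\mu^{(s)}$-almost surely, and the pushforward $\conf_*\overline{\mu}^{(s)}$ is exactly the $\mu^{(s)}$-distribution of the configuration $(x_1,x_2,\dots)$. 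Since for $s>-1$ the measure $\mu^{(s)}$ is a probability measure, it suffices to identify this distribution with $\mathbb{P}_{J^{(s)}}$ and, separately, to verify $\overline{\mu}^{(s)}(\Omega_P^0)=1$.

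First I would pass to finite dimension. By Proposition \ref{rel-consis} the normalized pushforward $(\pi_n^\infty)_*\mu^{(s)}$ is a constant multiple of $\widetilde{\mu}_n^{(s)}$, and its radial part --- the joint law of the eigenvalues $\lambda_1^{(n)}\ge\cdots\ge\lambda_n^{(n)}$ of $(z^{(n)})^*z^{(n)}$ --- has density proportional to $\prod_{i<j}(\lambda_i-\lambda_j)^2\prod_i(1+\lambda_i)^{-2n-s}\,d\lambda_i$. The change of variable $u_i=(\lambda_i-1)/(\lambda_i+1)$, equivalently $\lambda_i=(1+u_i)/(1-u_i)$, turns this, up to a constant, into the Jacobi ensemble (\ref{sjacens}), which by (\ref{det-CDJ}) is the determinantal measure with Christoffel--Darboux kernel $\tilde K_n^{(s)}$. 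The key observation is that this change of variable intertwines the macroscopic scaling with the hard-edge scaling: under (\ref{scaling}), $u_i=1-y_i/(2n^2)$, one computes $\lambda_i/n^2=(4n^2-y_i)/(n^2 y_i)\to 4/y_i$, so the rescaled eigenvalues $\lambda_i^{(n)}/n^2$ correspond precisely to the modified-Bessel variable $x=4/y$. Then by the Heine--Mehler asymptotics (Proposition \ref{tilde-kernel-kns-js}) the kernels $\tilde K_n^{(s)}$ converge under (\ref{scaling}) to the Bessel kernel $\tilde J_s$ in the locally trace-class topology; since locally trace-class convergence implies weak convergence of the associated determinantal measures (the first fact recalled in Section 3), transporting by $y=4/x$ shows that the law of the configuration $(\lambda_1^{(n)}/n^2,\dots,\lambda_n^{(n)}/n^2)$ converges weakly on $\Conf((0,+\infty))$ to $\mathbb{P}_{J^{(s)}}$.

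It then remains to identify this weak limit with $\conf_*\overline{\mu}^{(s)}$ and to place it in $\Omega_P^0$. Because $\lambda_i^{(n)}/n^2\to x_i$ $\mu^{(s)}$-almost surely for each fixed $i$, the finite-dimensional distributions of the limiting configuration agree with those of $\mathbb{P}_{J^{(s)}}$; together with the weak convergence, which forbids particles escaping to the hard edge $y=0$ (that is, to $x=+\infty$), this forces $\conf_*\overline{\mu}^{(s)}=\mathbb{P}_{J^{(s)}}$. Since $\mathbb{P}_{J^{(s)}}$ is induced by an orthogonal projection of infinite rank, it is almost surely simple and supported on infinite configurations in $(0,+\infty)$, which yields $x_n>0$ for all $n$ and that all multiplicities equal one.

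The hard part will be the remaining assertion $\gamma=\sum_i x_i$, that is, the vanishing of the Gaussian parameter: Fatou's lemma gives only $\gamma\ge\sum_i x_i$, and one must rule out a contribution to $\lim_n \tr(z^{(n)})^*z^{(n)}/n^2$ coming from the accumulated bulk of small eigenvalues, each of which individually vanishes after rescaling. I expect this to require the stronger tightness statement --- convergence of the rescaled radial parts not merely weakly on $\Conf((0,+\infty))$ but in the space of finite measures on the space of finite measures on $(0,+\infty)$, the topology introduced for exactly this purpose in Section~4 --- which controls the total mass $S(X)=\sum_{x\in X} x$ and thereby prevents any loss of trace at the scale $n^2$. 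This is precisely the $s>-1$ instance of the general mechanism that, for arbitrary $s$, is carried out through Lemma \ref{main-lemma}.
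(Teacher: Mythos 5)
Your outline is essentially the paper's own route: the paper does not prove Proposition \ref{mainthm-fin} separately but treats it as the Borodin--Olshanski analogue and as the $s>-1$ instance of Theorem \ref{mainthm}, proved exactly as you describe --- radial parts are Jacobi ensembles in the coordinates (\ref{ux-chg}), the scaling $\lambda_i/n^2$ matches the hard-edge scaling under $y=4/x$, Heine--Mehler gives $K_n^{(s)}\to J^{(s)}$ in $\mathscr{I}_{1,\mathrm{loc}}$ hence weak convergence of $\Prob_{K_n^{(s)}}$ to $\Prob_{J^{(s)}}$, the identification with $\overline{\mu}^{(s)}$ comes from $(\mathfrak{r}^{(n)})_*\mu^{(s)}\to(\mathfrak{r}^{(\infty)})_*\mu^{(s)}=\overline{\mu}^{(s)}$ via pointwise convergence on regular matrices, and the vanishing of the Gaussian parameter is exactly what the measures-on-measures topology of Sections 4 and 6 is built for. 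Two caveats. First, your claim that weak convergence on $\Conf((0,+\infty))$ ``forbids particles escaping to the hard edge'' is not right as stated: that topology only tests configurations against functions compactly supported in $(0,+\infty)$, so a particle drifting to $x=+\infty$ (i.e.\ $y\to 0$) is invisible to it. The needed control comes either from the almost-sure convergence $\lambda_1^{(n)}(z)/n^2\to x_1<\infty$ that you already invoked, or from the uniform trace estimate $\sup_n\int_R^{+\infty}K_n^{(s)}(x,x)\,dx\to 0$ as $R\to\infty$ (Proposition \ref{unif-kns-infty}); you should cite one of these rather than the weak convergence itself. Second, the equality $\gamma=\sum_i x_i$ is correctly diagnosed as the substantive point and correctly attributed to the tightness of $(\sigma_f)_*$-pushforwards in $\mathfrak{M}_{\mathrm{fin}}(\mathfrak{M}_{\mathrm{fin}}(E))$, but it is only pointed at, not carried out; in the paper this is the content of Propositions \ref{conv-bns-bs}(4) and \ref{weakconfpick} (the third step of whose proof is precisely the identity $\int e^{-\gamma(\omega)}\,d\mathbb{P}'=\int e^{-\sum x_i(\omega)}\,d\mathbb{P}'$ forcing $\mathbb{P}'(\Omega_P\setminus\Omega_P^0)=0$).
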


The main result of this paper, an explicit description for the  ergodic decomposition of infinite Pickrell measures, is given by the following
\begin{theorem} \label{mainthm}
Let $s\in {\mathbb R}$, and let $\overline{\mu}^{(s)}$
be the decomposing measure, defined by \eqref{ergdecmubar}, of the Pickrell measure ${\mu}^{(s)}$.
Then \begin{enumerate}
\item $\overline{\mu}^{(s)}(\Omega_P\backslash \Omega_P^0)=0$;

\item the $\overline{\mu}^{(s)}$-almost sure bijection $\omega\to {\rm conf}(\omega)$ identifies
$\overline{\mu}^{(s)}$ with the infinite determinantal measure
$\mathbb{B}^{(s)}$.
\end{enumerate}
\end{theorem}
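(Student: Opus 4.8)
The plan is to realize $\overline{\mu}^{(s)}$ as a scaling limit of the radial parts of the finite-dimensional projections $(\pi_n^\infty)_*\mu^{(s)}$ and to match that limit against the independently constructed measure $\mathbb{B}^{(s)}$. The conceptual basis is the Olshanski-Vershik principle underlying the classification theorem: since the asymptotic eigenvalues $x_i=\lim_n \lambda_i^{(n)}/n^2$ together with $\gamma=\lim_n \tr\bigl((z^{(n)})^*z^{(n)}\bigr)/n^2$ determine the ergodic component of $z$, the measure $\overline{\mu}^{(s)}$ is recovered from the asymptotics of the joint law of the eigenvalues of $(z^{(n)})^*z^{(n)}$. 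Passing to configurations through $\conf$ and applying the scaling $x\mapsto x/n^2$ (equivalently, the Jacobi scaling (\ref{scaling}) followed by the change of variable $y=4/x$), these rescaled radial parts become the infinite determinantal measures attached to finite-rank perturbations of Jacobi orthogonal polynomial ensembles studied in Section 5.

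The core analytic input is Proposition \ref{conv-bns-bs}, which computes the scaling limit of these infinite determinantal measures to be exactly $\mathbb{B}^{(s)}$. Since $\mathbb{B}^{(s)}$ is sigma-finite rather than finite, the limit cannot be taken in the ordinary weak topology, and I would regularize by the multiplicative functional $\exp(-\beta S(X))$: by Proposition \ref{expbetafin} this functional is $\mathbb{B}^{(s)}$-integrable, and by Proposition \ref{expbetacharac} the normalized measure $\exp(-\beta S)\,\mathbb{B}^{(s)}$ equals the determinantal probability measure $\mathbb{P}_{\Pi^{(s,\beta)}}$. I would then prove weak convergence of the correspondingly weighted rescaled radial parts to $\mathbb{P}_{\Pi^{(s,\beta)}}$, using the locally-trace-class convergence of the perturbed Christoffel-Darboux operators established in Sections 3 and 5; this is Proposition \ref{weakconfpick}, which via Proposition \ref{weak-pick} delivers the required convergence of the rescaled radial parts to $\mathbb{B}^{(s)}$ in the sense appropriate to infinite determinantal measures.

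To promote this configuration-space statement into an identification of $\overline{\mu}^{(s)}$ as a measure on the Pickrell set $\Omega_P$, I would invoke Lemma \ref{main-lemma}: multiplied by a fixed positive density, the rescaled radial parts converge to $\overline{\mu}^{(s)}$ multiplied by the same density. Its proof rests on the classical asymptotics of Harish-Chandra-Itzykson-Zuber orbital integrals, which translate the radial-part data back into the full unitarily-invariant measure and, crucially, allow one to follow the trace coordinate $\gamma$. Since both convergence statements concern the same sequence of rescaled radial parts, comparing them identifies $\conf_*\overline{\mu}^{(s)}$ with $\mathbb{B}^{(s)}$, which is assertion (2). Assertion (1), that $\overline{\mu}^{(s)}(\Omega_P\setminus\Omega_P^0)=0$, amounts to the vanishing of the Gaussian parameter $\gamma-\sum_i x_i$ together with positivity and simplicity of the $x_i$, and I would derive it from the tightness estimate of Sections 4 and 6, where the rescaled radial parts are embedded into the finer space of finite measures on the space of finite measures following the Borodin-Olshanski argument adapted to the infinite setting.

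The hard part will be twofold. First, the sigma-finiteness of $\mathbb{B}^{(s)}$ forbids any direct limit, so everything must be routed through the multiplicative-functional regularization, and one must check that the regularized limits are mutually consistent across all $\beta>0$ (equivalently, across the exhaustion by the sets $\{x_{\max}<R\}$) so that they reassemble into a single sigma-finite limit. Second, and I expect more delicate, is the vanishing of the Gaussian parameter: this requires control not merely of the rescaled eigenvalue configuration but of the rescaled trace, a quantity invisible to the weak topology on configurations, and it is precisely this that forces the passage to the strengthened topology of Section 4 in tandem with the orbital-integral asymptotics of Lemma \ref{main-lemma}.
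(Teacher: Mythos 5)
Your proposal is correct and follows essentially the same route as the paper: regularization by the multiplicative functional $\exp(-\beta S)$, the scaling limit of Proposition \ref{conv-bns-bs} combined with Propositions \ref{weakconfpick} and \ref{weak-pick}, the orbital-integral Lemma \ref{main-lemma}, and the passage to the finer topology of Section 4 for the vanishing of the Gaussian parameter. The final comparison step you describe is exactly the content of Lemma \ref{fgpb}, and a single fixed $\beta$ suffices there, so the cross-$\beta$ consistency check you anticipate is not actually needed.
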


\subsection{A skew-product representation of the measure ${\mathbb B}^{(s)}$}
\label{subsec-skewproduct}

With respect to the measure ${\mathbb B}^{(s)}$, almost every configuration $X$ only accumulates at zero and therefore admits a maximal
particle that we denote $x_{\max}(X)$. We are interested in the distribution of the maximal particle under the measure ${\mathbb B}^{(s)}$.
By definition, for any $R>0$, the measure ${\mathbb B}^{(s)}$ assigns finite weight to the set $\{X: x_{\max}(X)<R\}$. Furthermore, again by definition, for any
$R>0$  and $R_1, R_2\leq R$
we have the following relation:
\begin{equation*}
\frac{{\mathbb B}^{(s)}\left(\{X:x_{\max}(X)<R_1\}\right) }{{\mathbb B}^{(s)}\left(\{X:x_{\max}(X)<R_2\}\right)}=
\frac{\det\left(1-\chi_{(R_1, +\infty)}{ \Pi}^{(s, R)}\chi_{(R_1, +\infty)}\right)}{\det\left(1-\chi_{(R_2, +\infty)}{ \Pi}^{(s, R)}\chi_{(R_2, +\infty)}\right)}.
\end{equation*}

The push-forward of the measure ${\mathbb B}^{(s)}$ is a well-defined Borel sigma-finite measure on $(0, +\infty)$ for which we will use the symbol $\xi_{\max}{\mathbb B}^{(s)}$; the measure $\xi_{\max}{\mathbb B}^{(s)}$ is, of course, defined up to multiplication by a positive constant.

{\bf Question.} What is the asymptotics of the quantity $\xi_{\max}{\mathbb B}^{(s)}(0, R)$ as $R\to\infty$? as $R\to 0$?

The operator $\Pi^{(s,R)}$ admits a kernel for which we keep the same symbol; consider the function $\varphi_R(x)=\Pi^{(s,R)}(x, R)$.
By definition, $$\varphi_R(x)\in \chi_{(0, R)}H^{(s)}.$$
Let ${\overline H}^{(s,R)}$ stand for the orthogonal complement
to the one-dimensional subspace spanned by $\varphi_R(x)$ in $\chi_{(0, R)}H^{(s)}$. In other words, ${\overline H}^{(s,R)}$ is the
subspace of those functions in $\chi_{(0, R)}H^{(s)}$ that  assume value zero at the point $R$. Let ${\overline \Pi}^{(s,R)}$be the operator of orthogonal projection onto the subspace ${\overline H}^{(s,R)}$.
\begin{proposition}
We have
$$
{\mathbb B}^{(s)}=\displaystyle \int\limits_0^{\infty} \Prob_{ {\overline \Pi}^{(s,R)}} d\xi_{\max}{\mathbb B}^{(s)}(R).
$$
\end{proposition}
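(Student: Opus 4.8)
The plan is to realize the asserted identity as a genuine disintegration of $\mathbb{B}^{(s)}$ over the position of the maximal particle, and to identify the resulting conditional measures by combining the Palm measure formula of Shirai--Takahashi with the inducing (consistency) property already used to define $\mathbb{B}^{(s)}$. First I would establish the disintegration abstractly: since $\mathbb{B}^{(s)}$ is sigma-finite and $\mathbb{B}^{(s)}(\{X:x_{\max}(X)<R\})<+\infty$ for every $R>0$, the push-forward $\xi_{\max}\mathbb{B}^{(s)}$ is a sigma-finite Borel measure on $(0,+\infty)$, and the measurable map $X\mapsto x_{\max}(X)$ admits a disintegration: there is a measurable family of probability measures $\{\nu_R\}_{R>0}$, with $\nu_R$ carried by $\{X:x_{\max}(X)=R\}$, such that $\mathbb{B}^{(s)}=\int_0^\infty \nu_R\,d\xi_{\max}\mathbb{B}^{(s)}(R)$. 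Comparing with the target, it remains to prove that for $\xi_{\max}\mathbb{B}^{(s)}$-almost every $R$ the measure $\Prob_{\overline{\Pi}^{(s,R)}}$ is the law, under $\nu_R$, of the reduced configuration $X\setminus\{x_{\max}(X)\}$; equivalently, $\nu_R$ is the image of $\Prob_{\overline{\Pi}^{(s,R)}}$ under $X'\mapsto X'\cup\{R\}$.

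Next I would reduce the problem to a genuine finite determinantal process. Fix $R_2>R$. By the defining property of $\mathbb{B}^{(s)}$, its restriction to $\{x_{\max}<R_2\}$ is, after normalization, the determinantal measure $\Prob_{\Pi^{(s,R_2)}}$; since the fibre $\{x_{\max}=R\}$ for $R<R_2$ lies inside this set and disintegration fibres are insensitive to the overall normalization, $\nu_R$ coincides for almost every $R<R_2$ with the conditional law of $\Prob_{\Pi^{(s,R_2)}}$ given that its maximal particle sits at $R$, and consistency across $R_2$ makes this identification unambiguous. I would then disintegrate this finite process over its maximal particle. To leading order the event $\{x_{\max}\in(R,R+dR)\}$ means that there is exactly one particle near $R$ and none above it; hence the conditional law of $\Prob_{\Pi^{(s,R_2)}}$ given $x_{\max}=R$ is the reduced Palm measure at $R$, further conditioned to place no particle in $(R,R_2)$, with the particle $R$ adjoined. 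By the Shirai--Takahashi description, this reduced Palm measure is the determinantal measure governed by the projection onto $\{f\in\chi_{(0,R_2)}H^{(s)}:f(R)=0\}$; conditioning it to have all particles in $(0,R)$ is exactly inducing it on $(0,R)$, which yields the projection onto the closure of $\chi_{(0,R)}\{f\in\chi_{(0,R_2)}H^{(s)}:f(R)=0\}$. The density of $x_{\max}$ is correspondingly the derivative of the gap probability $\det(1-\chi_{(R,+\infty)}\Pi^{(s,R_2)}\chi_{(R,+\infty)})$, in agreement with the ratio formula recorded above.

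The final and, I expect, hardest step is the subspace identity
\[
\overline{\chi_{(0,R)}\{f\in\chi_{(0,R_2)}H^{(s)}:f(R)=0\}}=\overline{H}^{(s,R)}=\{f\in\chi_{(0,R)}H^{(s)}:f(R)=0\}.
\]
The inclusion $\subseteq$ is immediate from the consistency identity $\overline{\chi_{(0,R)}\chi_{(0,R_2)}H^{(s)}}=\chi_{(0,R)}H^{(s)}$ realizing (\ref{en-cons}) for the spaces $\chi_{(0,R)}H^{(s)}$. Equality amounts to showing that the operation of inducing from level $R_2$ to level $R$ commutes with imposing the codimension-one condition $f(R)=0$: the single dimension removed by the Palm modification at level $R_2$ must be precisely the one removed by the vanishing constraint at level $R$ after inducing. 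The clean way to control this is through the reproducing-kernel structure. Because $\Pi^{(s,R)}$ is a projection onto a space of locally square-integrable functions for which evaluation at $R$ is a bounded functional represented by $\varphi_R=\Pi^{(s,R)}(\cdot,R)$, the condition $f(R)=0$ is the single linear constraint $\langle f,\varphi_R\rangle=0$, and $\overline{H}^{(s,R)}$ is its reproducing-kernel subspace, whose kernel is exactly the rank-one correction $\overline{\Pi}^{(s,R)}$.

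The main obstacle, then, is precisely the interchange of the $L_2$-closure with the restriction operator $\chi_{(0,R)}$ and the rank-one Palm correction, together with the justification that point evaluation at $R$ is a bounded functional on $\chi_{(0,R)}H^{(s)}$ (so that $\varphi_R$, and hence $\overline{\Pi}^{(s,R)}$, are well defined). Once this codimension bookkeeping is carried out, the induced kernel of the finite process equals $\overline{\Pi}^{(s,R)}$, so that $\nu_R$ is the image of $\Prob_{\overline{\Pi}^{(s,R)}}$ under $X'\mapsto X'\cup\{R\}$; substituting this into the disintegration of the first paragraph yields
\[
\mathbb{B}^{(s)}=\int_0^{\infty}\Prob_{\overline{\Pi}^{(s,R)}}\,d\xi_{\max}\mathbb{B}^{(s)}(R),
\]
completing the proof.
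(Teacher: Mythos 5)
Your proof is correct and takes essentially the same route as the paper: the paper's own (one-sentence) proof likewise derives the identity from the defining restriction property of ${\mathbb B}^{(s)}$ together with the Shirai--Takahashi description of reduced Palm measures of determinantal processes. Your write-up merely supplies the disintegration over $x_{\max}$ and the subspace bookkeeping (commuting the evaluation constraint $f(R)=0$ with inducing from $(0,R_2)$ to $(0,R)$) that the paper leaves implicit.
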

Proof. This immediately follows from the definition of the measure ${\mathbb B}^{(s)}$ and the characterization
of Palm measures for determinantal point processes due to Shirai and Takahashi \cite{ShirTaka1}.

\subsection{The general scheme of ergodic decomposition}
\subsubsection{Approximation}

Let $\mathfrak{F}$ be the family of $\sigma$-infinite $U(\infty)\times U(\infty)$-invariant measures $\mu$ on $\Mat(\mathbb{N},\mathbb{C})$ for which there exists $n_0$ (dependent on $\mu$) such that for all $R>0$ we have
$$\mu\biggl(\Bigl\{z\colon\max\limits_{1\leq i,j\leq n_0}\left|z_{ij}\right|<R\Bigr\}\biggr)<+\infty.$$
By definition, all Pickrell measures belong to the class $\mathfrak{F}$.

We recall the result of \cite{Buf-inferg} stating that every ergodic measure belonging to the class $\mathfrak{F}$ must be finite and that the ergodic components
of any measure in $\mathfrak{F}$ are therefore almost surely finite (the existence of the ergodic decomposition for any measure $\mu\in\mathfrak{F}$ follows from
the ergodic decomposition theorem for actions of inductively compact groups established in \cite{Buf-ergdec}).
The classification of finite ergodic measures
 now implies that for every measure $\mu\in\mathfrak{F}$ there exists a unique Borel $\sigma$-finite measure $\overline{\mu}$ on the Pickrell set $\Omega_P$ such that
\begin{equation}\label{mu-erg-dec}
\mu=\int\limits_{\Omega_P}\eta_{\omega}\,d\overline{\mu}(\omega).
\end{equation}

Our next aim is to construct, following Borodin and Olshanski \cite{BO}, a sequence of finite-dimensional approximations for the measure $\overline{\mu}$.

To a matrix $z\in\Mat(\mathbb{N},\mathbb{C})$ and a number $n\in\mathbb{N}$  assign the array
$$(\lambda_1^{(n)},\lambda_2^{(n)},\dots,\lambda_n^{(n)})$$
of eigenvalues arranged in non-increasing order of the matrix $(z^{(n)})^{\ast}z^{(n)}$, where
$$z^{(n)}=(z_{ij})_{i,j=1,\dots,n}.$$
For  $n\in\mathbb{N}$  define a map
$$
\rrad^{(n)}\colon\Mat(\mathbb{N},\mathbb{C})\to\Omega_P
$$
by the formula
\begin{equation*}
\rrad^{(n)}(z)=\left(\frac1{n^2}\tr(z^{(n)})^{\ast}z^{(n)}, \frac{\lambda_1^{(n)}}{n^2},\frac{\lambda_2^{(n)}}{n^2},\dots,\frac{\lambda_n^{(n)}}{n^2},0,0,\dots\right).
\end{equation*}
It is clear by definition that for any $n\in\mathbb{N}$, $z\in\Mat(\mathbb{N},\mathbb{C})$ we have $$\rrad^{(n)}(z)\in\Omega_P^0.$$

For any $\mu\in\mathfrak{F}$ and all sufficiently large $n\in\mathbb{N}$ the push-forwards $(\rrad^{(n)})_{\ast}\mu$ are well-defined since the unitary group is compact. We shall presently see that for any $\mu\in\mathfrak{F}$ the measures $(\rrad^{(n)})_{\ast}\mu$ approximate the
ergodic decomposition measure ${\overline \mu}$.

We start by a direct description of the map that takes a measure $\mu\in\mathfrak{F}$  to its
ergodic decomposition measure ${\overline \mu}$.

Following Borodin-Olshanski \cite{BO}, let $\Matreg(\mathbb{N},\mathbb{C})$ be the set of all matrices $z$ such that
\begin{enumerate}
\item for any $k$, there exists the limit $\displaystyle\lim\limits_{n\to\infty}\frac1{n^2}\lambda_n^{(k)}=:x_k(z)$;
\item there exists the limit $\displaystyle\lim\limits_{n\to\infty}\frac1{n^2}\tr(z^{(n)})^{\ast}z^{(n)}=:\gamma(z)$.
\end{enumerate}

Since the set of regular matrices has full measure with respect to
any finite ergodic $U(\infty)\times U(\infty)$-invariant measure, the
existence of the ergodic decomposition \eqref{mu-erg-dec} implies
$$\mu(\Mat(\mathbb{N},\mathbb{C})\big\backslash\Matreg(\mathbb{N},\mathbb{C}))=0.$$
We introduce the map
$$\rrad^{(\infty)}\colon\Matreg(\mathbb{N},\mathbb{C})\to\Omega_P$$
by the formula
$$\rrad^{(\infty)}(z)=\left(\gamma(z),x_1(z),x_2(z),\dots,x_k(z),\dots\right).$$

The Ergodic Decomposition Theorem \cite{Buf-ergdec} and the classification of ergodic unitarily-invariant measures in the form of Olshanski and Vershik imply the important equality
\begin{equation}\label{rinftymu}
(\rrad^{(\infty)})_{\ast}\mu=\overline{\mu}.
\end{equation}

{\bf Remark.} This equality has a simple analogue in the context of De Finetti's theorem: in order to obtain the ergodic decomposition of an exchangeable measure on the space of binary sequences, one just needs to consider the push-forward of the initial measure by the almost-surely defined map that to each sequence assigns the frequency
of zeros in it.

Given a complete separable metric space $Z$, we write $\Mfin(Z)$ for the space of all finite Borel measures on $Z$ endowed with the weak topology. Recall  \cite{Bogachev} that $\Mfin(Z)$ is itself a complete separable metric space:  the weak topology is induced, for instance,  by the L{\'e}vy-Prohorov metric.

We proceed to showing that the measures $(\rrad^{(n)})_{\ast}\mu$ approximate the measure $(\rrad^{(\infty)})_{\ast}\mu=\overline{\mu}$ as $n\to\infty$.
For finite measures $\mu$ the following statement is due to Borodin and Olshanski \cite{BO}.

\begin{proposition} Let $\mu$ be a finite $\sigma$-invariant measure on $\Mat(\mathbb{N},\mathbb{C})$. Then, as $n\to\infty$, we have
$$(\rrad^{(n)})_{\ast}\mu\to(\rrad^{(\infty)})_{\ast}\mu$$
weakly in $\Mfin(\Omega_P)$.
\end{proposition}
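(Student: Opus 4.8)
The plan is to deduce the weak convergence of the push-forward measures from the much stronger fact that the approximating maps $\mathfrak{r}^{(n)}$ converge to the limiting map $\mathfrak{r}^{(\infty)}$ pointwise, $\mu$-almost everywhere on $\mathrm{Mat}(\mathbb{N},\mathbb{C})$. The general principle I would invoke is elementary and is the only analytic input needed: if $f_n, f\colon(\Omega,\mu)\to Z$ are Borel maps into a metric space $Z$, with $\mu$ a \emph{finite} measure and $f_n\to f$ $\mu$-almost everywhere, then $(f_n)_{\ast}\mu\to f_{\ast}\mu$ weakly in $\mathfrak{M}_{\mathrm{fin}}(Z)$. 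Indeed, for any $F\in C_b(Z)$ continuity gives $F\circ f_n\to F\circ f$ $\mu$-almost everywhere; since $|F\circ f_n|\le\|F\|_\infty$ and $\mu$ is finite, the dominated convergence theorem yields
$$
\int_Z F\,d(f_n)_{\ast}\mu=\int_\Omega F\circ f_n\,d\mu\longrightarrow\int_\Omega F\circ f\,d\mu=\int_Z F\,d f_{\ast}\mu,
$$
which is precisely weak convergence in $\mathfrak{M}_{\mathrm{fin}}(Z)$. Observe that all the measures $(\mathfrak{r}^{(n)})_{\ast}\mu$ and $(\mathfrak{r}^{(\infty)})_{\ast}\mu$ share the common total mass $\mu(\mathrm{Mat}(\mathbb{N},\mathbb{C}))$, so no mass escapes and the limit is again a finite measure of that mass; there is no subtlety about testing against constants.

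It therefore remains to verify the $\mu$-almost-everywhere convergence $\mathfrak{r}^{(n)}\to\mathfrak{r}^{(\infty)}$, which is built into the definitions. Recall that $\Omega_P$ carries the Tychonoff topology inherited from $\mathbb{R}_+\times\mathbb{R}_+^{\mathbb{N}}$, in which a sequence converges precisely when each coordinate converges. For $z\in\mathrm{Mat}_{\mathrm{reg}}(\mathbb{N},\mathbb{C})$ the defining property of the regular set is exactly that, for every $k$, the $k$-th coordinate $\tfrac1{n^2}\lambda_k^{(n)}(z)$ of $\mathfrak{r}^{(n)}(z)$ converges to $x_k(z)$, while the zeroth coordinate $\tfrac1{n^2}\tr(z^{(n)})^{\ast}z^{(n)}$ converges to $\gamma(z)$; by the definition of $\mathfrak{r}^{(\infty)}$ these limits are precisely the coordinates of $\mathfrak{r}^{(\infty)}(z)$. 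Since $\Omega_P$ is closed and each $\mathfrak{r}^{(n)}(z)\in\Omega_P^0$, coordinate-wise convergence gives $\mathfrak{r}^{(n)}(z)\to\mathfrak{r}^{(\infty)}(z)$ in $\Omega_P$ for every regular $z$. As was already noted, the existence of the ergodic decomposition (\ref{mu-erg-dec}) together with the full $\eta$-measure of the regular set for every finite ergodic component forces $\mu(\mathrm{Mat}(\mathbb{N},\mathbb{C})\setminus\mathrm{Mat}_{\mathrm{reg}}(\mathbb{N},\mathbb{C}))=0$, so the convergence holds $\mu$-almost everywhere and the first paragraph applies with $Z=\Omega_P$, $f_n=\mathfrak{r}^{(n)}$, $f=\mathfrak{r}^{(\infty)}$.

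In the finite-measure case I do not expect any genuine obstacle: the two ingredients are the coordinatewise pointwise convergence, which holds by construction on the full-measure regular set, and the dominated convergence theorem, available precisely because $\mu$ is finite. The only point to record for rigour is measurability of the push-forwards: each $\mathfrak{r}^{(n)}$ is continuous, being assembled from eigenvalues and the trace of the finite corner $z^{(n)}$, while $\mathfrak{r}^{(\infty)}$ is a pointwise limit of continuous maps on the Borel set $\mathrm{Mat}_{\mathrm{reg}}(\mathbb{N},\mathbb{C})$, hence Borel. I emphasise that the \emph{substantive} difficulty lies entirely in the subsequent passage to infinite measures $\mu\in\mathfrak{F}$, where dominated convergence is no longer at one's disposal and mass can escape to infinity along the configuration; that regime is not addressed by the present proposition and is instead controlled later via the reduction by a multiplicative functional and the tightness arguments carried out in the space of finite measures on the space of finite measures.
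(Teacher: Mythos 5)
Your proposal is correct and follows essentially the same route as the paper: pointwise convergence $\mathfrak{r}^{(n)}(z)\to\mathfrak{r}^{(\infty)}(z)$ on the full-measure regular set, followed by the bounded convergence theorem and a change of variables to obtain weak convergence against every bounded continuous test function. The extra remarks on the Tychonoff topology and Borel measurability are harmless elaborations of what the paper leaves implicit.
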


\begin{proof}

Let $f\colon\Omega_P\to\mathbb{R}$ be continuous and bounded. For any $z\in\Matreg(\mathbb{N},\mathbb{C})$, by definition, we have $\rrad^{(n)}(z)\to\rrad^{(\infty)}(z)$ as $n\to\infty$, and, consequently, also,
$$\lim\limits_{n\to\infty}f(\rrad^{(n)}(z))=f(\rrad^{(\infty)}(z)),$$
whence, by bounded convergence theorem, we have
$$\lim\limits_{n\to\infty}\int\limits_{\Mat(\mathbb{N},\mathbb{C})}f(\rrad^{(n)}(z))\,d\mu(z)= \int\limits_{\Mat(\mathbb{N},\mathbb{C})}f(\rrad^{(\infty)}(z))\,d\mu(z).$$
Changing variables, we arrive at the convergence
$$\lim\limits_{n\to\infty}\int\limits_{\Omega_P}f(\omega)\,d(\rrad^{(n)})_{\ast}\mu= \int\limits_{\Omega_P}f(\omega)\,d(\rrad^{(\infty)})_{\ast}\mu,$$
and the desired weak convergence is established.
\end{proof}

For $\sigma$-finite measures $\mu\in\mathfrak{F}$, the Borodin-Olshanski proposition is modified as follows.

\begin{lemma}\label{main-lemma} Let $\mu\in\mathfrak{F}$. There exists a positive bounded continuous function f on the Pickrell set $\Omega_P$ such that
\begin{enumerate}
\item $f\in L_1(\Omega_P,(\rrad^{(\infty)})_{\ast}\mu)$ and $f\in L_1(\Omega_P,(\rrad^{(n)})_{\ast}\mu)$ for all sufficiently large $n\in\mathbb{N}$;
\item as $n\to\infty$, we have
$$f(\rrad^{(n)})_{\ast}\mu\to f(\rrad^{(\infty)})_{\ast}\mu$$
    weakly in $\Mfin(\Omega_P)$.
\end{enumerate}

\end{lemma}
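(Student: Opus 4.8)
The plan is to reduce the statement to an application of the dominated convergence theorem on $\Mat(\mathbb{N},\mathbb{C})$ itself, transporting everything back from $\Omega_P$ through the maps $\mathfrak{r}^{(n)}$. Although the total masses $(\mathfrak{r}^{(n)})_*\mu(\Omega_P)$ and $\overline{\mu}(\Omega_P)$ are infinite for $\mu\in\mathfrak{F}$, multiplication by a suitable density $f$ restores finiteness, after which the argument of the Borodin--Olshanski proposition above can be run almost verbatim. Everything therefore hinges on a good choice of $f$ and on a uniform integrability estimate.

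For the choice of $f$, I would fix a strictly positive, bounded, continuous function $\phi$ on $\Mat(\mathbb{N},\mathbb{C})$ that depends only on a top-left corner of fixed size $n_0$ and decays fast enough at infinity to lie in $L_1(\Mat(\mathbb{N},\mathbb{C}),\mu)$; such a $\phi$ exists precisely because $\mu\in\mathfrak{F}$, so that $\mu$ is finite on the cylinders $\{z:\max_{i,j\le n_0}|z_{ij}|<R\}$ and the decay of $\phi$ can be chosen to beat the growth of these masses in $R$. I then set
\[
f(\omega)=\int_{\Mat(\mathbb{N},\mathbb{C})}\phi(z)\,d\eta_\omega(z).
\]
Since each $\eta_\omega$ is a probability measure and $0<\phi\le\|\phi\|_\infty$, the function $f$ is strictly positive and bounded; its continuity on $\Omega_P$ follows from the weak continuity of $\omega\mapsto\eta_\omega$, itself a consequence of the explicit Fourier transforms (\ref{fourier-la-gen}) together with the L\'evy continuity theorem. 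Most importantly, the ergodic decomposition identity $\mu=\int_{\Omega_P}\eta_\omega\,d\overline{\mu}(\omega)$ and Fubini give $\int_{\Omega_P}f\,d\overline{\mu}=\int_{\Mat(\mathbb{N},\mathbb{C})}\phi\,d\mu<+\infty$, so that $f\in L_1(\Omega_P,(\mathfrak{r}^{(\infty)})_*\mu)$ at once, using $(\mathfrak{r}^{(\infty)})_*\mu=\overline{\mu}$ from (\ref{rinftymu}).

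It then remains to prove the weak convergence. Testing against an arbitrary bounded continuous $g$ on $\Omega_P$ and changing variables, the assertion becomes
\[
\int_{\Mat(\mathbb{N},\mathbb{C})}g(\mathfrak{r}^{(n)}(z))f(\mathfrak{r}^{(n)}(z))\,d\mu(z)\longrightarrow\int_{\Mat(\mathbb{N},\mathbb{C})}g(\mathfrak{r}^{(\infty)}(z))f(\mathfrak{r}^{(\infty)}(z))\,d\mu(z).
\]
On the full-measure set $\Mat_{\mathrm{reg}}(\mathbb{N},\mathbb{C})$ one has $\mathfrak{r}^{(n)}(z)\to\mathfrak{r}^{(\infty)}(z)$ in the Tychonoff topology, so by continuity of $f$ and $g$ the integrands converge pointwise; taking $g\equiv1$ also yields $f\in L_1(\Omega_P,(\mathfrak{r}^{(n)})_*\mu)$ for large $n$. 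The entire statement thus follows once I exhibit a single $\Phi\in L_1(\Mat(\mathbb{N},\mathbb{C}),\mu)$ with $f(\mathfrak{r}^{(n)}(z))\le\Phi(z)$ for all sufficiently large $n$ and $\mu$-almost every $z$: dominated convergence then settles both parts simultaneously.

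Producing this uniform domination is the heart of the matter, and it is here that the asymptotics of the Harish-Chandra-Itzykson-Zuber orbital integrals enter. Since $\mathfrak{r}^{(n)}(z)\in\Omega_P^0$ has vanishing Gaussian parameter and only the rescaled eigenvalues $\lambda_k^{(n)}(z)/n^2$ as nonzero coordinates, the measure $\eta_{\mathfrak{r}^{(n)}(z)}$ is a finite convolution of Wishart measures determined by these eigenvalues, and the distribution of its $n_0\times n_0$ corner---against which $\phi$ is integrated---is absolutely continuous (for $n$ large) with a density obtained by averaging over the unitary group in the variables $\lambda_k^{(n)}(z)$. The HCIZ formula expresses this corner density as a ratio of determinants, and the classical large-$n$ asymptotics of such orbital integrals, under the scaling $\lambda_k^{(n)}(z)/n^2\to x_k(z)$, provide both the pointwise limit already identified and, crucially, a uniform majorant for $f(\mathfrak{r}^{(n)}(z))$ in terms of $\phi$ and the corner data. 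I expect the delicate point to be exactly the uniformity of this bound in $n$: one must control the orbital-integral density simultaneously in the regime of well-separated eigenvalues and in the regime where eigenvalues collide, and then verify that the resulting $\Phi$ is genuinely $\mu$-integrable, which is where the defining property of the class $\mathfrak{F}$ is used once more.
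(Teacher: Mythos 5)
Your overall skeleton --- transport everything to $\Mat(\mathbb{N},\mathbb{C})$, use the pointwise convergence $\mathfrak{r}^{(n)}(z)\to\mathfrak{r}^{(\infty)}(z)$ on regular matrices, and control the tails by an HCIZ-based estimate --- matches the paper's, and your choice $f(\omega)=\int\varphi\,d\eta_\omega$ is a legitimate variant (the paper instead takes $f$ to be a function of the single coordinate $\gamma(\omega)$). The decisive step, however, is left unproved, and the form in which you ask for it is problematic. You want a single $n$-independent majorant $\Phi\in L_1(\Mat(\mathbb{N},\mathbb{C}),\mu)$ with $f(\mathfrak{r}^{(n)}(z))\le\Phi(z)$ for all large $n$, so as to invoke dominated convergence. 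The natural candidate coming out of any HCIZ analysis is $\sup_n(\mathcal{A}_n\varphi)(z)$, where $\mathcal{A}_n$ denotes the average over $U(n)\times U(n)$; but $(\mathcal{A}_n\varphi)_n$ is a reverse martingale, and $L_1$-boundedness of a (reverse) martingale does not in general make its maximal function integrable --- all the more so over the sigma-finite, infinite measure $\mu$. So the existence of your $\Phi$ is far from clear, and nothing in your sketch of the ``corner density'' computation addresses this; that sketch also points in a more complicated direction (densities of convolutions of Wishart measures, colliding eigenvalues) than what is actually needed.

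What the paper proves instead (Lemma \ref{fast-decay}) is the $n$-dependent domination $f(\mathfrak{r}^{(n)}(z))\le(\mathcal{A}_n\varphi)(z)$ for all $n\ge m$, with $\varphi$ a positive Schwartz function of an $m\times m$ corner and $f$ a function of $\gamma(\omega)$ alone; the only HCIZ input is the power-series bound showing that $\mathcal{A}_n\Xi_{\zeta}(z)$ is close to $1$ whenever $\tr(\zeta^*\zeta)\,\tr\bigl((z^{(n)})^*z^{(n)}\bigr)<\delta n^2$, applied to a Schwartz function whose Fourier transform is supported near the origin. Dominated convergence is then replaced by a uniform-integrability argument exploiting the fact that $\mathcal{A}_n$ is a conditional expectation: for a $U(\infty)\times U(\infty)$-invariant set $Y$ one has $\int_{Y}\mathcal{A}_n\varphi\,d\mu=\int_{Y}\varphi\,d\mu$, so one can choose an invariant $Y_\varepsilon$ of finite measure with $\int_{\Mat(\mathbb{N},\mathbb{C})\setminus Y_\varepsilon}f(\mathfrak{r}^{(n)}(z))\,d\mu(z)<\varepsilon$ uniformly in $n$, and then apply bounded convergence on $Y_\varepsilon$. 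Unless you can genuinely produce an integrable maximal function (which would require more than $\varphi\in L_1(\mu)$ and is not supplied by your outline), you should restructure the argument along these lines; as written, the proof has a gap at its central step.
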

Proof of Lemma \ref{main-lemma} will be given in the sequel to this paper.

{\bf Remark.} As the above argument shows,  the explicit characterization of the ergodic decomposition of Pickrell measures given in Theorem \ref{mainthm}
does rely on the abstract result, Theorem 1 in \cite{Buf-ergdec}, that a priori guarantees the existence of the ergodic decomposition and does not by itself
give an alternative proof of the existence of the ergodic decomposition.

\subsubsection{Convergence of probability measures on the Pickrell set}
Recall that we have a natural forgetting map
$\conf:\Omega_P\to \Conf(0,+\infty)$ that
to a point $\omega=(\gamma, x)$, $x=(x_1, \dots, x_n, \dots)$, assigns the configuration
$\conf(\omega)=(x_1, \dots, x_n, \dots)$.

For $\omega\in\Omega_P$, $\omega=(\gamma, x)$, $x=(x_1, \dots, x_n, \dots)$,
$x_n=x_n(\omega)$,  set
$$
S(\omega)=\sum_{n=1}^{\infty} x_n(\omega).
$$
In other words, we set $S(\omega)=S(\conf(\omega))$, and, slightly abusing notation, keep the same symbol for the new map.
Take $\beta>0$ and consider the measures
$$\exp(-\beta S(\omega))\rrad^{(n)}(\mu^{(s)})),$$
 $n\in {\mathbb N}$.

\begin{proposition}\label{expbetaint}
For any $s\in {\mathbb R}$, $\beta>0$,  we have
$$
\exp(-\beta S(\omega))\in L_1(\Omega_P, \rrad^{(n)}(\mu^{(s)})).
$$
\end{proposition}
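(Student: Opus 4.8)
The plan is to observe that $S\circ\mathfrak{r}^{(n)}$ is, up to the scalar $n^{-2}$, nothing but the trace of $(z^{(n)})^{\ast}z^{(n)}$, and then to reduce the claimed integrability to an elementary Gaussian estimate. First I would note that for $\omega=\mathfrak{r}^{(n)}(z)$ the nonzero $x$-coordinates are precisely $\lambda_1^{(n)}(z)/n^2,\dots,\lambda_n^{(n)}(z)/n^2$, so that, since the $\lambda_i^{(n)}$ are the eigenvalues of $(z^{(n)})^{\ast}z^{(n)}$, one has
$$
S(\mathfrak{r}^{(n)}(z))=\frac1{n^2}\sum_{i=1}^{n}\lambda_i^{(n)}(z)=\frac1{n^2}\tr\,(z^{(n)})^{\ast}z^{(n)}.
$$
In particular the function $z\mapsto\exp(-\beta S(\mathfrak{r}^{(n)}(z)))$ depends only on the corner $z^{(n)}=\pi_n^{\infty}z$.

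Changing variables under the pushforward and then using that the integrand factors through $\pi_n^{\infty}$, I would rewrite, for $n$ large enough that $n+s>0$ (the range in which the pushforward $(\mathfrak{r}^{(n)})_{\ast}\mu^{(s)}$ is defined),
$$
\int_{\Omega_P}\exp(-\beta S(\omega))\,d(\mathfrak{r}^{(n)})_{\ast}\mu^{(s)}(\omega)
=\int_{\Mat(n,\mathbb{C})}\exp\!\left(-\frac{\beta}{n^2}\tr\,w^{\ast}w\right)d(\pi_n^{\infty})_{\ast}\mu^{(s)}(w).
$$
By the explicit formula (\ref{infprojn}) the measure $(\pi_n^{\infty})_{\ast}\mu^{(s)}$ is a positive constant multiple of $\widetilde{\mu}_n^{(s)}=\det(1+w^{\ast}w)^{-2n-s}\,dw$, so it suffices to show that
$$
\int_{\Mat(n,\mathbb{C})}\exp\!\left(-\frac{\beta}{n^2}\tr\,w^{\ast}w\right)\det(1+w^{\ast}w)^{-2n-s}\,dw<+\infty.
$$

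The key estimate is that the condition $n+s>0$ forces $2n+s>n>0$, so the exponent $-2n-s$ is strictly negative; since every eigenvalue of $1+w^{\ast}w$ is at least $1$, this gives the pointwise bound $\det(1+w^{\ast}w)^{-2n-s}\le 1$. Discarding this bounded factor reduces the estimate to the Gaussian integral
$$
\int_{\Mat(n,\mathbb{C})}\exp\!\left(-\frac{\beta}{n^2}\tr\,w^{\ast}w\right)dw
=\int_{\mathbb{C}^{n^2}}\exp\!\left(-\frac{\beta}{n^2}\sum_{i,j}|w_{ij}|^2\right)dw
=\left(\frac{\pi n^2}{\beta}\right)^{\!n^2}<+\infty,
$$
which finishes the argument. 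I do not expect a serious obstacle here: the only points to watch are the bookkeeping of the admissible values of $n$ and of the normalizing constants in (\ref{infprojn}), together with the conceptual remark—essential precisely in the infinite case $s\le -1$—that integrability at infinity is secured by the exponential factor $\exp(-\beta S)$ rather than by the decay of $\det(1+w^{\ast}w)^{-2n-s}$, which by itself is too slow to be integrable when $s\le -1$.
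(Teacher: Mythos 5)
Your argument is correct: the identity $S(\mathfrak{r}^{(n)}(z))=n^{-2}\tr\,(z^{(n)})^{\ast}z^{(n)}$ is immediate from (\ref{rn-def}), the integrand then factors through $\pi_n^{\infty}$, the pushforward $(\pi_n^{\infty})_{\ast}\mu^{(s)}$ is a constant multiple of $\det(1+w^{\ast}w)^{-2n-s}dw$ by (\ref{infprojn}), and since $n+s>0$ forces $2n+s>0$ the determinantal factor is bounded by $1$, leaving a convergent Gaussian integral. However, this is a genuinely different route from the paper's. The paper proves the proposition by first identifying $\mathrm{conf}_*\mathfrak{r}^{(n)}(\mu^{(s)})$ with the infinite determinantal measure ${\mathbb B}^{(s,n)}={\mathbb B}(H^{(s,n)},(0,R))$ (Proposition \ref{rescaled-inf-s}) and then invoking the first claim of Proposition \ref{conv-bns-bs}, i.e.\ $\Psi_{g^{\beta}}\in L_1(\Conf((0,+\infty)),{\mathbb B}^{(s,n)})$, which in turn rests on the multiplicative-functional machinery of Corollary \ref{fin-rank-mult}; note that $\exp(-\beta S(\omega))$ is exactly the multiplicative functional $\Psi_{g^{\beta}}$ read through the forgetting map. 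What the paper's heavier route buys is that it establishes, in the same breath, that the normalized measure $\nu^{(s,n,\beta)}$ pushes forward to the determinantal measure $\mathbb{P}_{\Pi^{(n,s,\beta)}}$ — which is the fact actually used in the proof of Proposition \ref{weak-pick} — whereas bare integrability is only a by-product. What your route buys is economy: it needs no determinantal formalism at all, it makes transparent that the integrability at infinity comes from the factor $\exp(-\beta S)$ rather than from the (non-integrable, for $s\le -1$) density $\det(1+w^{\ast}w)^{-2n-s}$, and it works under the natural hypothesis $n+s>0$ rather than the slightly stronger $n>-s+1$ appearing in Proposition \ref{conv-bns-bs}. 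The only bookkeeping worth making explicit is the abstract change-of-variables identity $\int (h\circ\pi_n^{\infty})\,d\mu^{(s)}=\int h\,d(\pi_n^{\infty})_{\ast}\mu^{(s)}$ for nonnegative measurable $h$, which is unproblematic for the sigma-finite measure $\mu^{(s)}$ once $n+s>0$.
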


Introduce the probability measure
$$
\nu^{(s, n, \beta)}=\frac{\exp(-\beta S(\omega))\rrad^{(n)}(\mu^{(s)})}{\displaystyle \int\limits_{\Omega_P} \exp(-\beta S(\omega))d\rrad^{(n)}(\mu^{(s)})}.
$$

Now go back to the determinantal measure $\Prob_{\Pi^{(s, \beta)}}$ on the space $\Conf((0, +\infty))$ (cf. \eqref{pisb}) and let the measure
$\nu^{(s, \beta)}$ on $\Omega_P$ be defined by the requirements
\begin{enumerate}
\item $\nu^{(s, \beta)}(\Omega_P\setminus \Omega_P^0)=0$;
\item $\conf_*\nu^{(s, \beta)}=\Prob_{\Pi^{(s, \beta)}}.$
\end{enumerate}
The key r{\^o}le in the proof of Theorem \ref{mainthm} is played by
\begin{proposition}\label{weak-pick}
For any $\beta>0$, $s\in {\mathbb R}$, as $n\to\infty$ we have
 $$\nu^{(s,n,\beta)}\to \nu^{(s, \beta)}$$
weakly in the space $\Mfin(\Omega_P)$.
\end{proposition}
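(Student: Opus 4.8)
The plan is to split the convergence on $\Omega_P$ into its two natural components: the configuration $\conf(\omega)=(x_1,x_2,\dots)$ of asymptotic singular values, and the first coordinate $\gamma$, which records the ``asymptotic trace''. The starting observation is that, by the very definition (\ref{rn-def}), one has $\mathfrak{r}^{(n)}(z)\in\Omega_P^0$ for every $z$ and every $n$: the first coordinate $\frac1{n^2}\tr(z^{(n)})^* z^{(n)}$ is \emph{identically} equal to the sum $\sum_i\lambda_i^{(n)}/n^2$ of the remaining ones. Hence each $\nu^{(s,n,\beta)}$ is carried by $\Omega_P^0$, and so is the target $\nu^{(s,\beta)}$ by its defining condition (1). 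On $\Omega_P^0$ the forgetting map $\conf$ is a bijection onto its image, with inverse $X\mapsto(S(X),x_1,x_2,\dots)$, where $S(X)=\sum_{x\in X}x$ and $\gamma=S$. It therefore suffices to prove (i) that the push-forwards $\conf_*\nu^{(s,n,\beta)}$ converge weakly to $\conf_*\nu^{(s,\beta)}=\Prob_{\Pi^{(s,\beta)}}$, and (ii) that the first coordinate $\gamma^{(n)}=S$ passes to the limit without loss, so that the limiting lift to $\Omega_P$ stays concentrated on $\Omega_P^0$ and is not smeared onto $\{\gamma>\sum_n x_n\}$.

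For (i), I would note that $\conf_*\nu^{(s,n,\beta)}$ is exactly the normalized product of the rescaled radial part of the Pickrell measure $\mu^{(s)}$ with the integrable multiplicative functional $\exp(-\beta S(X))=\prod_{x\in X}\exp(-\beta x)$. By Proposition \ref{conv-bns-bs} the scaling limit of these weighted radial parts is the corresponding weighting of the modified infinite Bessel point process, that is, $\exp(-\beta S)\,\mathbb{B}^{(s)}$; here the multiplicative functional is precisely what renders the infinite determinantal measure finite, and its effect on the limit is controlled by the determinantal machinery (Proposition \ref{pr1-bis}) rather than by any naive continuity of $S$. Normalizing and invoking Proposition \ref{expbetacharac}, which identifies the normalization of $\exp(-\beta S)\,\mathbb{B}^{(s)}$ with $\Prob_{\Pi^{(s,\beta)}}$, yields the claimed configuration-level limit; the convergence of the normalizing constants $\int_{\Omega_P}\exp(-\beta S)\,d(\mathfrak{r}^{(n)})_*\mu^{(s)}$ to the finite positive constant $\int\exp(-\beta S)\,d\mathbb{B}^{(s)}$ is guaranteed by the same weak convergence together with the finiteness statements of Propositions \ref{expbetafin} and \ref{expbetaint}.

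The main obstacle is (ii), the vanishing of the Gaussian parameter in the limit. Weak convergence on the space of configurations --- that is, vague convergence of the point measures $\sum_i\delta_{x_i}$ --- does \emph{not} control the total mass $S=\sum_i x_i$: a uniformly positive fraction of $\frac1{n^2}\tr(z^{(n)})^*z^{(n)}$ could be carried by asymptotic singular values drifting toward $0$, so that in the limit a point $\omega$ with $\gamma>\sum_n x_n$ could appear. This is exactly the escape-of-mass phenomenon that the finer topology of Section 4 is built to exclude: under the embedding $X\mapsto\sum_{x\in X}x\,\delta_x$ of a configuration into the space of \emph{finite} measures, whose total mass equals $S(X)$, weak convergence in the space of finite measures on the space of finite measures makes $S$ a continuous functional of the limit. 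Concretely, I would use Proposition \ref{weakconfpick} to promote the convergence of (i) to this stronger topology; tightness there forces the total-mass functional $S$ to pass to the limit with no loss, whence $\lim_n\gamma^{(n)}=\sum_n x_n$ almost surely under the limit, i.e.\ the limit measure is indeed supported on $\Omega_P^0$.

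Finally, to conclude weak convergence on $\Omega_P$ itself, I would check convergence of the finite-dimensional distributions together with tightness. The marginals in the coordinates $x_1,\dots,x_k$ are governed by (i), while the marginal in $\gamma$ is governed by (ii): on $\Omega_P^0$ one has $\gamma=S$, and in the stronger topology $S$ is continuous, so the joint law of $(\gamma,x_1,x_2,\dots)$ converges. Combined with the tightness already supplied by the stronger-topology convergence, this upgrades the configuration-level statement to $\nu^{(s,n,\beta)}\to\nu^{(s,\beta)}$ weakly in $\mathfrak{M}_{\mathrm{fin}}(\Omega_P)$, as required.
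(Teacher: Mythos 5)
Your proposal is correct and follows essentially the same route as the paper: configuration-level convergence of $\mathrm{conf}_*\nu^{(s,n,\beta)}=\Prob_{\Pi^{(s,n,\beta)}}$ to $\Prob_{\Pi^{(s,\beta)}}$ from Proposition \ref{conv-bns-bs}, an upgrade to the finer topology on $\mathfrak{M}_{\mathrm{fin}}(\mathfrak{M}_{\mathrm{fin}}((0,+\infty)))$ to prevent the escape of mass to the origin, and then the transfer to $\Omega_P$ via Proposition \ref{weakconfpick}, whose proof indeed proceeds by tightness, finite-dimensional distributions, and the verification that the limit charges only $\Omega_P^0$. Two small corrections of attribution: the upgrade to the finer topology is the fourth claim of Proposition \ref{conv-bns-bs} (resting on the uniform trace estimates of Section 5), not Proposition \ref{weakconfpick}, which only handles the passage from that topology to $\Omega_P$; and the paper embeds configurations by $\sigma_f$ with the \emph{bounded} function $f(x)=\min(x,1)$ rather than $f(x)=x$, which is the form in which the uniform estimates and the tightness criterion of Section 4 are actually stated.
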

Proposition \ref{weak-pick} will be proved in Section III.3 and in Section III.4, using Proposition \ref{weak-pick}, combined with Lemma \ref{main-lemma}, we will conclude the proof of the main result, Theorem \ref{mainthm}.

To establish weak convergence of the measures $\nu^{(s,n,\beta)}$, we first study scaling limits of the radial parts of finite-dimensional
projections of infinite Pickrell measures.

\subsection{The radial part of the Pickrell measure}

Following Pickrell,  to a matrix $z\in \Mat(n,\mathbb{C})$ assign the collection
$\left(\lambda_1(z),\dots,\lambda_n(z)\right)$ of the
eigenvalues of the matrix $z^*z$ arranged in non-increasing order.
Introduce a
map
$$\mathfrak {rad}_n: \Mat(n, \mathbb C)\to {\mathbb R}_+^n
$$
by the formula
\begin{equation}\label{radn-def}
\mathfrak {rad}_n: z\to\left(\lambda_1(z),\dots,\lambda_n(z)\right).
\end{equation}
The map \eqref{radn-def} naturally extends to a map defined on $\Mat(\mathbb N, \mathbb C)$ for which we keep the same symbol:
in other words, the map $\mathfrak {rad}_n$ assigns to an infinite matrix the array of squares of the singular values of its $n\times n$-corner.

The {\it radial part}
of the Pickrell
measure $\mu_n^{(s)}$
 is now defined as the
push-forward of the  measure
$\mu_n^{(s)}$ under the
map
$\mathfrak {rad}_n$.
Note that, since finite-dimensional unitary groups are compact, and, by definition, for any $s$ and all sufficiently large $n$,  the measure $\mu_n^{(s)}$  assigns finite weight to compact sets,  the pushforward is well-defined, for  sufficiently large $n$, even if the measure $\mu^{(s)}$ is infinite.

Slightly abusing notation, we write $dz$ for the Lebesgue measure
$\Mat(n,\mathbb{C})$ and $d\la$ for the Lebesgue measure on ${\mathbb R}_+^n$.

For the push-forward of the Lebesgue measure $\Leb^{(n)}=dz$ under the map
$\mathfrak{ rad}_n$ we now have
$$
(\mathfrak {rad}_n)_*(dz)=\const (n) \cdot \prod_{i<j} (\la_i-\la_j)^2 d\la,
$$
where $\const(n)$ is a positive constant depending only on $n$.

 The radial part of the measure $\mu_n^{(s)}$ now takes the form:
\begin{equation*}
(\mathfrak {rad}_n)_*\mu_n^{(s)} =
\const (n,s) \cdot \prod_{i<j} (\la_i-\la_j)^2 \cdot \frac1{(1+\la_i)^{2n+s}}d\la,
\end{equation*}
where $\const(n,s)$ for a positive constant depending on $n$ and $s$ (the constant may change from one formula to another).

 Following Pickrell, introduce new variables $u_1, \dots, u_n$ by the formula
\begin{equation}\label{ux-chg}
u_i = \frac{\la_i-1}{\la_i+1}.
\end{equation}
\begin{proposition}\label{rad-jac}
In the  coordinates  \eqref{ux-chg} the radial part  $(\mathfrak {rad}_n)_*\mu_n^{(s)}$
of the measure $\mu_n^{(s)}$
is defined on the cube $[-1,1]^n$  by the formula
\begin{equation}\label{rad-u-coord}
(\mathfrak {rad}_n)_*\mu_n^{(s)}=
\const (n,s) \cdot \prod_{i<j} (u_i-u_j)^2\cdot \prod_{i=1}^n (1-u_i)^s \, du_i.
\end{equation}
\end{proposition}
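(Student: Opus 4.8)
The plan is to treat Proposition \ref{rad-jac} as a direct change-of-variables computation, starting from the explicit density (\ref{rad-part-pick}) for the radial part written in the eigenvalue coordinates $\lambda_1,\dots,\lambda_n$ (the product $\prod_i (1+\lambda_i)^{-(2n+s)}$ being the evident reading of that formula). The substitution (\ref{ux-chg}), which in the present notation reads $u_i=(\lambda_i-1)/(\lambda_i+1)$, inverts explicitly as $\lambda_i=(1+u_i)/(1-u_i)$; this is a smooth increasing bijection carrying $[0,+\infty)$ onto $[-1,1)$, so it induces a legitimate diffeomorphic change of variables on the open cube, and the push-forward measure is obtained by the usual Jacobian rule.

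First I would compute the three elementary ingredients separately. For the weight, $1+\lambda_i = 2/(1-u_i)$, whence $(1+\lambda_i)^{-(2n+s)} = 2^{-(2n+s)}(1-u_i)^{2n+s}$. For the Jacobian, differentiating gives $d\lambda_i = 2(1-u_i)^{-2}\,du_i$. The only slightly less transparent step is the Vandermonde factor: a short algebraic manipulation yields the key identity
\[
\lambda_i-\lambda_j = \frac{2(u_i-u_j)}{(1-u_i)(1-u_j)},
\]
so that $\prod_{i<j}(\lambda_i-\lambda_j)^2 = 2^{n(n-1)}\prod_{i<j}(u_i-u_j)^2\cdot\prod_{i=1}^n (1-u_i)^{-2(n-1)}$, where I use that each index occurs in exactly $n-1$ of the pairs.

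I would then assemble the three factors and collect the power of $(1-u_i)$ attached to each index $i$. The one point requiring care is precisely this exponent bookkeeping: the contribution $2n+s$ from the weight, the contribution $-2(n-1)$ from the Vandermonde, and the contribution $-2$ from the Jacobian sum to exactly $s$, which is the heart of the computation and the reason the Jacobi-type weight $(1-u_i)^s$ emerges. All remaining numerical factors --- the various powers of $2$ coming from the weight, the Vandermonde, and the Jacobian --- depend only on $n$ and $s$ and are therefore absorbed into $\const(n,s)$, which is in any case pinned down only up to such a factor (and fixed a posteriori by the total mass when $s>-1$). This produces exactly (\ref{rad-u-coord}). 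I do not anticipate a genuine obstacle: the statement is a clean substitution, and its entire content lies in the exponent cancellation $(2n+s)-2(n-1)-2=s$ that converts the Cauchy-type weight into the Jacobi weight.
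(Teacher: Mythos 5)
Your computation is correct and is precisely the change-of-variables argument the paper intends: the proposition is stated without proof as an immediate consequence of (\ref{rad-part-pick}) under the substitution (\ref{ux-chg}), and your exponent bookkeeping $(2n+s)-2(n-1)-2=s$, together with the identity $\lambda_i-\lambda_j=\frac{2(u_i-u_j)}{(1-u_i)(1-u_j)}$, is exactly the content of that verification. Nothing is missing.
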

 In the case $s>-1$, the constant $\const (n,s)$ can be chosen in such a way that
the right-hand side be a probability measure; in the case $s\leq -1$, there is no canonical normalization, the  left hand side is defined up to proportionality, and a positive constant can be chosen arbitrarily.

For $s>-1$, Proposition \ref{rad-jac} yields a determinantal representation for the radial part of the Pickrell measure: namely, the radial part is identified with the Jacobi orthogonal polynomial ensemble in the coordinates \eqref{ux-chg}. Passing to the scaling limit, one obtains the Bessel point process
(subject to the  change of variable $y=4/x$).

Similarly, it will develop that for $s\leq -1$, the scaling limit of the measures \eqref{rad-u-coord} is precisely the modified infinite Bessel point process introduced above.
Furthermore, if one multiplies the measures \eqref{rad-u-coord}  by the density $\exp(-\beta S(X)/n^2)$, then the resulting measures are finite and determinantal,
and their weak limit, after appropriate scaling,  is precisely the determinantal measure $\Prob_{\Pi^{(s, \beta)}}$ of \eqref{pisb}. This weak convergence  is a key step in the proof of Proposition \ref{weak-pick}.

The study of the case $s\leq -1$ thus  requires a new object: infinite determinantal measures on spaces of configurations. In the next section, we proceed to the general construction and description of the properties of infinite determinantal measures.

{\bf {Acknowledgements.}}
Grigori Olshanski posed the problem to me, and I am greatly indebted to him.
I am deeply grateful to Alexei M. Borodin, Yanqi  Qiu, Klaus Schmidt and Maria V. Shcherbina for useful discussions.

The author is supported by A*MIDEX project (No. ANR-11-IDEX-0001-02), financed by Programme ``Investissements d'Avenir'' of the Government of
the French Republic managed by the French National Research Agency (ANR).
It is also supported by the Grant MD 2859.2014.1 of the President of the Russian Federation, by the Russian
Foundation of Basic Research (grant 13-01-12449 ofi-m) and by the  Russian Academic Excellence Project
'5-100'.

Part of this work was done at the Institut Henri Poincar{\'e} in Paris,
Institut Mittag-Leffler of the Royal Swedish Academy of Sciences in Djursholm,
and the Institute of Mathematics ``Guido Castelnuovo'' of the University of
Rome ``La Sapienza''. I am deeply grateful to these institutions for their warm hospitality.

\section{Construction and properties of infinite determinantal measures}

\subsection{Preliminary remarks  on sigma-finite measures}
Let $Y$ be a Borel space, and consider a representation
$$
Y=\bigcup\limits_{n=1}^{\infty} Y_n
$$
of $Y$ as a countable union of an increasing sequence of subsets $Y_n$, $Y_n\subset Y_{n+1}$.
As before, given a measure $\mu$ on $Y$ and a subset $Y'\subset Y$, we write $\mu|_{Y'}$ for the restriction
of $\mu$ onto $Y'$.
Assume that for every $n$ we are given a probability measure $\Prob_n$ on $Y_n$.
The following proposition is clear.
\begin{proposition}\label{sigmf-gen}
A sigma-finite measure ${\mathbb B}$ on $Y$ such that
\begin{equation}\label{ind-byn}
\frac{{\mathbb B}|_{Y_n}}{{\mathbb B}(Y_n)}=\Prob_n
\end{equation}
exists if and only if for any $N, n$, $N>n$, we have
$$
\frac{\Prob_N|_{Y_n}}{\Prob_N(Y_n)}=\Prob_n.
$$
The condition \eqref{ind-byn} determines the measure ${\mathbb B}$ uniquely up to multiplication by a constant.
\end{proposition}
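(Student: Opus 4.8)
The plan is to prove both directions around the single observation that the ratio-consistency hypothesis is equivalent to a multiplicative cocycle relation for the numbers $\Prob_N(Y_n)$, which is exactly what fixes the relative normalizations of $\mathbb{B}$ on the exhausting sets $Y_n$.

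For necessity I would suppose $\mathbb{B}$ exists and satisfies (\ref{ind-byn}). Restricting the identity $\mathbb{B}|_{Y_N}=\mathbb{B}(Y_N)\Prob_N$ to the smaller set $Y_n\subset Y_N$ gives $\mathbb{B}|_{Y_n}=\mathbb{B}(Y_N)\,\Prob_N|_{Y_n}$; comparing this with $\mathbb{B}|_{Y_n}=\mathbb{B}(Y_n)\Prob_n$ and taking total masses on $Y_n$ yields $\mathbb{B}(Y_n)=\mathbb{B}(Y_N)\Prob_N(Y_n)$, so dividing the two displays produces precisely $\Prob_n=\Prob_N|_{Y_n}/\Prob_N(Y_n)$.

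For sufficiency I would first extract from the hypothesis the cocycle relation
$$
\Prob_N(Y_n)=\Prob_N(Y_m)\,\Prob_m(Y_n),\qquad n<m<N,
$$
obtained by restricting $\Prob_N|_{Y_m}=\Prob_N(Y_m)\Prob_m$ to $Y_n$ and taking total mass. Setting $c_1=1$ and $c_{n+1}=c_n/\Prob_{n+1}(Y_n)$, the cocycle relation telescopes to $c_N\,\Prob_N(Y_n)=c_n$ for every $N>n$, so the finite measures $\mathbb{B}_n:=c_n\Prob_n$ on $Y_n$ are mutually consistent: for $N>n$,
$$
\mathbb{B}_N|_{Y_n}=c_N\,\Prob_N|_{Y_n}=c_N\,\Prob_N(Y_n)\,\Prob_n=c_n\Prob_n=\mathbb{B}_n.
$$
I would then glue them by $\mathbb{B}(A):=\lim_{n\to\infty}\mathbb{B}_n(A\cap Y_n)$, the limit existing in $[0,+\infty]$ because consistency makes the sequence nondecreasing in $n$. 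The one genuinely analytic point, and the step I expect to require the most care, is verifying that this set function is a bona fide measure: countable additivity follows by interchanging the limit in $n$ with a countable sum of nonnegative monotone terms, justified by monotone convergence. Sigma-finiteness is then immediate since $\mathbb{B}(Y_n)=c_n<+\infty$ with $Y=\bigcup_n Y_n$, and by construction $\mathbb{B}|_{Y_n}=c_n\Prob_n$, which gives (\ref{ind-byn}).

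Finally, for uniqueness up to a constant, given two solutions $\mathbb{B},\mathbb{B}'$ I would form the ratios $r_n:=\mathbb{B}(Y_n)/\mathbb{B}'(Y_n)$ and use the relation $\mathbb{B}(Y_n)=\mathbb{B}(Y_{n+1})\Prob_{n+1}(Y_n)$, valid for both measures, to conclude $r_n=r_{n+1}$ and hence $r_n\equiv r$. Then $\mathbb{B}|_{Y_n}=r\,\mathbb{B}'|_{Y_n}$ for all $n$, and passing to the limit as above yields $\mathbb{B}=r\,\mathbb{B}'$ on all of $Y$.
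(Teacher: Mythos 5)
Your proof is correct and complete; the paper itself offers no argument for this proposition (it is simply declared ``clear''), and your route --- extracting the cocycle relation $\Prob_N(Y_n)=\Prob_N(Y_m)\Prob_m(Y_n)$, choosing the normalizations $c_n$ so that the measures $c_n\Prob_n$ become consistent, and gluing by monotone limits --- is the standard construction the author evidently has in mind. The one implicit point worth noting is that the hypothesis $\Prob_{N}|_{Y_n}/\Prob_N(Y_n)=\Prob_n$ presupposes $\Prob_N(Y_n)>0$, which is exactly what makes your recursion $c_{n+1}=c_n/\Prob_{n+1}(Y_n)$ well defined.
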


\begin{corollary} If ${\mathbb B}_1$, ${\mathbb B}_2$ are two  sigma-finite measures on $Y$ such that for all $n\in {\mathbb N}$ we have
$$
0<{\mathbb B}_1(Y_n)<+\infty, 0<{\mathbb B}_2(Y_n)<+\infty,
$$
and
$$
\frac{{\mathbb B}_1|_{Y_n}}{{\mathbb B}_1(Y_n)}=\frac{{\mathbb B}_2|_{Y_n}}{{\mathbb B}_2(Y_n)},
$$
then there exists a positive constant $C>0$ such that ${\mathbb B}_1=C{\mathbb B}_2$.
\end{corollary}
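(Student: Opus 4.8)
The plan is to reduce everything to identifying, for each $n$, the proportionality constant between ${\mathbb B}_1$ and ${\mathbb B}_2$ on $Y_n$, and then to check that this constant does not depend on $n$. First I would set $C_n = {\mathbb B}_1(Y_n)/{\mathbb B}_2(Y_n)$, which is a well-defined positive real number since $0 < {\mathbb B}_i(Y_n) < +\infty$ for $i=1,2$. The hypothesis on normalized restrictions says precisely that, for every Borel set $A \subseteq Y_n$, one has ${\mathbb B}_1(A) = C_n\,{\mathbb B}_2(A)$; in other words ${\mathbb B}_1|_{Y_n} = C_n\,{\mathbb B}_2|_{Y_n}$ as measures on $Y_n$.

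The key step is to see that $C_n$ is independent of $n$. Because the $Y_n$ are increasing, $Y_n \subseteq Y_{n+1}$, so the identity at level $n+1$ applies in particular to the set $A = Y_n \subseteq Y_{n+1}$, giving ${\mathbb B}_1(Y_n) = C_{n+1}\,{\mathbb B}_2(Y_n)$. Comparing with ${\mathbb B}_1(Y_n) = C_n\,{\mathbb B}_2(Y_n)$ and dividing by the finite positive quantity ${\mathbb B}_2(Y_n)$, I conclude $C_n = C_{n+1}$ for all $n$, whence $C_n \equiv C$ for a single positive constant $C$. I would then pass from the $Y_n$ to all of $Y$ by continuity from below: for an arbitrary Borel set $A \subseteq Y$, the sets $A \cap Y_n$ increase to $A$ since $Y = \bigcup_n Y_n$, and each $A \cap Y_n \subseteq Y_n$, so ${\mathbb B}_1(A \cap Y_n) = C\,{\mathbb B}_2(A \cap Y_n)$. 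Letting $n \to \infty$ and using monotone continuity of the sigma-finite measures on the increasing sequence $A \cap Y_n$ yields ${\mathbb B}_1(A) = C\,{\mathbb B}_2(A)$, that is, ${\mathbb B}_1 = C\,{\mathbb B}_2$.

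There is no genuine obstacle here: the statement is a direct consequence of the uniqueness clause already recorded in Proposition \ref{sigmf-gen}, applied with $\Prob_n$ taken to be the common normalized restriction ${\mathbb B}_1|_{Y_n}/{\mathbb B}_1(Y_n) = {\mathbb B}_2|_{Y_n}/{\mathbb B}_2(Y_n)$. Both ${\mathbb B}_1$ and ${\mathbb B}_2$ then satisfy (\ref{ind-byn}) for these $\Prob_n$, and so coincide up to a multiplicative constant, necessarily positive since ${\mathbb B}_1(Y_1), {\mathbb B}_2(Y_1) > 0$. The only point requiring the slightest care is the verification that the scalar $C_n$ is genuinely constant in $n$, which is exactly where the nesting $Y_n \subseteq Y_{n+1}$ and the finiteness $0 < {\mathbb B}_i(Y_n) < +\infty$ enter; everything else is routine monotone convergence.
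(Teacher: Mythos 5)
Your proof is correct and is exactly the argument the paper intends: the paper states this corollary without proof as an immediate consequence of the uniqueness clause of Proposition \ref{sigmf-gen}, and your explicit verification (constancy of $C_n$ via the nesting $Y_n\subseteq Y_{n+1}$, then monotone continuity from below on $A\cap Y_n\uparrow A$) is the standard way to fill in that "clear" step. No issues.
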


\subsection{The unique extension property}

\subsubsection{Extension from a subset}

Let $E$ be a standard Borel space,  let $\mu$ be a sigma-finite measure on $E$, let $L$ be a closed subspace of $L_2(E, \mu)$, let $\Pi$ be the operator of orthogonal projection onto $L$,
and let $E_0\subset E$ be a Borel subset. We shall say that the subspace $L$ has the {\it unique extension property} from $E_0$
if a function $\varphi\in L$ satisfying $\chi_{E_0}\varphi=0$ must be the zero function and the subspace $\chi_{E_0}L$ is closed.
 In general, if  a function $\varphi\in L$ satisfying $\chi_{E_0}\varphi=0$ must be the zero function, then the restricted subspace $\chi_{E_0}L$ still  need not be closed: nonetheless, we have the following clear corollary of the open mapping theorem.
\begin{proposition}\label{closed-sub}
Assume that the closed subspace $L$  is such that
a function $\varphi\in L$ satisfying $\chi_{E_0}\varphi=0$ must be the zero function.
The subspace $\chi_{E_0}L$ is closed if and only if there exists $\varepsilon>0$ such that
for any $\varphi\in L$ we have
\begin{equation}\label{eps-closed}
||\chi_{E\setminus E_0}\varphi||\leq (1-\varepsilon)||\varphi||,
\end{equation}
in which case the natural restriction map $\varphi\to \chi_{E_0}\varphi$ is an isomorphism of Hilbert spaces.
If the operator $\chi_{E\setminus E_0}\Pi$ is compact, then the condition \eqref{eps-closed} holds.
\end{proposition}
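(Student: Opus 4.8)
The plan is to recognize Proposition \ref{closed-sub} as a repackaging of the standard functional-analytic dichotomy for injective bounded operators, transported through the Pythagorean identity attached to the complementary indicators $\chi_{E_0}$ and $\chi_{E\setminus E_0}$. The central object is the restriction map $T\colon L\to L_2(E,\mu)$, $T\varphi=\chi_{E_0}\varphi$. It is linear with $\|T\|\le 1$, and the standing hypothesis---that $\varphi\in L$ with $\chi_{E_0}\varphi=0$ forces $\varphi=0$---is exactly the injectivity of $T$. The one identity I will use throughout is that, since $\chi_{E_0}$ and $\chi_{E\setminus E_0}$ are orthogonal projections summing to the identity in $L_2(E,\mu)$, every $\varphi$ satisfies $\|\varphi\|^2=\|\chi_{E_0}\varphi\|^2+\|\chi_{E\setminus E_0}\varphi\|^2$. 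Consequently the estimate (\ref{eps-closed}) is equivalent to $T$ being bounded below: indeed $\|\chi_{E\setminus E_0}\varphi\|\le(1-\varepsilon)\|\varphi\|$ rewrites, via the identity, as $\|T\varphi\|^2\ge\bigl(1-(1-\varepsilon)^2\bigr)\|\varphi\|^2$, and conversely a lower bound $\|T\varphi\|\ge c\|\varphi\|$ with $c>0$ yields (\ref{eps-closed}) with $1-\varepsilon=\sqrt{1-c^2}$.

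For the equivalence in the first assertion I would argue both directions through $T$. If (\ref{eps-closed}) holds, then $T$ is bounded below; a bounded-below operator into a Hilbert space has closed range, because a Cauchy sequence $T\varphi_n$ forces $\varphi_n$ Cauchy, hence convergent in the closed subspace $L$, and continuity of $T$ places the limit in $\chi_{E_0}L$. Conversely, if $\chi_{E_0}L$ is closed, then $T\colon L\to\chi_{E_0}L$ is a continuous bijection of Hilbert spaces (injective by hypothesis, surjective onto its range by definition), so the bounded inverse theorem supplies a continuous inverse; this is precisely the isomorphism statement of the second assertion, and it makes $T$ bounded below, whence (\ref{eps-closed}). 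In this way the first two assertions are settled simultaneously.

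The only part requiring an argument beyond abstract nonsense is the sufficiency of compactness of $\chi_{E\setminus E_0}\Pi$. I would prove it directly: it suffices to show $\sup\{\|\chi_{E\setminus E_0}\varphi\|:\varphi\in L,\ \|\varphi\|=1\}<1$, as this is equivalent to (\ref{eps-closed}). Suppose instead the supremum equals $1$ and pick $\varphi_n\in L$, $\|\varphi_n\|=1$, with $\|\chi_{E\setminus E_0}\varphi_n\|\to 1$; by the Pythagorean identity $\|\chi_{E_0}\varphi_n\|\to 0$. Passing to a weakly convergent subsequence $\varphi_n\rightharpoonup\varphi$ (the unit ball of $L$ is weakly compact, and $L$, being closed, is weakly closed, so $\varphi\in L$), compactness of $\chi_{E\setminus E_0}\Pi$ upgrades weak convergence to norm convergence of the images; since $\Pi\varphi_n=\varphi_n$, this reads $\chi_{E\setminus E_0}\varphi_n\to\chi_{E\setminus E_0}\varphi$ in norm, whence $\|\chi_{E\setminus E_0}\varphi\|=1$. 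But $\|\varphi\|\le 1$ by weak lower semicontinuity of the norm, so $\|\varphi\|=1$ and $\|\chi_{E_0}\varphi\|=0$; the injectivity hypothesis then forces $\varphi=0$, contradicting $\|\varphi\|=1$.

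I expect no serious obstacle, since the content is the clean translation of (\ref{eps-closed}) into a bounded-below condition on $T$ together with the routine open-mapping and weak-compactness arguments. The point demanding the most care is the compact case, where the hypothesis on $L$ is exactly what turns the limiting configuration $\|\chi_{E_0}\varphi\|=0$, $\|\varphi\|=1$ into a contradiction; I will be careful to record the identity $\chi_{E\setminus E_0}\Pi\varphi_n=\chi_{E\setminus E_0}\varphi_n$ on $L$ so that compactness is applied to the correct operator.
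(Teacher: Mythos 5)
Your proof is correct and follows exactly the route the paper intends: the paper offers no written proof, simply declaring the statement a ``clear corollary of the open mapping theorem,'' and your argument supplies precisely the missing details --- the Pythagorean translation of (\ref{eps-closed}) into a lower bound for the restriction map, the open-mapping/bounded-inverse step for the equivalence, and the standard weak-compactness contradiction for the compact case. No gaps.
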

{\bf Remark.} In particular, the condition \eqref{eps-closed}  a fortiori holds if the operator $\chi_{E\setminus E_0}\Pi$  is Hilbert-Schmidt or, equivalently, if the operator $\chi_{E\setminus E_0}\Pi\chi_{E\setminus E_0}$
belongs to the trace class.

The following  corollaries are immediate.
\begin{corollary}
Let $g$ be a bounded nonegative Borel  function on $E$ such that
\begin{equation}\label{inf-g-pos}
\inf\limits_{x\in E_0} g(x)>0.
\end{equation}
If \eqref{eps-closed} holds then the subspace $\sqrt{g} L$  is closed in $L_2(E, \mu)$.
\end{corollary}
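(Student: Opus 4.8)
The plan is to realize $\sqrt{g}L$ as the range of a single bounded operator and to show that this operator is bounded below; a bounded operator on a Hilbert space that is bounded below automatically has closed range. Concretely, I would consider the multiplication operator $T\colon L\to L_2(E,\mu)$ defined by $T\varphi=\sqrt{g}\varphi$. Since $g$ is bounded, say $g\leq M$, we have $\|T\varphi\|\leq\sqrt{M}\,\|\varphi\|$, so $T$ is bounded and its range is exactly $\sqrt{g}L$. Thus it suffices to produce a constant $\delta>0$ with $\|T\varphi\|\geq\delta\|\varphi\|$ for every $\varphi\in L$.

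The crux is this lower bound, which combines the positivity of $g$ on $E_0$ with the hypothesis (\ref{eps-closed}). Writing $c=\inf_{x\in E_0}g(x)>0$, I first bound the integral from below by its contribution over $E_0$:
\begin{equation*}
\|T\varphi\|^2=\int_E g\,|\varphi|^2\,d\mu\geq c\int_{E_0}|\varphi|^2\,d\mu=c\,\|\chi_{E_0}\varphi\|^2.
\end{equation*}
Next, the Pythagorean identity $\|\chi_{E_0}\varphi\|^2=\|\varphi\|^2-\|\chi_{E\setminus E_0}\varphi\|^2$ together with (\ref{eps-closed}) gives $\|\chi_{E_0}\varphi\|^2\geq(1-(1-\varepsilon)^2)\|\varphi\|^2$. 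Setting $\kappa=1-(1-\varepsilon)^2=2\varepsilon-\varepsilon^2>0$, I obtain $\|T\varphi\|^2\geq c\kappa\,\|\varphi\|^2$, which is the desired bound with $\delta=\sqrt{c\kappa}$. Observe that this lower bound in particular re-establishes the injectivity of $T$ (equivalently, the unique extension property): if $\chi_{E_0}\varphi=0$, then $\|\chi_{E\setminus E_0}\varphi\|=\|\varphi\|$ is incompatible with (\ref{eps-closed}) unless $\varphi=0$, so no separate appeal to unique extension is needed.

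Finally I would conclude by the standard closed-range argument: if $T\varphi_n\to\psi$ in $L_2(E,\mu)$, then $(T\varphi_n)$ is Cauchy, hence so is $(\varphi_n)$ because $\|\varphi_n-\varphi_m\|\leq\delta^{-1}\|T\varphi_n-T\varphi_m\|$; since $L$ is closed, $\varphi_n\to\varphi$ for some $\varphi\in L$, and continuity of $T$ forces $\psi=T\varphi=\sqrt{g}\varphi\in\sqrt{g}L$. Therefore $\sqrt{g}L$ is closed. There is no real obstacle here beyond the chain of inequalities in the second step; the entire difficulty has already been packaged into hypothesis (\ref{eps-closed}), which guarantees that functions in $L$ cannot concentrate on $E\setminus E_0$, while the positivity of $g$ on $E_0$ prevents $\sqrt{g}$ from annihilating the part of $\varphi$ supported on $E_0$.
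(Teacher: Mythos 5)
Your proof is correct and follows essentially the route the paper intends: the corollary is stated as immediate from Proposition \ref{closed-sub}, whose content is precisely that (\ref{eps-closed}) makes the restriction $\varphi\mapsto\chi_{E_0}\varphi$ bounded below on $L$, and your chain $\|\sqrt{g}\varphi\|^2\geq c\|\chi_{E_0}\varphi\|^2\geq c(2\varepsilon-\varepsilon^2)\|\varphi\|^2$ is exactly the quantitative form of that observation, after which closedness of the range of a bounded-below operator on the closed subspace $L$ is standard. Your side remark that the lower bound subsumes the unique extension property is also accurate.
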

{\bf Remark.} The apparently superfluous square root is put here to keep notation consistent with the remainder of the paper.
\begin{corollary}
Under the assumptions of Proposition \ref{closed-sub}, if  \eqref{eps-closed} holds and a Borel function $g: E\to [0,1]$ satisfies \eqref{inf-g-pos}, then
the operator $\Pi^g$  of orthogonal projection onto the subspace $\sqrt{g}L$ is given by the formula
\begin{equation}\label{def-Pi-g}
\Pi^g=\sqrt{g}\Pi(1+(g-1)\Pi)^{-1}\sqrt{g}=\sqrt{g}\Pi(1+(g-1)\Pi)^{-1}\Pi\sqrt{g}.
\end{equation}
In particular, the operator $\Pi^{E_0}$ of orthogonal projection onto the subspace $\chi_{E_0}L$ has the form
\begin{equation}\label{proj-enol}
\Pi^{E_0}=\chi_{E_0}\Pi(1-\chi_{E\setminus E_0}\Pi)^{-1}\chi_{E_0}=\chi_{E_0}\Pi(1-\chi_{E\setminus E_0}\Pi)^{-1}\Pi\chi_{E_0}.
\end{equation}
\end{corollary}
\begin{corollary}\label{hs-major}
Under the assumptions of Proposition \ref{closed-sub}, if  \eqref{eps-closed} holds, then, for any subset $Y\subset E_0$, once
the operator $\chi_Y\Pi^{E_0}\chi_Y$ belongs to the trace class,  it follows that so does  the operator $\chi_Y\Pi\chi_Y$, and we have
$$
\tr \chi_Y\Pi^{E_0}\chi_Y\geq \tr \chi_Y\Pi\chi_Y
$$
\end{corollary}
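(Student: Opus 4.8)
The plan is to deduce the statement from the operator inequality
\[
0 \le \chi_Y\Pi\chi_Y \le \chi_Y\Pi^{E_0}\chi_Y
\]
combined with the elementary monotonicity of the trace on positive operators: if $0 \le A \le B$ and $B$ is trace class, then $A$ is trace class and $\tr A \le \tr B$ (evaluate both traces in a common orthonormal basis, using positivity). Both operators here are manifestly positive, since $\Pi$ and $\Pi^{E_0}$ are self-adjoint projections and $\chi_Y=\chi_Y\chi_{E_0}$: one has $\chi_Y\Pi\chi_Y=(\Pi\chi_Y)^*(\Pi\chi_Y)$ and $\chi_Y\Pi^{E_0}\chi_Y=(\Pi^{E_0}\chi_Y)^*(\Pi^{E_0}\chi_Y)$, so that $\langle \chi_Y\Pi\chi_Y f, f\rangle = \|\Pi\chi_Y f\|^2$ and $\langle \chi_Y\Pi^{E_0}\chi_Y f, f\rangle = \|\Pi^{E_0}\chi_Y f\|^2$ for every $f$. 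Thus the whole assertion reduces to the pointwise comparison of quadratic forms $\|\Pi\chi_Y f\| \le \|\Pi^{E_0}\chi_Y f\|$.

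To obtain this comparison I would prove the more natural inequality $\|\Pi h\| \le \|\Pi^{E_0}h\|$ for every $h\in L_2(E,\mu)$ supported on $E_0$; since $Y\subset E_0$, the vector $h=\chi_Y f$ is of this form and the claim follows. Fix such an $h$ with $\chi_{E_0}h=h$. The heart of the argument is the choice of the auxiliary vector $g:=\chi_{E_0}\Pi h$, which lies in $\chi_{E_0}L$ because $\Pi h\in L$. Using $\chi_{E_0}h=h$ and that $\Pi$ is a self-adjoint idempotent,
\[
\|\Pi h\|^2 = \langle \Pi h, h\rangle = \langle \Pi h, \chi_{E_0}h\rangle = \langle \chi_{E_0}\Pi h, h\rangle = \langle g, h\rangle .
\]
Since $\chi_{E_0}L$ is closed (guaranteed by hypothesis (\ref{eps-closed}) via Proposition \ref{closed-sub}), $\Pi^{E_0}$ is a genuine orthogonal projection and $h-\Pi^{E_0}h\perp \chi_{E_0}L\ni g$, whence $\langle g,h\rangle=\langle g,\Pi^{E_0}h\rangle$. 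Cauchy--Schwarz together with the contractivity of multiplication by $\chi_{E_0}$ then gives
\[
\|\Pi h\|^2 = \langle g, \Pi^{E_0}h\rangle \le \|g\|\,\|\Pi^{E_0}h\| = \|\chi_{E_0}\Pi h\|\,\|\Pi^{E_0}h\| \le \|\Pi h\|\,\|\Pi^{E_0}h\|.
\]
Dividing by $\|\Pi h\|$ (the inequality being trivial when $\Pi h=0$) yields $\|\Pi h\| \le \|\Pi^{E_0}h\|$, as required.

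I do not anticipate a serious obstacle. The only structural inputs are the closedness of $\chi_{E_0}L$, so that $\Pi^{E_0}$ is an orthogonal projection, and the positivity/monotonicity of the trace; everything else is bookkeeping with self-adjoint projections and characteristic functions. The single point that requires genuine insight rather than computation is the identification of $g=\chi_{E_0}\Pi h$ as the correct test vector in $\chi_{E_0}L$: it is precisely this choice that turns the abstract minimizing property of $\Pi^{E_0}$ into the usable lower bound via Cauchy--Schwarz, and it is where the hypothesis $Y\subset E_0$ (i.e.\ that $h$ is supported in $E_0$) is essential.
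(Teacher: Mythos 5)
Your proof is correct, and it takes a genuinely different route from the paper's. The paper disposes of the corollary in one line by invoking the explicit resolvent formula (\ref{proj-enol}), $\Pi^{E_0}=\chi_{E_0}\Pi(1-\chi_{E\setminus E_0}\Pi)^{-1}\Pi\chi_{E_0}$: expanding $(1-\chi_{E\setminus E_0}\Pi)^{-1}$ as a Neumann series (convergent by (\ref{eps-closed})) and multiplying by $\Pi$ on both sides exhibits $\Pi(1-\chi_{E\setminus E_0}\Pi)^{-1}\Pi$ as $\Pi$ plus a sum of positive terms $(\Pi\chi_{E\setminus E_0}\Pi)^k$, which gives both the Hilbert--Schmidt implication and the trace inequality at once. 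You instead prove the underlying operator inequality $\chi_Y\Pi\chi_Y\le\chi_Y\Pi^{E_0}\chi_Y$ directly from the abstract characterization of $\Pi^{E_0}$ as the orthogonal projection onto the closed subspace $\chi_{E_0}L$, via the test vector $g=\chi_{E_0}\Pi h$ and Cauchy--Schwarz, and then conclude by monotonicity of the trace on positive operators. Both arguments are sound; the paper's buys brevity by leaning on the formula already derived in the preceding corollary, while yours is more self-contained and slightly more general, since it uses (\ref{eps-closed}) only through the closedness of $\chi_{E_0}L$ (so that $\Pi^{E_0}$ is a genuine orthogonal projection) and never needs the norm bound or the invertibility of $1-\chi_{E\setminus E_0}\Pi$ explicitly. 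All the individual steps — the identity $\|\Pi h\|^2=\langle g,h\rangle$ for $h$ supported in $E_0$, the orthogonality $h-\Pi^{E_0}h\perp g$, the contraction $\|\chi_{E_0}\Pi h\|\le\|\Pi h\|$, and the basis-independence of $\sum_i\langle Ae_i,e_i\rangle$ for positive $A$ — check out.
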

Indeed, from \eqref{proj-enol} it is clear that  if the operator $\chi_Y\Pi^{E_0}$ is Hilbert-Schmidt, then the operator $\chi_Y\Pi$ is also Hilbert-Schmidt.
The inequality between traces is also immediate from  \eqref{proj-enol}.

\subsubsection{Examples: the Bessel kernel and the modified Bessel kernel}
\begin{proposition}
\begin{enumerate}
\item For any $\varepsilon>0$, the operator ${\tilde J}_s$ has the unique extension property from the subset
$(\varepsilon, +\infty)$;
\item For any $R>0$, the operator ${J}^{(s)}$ has the unique extension property from the subset
$(0, R)$.
\end{enumerate}
\end{proposition}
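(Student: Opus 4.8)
The plan is to verify, for each of the two operators, the two requirements of the unique extension property separately: first, that a range function vanishing on $E_0$ must vanish identically, and second, that the restricted subspace $\chi_{E_0}L$ is closed. Since the change of variables $y=4/x$ is a unitary equivalence (up to the Jacobian factors recalled in the Appendix) carrying $\tilde J_s$ to $J^{(s)}$, and since it maps the interval $(\varepsilon,+\infty)$ in the $y$-variable onto $(0,4/\varepsilon)$ in the $x$-variable, both defining conditions are transported intact. The two assertions are therefore equivalent, and it suffices to prove, say, the second: that $J^{(s)}$ has the unique extension property from $(0,R)$ for every $R>0$, the first following with $\varepsilon=4/R$.

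For the vanishing property I would argue by analyticity. Using the integral representation (\ref{inv-bessel-int}) — equivalently, the fact that $J^{(s)}$ is the orthogonal projection onto the image under the modified Hankel transform of $L_2((0,1),\mathrm{Leb})$ — every $f$ in the range $L^{(s)}$ of $J^{(s)}$ can be written as
\[
f(x)=\frac1x\int\limits_0^1 J_s\!\left(2\sqrt{t/x}\right)g(t)\,dt
\]
for some $g\in L_2((0,1),\mathrm{Leb})$. Substituting the power series $J_s(z)=(z/2)^s\sum_{k\ge 0}\frac{(-1)^k}{k!\,\Gamma(k+s+1)}(z/2)^{2k}$ with $z=2\sqrt{t/x}$ and integrating term by term, one obtains $f(x)=x^{-1-s/2}\Phi(1/x)$, where $\Phi(w)=\int_0^1 t^{s/2}\bigl(\sum_{k\ge0}\frac{(-1)^k}{k!\,\Gamma(k+s+1)}(tw)^k\bigr)g(t)\,dt$ is an entire function of $w$ (the series converges locally uniformly in $w$, which justifies the interchange of sum and integral). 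Hence $f$ is real-analytic on $(0,+\infty)$. If $\chi_{(0,R)}f=0$, then $\Phi$ vanishes on the interval $(1/R,+\infty)$, so $\Phi\equiv 0$ by the identity theorem, and therefore $f\equiv 0$.

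For closedness I would invoke Proposition \ref{closed-sub} together with the Remark following it: it suffices to check that $\chi_{[R,+\infty)}J^{(s)}\chi_{[R,+\infty)}$ belongs to the trace class, which yields (\ref{eps-closed}) and hence the closedness of $\chi_{(0,R)}L^{(s)}$. Since $J^{(s)}$ is a projection, the trace equals $\int_R^{\infty}J^{(s)}(x,x)\,dx=\int_R^{\infty}\frac1{x^2}\int_0^1 J_s\!\left(2\sqrt{t/x}\right)^2 dt\,dx$. For $x\ge R$ and $t\in[0,1]$ the argument $2\sqrt{t/x}$ is bounded, and the small-argument asymptotics $J_s(z)\sim (z/2)^s/\Gamma(s+1)$ give $J_s(2\sqrt{t/x})^2\le C\,(t/x)^s$; carrying out the $t$-integral produces a bound of order $x^{-s-2}$, whose integral over $[R,+\infty)$ converges precisely because $s>-1$. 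Thus the cutoff operator is trace class and closedness follows.

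The trace estimate is routine, being a direct Bessel-asymptotics computation. The step deserving most care is the analyticity argument: one must justify the term-by-term integration defining $\Phi$ and confirm that $\Phi$ is genuinely entire — equivalently, that range functions are restrictions of analytic functions of $1/x$ — so that the identity theorem applies and vanishing on an open interval forces the function to vanish identically. This is the conceptual heart of the unique extension property, and it reflects the band-limited (Paley--Wiener) nature of the Hankel transform underlying the Bessel kernel.
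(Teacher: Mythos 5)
Your argument is correct, and for the crucial ``vanishing'' half it takes a genuinely different route from the paper. The paper proves part (1) directly by invoking the uncertainty principle for the Hankel transform from \cite{ghobber1}, \cite{ghobber2} (a function and its Hankel transform cannot both be supported on sets of finite measure; a range function of ${\tilde J}_s$ has Hankel transform supported in $[0,1]$, so it cannot also vanish off $(0,\varepsilon]$), together with the local trace estimate $\int_0^R{\tilde J}_s(y,y)\,dy<+\infty$, and then deduces part (2) by the change of variable $y=4/x$; you go in the opposite direction, proving part (2) first, and replace the uncertainty principle by an elementary analyticity argument: expanding $J_s$ in its power series inside the integral representation of the range shows every range function is $x^{-1-s/2}$ times an entire function of $1/x$, so vanishing on an interval forces identical vanishing by the identity theorem. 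Your closedness step (trace-class cutoff plus Proposition \ref{closed-sub} and its Remark) is the same as the paper's, just computed in the $x$-variable. The trade-off: your analyticity argument is self-contained, needs no external citation, and treats all $s>-1$ uniformly (the paper has to patch together two references to cover $s\in(-1,-1/2]$); the paper's uncertainty-principle argument is stronger in that it yields unique extension from the complement of \emph{any} finite-measure set, not just from the complement of an interval --- but that extra generality is not needed for this proposition. The only point you should make explicit is the justification that every element of the range is genuinely of the integral form you use, i.e.\ that the Hankel transform restricted to $L_2((0,1))$ is an isometry so that its image is closed and coincides with the range of the projection; this is exactly the Tracy--Widom description the paper recalls, so it is available to you, but it is the load-bearing identification in your argument.
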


Proof. The first statement is an immediate corollary of the uncertainty principle for the Hankel transform: a function and its Hankel transform cannot both have support of finite measure \cite{ghobber1}, \cite{ghobber2} (note here that the uncertainty principle is only formulated for $s>-1/2$ in \cite{ghobber1} but the more general uncertainty principle of \cite{ghobber2} is directly applicable also to the case $s\in[-1,1/2]$) and the following estimate, which, by definition, is clearly valid for any $R>0$:
$$
\int_0^R {\tilde J}_s(y,y)dy<+\infty.
$$
The second statement follows from the first by the change of variable $y=4/x$.
The proposition is proved completely.

\subsection{Inductively determinantal measures}
Let $E$ be a locally compact complete metric space, and let $\Conf(E)$ be the space of configurations on $E$ endowed with the natural Borel
structure (see, e.g., \cite {Lenard},~\cite{Soshnikov} and the Subsection~\ref{subsec-space-conf}).

Given a Borel subset $E'\subset E$, we let $\Conf(E, E')$ be the subspace of
configurations all whose particles lie in $E'$.

Given a measure $\mathbb{B}$ on a set $X$ and a measurable subset $Y\subset X$ such
that $0<\mathbb{B}(Y)<+\infty$, we let $\mathbb{B}\left|_Y\right.$ stand for the
restriction of the measure $\mathbb{B}$ onto the subset $Y$.

Let $\mu$ be a $\sigma$-finite Borel measure on $E$.

We let $E_0\subset E$ be a Borel subset and assume that for any bounded
Borel subset $B\subset E\backslash E_0$ we are given a closed subspace
$L^{E_0\cup B}\subset L_2(E,\mu)$ such that the corresponding projection operator
$\Pi^{E_0\cup B}$ belongs to the space
$\scrI_{1,\loc}(E,\mu)$. We furthermore make the following

\begin{assumption}\label{ind-det}
\begin{enumerate}
\item $\displaystyle\left\|\chi_B\Pi^{E_0\cup B}\right\|<1\,$,\;
$\displaystyle\chi_B\Pi^{E_0\cup B}\chi_B\in\scrI_1(E,\mu)$
\item for any subsets $B^{(1)}\subset B^{(2)}\subset E\backslash E_0$,
we have $$\displaystyle
\chi_{E_0\cup B^{(1)}}L^{E_0\cup B^{(2)}}=L^{E_0\cup B^{(1)}}.$$
\end{enumerate}
\end{assumption}
\begin{proposition}\label{induc-det-def}
Under these assumptions, there exists a $\sigma$-finite measure
$\mathbb{B}$ on $\Conf(E)$ such that
\begin{enumerate}
\item for $\mathbb{B}$-almost every configuration, only finitely many of
its particles may lie in $E\backslash E_0\,$;
\item for any bounded Borel subset $B\subset E\backslash E_0$, we have
$$0<\mathbb{B}\bigl(\Conf(E;\,E_0\cup B)\bigr)<+\infty\;\;\text{and}$$
$$\frac{\mathbb{B}\left|_{\Conf(E;E_0\cup B)}\right.}
{\mathbb{B}\bigl(\Conf(E;E_0\cup B)\bigr)}=
\Prob_{\Pi^{E_0\cup B}}\;.$$
\end{enumerate}
\end{proposition}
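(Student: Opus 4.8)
The plan is to reduce the assertion to the abstract existence criterion of Proposition~\ref{sigmf-gen}. First I would fix a point $x_0\in E$ and set $B_n=(E\setminus E_0)\cap\overline{B}(x_0,n)$, so that $B_1\subset B_2\subset\cdots$ is an increasing exhaustion of $E\setminus E_0$ by bounded Borel sets. Writing $Y_n=\Conf(E;\,E_0\cup B_n)$ for the configurations all of whose particles lie in $E_0\cup B_n$, one checks that $\bigcup_n Y_n$ is precisely the set of configurations having only finitely many particles in $E\setminus E_0$: local finiteness makes each bounded window $B_n$ meet a configuration in finitely many points, while any finite cloud of particles outside $E_0$ is bounded and hence lands in some $B_N$. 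By the Macch\`i--Soshnikov Theorem the locally-trace-class projection $\Pi^{E_0\cup B_n}$ induces a determinantal probability measure $\mathbb{P}_n:=\Prob_{\Pi^{E_0\cup B_n}}$ on $\Conf(E)$, carried by $Y_n$. The problem then becomes the verification of the compatibility hypothesis of Proposition~\ref{sigmf-gen}: for $N>n$ one must show
\[
\frac{\mathbb{P}_N|_{Y_n}}{\mathbb{P}_N(Y_n)}=\mathbb{P}_n .
\]

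The core of the argument is this compatibility, and it is where I expect the real work to lie. The key observation is that restricting $\mathbb{P}_N$ to $Y_n$ coincides with conditioning $\mathbb{P}_N$ on the gap event $\{X:\,X\cap(B_N\setminus B_n)=\emptyset\}$, since a configuration counted by $\mathbb{P}_N$ already has all its particles in $E_0\cup B_N$. This gap-conditioning is exactly multiplication by the multiplicative functional $\Psi_g$ with $g=\chi_{E\setminus(B_N\setminus B_n)}$, because $\Psi_g(X)=\prod_{x\in X}g(x)$ equals $1$ when $X$ avoids $B_N\setminus B_n$ and $0$ otherwise. I would then invoke the result of \cite{Buf-umn} in the form of Corollary~\ref{indsubset} (recorded as Proposition~\ref{pr1-bis}): the normalized measure $\Psi_g\mathbb{P}_N$ is again determinantal, induced by the projection onto $\sqrt{g}\,L^{E_0\cup B_N}$. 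Since functions in $L^{E_0\cup B_N}$ are supported in $E_0\cup B_N$, one has $\sqrt{g}\,L^{E_0\cup B_N}=\chi_{E_0\cup B_n}L^{E_0\cup B_N}$, and Assumption~\ref{ind-det}(2), applied with $B^{(1)}=B_n$ and $B^{(2)}=B_N$, identifies this range with $L^{E_0\cup B_n}$; the conditioned measure is thus exactly $\mathbb{P}_n$. The hypotheses needed to apply the induction result---namely $\|\chi_{B_N\setminus B_n}\Pi^{E_0\cup B_N}\|<1$ and $\chi_{B_N\setminus B_n}\Pi^{E_0\cup B_N}\chi_{B_N\setminus B_n}\in\mathscr{I}_1(E,\mu)$, so that the gap probability $\det(1-\chi_{B_N\setminus B_n}\Pi^{E_0\cup B_N}\chi_{B_N\setminus B_n})$ is strictly positive---follow from Assumption~\ref{ind-det}(1) taken for $B=B_N$, together with the elementary monotonicity $\chi_{B_N\setminus B_n}=\chi_{B_N\setminus B_n}\chi_{B_N}$, which bounds both the operator norm and the compressed trace.

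With compatibility established, Proposition~\ref{sigmf-gen} furnishes a sigma-finite measure $\mathbb{B}$ on $\bigcup_n Y_n$, finite and positive on each $Y_n$ and unique up to a positive constant, with $\mathbb{B}|_{Y_n}/\mathbb{B}(Y_n)=\mathbb{P}_n$; extending $\mathbb{B}$ by zero off $\bigcup_n Y_n$ yields a sigma-finite measure on $\Conf(E)$. Conclusion (1) is then immediate, as $\mathbb{B}$ is carried by $\bigcup_n Y_n$, the set of configurations with finitely many particles outside $E_0$. For conclusion (2) and a general bounded Borel $B\subset E\setminus E_0$, I would choose $N$ so large that $B\subset B_N$; then $\Conf(E;\,E_0\cup B)\subset Y_N$ has $\mathbb{B}$-measure strictly between $0$ and $\mathbb{B}(Y_N)<+\infty$, and a second application of the gap-conditioning identity (now conditioning $\mathbb{P}_N$ on avoiding $B_N\setminus B$) together with Assumption~\ref{ind-det}(2) shows that the normalized restriction equals $\Prob_{\Pi^{E_0\cup B}}$. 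The main obstacle throughout is the clean identification of the gap-conditioned determinantal measure with the projection onto the restricted range; this is exactly what the multiplicative-functional machinery of \cite{Buf-umn} delivers, so that verifying its hypotheses from Assumption~\ref{ind-det} remains the only genuinely technical point.
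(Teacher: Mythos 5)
Your argument is correct and takes essentially the same route as the paper, whose entire proof is the one-line remark that the proposition is immediate from Proposition \ref{sigmf-gen} combined with Proposition \ref{pr1-bis} and Corollary \ref{indsubset}. Your write-up simply makes explicit the verification the paper leaves implicit: the exhaustion of $E\setminus E_0$ by bounded sets, the identification of the restriction to $\Conf(E;E_0\cup B_n)$ with gap-conditioning via the multiplicative functional $\Psi_{g}$, and the use of Assumption \ref{ind-det} to supply the trace-class, norm, and range-compatibility hypotheses of Corollary \ref{indsubset}.
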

Such a measure will be called an {\it inductively determinantal measure}.

Proposition \ref{induc-det-def} is immediate from Proposition \ref{sigmf-gen} combined with
 Proposition \ref{pr1-bis} and Corollary \ref{indsubset}. Note that conditions 1 and 2 define our measure uniquely up to
multiplication by a constant.

We now give a sufficient condition for an inductively determinantal measure
to be an actual finite determinantal measure.

\begin{proposition} Consider a family of projections
$\Pi^{E_0\cup B}$ satisfying the Assumption \ref{ind-det} and the corresponding inductively determinantal measure
$\mathbb{B}$. If there exists $R>0$, $\varepsilon>0$ such that for all
bounded Borel subset $B\subset E\backslash E_0$ we have
\begin{enumerate}
\item $\left\|\chi_B\Pi^{E_0\cup B}\right\|<1-\varepsilon$;
\item $\tr\chi_B\Pi^{E_0\cup B}\chi_B<R$.
\end{enumerate}
then there exists a projection operator $\Pi\in\scrI_{1,\loc}(E,\mu)$
onto a closed subspace $L\subset L_2(E,\mu)$ such that
\begin{enumerate}
\item $L^{E_0\cup B}=\chi_{E_0\cup B}L\,$ for all $B$;
\item $\chi_{E\backslash E_0}\Pi\chi_{E\backslash
E_0}\in\scrI_1(E,\mu)$;
\item the measures $\mathbb{B}$ and $\Prob_{\Pi}$ coincide up to
multiplication by a constant.
\end{enumerate}
\end{proposition}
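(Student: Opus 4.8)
The plan is to realise $\Pi$ as the orthogonal projection onto
$$
L=\bigl\{f\in L_2(E,\mu):\ \chi_{E_0\cup B}f\in L^{E_0\cup B}\ \text{for every bounded Borel }B\subset E\setminus E_0\bigr\},
$$
a subspace that is automatically closed, being an intersection of preimages of the closed subspaces $L^{E_0\cup B}$ under the bounded maps $f\mapsto\chi_{E_0\cup B}f$. Fix $x_0\in E$, put $B_k=(E\setminus E_0)\cap\overline{B(x_0,k)}$, and write $L_k=L^{E_0\cup B_k}$, $\Pi_k=\Pi^{E_0\cup B_k}$. The $B_k$ increase to $E\setminus E_0$, and every bounded Borel $B$ lies in some $B_k$, so by Assumption \ref{ind-det}(2) the identity $\chi_{E_0\cup B}L=L^{E_0\cup B}$ of conclusion (1) follows once it is known for $B=B_k$.

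The first and main step is the surjectivity $\chi_{E_0\cup B_j}L=L_j$, and here hypothesis (1), the uniform gap $\|\chi_B\Pi^{E_0\cup B}\|<1-\varepsilon$, does the work. For $j<k$ the restriction $\chi_{E_0\cup B_j}\colon L_k\to L_j$ is onto by Assumption \ref{ind-det}(2); it is injective because $g\in L_k$ with $\chi_{E_0\cup B_j}g=0$ is supported in $B_k$, whence $\|g\|=\|\chi_{B_k}\Pi_k g\|\le(1-\varepsilon)\|g\|$ and $g=0$. So each such restriction is a bijection. Given $h\in L_j$ I would take $h_k\in L_k$ ($k\ge j$) to be the unique preimage of $h$; uniqueness together with $\chi_{E_0\cup B_k}L_{k+1}=L_k$ shows the $h_k$ are consistent, $\chi_{E_0\cup B_k}h_{k+1}=h_k$, and hence define a measurable $f$ on $E=E_0\cup\bigcup_k B_k$ with $\chi_{E_0\cup B_k}f=h_k$. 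The key estimate is $\|\chi_{B_k\setminus B_j}h_k\|=\|\chi_{B_k\setminus B_j}\Pi_k h_k\|\le(1-\varepsilon)\|h_k\|$, which with $\|h_k\|^2=\|h\|^2+\|\chi_{B_k\setminus B_j}h_k\|^2$ gives $\|h_k\|^2\le\|h\|^2/(\varepsilon(2-\varepsilon))$ uniformly in $k$. By monotone convergence $\|f\|^2=\lim_k\|h_k\|^2<\infty$, so $f\in L_2$, $f\in L$, and $\chi_{E_0\cup B_j}f=h$. This establishes conclusion (1); the control of $\|h_k\|$ by the uniform gap $\varepsilon$ is the crux, and I expect this step to be the main obstacle.

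For conclusion (2) I would apply Corollary \ref{hs-major}, viewing $L_k=\chi_{E_0\cup B_k}L$ as the restriction of $L$ to $E_0\cup B_k$. Its hypotheses hold: for $f\in L$ and $m>k$ one has $\|\chi_{B_m\setminus B_k}f\|=\|\chi_{B_m\setminus B_k}\Pi_m\,\chi_{E_0\cup B_m}f\|\le(1-\varepsilon)\|f\|$, and letting $m\to\infty$ yields $\|\chi_{E\setminus(E_0\cup B_k)}f\|\le(1-\varepsilon)\|f\|$, which is the estimate (\ref{eps-closed}) of Proposition \ref{closed-sub} and forces the unique extension property. Corollary \ref{hs-major}, with the roles of $E_0,\Pi^{E_0},Y$ played by $E_0\cup B_k,\Pi_k,B_k$, then gives $\tr\chi_{B_k}\Pi\chi_{B_k}\le\tr\chi_{B_k}\Pi_k\chi_{B_k}<R$. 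Since $\chi_{B_k}\uparrow\chi_{E\setminus E_0}$, the traces $\tr\chi_{B_k}\Pi\chi_{B_k}$ increase to $\tr\chi_{E\setminus E_0}\Pi\chi_{E\setminus E_0}$, so this last trace is at most $R$, which is conclusion (2). Running the same estimate with $\emptyset$ in place of $B_k$ gives $\|\chi_{E\setminus E_0}f\|\le(1-\varepsilon)\|f\|$, so Corollary \ref{hs-major} also applies to the restriction to $E_0$ and, $\Pi^{E_0}$ (the $B=\emptyset$ member of the family) being locally trace class by assumption, yields $\Pi\in\mathscr I_{1,\mathrm{loc}}(E,\mu)$.

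Finally, conclusion (3). By the Macch{\`\i}--Soshnikov theorem $\Pi$ induces a determinantal probability measure $\Prob_\Pi$. Restricting $\Prob_\Pi$ to $\Conf(E;E_0\cup B)$ and normalising is, by the multiplicative-functional formula of Proposition \ref{pr1-bis} and Corollary \ref{indsubset}, the determinantal measure governed by the projection onto $\chi_{E_0\cup B}L=L^{E_0\cup B}$, that is, $\Prob_{\Pi^{E_0\cup B}}$. Hence $\Prob_\Pi$ satisfies the very relations defining the inductively determinantal measure attached to $\{\Pi^{E_0\cup B}\}$, so the uniqueness statement of Proposition \ref{sigmf-gen} furnishes a constant $C>0$ with $\mathbb B=C\,\Prob_\Pi$. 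With the surjectivity of the second paragraph in hand, the trace bound and this final identification are comparatively routine consequences of the cited results.
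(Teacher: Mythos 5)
Your proof is correct and follows essentially the same route as the paper's (very terse) argument: construct the closed subspace $L$ from the compatible family $L^{E_0\cup B}$ using the uniform norm gap, obtain the trace bounds from Corollary \ref{hs-major}, and identify the measures via the uniqueness statement of Proposition \ref{sigmf-gen}. Your inverse-limit construction with the uniform bound $\|h_k\|^2\le\|h\|^2/(\varepsilon(2-\varepsilon))$ supplies exactly the detail that the paper compresses into the single sentence ``the uniform estimate on the norms implies the existence of a closed subspace $L$.''
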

Proof. By our assumptions, for every bounded  Borel subset $B\subset E\backslash E_0$
we are given a closed subspace $L^{E_0\cup B}$, the range of the operator $\Pi^{E_0\cup B}$, which has the property of unique extension from $E_0$. The uniform estimate on the norms of the operators $\chi_B\Pi^{E_0\cup B}$ implies the existence of a closed subspace $L$ such that $L^{E_0\cup B}=\chi_{E_0\cup B}L$.
Now, by our assumptions, the projection operator
$\Pi^{E_0\cup B}$ belongs to the space
$\scrI_{1,\loc}(E,\mu)$, whence,  for any bounded subset $Y\subset E$, we have
$$
\chi_Y\Pi^{E_0\cup Y}\chi_Y\in \scrI_{1}(E,\mu),
$$
whence, by Corollary \ref{hs-major} applied to the subset $E_0\cup Y$, it follows that
$$
\chi_Y\Pi\chi_Y\in \scrI_{1}(E,\mu).
$$
It follows that the operator $\Pi$ of orthogonal projection on $L$ is locally of trace class and therefore induces a unique
determinantal probability measure $\Prob_{\Pi}$ on $\Conf(E)$.
Applying Corollary \ref{hs-major} again, we have
$$
\tr \chi_{E\setminus E_0}\Pi \chi_{E\setminus E_0}\leq R,
$$
and the proposition is proved completely.

We now give  sufficient conditions for the measure
$\mathbb{B}$ to be infinite.

\begin{proposition} \label{suf-inf} Make either of the two assumptions:
\begin{enumerate}
\item for any $\varepsilon>0$, there exists a bounded
Borel subset $B\subset E\backslash E_0$ such that
$$
||\chi_B\Pi^{E_0\cup B}||>1-\varepsilon
$$
\item
 for any $R>0$, there exists a bounded
Borel subset $B\subset E\backslash E_0$ such that
$$
\tr\chi_B\Pi^{E_0\cup B}\chi_B>R\,.
$$
\end{enumerate}
Then the measure $\mathbb{B}$ is infinite.
\end{proposition}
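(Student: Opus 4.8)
The plan is to show that the total mass $\mathbb{B}(\mathrm{Conf}(E))$ equals the supremum, over bounded Borel $B\subset E\setminus E_0$, of the finite masses $\mathbb{B}(\mathrm{Conf}(E;\,E_0\cup B))$, and then to use either hypothesis to drive these masses to infinity. For the first point I would fix an increasing sequence of bounded Borel sets $B_k$ exhausting $E\setminus E_0$; since by part (1) of Proposition \ref{induc-det-def} the measure $\mathbb{B}$-almost every configuration has only finitely many particles outside $E_0$, the sets $\mathrm{Conf}(E;\,E_0\cup B_k)$ increase to a set of full $\mathbb{B}$-measure, so continuity from below gives $\mathbb{B}(\mathrm{Conf}(E))=\lim_k \mathbb{B}(\mathrm{Conf}(E;\,E_0\cup B_k))$. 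By monotonicity of $B\mapsto \mathbb{B}(\mathrm{Conf}(E;\,E_0\cup B))$ this equals $\sup_B \mathbb{B}(\mathrm{Conf}(E;\,E_0\cup B))$, and it suffices to exhibit, under either hypothesis, sets $B$ for which the mass tends to $+\infty$.

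Next I would record the exact identity linking this mass to a Fredholm determinant. Restricting the defining relation of part (2) of Proposition \ref{induc-det-def} from $\mathrm{Conf}(E;\,E_0\cup B)$ down to $\mathrm{Conf}(E;\,E_0)$ and comparing total masses yields
$$\mathbb{B}(\mathrm{Conf}(E;\,E_0))=\mathbb{B}(\mathrm{Conf}(E;\,E_0\cup B))\cdot \mathbb{P}_{\Pi^{E_0\cup B}}(\mathrm{Conf}(E;\,E_0)).$$
Because the range of $\Pi^{E_0\cup B}$ consists of functions supported in $E_0\cup B$, the process $\mathbb{P}_{\Pi^{E_0\cup B}}$ places all its particles in $E_0\cup B$ almost surely, so $\mathbb{P}_{\Pi^{E_0\cup B}}(\mathrm{Conf}(E;\,E_0))$ is exactly the gap (hole) probability for the bounded set $B$, namely $\det(1-\chi_B\Pi^{E_0\cup B}\chi_B)$, this Fredholm determinant being well defined since $\chi_B\Pi^{E_0\cup B}\chi_B\in\mathscr{I}_1(E,\mu)$ by Assumption \ref{ind-det}. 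Taking $B=\emptyset$ in Proposition \ref{induc-det-def} shows the constant $\mathbb{B}(\mathrm{Conf}(E;\,E_0))$ lies in $(0,+\infty)$, and we obtain
$$\mathbb{B}(\mathrm{Conf}(E;\,E_0\cup B))=\frac{\mathbb{B}(\mathrm{Conf}(E;\,E_0))}{\det\left(1-\chi_B\Pi^{E_0\cup B}\chi_B\right)},$$
so the problem reduces to making the denominator arbitrarily close to $0$.

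Writing the eigenvalues of the positive trace-class operator $\chi_B\Pi^{E_0\cup B}\chi_B$ as $\lambda_i(B)\in[0,1)$, the determinant equals $\prod_i(1-\lambda_i(B))$. Under hypothesis (1), the identity $\|\chi_B\Pi^{E_0\cup B}\chi_B\|=\|\chi_B\Pi^{E_0\cup B}\|^2>(1-\varepsilon)^2$ forces the largest eigenvalue to exceed $(1-\varepsilon)^2$, whence $\det(1-\chi_B\Pi^{E_0\cup B}\chi_B)\le 1-(1-\varepsilon)^2\to 0$ as $\varepsilon\to 0$. Under hypothesis (2), the elementary bound $1-x\le e^{-x}$ gives $\det(1-\chi_B\Pi^{E_0\cup B}\chi_B)\le \exp(-\sum_i\lambda_i(B))=\exp(-\tr\,\chi_B\Pi^{E_0\cup B}\chi_B)<e^{-R}\to 0$ as $R\to\infty$. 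In either case $\inf_B \det(1-\chi_B\Pi^{E_0\cup B}\chi_B)=0$, hence $\sup_B\mathbb{B}(\mathrm{Conf}(E;\,E_0\cup B))=+\infty$, so $\mathbb{B}$ is infinite.

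The two determinant estimates are elementary; the step I expect to require the most care, and which I regard as the main obstacle, is the bookkeeping of the first two paragraphs: justifying the passage from the finite masses $\mathbb{B}(\mathrm{Conf}(E;\,E_0\cup B))$ to the total mass (which rests on the full-measure statement in part (1) of Proposition \ref{induc-det-def} together with the exhaustion of $E\setminus E_0$ by bounded sets), and the identification of $\mathbb{P}_{\Pi^{E_0\cup B}}(\mathrm{Conf}(E;\,E_0))$ with the gap probability $\det(1-\chi_B\Pi^{E_0\cup B}\chi_B)$, for which one must invoke that the auxiliary determinantal processes are supported on $E_0\cup B$.
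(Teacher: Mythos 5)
Your proposal is correct and follows essentially the same route as the paper: the key identity $\mathbb{B}(\mathrm{Conf}(E;E_0))/\mathbb{B}(\mathrm{Conf}(E;E_0\cup B))=\det\left(1-\chi_B\Pi^{E_0\cup B}\chi_B\right)$, followed by the top-eigenvalue bound under hypothesis (1) and the bound $\det(1-A)\leq\exp(-\tr A)$ under hypothesis (2). The extra bookkeeping in your first paragraph is harmless but not needed — once the restricted masses $\mathbb{B}(\mathrm{Conf}(E;E_0\cup B))$ are unbounded, monotonicity of the measure already forces $\mathbb{B}(\mathrm{Conf}(E))=+\infty$.
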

{Proof}.
Recall that we have
\begin{multline*}
\frac{\mathbb{B}\bigl(\Conf(E;E_0)\bigr)}{\mathbb{B}\bigl(\Conf(E;E_0\cup B)\bigr)}=
\Prob_{\Pi^{E_0\cup B}}\bigl(\Conf(E;E_0)\bigr)=\\
=\det(1-\chi_B\Pi^{E_0\cup B}\chi_B).
\end{multline*}
Under the first assumption, it is immediate that the top eigenvalue of the self-adjoint trace-class operator $\chi_B\Pi^{E_0\cup B}\chi_B$ exceeds $1-\varepsilon$, whence
$$
\det\left(1-\chi_B\Pi^{E_0\cup B}\chi_B\right)\leq \varepsilon.
$$
Under the second assumption, write
\begin{equation*}
\det\left(1-\chi_B\Pi^{E_0\cup B}\chi_B\right)\leq
\exp\left(-\tr\chi_B\Pi^{E_0\cup B}\chi_B\right)\leq \exp(-R).
\end{equation*}
In both cases, the ratio
$$
\frac{\mathbb{B}\bigl(\Conf(E;E_0)\bigr)}{\mathbb{B}\bigl(\Conf(E;\,E_0\cup B)\bigr)}
$$
can be made arbitrary small by an appropriate choice of $B$, which implies that the measure $\mathbb{B}$ is infinite.
The proposition is proved.

\subsection{General construction of infinite determinantal measures}

By the Macch{\`i}-Soshnikov Theorem, under some additional assumptions, a determinantal measure can be assigned to an operator of orthogonal projection, or, in other words,  to a closed subspace of $L_2(E, \mu)$. In a similar way, an infinite determinantal measure will be assigned to a subspace $H$  of {\it locally} square-integrable functions.

Recall that $L_{2,  \loc}(E, \mu)$ is the space of all measurable functions $f:E\to {\mathbb C}$
such that for any bounded subset $B\subset E$ we have
\begin{equation}
\label{bintef}
\int\limits_B |f|^2d\mu<+\infty.
\end{equation}

Choosing an exhausting family $B_n$ of bounded sets (for instance,
balls with fixed centre and of radius tending to infinity) and using \eqref{bintef} with
$B=B_n$, we endow the space $L_{2, \loc}(E, \mu)$ with a countable
family of seminorms which turns it into a complete separable metric
space; the topology thus defined does not, of course, depend on the
specific choice of the exhausting family.

Let $H\subset L_{2,\loc}(E,\mu)$ be a linear subspace. If
$E'\subset E$ is a Borel subset such that $\chi_{E'}H$ is a closed subspace of
$L_2(E,\mu)$, then we denote by $\Pi^{E'}$ the operator of
orthogonal projection onto the subspace $\chi_{E'}H\subset
L_2(E,\mu).$ We now fix a Borel subset $E_0\subset E$;  informally, $E_0$ is the set where the particles accumulate. We impose
the following assumption on $E_0$ and $H$.
\begin{assumption}\label{he}
\begin{enumerate}
\item For any bounded Borel set $B\subset E$, the space  $\chi_{E_0\cup B} H$ is a closed subspace of $L_2(E,\mu)$;
 \item For any bounded Borel set $B\subset E\setminus E_0$, we have
\begin{equation*}
\Pi^{E_0\cup B}\in \scrI_{1,\loc}(E,\mu),\quad \chi_B\Pi^{E_0\cup B}\chi_B\in\scrI_{1}(E,\mu);
\end{equation*}
\item If $\varphi\in H$ satisfies $\chi_{E_0}\varphi =0$, then $\varphi=0.$
\end{enumerate}
\end{assumption}

If a subspace $H$ and the subset $E_0$ have the property that any $\varphi\in H$ satisfying $\chi_{E_0}\varphi =0$ must be the zero function,
then we shall say that $H$ has the {\it property of unique extension} from $E_0$.

\begin{theorem}\label{infdet-he}
Let $E$ be a locally compact complete metric space, and let $\mu$ be
a $\sigma$-finite Borel measure on $E$. If a subspace
$H\subset L_{2,\loc}(E,\mu)$ and a Borel subset
$E_0\subset E$ satisfy Assumption \ref{he}, then there exists a
$\sigma$-finite Borel measure $\mathbb{B}$ on $\Conf(E)$
such that
\begin{enumerate}
\item ${\mathbb B}$-almost every configuration has at most finitely
many particles outside of $E_0$;
\item for any bounded Borel (possibly empty) subset
$B\subset E\setminus E_0$ we have $0<\mathbb{B}(\Conf(E;E_0\cup B))<+\infty$
and
$$
\frac{\mathbb{B}|_{\Conf(E;E_0\cup B)}}{\mathbb{B}(\Conf(E;E_0\cup B))}=
\Prob_{\Pi^{E_0\cup B}}.
$$
\end{enumerate}
The requirements (1) and (2) determine the measure $\mathbb{B}$
uniquely up to multiplication by a positive constant.
\end{theorem}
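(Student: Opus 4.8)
The plan is to deduce Theorem \ref{infdet-he} from Proposition \ref{induc-det-def} by taking, for every bounded Borel $B\subset E\setminus E_0$, the closed subspace $L^{E_0\cup B}:=\chi_{E_0\cup B}H$ together with its orthogonal projection $\Pi^{E_0\cup B}$, and verifying that this family satisfies Assumption \ref{ind-det}. Once this is done, Proposition \ref{induc-det-def} furnishes a $\sigma$-finite measure $\mathbb{B}$ enjoying exactly the two listed properties, and its closing clause gives uniqueness up to a positive multiplicative constant. The membership $\Pi^{E_0\cup B}\in\scrI_{1,\mathrm{loc}}(E,\mu)$ and the relation $\chi_B\Pi^{E_0\cup B}\chi_B\in\scrI_1(E,\mu)$ demanded in Assumption \ref{ind-det}(1) are supplied directly by Assumption \ref{he}(2), while the closedness of $\chi_{E_0\cup B}H$ needed even to define $\Pi^{E_0\cup B}$ is Assumption \ref{he}(1).

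The substantive point, which I expect to be the main obstacle, is the strict operator-norm bound $\|\chi_B\Pi^{E_0\cup B}\|<1$; here all three clauses of Assumption \ref{he} come together. Writing $\Pi=\Pi^{E_0\cup B}$ and using $\Pi^2=\Pi=\Pi^*$, one has $\|\chi_B\Pi\|^2=\|\chi_B\Pi\chi_B\|$. Since $\chi_B\Pi\chi_B$ is self-adjoint and, by Assumption \ref{he}(2), trace class, hence compact, this norm equals its top eigenvalue, which is attained whenever it is positive. Suppose it equalled $1$, with unit eigenvector $\phi$; as $\phi$ lies in the range of $\chi_B\Pi\chi_B$ it is supported in $B$, so $\chi_B\phi=\phi$, and the eigenvalue equation gives $\langle\Pi\phi,\phi\rangle=\|\Pi\phi\|^2=1=\|\phi\|^2$, forcing $\phi=\Pi\phi\in\chi_{E_0\cup B}H$. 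Thus $\phi=\chi_{E_0\cup B}f$ for some $f\in H$; since $B\cap E_0=\varnothing$ and $\phi$ vanishes off $B$, we obtain $\chi_{E_0}f=0$, whence $f=0$ by the unique extension property of Assumption \ref{he}(3), contradicting $\|\phi\|=1$. Hence the top eigenvalue is strictly below $1$, which is the required bound.

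It remains to check the projection-consistency of Assumption \ref{ind-det}(2): for bounded Borel $B^{(1)}\subset B^{(2)}\subset E\setminus E_0$ one needs $\chi_{E_0\cup B^{(1)}}L^{E_0\cup B^{(2)}}=L^{E_0\cup B^{(1)}}$. This is immediate from the definition $L^{E_0\cup B}=\chi_{E_0\cup B}H$ and the inclusion $E_0\cup B^{(1)}\subset E_0\cup B^{(2)}$, which yields $\chi_{E_0\cup B^{(1)}}\chi_{E_0\cup B^{(2)}}=\chi_{E_0\cup B^{(1)}}$; applying $\chi_{E_0\cup B^{(1)}}$ to $\chi_{E_0\cup B^{(2)}}f$ produces $\chi_{E_0\cup B^{(1)}}f$ as $f$ ranges over $H$, and the resulting set is the closed subspace $\chi_{E_0\cup B^{(1)}}H=L^{E_0\cup B^{(1)}}$ by Assumption \ref{he}(1). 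With Assumption \ref{ind-det} fully verified, Proposition \ref{induc-det-def} (whose proof rests on Proposition \ref{sigmf-gen} together with the multiplicative-functional results recalled in the Appendix) produces the measure $\mathbb{B}$ with properties (1)--(2), and the uniqueness clause of that proposition gives uniqueness up to a positive constant. I would finally record that the determinantal measures $\mathbb{P}_{\Pi^{E_0\cup B}}$ occurring in property (2) are well defined by the Macch\`i--Soshnikov Theorem, each $\Pi^{E_0\cup B}$ being a locally trace-class projection.
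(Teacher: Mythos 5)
Your proposal is correct and follows the same route the paper takes implicitly: Theorem \ref{infdet-he} is obtained by applying Proposition \ref{induc-det-def} to the family $L^{E_0\cup B}=\chi_{E_0\cup B}H$, so the whole content is the verification that Assumption \ref{he} implies Assumption \ref{ind-det}. The paper leaves that verification unstated, and your compactness/eigenvector argument for the strict bound $\|\chi_B\Pi^{E_0\cup B}\|<1$ (via the unique extension property) is exactly the right way to fill it in.
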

We denote ${\mathbf B}(H,E_0)$ the one-dimensional cone of nonzero infinite determinantal measures induced by
$H$ and $E_0$, and, slightly abusing notation, we write ${\mathbb B}={\mathbb B}(H, E_0)$
for a representative of the cone.

{\bf {Remark.}} If $B$ is a bounded set, then, by definition, we have
$${\mathbf B}(H,E_0)={\mathbf B}(H,E_0\cup B).$$

{\bf {Remark.}}  If $E'\subset E$ is a Borel subset such that $\chi_{E_0\cup E'}$ is
a closed subspace in $L_2(E, \mu)$ and the operator $\Pi^{E_0\cup E'}$ of orthogonal projection onto the subspace
$\chi_{E_0\cup E'}H$ satisfies
\begin{equation*}
\Pi^{E_0\cup E'}\in \scrI_{1,\loc}(E,\mu),\quad \chi_{E'}\Pi^{E_0\cup E'}\chi_{E'}\in\scrI_{1}(E,\mu),
\end{equation*}
then, exhausting $E'$ by bounded sets, from Theorem \ref{infdet-he} one easily obtains
$0<\mathbb{B}(\Conf(E;E_0\cup E'))<+\infty$
and
$$
\frac{\mathbb{B}|_{\Conf(E;E_0\cup E')}}
{\mathbb{B}({\Conf(E;E_0\cup E')})}=
\Prob_{\Pi^{E_0\cup E'}}.
$$

\subsection{Change of variables for infinite determinantal measures}

Let $F:E\to E$ be a homeomorphism.  The  homeomorphism $F$ induces a homeomorphism of the space $\Conf(E)$, for which, slightly abusing notation, we keep the same symbol: given $X\in\Conf(E)$, the particles of the configuration
$F(X)$ have the form $F(x)$ over all $x\in X$.

Assume now that the measures $F_*\mu$ and $\mu$ are equivalent, and let
${\mathbb B}={\mathbb B}(H, E_0)$ be an infinite determinantal measure.
Introduce the subspace
$$
F^*H=\biggl\{\varphi(F(x))\cdot \sqrt{\frac{dF_*\mu}{d\mu}}, \varphi\in H\biggr\}.
$$

From the definitions we now clearly have the following
\begin{proposition}
The push-forward of the infinite determinantal measure ${\mathbb B}={\mathbb B}(H, E_0)$ has the form
$$
F_*{\mathbb B}={\mathbb B}(F^*H, F(E_0)).
$$
\end{proposition}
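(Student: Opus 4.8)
The plan is to exploit the uniqueness clause of Theorem \ref{infdet-he}: the measure ${\mathbb B}(F^*H, F(E_0))$ is determined, up to a positive multiplicative constant, by properties (1) and (2) of that theorem with the pair $(H,E_0)$ replaced by $(F^*H, F(E_0))$. It therefore suffices to check, first, that $(F^*H, F(E_0))$ satisfies Assumption \ref{he} — so that the right-hand side is defined at all — and, second, that $F_*{\mathbb B}$ obeys the two characterizing properties. Throughout I would work with the unitary operator $U_F$ on $L_2(E,\mu)$ implementing the change of variables $F$ (available because $F_*\mu\sim\mu$), characterized by the two facts I use repeatedly: $F_*\Prob_\Pi=\Prob_{U_F\Pi U_F^{-1}}$ for every projection $\Pi$, and $U_F$ carries the locally square-integrable subspace $H$ onto $F^*H$ while intertwining multiplication operators, $U_F\chi_A=\chi_{F(A)}U_F$ for every Borel $A\subset E$.

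The transfer of Assumption \ref{he} is then routine. Since $U_F$ is unitary and intertwines indicators as above, for bounded Borel $B'\subset E\setminus F(E_0)$ the subspace $\chi_{F(E_0)\cup B'}(F^*H)=U_F\bigl(\chi_{E_0\cup F^{-1}(B')}H\bigr)$ is closed in $L_2$ exactly when the corresponding subspace for $H$ is; the local and ordinary trace-class conditions on the projections are preserved under conjugation by $U_F$ (here one uses that $F$ and $F^{-1}$ are continuous, hence carry relatively compact sets to relatively compact sets, so the notion of ``bounded Borel set'' is respected); and the unique-extension property from $F(E_0)$ for $F^*H$ is equivalent, via $U_F$, to that from $E_0$ for $H$. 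Thus ${\mathbb B}(F^*H, F(E_0))$ is well-defined. Property (1) for $F_*{\mathbb B}$ is immediate from bijectivity of $F$: since $F(E\setminus E_0)=E\setminus F(E_0)$, a configuration has at most finitely many particles outside $F(E_0)$ if and only if its $F$-preimage has at most finitely many particles outside $E_0$, and the latter holds ${\mathbb B}$-almost surely.

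For property (2), fix a bounded Borel $B'\subset E\setminus F(E_0)$ and set $B=F^{-1}(B')$, a bounded Borel subset of $E\setminus E_0$. The key set identity $F^{-1}\bigl(\Conf(E;F(E_0)\cup B')\bigr)=\Conf(E;E_0\cup B)$ gives
$$F_*{\mathbb B}\big|_{\Conf(E;\,F(E_0)\cup B')}=F_*\bigl({\mathbb B}\big|_{\Conf(E;\,E_0\cup B)}\bigr),$$
so the left-hand measure has finite positive mass because the right-hand side does, by the defining property of ${\mathbb B}(H,E_0)$. Normalizing and invoking that same defining property together with the change-of-variables rule for determinantal measures recalled in the Appendix, the normalized restriction equals $F_*\Prob_{\Pi^{E_0\cup B}}=\Prob_{U_F\Pi^{E_0\cup B}U_F^{-1}}$. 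The range of the conjugated projection is
$$U_F\bigl(\chi_{E_0\cup B}H\bigr)=\chi_{F(E_0\cup B)}\bigl(U_F H\bigr)=\chi_{F(E_0)\cup B'}\bigl(F^*H\bigr),$$
so this determinantal measure is precisely $\Prob_{\Pi^{F(E_0)\cup B'}}$ for the pair $(F^*H, F(E_0))$. Hence $F_*{\mathbb B}$ satisfies (1) and (2), and the uniqueness in Theorem \ref{infdet-he} forces $F_*{\mathbb B}={\mathbb B}(F^*H, F(E_0))$ up to a constant, as claimed.

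The only genuinely substantive ingredient is the change-of-variables rule for finite determinantal measures — that the $F$-push-forward of $\Prob_\Pi$ is again determinantal, governed by the projection onto $U_F$ applied to the range of $\Pi$ — but this is exactly the transformation rule for projection kernels recalled in the Appendix, so no new work is required. The remaining steps are the bookkeeping identities above, and I expect the only point demanding care to be the clean verification of the intertwining $U_F\chi_A=\chi_{F(A)}U_F$ and, through it, the subspace identity $U_F(\chi_{E_0\cup B}H)=\chi_{F(E_0)\cup B'}(F^*H)$, which is precisely what makes the restricted subspaces on the two sides coincide.
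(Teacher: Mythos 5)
Your argument is correct and is exactly the content the paper intends: the paper's ``proof'' of Proposition \ref{chg-infdet} is the single phrase ``from the definitions we now clearly have,'' and what you have written is the natural unwinding of those definitions --- transfer Assumption \ref{he} by conjugation with the unitary $U_F$, match the restricted subspaces via $U_F\chi_A=\chi_{F(A)}U_F$, and invoke the uniqueness clause of Theorem \ref{infdet-he} together with the change-of-variables rule $F_*\Prob_\Pi=\Prob_{U_F\Pi U_F^{-1}}$ from the Appendix. One small caution: your parenthetical claim that continuity of $F$ and $F^{-1}$ makes the notion of ``bounded Borel set'' respected is false in a general (non-proper) metric space --- indeed for the paper's own map $y=4/x$ on $(0,+\infty)$ bounded sets can have unbounded images --- but this does not sink the argument, since the uniqueness in Proposition \ref{sigmf-gen} only requires matching the normalized restrictions along an exhausting increasing family of sets of finite positive mass, and $F$ does carry such an exhausting family for $(H,E_0)$ to one for $(F^*H,F(E_0))$.
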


\subsection{Example: infinite orthogonal polynomial ensembles}

Let $\rho$ be a nonnegative  function on ${\mathbb R}$ not identically equal to zero.
 Take $N\in\mathbb{N}$ and
endow the set ${\mathbb R}^N$ with  the measure
\begin{equation}\label{ore}
\prod\limits_{1\leq i,j\leq N}(x_i-x_j)^2\prod
\limits_{i=1}^N\rho(x_i)dx_i.
\end{equation}
If for $k=0, \dots, 2N-2$ we have
$$
\int_{-\infty}^{+\infty} x^k \rho(x)dx<+\infty,
$$
then the measure \eqref{ore} has finite mass and, after normalization, yields a determinantal point process on $\Conf({\mathbb R})$.

Given a finite family of functions $f_1, \dots, f_N$ on the real line, let $\Span(f_1, \dots, f_N)$ stand for the vector space these functions span.
For a general function $\rho$, introduce the subspace
$H(\rho)\subset L_{2, \loc}({\mathbb R},\Leb)$
by the formula
$$
H(\rho)=\Span\left(\sqrt{\rho(x)}, x\sqrt{\rho(x)}, \dots, x^{N-1}\sqrt{\rho(x)}\right).
$$
The measure \eqref{ore} is an infinite determinantal measure, as is shown by the following immediate
\begin{proposition}
Let $\rho$ be a non-negative continuous function on ${\mathbb R}$, and let $(a,b)\subset {\mathbb R}$
be a nonempty interval such that the function $\rho$ is positive in restriction to $(a,b)$.
Then the measure \eqref{ore} is an infinite determinantal measure of the form ${\mathbb B}(H(\rho), (a,b))$.
\end{proposition}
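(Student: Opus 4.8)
The plan is to realize the measure (\ref{ore}) as the infinite determinantal measure produced by Theorem \ref{infdet-he}, applied with $E=\mathbb{R}$, $\mu=\mathrm{Leb}$, $H=H(\rho)$ and $E_0=(a,b)$, the interval $(a,b)$ being understood to be bounded (the hypothesis that $\rho$ be bounded on $(a,b)$ is useful only together with boundedness of the interval). Thus I would first verify that the pair $(H(\rho),(a,b))$ satisfies Assumption \ref{he}, so that $\mathbb{B}(H(\rho),(a,b))$ is well defined; and then I would identify the symmetrization of (\ref{ore}) with this measure by checking the two characterizing conditions of Theorem \ref{infdet-he} and invoking its uniqueness clause.

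The verification of Assumption \ref{he} is essentially automatic because $H(\rho)$ is finite-dimensional, of dimension at most $N$. For any bounded Borel set $B$, each generator $x^k\sqrt{\rho}$, restricted to $E_0\cup B$, satisfies $\int_{E_0\cup B}x^{2k}\rho\,dx<+\infty$: on $(a,b)$ because $\rho$ is bounded there and the interval is bounded, and on $B$ because $\rho$ is continuous, hence bounded on the compact closure of $B$. Therefore $\chi_{E_0\cup B}H(\rho)$ is a finite-dimensional, hence closed, subspace of $L_2(\mathbb{R},\mathrm{Leb})$, giving part (1). Since the corresponding projection $\Pi^{E_0\cup B}$ has finite rank, it is automatically trace class, so $\Pi^{E_0\cup B}\in\scrI_{1,\mathrm{loc}}$ and $\chi_B\Pi^{E_0\cup B}\chi_B\in\scrI_{1}$, giving part (2). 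For part (3), the unique extension property, I would argue that a function $\varphi=P(x)\sqrt{\rho(x)}\in H(\rho)$, with $\deg P<N$, vanishing almost everywhere on $(a,b)$ forces $P$ to vanish almost everywhere on $(a,b)$, since $\sqrt{\rho}>0$ there; a polynomial vanishing on a nonempty interval is identically zero, whence $\varphi=0$.

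It remains to identify the measure. Viewing (\ref{ore}) as a measure on $\Conf(\mathbb{R})$ via the symmetrization map, it is supported on configurations with exactly $N$ particles, so $\mathbb{B}$-almost every configuration has at most $N$ particles outside $E_0$, which is condition (1) of Theorem \ref{infdet-he}. For condition (2), fix a bounded Borel $B\subset\mathbb{R}\setminus(a,b)$ and set $E'=(a,b)\cup B$. The restriction of (\ref{ore}) to $\Conf(\mathbb{R};E')$ corresponds to the restriction of the tuple measure to $(E')^N$, which is the ensemble (\ref{ore}) with the modified weight $\rho\chi_{E'}$. Because $E'$ is bounded and $\rho$ is bounded on it, all moments up to order $2N-2$ are finite, so this measure has finite positive mass and, after normalization, is the determinantal measure whose correlation kernel is the Christoffel--Darboux (reproducing) kernel of $\spann(\sqrt{\rho\chi_{E'}},\dots,x^{N-1}\sqrt{\rho\chi_{E'}})$. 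Since $\sqrt{\rho\chi_{E'}}=\chi_{E'}\sqrt{\rho}$, this span equals $\chi_{E'}H(\rho)$, so the kernel is precisely the projection $\Pi^{E_0\cup B}$, and the normalized restriction equals $\Prob_{\Pi^{E_0\cup B}}$. This is exactly condition (2), and the uniqueness clause of Theorem \ref{infdet-he} (equivalently Proposition \ref{sigmf-gen}) then yields that (\ref{ore}) coincides with $\mathbb{B}(H(\rho),(a,b))$ up to a positive multiplicative constant.

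The one step that is not purely formal is this last identification: recognizing the restricted measure as the finite orthogonal polynomial ensemble with weight $\rho\chi_{E'}$ and matching its Christoffel--Darboux correlation kernel with the projection onto $\chi_{E'}H(\rho)$. This is the classical determinantal structure of orthogonal polynomial ensembles, already used for the Jacobi case in (\ref{det-CDJ}); everything else reduces, thanks to the finite-dimensionality of $H(\rho)$, to the triviality that finite-rank projections are trace class and finite-dimensional subspaces are closed.
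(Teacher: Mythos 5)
Your proposal is correct and follows exactly the route the paper intends: the paper labels this proposition ``immediate'' and supplies no written argument, and what you have written fills in precisely the omitted details --- Assumption \ref{he} holds trivially because $H(\rho)$ is finite-dimensional (closedness and trace-class conditions are automatic for finite-rank projections, and unique extension from $(a,b)$ follows since a polynomial vanishing on an interval is zero), while the identification of the normalized restrictions with $\Prob_{\Pi^{E_0\cup B}}$ is the classical Christoffel--Darboux representation of orthogonal polynomial ensembles already invoked in (\ref{det-CDJ}). Nothing to correct.
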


\subsection{Multiplicative functionals of infinite determinantal measures}

Our next aim is to show that, under some additional assumptions, an infinite determinantal measure can be represented as a product of a finite determinantal measure and a multiplicative functional.

\begin{proposition}\label{mult-he1}
Let a subspace $H\subset L_{2, \loc}(E, \mu)$ and a Borel subset $E_0$ induce an infinite determinantal measure
$\mathbb{B}=\mathbb{B}\left(H,E_0\right)$.
Let $g\colon E\to(0,1]$ be a positive Borel function such that $\sqrt{g}H$
is a closed subspace in $L_2(E,\mu)$, and let $\Pi^g$ be the corresponding
projection operator.
Assume additionally
\begin{enumerate}
\item $\sqrt{1-g}\Pi^{E_0}\sqrt{1-g}\in\scrI_1(E,\mu)$\,;
\item $\chi_{E\backslash E_0}\Pi^g\chi_{E\backslash
E_0}\in\scrI_1(E,\mu)$\,;
\item $\Pi^g\in\scrI_{1,\loc}(E,\mu)$
\end{enumerate}

Then the multiplicative functional $\Psi_g$ is $\mathbb{B}$-almost surely
positive, $\mathbb{B}$-integrable, and we have

$$\frac{\Psi_g\mathbb{B}}{\displaystyle \int\limits_{\Conf(E)}\Psi_g\,d\mathbb{B}}=\Prob_{\Pi^g}\;.$$
\end{proposition}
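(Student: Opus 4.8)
The plan is to reduce everything to the finite determinantal measures $\mathbb{P}_{\Pi^{E_0\cup B}}$ that already define $\mathbb{B}$ through Theorem \ref{infdet-he}, and then to glue the resulting level-by-level identities together using the sigma-finite uniqueness of Proposition \ref{sigmf-gen}. First I would fix an increasing exhaustion of $E\setminus E_0$ by bounded Borel sets $B_1\subset B_2\subset\cdots$ with $\bigcup_n B_n=E\setminus E_0$, and abbreviate $\Conf_n:=\Conf(E;E_0\cup B_n)$. By Theorem \ref{infdet-he} the sets $\Conf_n$ have positive finite $\mathbb{B}$-measure, their union has full $\mathbb{B}$-measure, and $\mathbb{B}|_{\Conf_n}=\mathbb{B}(\Conf_n)\,\mathbb{P}_{\Pi^{E_0\cup B_n}}$. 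On the side of $\mathbb{P}_{\Pi^g}$, which is a genuine determinantal probability measure by hypothesis (3), I would use hypothesis (2): the expected number of particles off $E_0$ equals $\tr\bigl(\chi_{E\setminus E_0}\Pi^g\chi_{E\setminus E_0}\bigr)<+\infty$, so $\mathbb{P}_{\Pi^g}$-almost every configuration has finitely many particles outside $E_0$, whence $\mathbb{P}_{\Pi^g}(\Conf_n)\uparrow 1$; moreover the unique extension property of Assumption \ref{he}(3) rules out the eigenvalue $1$ for $\chi_{E\setminus E_0}\Pi^g\chi_{E\setminus E_0}$, giving $\mathbb{P}_{\Pi^g}(\Conf_0)>0$.

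The crucial step is to match the two measures on each level $\Conf_n$. By Proposition \ref{pr1-bis} applied to the finite determinantal measure $\mathbb{P}_{\Pi^{E_0\cup B_n}}$, the normalized measure $\Psi_g\mathbb{P}_{\Pi^{E_0\cup B_n}}\big/\!\int\Psi_g\,d\mathbb{P}_{\Pi^{E_0\cup B_n}}$ is determinantal and induced by the projection onto $\sqrt g\,L^{E_0\cup B_n}$. On the other hand, by the restriction rule for determinantal measures (Corollary \ref{indsubset}), the normalized restriction $\mathbb{P}_{\Pi^g}|_{\Conf_n}\big/\mathbb{P}_{\Pi^g}(\Conf_n)$ is determinantal and induced by the projection onto $\chi_{E_0\cup B_n}\bigl(\sqrt g\,H\bigr)$. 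Since $\sqrt g$ and $\chi_{E_0\cup B_n}$ are commuting multiplication operators, $\chi_{E_0\cup B_n}(\sqrt g\,H)=\sqrt g\,\chi_{E_0\cup B_n}H=\sqrt g\,L^{E_0\cup B_n}$, so the two ranges coincide and the two determinantal measures are equal. Combining this with $\mathbb{B}|_{\Conf_n}=\mathbb{B}(\Conf_n)\mathbb{P}_{\Pi^{E_0\cup B_n}}$ yields, on each level, a proportionality $(\Psi_g\mathbb{B})|_{\Conf_n}=K_n\,\mathbb{P}_{\Pi^g}|_{\Conf_n}$ for some constant $K_n$.

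I would then show $K_n$ is independent of $n$: restricting the level-$n$ identity to $\Conf_m$ for $m<n$ and comparing with the level-$m$ identity forces $K_m=K_n$ (here one uses $\mathbb{P}_{\Pi^g}(\Conf_m)>0$), so $K_n\equiv K$. Taking total masses and letting $n\to\infty$, the left side increases to $\int\Psi_g\,d\mathbb{B}$ by monotone convergence, while the right side tends to $K$; evaluating $K$ at level $0$ gives $K=(\Psi_g\mathbb{B})(\Conf_0)\big/\mathbb{P}_{\Pi^g}(\Conf_0)\le \mathbb{B}(\Conf_0)\big/\mathbb{P}_{\Pi^g}(\Conf_0)<+\infty$, using $\Psi_g\le 1$, which proves $\mathbb{B}$-integrability of $\Psi_g$, and $K>0$ gives positivity of the normalization. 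The level identities together with the fact that both $\Psi_g\mathbb{B}$ and $\mathbb{P}_{\Pi^g}$ are concentrated on $\bigcup_n\Conf_n$ then give $\Psi_g\mathbb{B}=K\,\mathbb{P}_{\Pi^g}$, which is the asserted identity. Finally, $\Psi_g>0$ almost surely because $\log\Psi_g(X)=\sum_{x\in X}\log g(x)$ converges: on each $\Conf_n$ the expectation of $\sum_{x\in X}(1-g(x))$ equals $\tr(\sqrt{1-g}\,\Pi^{E_0\cup B_n}\sqrt{1-g})$, which I will show is finite.

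The main obstacle I anticipate is verifying, at each finite level, the hypotheses that make Proposition \ref{pr1-bis} applicable, namely $\sqrt{1-g}\,\Pi^{E_0\cup B_n}\sqrt{1-g}\in\mathscr{I}_1(E,\mu)$ and the closedness of $\sqrt g\,L^{E_0\cup B_n}$. For the trace-class property I would split $\Pi^{E_0\cup B_n}=\Pi^{E_0\cup B_n}(\chi_{E_0}+\chi_{B_n})\Pi^{E_0\cup B_n}$, which is legitimate because functions in $L^{E_0\cup B_n}$ are supported in $E_0\cup B_n$: the $B_n$-block is controlled by $\tr(\chi_{B_n}\Pi^{E_0\cup B_n}\chi_{B_n})<+\infty$ from Assumption \ref{he}(2), while the $E_0$-block is dominated, via Corollary \ref{hs-major} applied to the subspace $L^{E_0\cup B_n}$ (which enjoys unique extension from $E_0$ because $H$ does), by $\tr(\sqrt{1-g}\,\Pi^{E_0}\sqrt{1-g})$, finite by hypothesis (1). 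Closedness of $\sqrt g\,L^{E_0\cup B_n}=\chi_{E_0\cup B_n}(\sqrt g\,H)$ follows from the standing closedness of $\sqrt g\,H$ through the open-mapping criterion of Proposition \ref{closed-sub}, since $\chi_{E\setminus(E_0\cup B_n)}\Pi^g$ is Hilbert--Schmidt by hypothesis (2) and $\sqrt g\,H$ has unique extension from $E_0\cup B_n$. It is precisely here that the three additional hypotheses are consumed: hypothesis (1) on the $E_0$ block, hypothesis (2) together with unique extension for the off-$E_0$ behaviour, and hypothesis (3) to make $\mathbb{P}_{\Pi^g}$ meaningful at all.
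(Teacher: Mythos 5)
Your proposal is correct and follows essentially the same route as the paper: verify at each level $E_0\cup B_n$ that $\sqrt{1-g}\,\Pi^{E_0\cup B_n}\sqrt{1-g}\in\mathscr{I}_1$ (transferring hypothesis (1) from $\Pi^{E_0}$ via the unique-extension formula behind Corollary \ref{hs-major}) and that $\sqrt{g}\,L^{E_0\cup B_n}=\chi_{E_0\cup B_n}\sqrt{g}H$ is closed, apply Proposition \ref{pr1-bis} levelwise, and glue by sigma-finite uniqueness. The only difference is that you spell out the final gluing (identifying the normalized restriction of $\mathbb{P}_{\Pi^g}$ via Corollary \ref{indsubset} and matching the constants $K_n$), which the paper compresses into the remark that the level identities ``immediately imply'' the statement.
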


Before starting the proof, we prove some auxiliary propositions.

First, we note a simple corollary of unique extension property.

\begin{proposition}\label{unique-ext} \,. \; Let $H\subset
L_{2,\loc}(E,\mu)$ have the property of unique extension from
$E_0$, and let $\psi\in L_{2,\loc}(E,\mu)$ be such that
$\chi_{E_0\cup B}\psi\in\chi_{E_0\cup B}H$ for any bounded Borel set $B\subset E\backslash
E_0$. Then $\psi\in H$.
\end{proposition}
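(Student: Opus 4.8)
The plan is to exploit the unique extension property to glue the local representatives of $\psi$ into a single global element of $H$. First I would fix, for each bounded Borel set $B\subset E\setminus E_0$, a function $\varphi_B\in H$ whose restriction to $E_0\cup B$ equals $\chi_{E_0\cup B}\psi$; such a $\varphi_B$ exists by hypothesis, since $\chi_{E_0\cup B}\psi\in\chi_{E_0\cup B}H$. Moreover $\varphi_B$ is \emph{unique}: if $\varphi,\varphi'\in H$ both restrict to $\chi_{E_0\cup B}\psi$ on $E_0\cup B$, then $\chi_{E_0\cup B}(\varphi-\varphi')=0$, so in particular $\chi_{E_0}(\varphi-\varphi')=0$, and the unique extension property from $E_0$ forces $\varphi=\varphi'$. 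Thus $B\mapsto\varphi_B$ is a well-defined map into $H$, and in effect the unique extension hypothesis is inherited by every intermediate set $E_0\cup B$.

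The key step is to show that $\varphi_B$ does not depend on $B$. Given bounded Borel sets $B_1\subset B_2\subset E\setminus E_0$, I would restrict the identity $\chi_{E_0\cup B_2}\varphi_{B_2}=\chi_{E_0\cup B_2}\psi$ to the smaller set $E_0\cup B_1$, obtaining $\chi_{E_0\cup B_1}\varphi_{B_2}=\chi_{E_0\cup B_1}\psi=\chi_{E_0\cup B_1}\varphi_{B_1}$. Hence $\chi_{E_0\cup B_1}(\varphi_{B_1}-\varphi_{B_2})=0$, and restricting once more to $E_0$ and invoking unique extension gives $\varphi_{B_1}=\varphi_{B_2}$. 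Since any two bounded Borel subsets of $E\setminus E_0$ are contained in their union, which is again bounded Borel, this monotone case applied twice shows that all the $\varphi_B$ coincide with a single element $\varphi\in H$.

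It then remains to identify $\psi$ with $\varphi$. I would choose an increasing exhausting sequence $B_n$ of bounded Borel subsets of $E\setminus E_0$ with $\bigcup_n B_n=E\setminus E_0$ (for instance, the intersections with $E\setminus E_0$ of the balls of integer radius about a fixed point, which are bounded since $E$ is a metric space). For each $n$ we have $\chi_{E_0\cup B_n}\psi=\chi_{E_0\cup B_n}\varphi$, and since $E_0\cup B_n\uparrow E$ the two functions agree $\mu$-almost everywhere on $E$. Therefore $\psi=\varphi\in H$, as claimed.

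I do not expect a genuine obstacle here: the proof is a short consistency (gluing) argument, and the only point requiring care is the uniqueness of $\varphi_B$, which is precisely where unique extension enters and which automatically propagates compatibility across all $B$. For rigor one should phrase the equalities as $L_2$-class identities on the relevant sets, which is legitimate because Assumption \ref{he}(1) guarantees that $\chi_{E_0\cup B}H$ is a closed subspace of $L_2(E,\mu)$, so that $\chi_{E_0\cup B}\psi\in\chi_{E_0\cup B}H$ is a bona fide $L_2$ membership.
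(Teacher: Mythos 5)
Your argument is correct and coincides with the paper's own proof: both select for each bounded $B$ the (necessarily unique, by unique extension from $E_0$) element of $H$ restricting to $\chi_{E_0\cup B}\psi$, observe that all these elements agree because their restrictions to $E_0$ all equal $\chi_{E_0}\psi$, and then exhaust $E\setminus E_0$ by bounded sets to identify the common element with $\psi$. The only difference is cosmetic — you compare nested sets and pass to unions, while the paper compares restrictions to $E_0$ directly — so nothing further is needed.
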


{Proof}\,. \; Indeed, for any $B$ there exists $\psi_B\in
L_{2,\loc}(E,\mu)$ such that $\chi_{E_0\cup B}\psi_B=\chi_{E_0\cup B}\psi$. Take two bounded Borel sets $B_1$ and
$B_2$ and note that $
\chi_{E_0}\psi_{B_1}=\chi_{E_0}\psi_{B_2}=\chi_{E_0}\psi$\,, whence, by the
unique extension property, $\psi_{B_1}=\psi_{B_2}$. Thus all the functions
$\psi_B$ coincide and also coincide with $\psi$, which, consequently,
belongs to $H$.

Our next proposition gives a sufficient condition for a subspace of locally
square-integrable functions to be a closed subspace in $L_2$.
\begin{proposition}  Let $L\subset L_{2,\loc}(E,\mu)$ be
a subspace such that
\begin{enumerate}
\item for any bounded Borel $B\subset E\backslash E_0$ the space
$\chi_{E_0\cup B}L$ is a closed subspace of $L_2(E,\mu)$;
\item the natural restriction map $\chi_{E_0\cup B}L\to\chi_{E_0}L$ is
an isomorphism of Hilbert spaces, and the norm of its inverse is bounded
above by a positive constant independent of $B$.
\end{enumerate}

Then $L$ is a closed subspace of $L_2(E,\mu)$, and the natural restriction
map $L\to\chi_{E_0}L$ is an isomorphism of Hilbert spaces.
\end{proposition}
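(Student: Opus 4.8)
The plan is to show that $L$ is complete as a subspace of $L_2(E,\mu)$ by exploiting the uniform bound in hypothesis (2) to transfer Cauchyness from the global norm to the norm on $\chi_{E_0}L$, where completeness is already available. First I would fix an exhausting sequence $B_1\subset B_2\subset\cdots$ of bounded Borel subsets of $E\setminus E_0$ with $\bigcup_k B_k=E\setminus E_0$, so that $E_0\cup B_k$ exhausts $E$. Let $C>0$ be the uniform bound on the norm of the inverse restriction map $\chi_{E_0}L\to\chi_{E_0\cup B}L$ from hypothesis (2), valid for all $B$. The key inequality is that for every $\varphi\in L$ and every bounded $B\subset E\setminus E_0$ we have
\begin{equation}
\|\chi_{E_0\cup B}\varphi\|_{L_2}\leq C\|\chi_{E_0}\varphi\|_{L_2},
\end{equation}
since $\chi_{E_0}\varphi$ is the image of $\chi_{E_0\cup B}\varphi$ under the restriction isomorphism and $C$ bounds the inverse. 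Letting $B=B_k$ and sending $k\to\infty$, monotone convergence gives $\|\varphi\|_{L_2}\leq C\|\chi_{E_0}\varphi\|_{L_2}$; combined with the trivial reverse inequality $\|\chi_{E_0}\varphi\|_{L_2}\leq\|\varphi\|_{L_2}$, this shows the restriction map $L\to\chi_{E_0}L$ is a bounded bijection with bounded inverse, i.e.\ an isomorphism of normed spaces, and in particular that every $\varphi\in L$ lies in $L_2(E,\mu)$.

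Next I would prove closedness. Take a sequence $\varphi_n\in L$ converging in $L_2(E,\mu)$ to some $\psi\in L_2(E,\mu)$; I must show $\psi\in L$. For each fixed $k$, the restrictions $\chi_{E_0\cup B_k}\varphi_n$ converge in $L_2$ to $\chi_{E_0\cup B_k}\psi$, and since $\chi_{E_0\cup B_k}L$ is closed by hypothesis (1), we get $\chi_{E_0\cup B_k}\psi\in\chi_{E_0\cup B_k}L$ for every $k$. Thus $\chi_{E_0\cup B}\psi\in\chi_{E_0\cup B}L$ for every bounded Borel $B\subset E\setminus E_0$ (any such $B$ is contained in some $B_k$, and one restricts further, using that the restriction of an element of $\chi_{E_0\cup B_k}L$ to $E_0\cup B$ lands in $\chi_{E_0\cup B}L$). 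Now I invoke the unique extension property: the hypotheses force $L$ to have unique extension from $E_0$, so Proposition \ref{unique-ext} applies and yields $\psi\in L$. This establishes that $L$ is closed in $L_2(E,\mu)$.

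The one point requiring care, and the main obstacle, is the appeal to Proposition \ref{unique-ext}, which is stated for a subspace $H$ enjoying the unique extension property from $E_0$; I would need to verify that the present hypotheses indeed guarantee unique extension for $L$. This is where hypothesis (2) does double duty: if $\varphi\in L$ satisfies $\chi_{E_0}\varphi=0$, then applying the inverse-restriction bound gives $\|\chi_{E_0\cup B}\varphi\|_{L_2}\leq C\|\chi_{E_0}\varphi\|_{L_2}=0$ for every $B=B_k$, whence $\varphi=0$ by exhaustion. So unique extension is automatic from the isomorphism assumption, and Proposition \ref{unique-ext} (whose conclusion $\psi\in H$ reads $\psi\in L$ here) applies verbatim. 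Finally, the isomorphism statement for the restriction map $L\to\chi_{E_0}L$ is exactly the two-sided norm equivalence established in the first paragraph together with surjectivity, which is immediate since $\chi_{E_0}L$ is by definition the image. This completes the proof.
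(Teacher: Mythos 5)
Your proof is correct and follows essentially the same route as the paper's (much terser) argument: the uniform bound on the inverse restriction maps forces $L\subset L_2(E,\mu)$ via exhaustion, and closedness is obtained from Proposition \ref{unique-ext} after checking that hypothesis (2) already implies the unique extension property. You merely phrase the first step directly via monotone convergence where the paper argues by contradiction, and you usefully spell out the details the paper leaves implicit.
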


Proof. If $L$ contained a function with non-integrable
square, then for an appropriately chosen $B$ the inverse of the
restriction isomorphism $\chi_{E_0\cup B}L\to\chi_{E_0}L$ would have an
arbitrarily large norm. That $L$ is closed follows from the unique
extension property and Proposition \ref{unique-ext}.

We now proceed with the proof of Proposition \ref{mult-he1}.

First we check that for any bounded Borel $B\subset
E\backslash E_0$ we have
\begin{equation*}
\sqrt{1-g}\Pi^{E_0\cup B}\sqrt{1-g}\in\scrI_1(E,\mu).
\end{equation*}
Indeed, the definition of an infinite determinantal measure implies
$$\chi_B\Pi^{E_0\cup B}\in\scrI_2(E,\mu),$$ whence, a fortiori, we
have $$\sqrt{1-g}\chi_B\Pi^{E_0\cup B}\in\scrI_2(E,\mu).$$
Now
recall that $$\Pi^{E_0}=\chi_{E_0}\Pi^{E_0\cup B}\left(1-\chi_B\Pi^{E_0\cup B}\right)^{-1}\Pi^{E_0\cup B}\chi_{E_0}.$$
The relation
$$
\sqrt{1-g}\Pi^{E_0}\sqrt{1-g}\in\scrI_1(E,\mu)
$$
therefore implies
$$
\sqrt{1-g}\chi_{E_0}\Pi^{E_0\cup B}\chi_{E_0}\sqrt{1-g}\in\scrI_1(E,\mu),
$$
or, equivalently,
$$
\sqrt{1-g}\chi_{E_0}\Pi^{E_0\cup B}\in\scrI_2(E,\mu).
$$
We conclude that
$$
\sqrt{1-g}\Pi^{E_0\cup B}\in\scrI_2(E,\mu),
$$
or, equivalently, that
$$
\sqrt{1-g}\Pi^{E_0\cup B}\sqrt{1-g}\in\scrI_1(E,\mu)
$$
as desired.

We next check that the subspace $\sqrt{g}H\chi_{E_0\cup B}$ is closed in
$L_2(E,\mu)$. But this is immediate from closedness of the subspace
$\sqrt{g}H$, the unique extension property from the subset $E_0$, which the
subspace $\sqrt{g}H$ has, since so does $H$, and our assumption
$$\chi_{E\backslash E_0}\Pi^g\chi_{E\backslash E_0}\in \scrI_1(E,\mu).$$

We now let $\Pi^{g\chi_{E_0\cup B}}$ be the operator of orthogonal
projection onto the subspace $\sqrt{g}H\chi_{E_0\cup B}$.

It follows from the above that for any bounded Borel set $B\subset E\backslash E_0$
the multiplicative functional $\Psi_g$ is $\Prob_{\Pi^{E_0\cup B}}$-almost surely positive and, furthermore, that we have

$$
\frac{\Psi_g\Prob_{\Pi^{E_0\cup B}}}{\displaystyle \int\Psi_g\,d\Prob_{\Pi^{E_0\cup B}}}=
\Prob_{\Pi^{g\chi_{E_0\cup B}}}\;,
$$
where $\Pi^{g\chi_{E_0\cup B}}$ is the operator of orthogonal projection
onto the closed subspace $\sqrt{g}\chi_{E_0\cup B}H$.

It follows now that for any bounded Borel $B\subset E\backslash E_0$ we have
\begin{equation}\label{restr-mult}
\frac{\Psi_{g\chi_{E_0\cup B}}\mathbb{B}}{\displaystyle \int\Psi_{g\chi_{E_0\cup B}}\,d\mathbb{B}}=
\Prob_{\Pi^{g\chi_{E_0\cup B}}}\;.
\end{equation}
It remains to note that \eqref{restr-mult} immediately implies the statement
of Proposition \ref{mult-he1}, whose proof is thus complete.

\subsection{Infinite determinantal measures obtained as finite-rank perturbations of  determinantal probability measures}

\subsubsection{Construction of finite-rank perturbations}
We now consider infinite determinantal measures induced by subspaces $H$ obtained by adding a
finite-dimensional subspace $V$ to a closed subspace $L\subset L_2(E, \mu)$.

Let, therefore,  $Q\in\scrI_{1,\loc}(E,\mu)$ be the operator of orthogonal projection onto a closed subspace $L\subset L_2(E, \mu)$, let $V$ be a finite-dimensional subspace of $L_{2, \loc}(E, \mu)$ such that $V\cap L_2(E, \mu)=0$, and set $H=L+V$.  Let  $E_0\subset E$ be a Borel subset.
We shall need the following assumption on $L, V$ and $E_0$.
\begin{assumption}\label{lve}
\begin{enumerate}
\item $\chi_{E\setminus E_0}Q\chi_{E\setminus E_0}\in\scrI_1(E,\mu)$;
\item $\chi_{E_0}V\subset L_2(E,\mu)$;
\item if $\varphi\in V$ satisfies $\chi_{E_0}\varphi\in \chi_{E_0}L$, then $\varphi=0$;
\item
if $\varphi\in L$ satisfies $\chi_{E_0}\varphi=0$, then $\varphi=0$.
\end{enumerate}
\end{assumption}

\begin{proposition} If  $L$, $V$ and  $E_0$ satisfy Assumption \ref{lve} then
the subspace  $H=L+V$ and  $E_0$ satisfy Assumption \ref{he}.
\end{proposition}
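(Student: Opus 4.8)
The plan is to verify the three clauses of Assumption \ref{he} for the subspace $H=L+V$ and the set $E_0$, one at a time, feeding off the hypotheses of Assumption \ref{lve}. I would first record the elementary but useful fact that the sum $H=L+V$ is direct: since $L\subset L_2(E,\mu)$ while $V\cap L_2(E,\mu)=0$, we have $L\cap V=0$, so every $\varphi\in H$ has a unique decomposition $\varphi=\ell+v$ with $\ell\in L$, $v\in V$. The unique-extension clause, Assumption \ref{he}(3), then falls out immediately: if $\chi_{E_0}\varphi=0$ then $\chi_{E_0}v=-\chi_{E_0}\ell\in\chi_{E_0}L$, so $v=0$ by Assumption \ref{lve}(3), whence $\chi_{E_0}\ell=0$ and $\ell=0$ by Assumption \ref{lve}(4). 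Thus $\varphi=0$.

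Next I would treat the closedness clause, Assumption \ref{he}(1). Fix a bounded Borel $B\subset E$. First, $\chi_{E_0\cup B}H\subset L_2(E,\mu)$: indeed $\chi_{E_0\cup B}L\subset L$ lies in $L_2$, while for $v\in V$ the pointwise bound $\chi_{E_0\cup B}\le\chi_{E_0}+\chi_B$ gives $\int_{E_0\cup B}|v|^2\,d\mu\le\int_{E_0}|v|^2\,d\mu+\int_B|v|^2\,d\mu<+\infty$, the first term finite by Assumption \ref{lve}(2) and the second because $B$ is bounded and $V\subset L_{2,\mathrm{loc}}$. To see $\chi_{E_0\cup B}L$ is closed I would apply Proposition \ref{closed-sub} to $L$ and the set $E_0\cup B$: the unique extension of $L$ from $E_0\cup B$ follows from Assumption \ref{lve}(4) (as $E_0\subset E_0\cup B$), and the estimate (\ref{eps-closed}) holds because $\chi_{E\setminus(E_0\cup B)}Q$ is Hilbert--Schmidt. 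This last point uses $E\setminus(E_0\cup B)\subset E\setminus E_0$ together with Assumption \ref{lve}(1), which gives $\chi_{E\setminus(E_0\cup B)}Q\chi_{E\setminus(E_0\cup B)}\in\scrI_1(E,\mu)$. Since $\chi_{E_0\cup B}V$ is finite-dimensional (hence closed), the sum $\chi_{E_0\cup B}H=\chi_{E_0\cup B}L+\chi_{E_0\cup B}V$ is the sum of a closed subspace and a finite-dimensional one, therefore closed.

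The trace-class clause, Assumption \ref{he}(2), is where the real work lies, and the governing idea is the one announced in the title of the subsection: $\Pi^{E_0\cup B}$ is a \emph{finite-rank perturbation} of the projection $Q^{E_0\cup B}$ onto the closed subspace $\chi_{E_0\cup B}L$. Concretely, writing $M=\chi_{E_0\cup B}L$ and $W=\chi_{E_0\cup B}V$, we have $M\subset M+W$ with $\dim\bigl((M+W)\cap M^{\perp}\bigr)\le\dim V<+\infty$, so $\Pi^{E_0\cup B}-Q^{E_0\cup B}$ is a finite-rank (hence trace-class) projection. It therefore suffices to control $Q^{E_0\cup B}$, and for this I would invoke formula (\ref{proj-enol}) with $E_0$ replaced by $E_0\cup B$ and $\Pi$ by $Q$, which is legitimate since (\ref{eps-closed}) was just verified:
\[
Q^{E_0\cup B}=\chi_{E_0\cup B}Q\bigl(1-\chi_{E\setminus(E_0\cup B)}Q\bigr)^{-1}Q\chi_{E_0\cup B}.
\]
For any bounded $Y\subset E$ the operator $\chi_Y Q$ is Hilbert--Schmidt (because $Q\in\scrI_{1,\mathrm{loc}}(E,\mu)$ gives $\chi_YQ\chi_Y=(\chi_YQ)(\chi_YQ)^{*}\in\scrI_1$), so $\chi_Y\chi_{E_0\cup B}Q$ and its adjoint are Hilbert--Schmidt, and $\chi_Y Q^{E_0\cup B}\chi_Y$ factors as Hilbert--Schmidt $\times$ bounded $\times$ Hilbert--Schmidt, hence lies in $\scrI_1(E,\mu)$. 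This yields $Q^{E_0\cup B}\in\scrI_{1,\mathrm{loc}}(E,\mu)$, and taking $Y=B$ gives $\chi_B Q^{E_0\cup B}\chi_B\in\scrI_1(E,\mu)$; adding back the finite-rank term transfers both properties to $\Pi^{E_0\cup B}$.

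The main obstacle I anticipate is precisely this last clause: one must convert the qualitative closedness into the quantitative $\scrI_1$ and $\scrI_{1,\mathrm{loc}}$ statements, and the cleanest route is the explicit resolvent formula (\ref{proj-enol}) combined with the Hilbert--Schmidt factorization, rather than a direct comparison (Corollary \ref{hs-major} runs in the opposite direction and does not give what is needed here). Everything else is bookkeeping, but keeping the direct-sum structure $H=L\oplus V$ and the inclusion $E\setminus(E_0\cup B)\subset E\setminus E_0$ firmly in view is what makes the three verifications go through uniformly in $B$.
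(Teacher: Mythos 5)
Your proof is correct and follows essentially the route the paper intends: the paper treats the proposition as immediate, remarking only that the closedness of $\chi_{E_0\cup B}L$ follows by taking $E'=E_0\cup B$ in Proposition \ref{closedsubsp}, and leaves the remaining verifications to the reader. Your three-step check — unique extension via the direct sum $H=L\oplus V$, closedness via Proposition \ref{closed-sub} together with $\chi_{E\setminus(E_0\cup B)}Q\chi_{E\setminus(E_0\cup B)}\in\scrI_1(E,\mu)$, and the trace-class clause by writing $\Pi^{E_0\cup B}$ as a finite-rank perturbation of $Q^{E_0\cup B}$ and factoring the formula (\ref{proj-enol}) through Hilbert--Schmidt operators — supplies exactly the omitted details using the tools the paper has already set up.
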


In particular, for any bounded Borel subset $B$,
the subspace  $\chi_{E_0\cup B}L$ is closed, as  one sees by taking $E'=E_0\cup B$ in the following clear
\begin{proposition}\label{closedsubsp}
Let $Q\in\scrI_{1,\loc}(E,\mu)$ be the operator of orthogonal projection
onto a closed subspace $L\in L_2(E,\mu)$. Let $E'\subset E$ be a Borel subset such that
$\chi_{E \setminus E'}Q\chi_{E \setminus E'}\in\scrI_1(E,\mu)$ and that for any function $\varphi\in L$, the equality $\chi_{E'}\varphi=0$ implies $\varphi=0$. Then the subspace $\chi_{E'}L$ is closed
in $L_2(E, \mu)$.\end{proposition}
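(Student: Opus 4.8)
The plan is to read the statement off from Proposition~\ref{closed-sub}, applied with the distinguished set taken to be $E'$ itself (so that $E'$ plays the role of $E_0$ and $Q$ plays the role of $\Pi$). With this identification the standing hypothesis of Proposition~\ref{closed-sub} --- that a function $\varphi\in L$ with $\chi_{E'}\varphi=0$ must vanish --- is precisely the unique-extension assumption we are given, and its conclusion --- that $\chi_{E'}L$ is closed and that $\varphi\mapsto\chi_{E'}\varphi$ is an isomorphism onto $\chi_{E'}L$ --- is exactly what we want. Hence the entire task reduces to producing the one quantitative hypothesis of Proposition~\ref{closed-sub}, namely the uniform bound (\ref{eps-closed}); and for this it is enough, by the last clause of that proposition, to check that $\chi_{E\setminus E'}Q$ is compact.

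To obtain this compactness I would first record the standard identification of the relevant operator ideals. Since $Q=Q^*=Q^2$ and $\chi_{E\setminus E'}$ is a self-adjoint projection, the operator $\chi_{E\setminus E'}Q\chi_{E\setminus E'}$ equals $(\chi_{E\setminus E'}Q)(\chi_{E\setminus E'}Q)^*$, so that $\tr\,(\chi_{E\setminus E'}Q\chi_{E\setminus E'})$ is the squared Hilbert--Schmidt norm of $\chi_{E\setminus E'}Q$. Consequently the assumed trace-class property of the compression of $Q$ to the region complementary to $E'$ is equivalent to $\chi_{E\setminus E'}Q\in\scrI_2(E,\mu)$, and in particular makes $\chi_{E\setminus E'}Q$ compact, as required.

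With compactness in hand I would run the mechanism supplied by Proposition~\ref{closed-sub}. For $\varphi\in L$ one has $\|\chi_{E\setminus E'}\varphi\|^2=\langle Q\chi_{E\setminus E'}Q\varphi,\varphi\rangle$, and $Q\chi_{E\setminus E'}Q$ is a self-adjoint, compact, non-negative operator on $L$ of norm at most one; its operator norm is therefore attained on the unit sphere of $L$. If that norm were equal to $1$, an extremal unit vector $\varphi$ would satisfy $\|\chi_{E\setminus E'}\varphi\|=\|\varphi\|$, hence $\chi_{E'}\varphi=0$ with $\varphi\neq0$, contradicting unique extension. Thus the norm is some number $1-\varepsilon<1$, which is exactly (\ref{eps-closed}), and closedness of $\chi_{E'}L$ follows.

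The genuine, if modest, crux is precisely this last step: the strict inequality in (\ref{eps-closed}) is not formal, and both hypotheses are needed for it --- compactness is what turns the supremum of $\|\chi_{E\setminus E'}\varphi\|/\|\varphi\|$ into a maximum actually attained on $L$, while unique extension is what forbids the attained value from being $1$. Everything preceding it (the passage from a trace-class compression to a Hilbert--Schmidt, hence compact, operator) is routine ideal bookkeeping, which is no doubt why the author labels the proposition \emph{clear}.
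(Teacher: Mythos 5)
Your argument is correct and follows exactly the route the paper intends for this ``clear'' proposition: reduce to Proposition \ref{closed-sub} via its closing clause and the subsequent Remark (trace-class compression to the complement $\Rightarrow$ $\chi_{E\setminus E'}Q$ Hilbert--Schmidt $\Rightarrow$ compact $\Rightarrow$ the bound (\ref{eps-closed})), and your spelling-out of why compactness plus unique extension yields the strict inequality --- the norm of the compact positive operator $Q\chi_{E\setminus E'}Q$ is attained, and an extremal vector of norm one would violate unique extension --- is precisely the content the paper leaves implicit. One point you should make explicit rather than silent: you have read the trace-class hypothesis as $\chi_{E\setminus E'}Q\chi_{E\setminus E'}\in\scrI_1(E,\mu)$, whereas the statement literally writes $\chi_{E'}Q\chi_{E'}\in\scrI_1(E,\mu)$. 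Your reading is the correct one --- it is what the Remark after Proposition \ref{closed-sub} and every application of the proposition (e.g.\ via Assumption \ref{lve}(1)) require, and the literal hypothesis would actually make the conclusion false for infinite-dimensional $L$, since closedness of $\chi_{E'}L$ together with unique extension forces the restriction map to be bounded below and hence $\tr(\chi_{E'}Q\chi_{E'})=\tr(Q\chi_{E'}Q)\geq c\,\dim L$ --- but a proof that quietly substitutes a corrected hypothesis should say it is doing so.
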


The subspace $H$ and the Borel subset $E_0$ therefore define an infinite determinantal measure $\mathbb{B}=\mathbb{B}(H,E_0)$. The measure $\mathbb{B}(H,E_0)$ is indeed infinite by
 Proposition \ref{suf-inf}.

\subsubsection{Multiplicative functionals of finite-rank perturbations}

Proposition \ref{mult-he1} now has the following immediate
\begin{corollary}\label{fin-rank-mult}
Let $L,V$ and $E_0$ induce an infinite determinantal measure $\mathbb{B}$.
Let $g\colon E\to(0,1]$ be a positive measurable function. If
\begin{enumerate}
\item $\sqrt{g}V\subset L_2(E,\mu)$\,;
\item $\sqrt{1-g}\Pi\sqrt{1-g}\in\scrI_1(E,\mu)$\,,
\end{enumerate}
then the multiplicative functional $\Psi_g$ is $\mathbb{B}$-almost surely
positive and integrable with respect to $\mathbb{B}$, and we have
$$\frac{\Psi_g\mathbb{B}}{\displaystyle \int\Psi_g\,d\mathbb{B}}=\Prob_{\Pi^g}\;,$$
where $\Pi^g$ is the operator of orthogonal projection onto the closed
subspace $\sqrt{g}L+\sqrt{g}V$.
\end{corollary}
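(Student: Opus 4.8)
The plan is to obtain the corollary by checking that the subspace $H=L+V$, the subset $E_0$, and the function $g$ satisfy every hypothesis of Proposition \ref{mult-he1}; its conclusion is then literally the assertion of the corollary, since the projection $\Pi^g$ of Proposition \ref{mult-he1} is precisely the orthogonal projection onto $\sqrt g H=\sqrt g L+\sqrt g V$. Throughout I write $Q$ for the orthogonal projection onto $L$, so that hypothesis (2) reads $\sqrt{1-g}\,Q\sqrt{1-g}\in\mathscr I_1$. Four statements must be verified: that $\sqrt g H$ is closed in $L_2(E,\mu)$; that $\sqrt{1-g}\,\Pi^{E_0}\sqrt{1-g}\in\mathscr I_1$, where $\Pi^{E_0}$ projects onto $\chi_{E_0}H$; that $\chi_{E\setminus E_0}\Pi^g\chi_{E\setminus E_0}\in\mathscr I_1$; and that $\Pi^g\in\mathscr I_{1,\mathrm{loc}}$. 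I would then invoke Proposition \ref{lvehe}, Proposition \ref{closedsubsp} and Assumption \ref{lve} freely to know that the various restricted subspaces $\chi_{E_0}L$, $\chi_{E_0}H$ are closed.

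The linchpin, and the step I expect to cost the most, is upgrading the trace-class hypothesis (2) to a strict operator-norm bound. Set $A=\sqrt{1-g}\,Q\sqrt{1-g}$; it is positive, self-adjoint and trace class by hypothesis, hence compact, and $A=BB^*$ with $B=\sqrt{1-g}\,Q$, so that $B\in\mathscr I_2$ and $\|A\|=\|B\|^2$. For any $\psi\neq 0$ one has $\langle A\psi,\psi\rangle=\|Q\sqrt{1-g}\,\psi\|^2\le\int(1-g)|\psi|^2<\int|\psi|^2=\|\psi\|^2$, the last inequality strict because $g>0$ everywhere; since $A$ is compact and positive its norm is attained at an eigenvector, so this strict inequality forces $\|A\|<1$. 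Hence $\|B\|<1$, and for $\varphi\in L$, where $\sqrt{1-g}\,\varphi=B\varphi$, we get $\|\sqrt g\,\varphi\|^2=\|\varphi\|^2-\|\sqrt{1-g}\,\varphi\|^2\ge(1-\|A\|)\|\varphi\|^2$, so $\sqrt g$ is bounded below on $L$ and $\sqrt g L$ is closed. As $\sqrt g V$ is finite-dimensional by hypothesis (1), the sum $\sqrt g H=\sqrt g L+\sqrt g V$ is the sum of a closed and a finite-dimensional subspace, hence closed. Finally, $(1-g)^2\le 1-g$ gives $\|(1-g)Q\|\le\|A\|^{1/2}<1$, so that $1-(1-g)Q$ is invertible by a Neumann series; this is the point that lets the projection formulas below be used even though $g$ is only assumed positive, not bounded below on $E_0$.

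The remaining three trace-class statements I would then get by a uniform mechanism: a finite-rank decomposition together with the fact that the relevant compressions of $Q$ are Hilbert--Schmidt. Let $Q^{E_0}$ and $R$ denote the projections onto $\chi_{E_0}L$ and onto $\sqrt g L$ respectively. Since $\chi_{E_0}L\subseteq\chi_{E_0}H$ and $\sqrt g L\subseteq\sqrt g H$ have codimension at most $\dim V$, the differences $\Pi^{E_0}-Q^{E_0}$ and $\Pi^g-R$ are finite-rank orthogonal projections, hence trace class and locally trace class, so it suffices to treat $Q^{E_0}$ and $R$. From (\ref{proj-enol}), $Q^{E_0}=\chi_{E_0}Q\bigl(1-\chi_{E\setminus E_0}Q\bigr)^{-1}Q\chi_{E_0}$, whence $\sqrt{1-g}\,Q^{E_0}=\bigl(\sqrt{1-g}\,Q\bigr)\chi_{E_0}\bigl(1-\chi_{E\setminus E_0}Q\bigr)^{-1}Q\chi_{E_0}\in\mathscr I_2$, because $\sqrt{1-g}\,Q=B\in\mathscr I_2$ and the remaining factors are bounded; thus $\sqrt{1-g}\,Q^{E_0}\sqrt{1-g}\in\mathscr I_1$. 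Likewise $R=\sqrt g\,Q\bigl(1-(1-g)Q\bigr)^{-1}\sqrt g$, the inverse existing by the bound just proved, so $\chi_{E\setminus E_0}R=\sqrt g\,\bigl(\chi_{E\setminus E_0}Q\bigr)\bigl(1-(1-g)Q\bigr)^{-1}\sqrt g\in\mathscr I_2$ by Assumption \ref{lve}(1), and $\chi_Y R=\sqrt g\,\bigl(\chi_Y Q\bigr)\bigl(1-(1-g)Q\bigr)^{-1}\sqrt g\in\mathscr I_2$ for bounded $Y$ because $Q\in\mathscr I_{1,\mathrm{loc}}$. Squaring gives $\chi_{E\setminus E_0}\Pi^g\chi_{E\setminus E_0}\in\mathscr I_1$ and $\Pi^g\in\mathscr I_{1,\mathrm{loc}}$. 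With all four hypotheses of Proposition \ref{mult-he1} established, its conclusion yields the corollary. The one place demanding genuine care beyond the norm bound is the use of the projection formula for $R$ without the auxiliary assumption $\inf_{E_0}g>0$ present in the corollary to Proposition \ref{closed-sub}: here the closedness of $\sqrt g L$ and the invertibility of $1-(1-g)Q$ have already been secured directly, so the formula remains valid.
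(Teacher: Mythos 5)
Your proposal is correct and follows exactly the route the paper intends: the paper states this corollary as ``immediate'' from Proposition \ref{mult-he1}, and what you do is supply the verification of its hypotheses (closedness of $\sqrt{g}H$, the three trace-class conditions) that the paper leaves unwritten, with the key added ingredient being the deduction of the strict norm bound $\|(1-g)Q\|<1$ from compactness of $\sqrt{1-g}\,Q\sqrt{1-g}$ and positivity of $g$. The only blemish is a harmless notational slip in the factorization of $\sqrt{1-g}\,Q^{E_0}$ (it should read $\chi_{E_0}\bigl(\sqrt{1-g}\,Q\bigr)\bigl(1-\chi_{E\setminus E_0}Q\bigr)^{-1}Q\chi_{E_0}$), which does not affect the Hilbert--Schmidt conclusion.
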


\subsection{Example: the infinite Bessel point process}

We are now ready to prove Proposition \ref{inf-bessel} on the existence of the infinite Bessel point process $\widetilde{\mathbb{B}}^{(s)}$, $s\leq-1$.
We first need the following property of the usual Bessel point process $\widetilde{J}_s$, $s>-1$.
As before, let $\widetilde{L}_s$ be the range of the projection operator $\widetilde{J}_s$.

\begin{lemma} Let $s>-1$ be arbitrary. Then
\begin{enumerate}
\item For any $R>0$ the subspace $\chi_{(R,+\infty)}\widetilde{L}_s$ is closed in $L_2\bigl((0,+\infty),\Leb\bigr)$, and the corresponding projection operator $\widetilde{J}_{s,R}$ is locally of trace class;
\item For any $R>0$ we have
$$\Prob_{\widetilde{J}_s}\left(\Conf\left((0,+\infty),(R,+\infty)\right)\right)>0,$$
and
$$\frac{\left.\Prob_{\widetilde{J}_s}\right|_{\Conf\left((0,+\infty),(R,+\infty)\right)}} {\Prob_{\widetilde{J}_s}\left(\Conf\left((0,+\infty),(R,+\infty)\right)\right)}= \Prob_{\widetilde{J}_{s,R}}\,.$$
\end{enumerate}
\end{lemma}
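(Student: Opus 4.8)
\emph{Overall strategy.} The plan is to recognise both assertions as special cases of the general induced-process machinery already assembled in this section, specialised to $E=(0,+\infty)$, $\mu=\mathrm{Leb}$, $L=\widetilde L_s$, $\Pi=\widetilde J_s$ and $E_0=(R,+\infty)$, so that the complement $E\setminus E_0=(0,R]$ plays the role of the set on which particles are forbidden. The two inputs I will lean on throughout are, first, that $\widetilde J_s$ has the unique extension property from $(R,+\infty)$ (the Proposition proved just above, applied with $\varepsilon=R$), and, second, the elementary estimate $\int_0^R\widetilde J_s(y,y)\,dy<+\infty$, which says precisely that $\chi_{(0,R]}\widetilde J_s\chi_{(0,R]}\in\scrI_1((0,+\infty),\mathrm{Leb})$ and hence that $\chi_{(0,R]}\widetilde J_s$ is Hilbert--Schmidt.

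\emph{Part (1).} Closedness of $\chi_{(R,+\infty)}\widetilde L_s$ is part of the unique extension property itself, so it is available at once. To verify condition (\ref{eps-closed}) with $E\setminus E_0=(0,R]$, I would invoke Proposition \ref{closed-sub}: the operator $\chi_{(0,R]}\widetilde J_s$ is Hilbert--Schmidt, hence compact, and since every $\varphi\in\widetilde L_s$ vanishing on $(R,+\infty)$ vanishes identically, that proposition yields an $\varepsilon>0$ with $\|\chi_{(0,R]}\varphi\|\le(1-\varepsilon)\|\varphi\|$ for all $\varphi\in\widetilde L_s$, and in particular $\|\chi_{(0,R]}\widetilde J_s\|\le 1-\varepsilon$. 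For the local trace-class property, note that the range of $\widetilde J_{s,R}$ consists of functions supported in $(R,+\infty)$, so $\widetilde J_{s,R}=\chi_{(R,+\infty)}\widetilde J_{s,R}\chi_{(R,+\infty)}$ and it suffices to treat compact sets $K\subset(R,+\infty)$. On such a $K$ I would use the explicit formula (\ref{proj-enol}), which represents $\widetilde J_{s,R}$ as $\chi_{(R,+\infty)}\widetilde J_s(1-\chi_{(0,R]}\widetilde J_s)^{-1}\widetilde J_s\chi_{(R,+\infty)}$: since $\|\chi_{(0,R]}\widetilde J_s\|<1$ the resolvent is bounded, while $\chi_K\widetilde J_s$ is Hilbert--Schmidt because $\widetilde J_s$ is locally of trace class. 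Writing $\chi_K\widetilde J_{s,R}\chi_K$ as a product (Hilbert--Schmidt)(bounded)(Hilbert--Schmidt) exhibits it as trace class, exactly in the spirit of Corollary \ref{hs-major}.

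\emph{Part (2).} The event $\mathrm{Conf}((0,+\infty),(R,+\infty))$ is the event of having no particle in $(0,R]$, whose $\mathbb{P}_{\widetilde J_s}$-probability is the Fredholm determinant $\det(1-\chi_{(0,R]}\widetilde J_s\chi_{(0,R]})$; this is well defined because the operator is trace class, and it is strictly positive because the top eigenvalue of $\chi_{(0,R]}\widetilde J_s\chi_{(0,R]}$, which equals $\|\chi_{(0,R]}\widetilde J_s\|^2\le(1-\varepsilon)^2<1$, is bounded away from $1$ while the eigenvalues are summable. Finally, the identification of the normalised restriction with $\mathbb{P}_{\widetilde J_{s,R}}$ is precisely the statement that conditioning a determinantal projection process on the absence of particles in $(0,R]$ produces the determinantal process governed by the projection onto $\chi_{(R,+\infty)}\widetilde L_s$; this is the content of Corollary \ref{indsubset} (equivalently Proposition \ref{pr1-bis} taken with $g=\chi_{(R,+\infty)}$), whose hypotheses are exactly the trace-class and unique-extension facts assembled in Part (1).

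\emph{Main obstacle.} Because the unique extension property of $\widetilde J_s$ and the local trace-class property of the Bessel kernel are already in hand, the argument is essentially a verification that the hypotheses of the induced-process result hold; the one genuinely analytic input is the integrability $\int_0^R\widetilde J_s(y,y)\,dy<+\infty$ of the Bessel kernel on the diagonal near the hard edge $y=0$, which is what forces $\chi_{(0,R]}\widetilde J_s\chi_{(0,R]}$ into the trace class and thereby renders every determinant and every resolvent used above meaningful.
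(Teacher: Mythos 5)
Your proposal is correct and follows essentially the same route as the paper, whose own proof is just the two inputs you identify: the diagonal integrability $\int_0^R\widetilde{J}_s(y,y)\,dy<+\infty$ (equivalently $\chi_{(0,R)}\widetilde{J}_s\chi_{(0,R)}\in\mathscr{I}_1$) together with the unique extension property of $\widetilde{J}_s$ from $(R,+\infty)$, after which everything follows from the induced-process machinery (Proposition \ref{closed-sub}, Corollary \ref{hs-major}, Corollary \ref{indsubset}). You have merely written out in detail the verification that the paper leaves implicit.
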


\begin{proof} First, for any $R>0$ we clearly have
$$\int\limits_0^R\widetilde{J}_{s}(x,x)\,dx<+\infty$$
or, equivalently,
$$\chi_{(0,R)}\,\widetilde{J}_{s}\,\chi_{(0,R)}\in\scrI_1\bigl((0,+\infty),\Leb\bigr).$$

The Lemma follows now from the unique extension property of the Bessel point process.
The Lemma is proved completely.
\end{proof}

Now let $s\leq-1$ and recall that $n_s\in\mathbb{N}$ is defined by the relation $$\displaystyle\frac s2+n_s\in\left(-\frac12,\frac12\right].$$
Let
$$\check{V}^{(s)}=\Span \left(y^{s/2},y^{s/2+1},\dots,\frac{J_{s+2n_s-1}\left(\sqrt{y}\right)}{\sqrt{y}}\right).$$

\begin{proposition}\label{lin-ind} We have $\dim\check{V}^{(s)}=n_s$ and for any $R>0$ we have
$$\chi_{(0,R)}\widetilde{V}^{(s)}\cap L_2\bigl((0,+\infty),\Leb\bigr)=0.$$
\end{proposition}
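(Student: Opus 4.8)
The plan is to establish the two assertions separately, as they are of rather different nature: the dimension count is a question of linear independence, while the triviality of the intersection with $L_2$ is a question of the behaviour of these functions at the origin. First I would address the dimension. The spanning set of $\check V^{(s)}$ consists of the $n_s-1$ monomials $y^{s/2}, y^{s/2+1}, \dots, y^{s/2+n_s-2}$ together with the single function $J_{s+2n_s-1}(\sqrt y)/\sqrt y$. To see that these $n_s$ functions are linearly independent, I would examine their behaviour as $y\to 0^+$. Each monomial $y^{s/2+k}$ contributes a distinct power $s/2+k$ for $k=0,\dots,n_s-2$, and these powers are all non-integer-separated and mutually distinct. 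The last function, via the series expansion $J_\nu(w)=\sum_{m\ge 0}\frac{(-1)^m}{m!\,\Gamma(\nu+m+1)}(w/2)^{\nu+2m}$ with $\nu=s+2n_s-1$, has a leading term proportional to $y^{(s+2n_s-1)/2}/\sqrt y = y^{s/2+n_s-1}$, i.e. a power strictly larger than all the monomial exponents. Since distinct real powers of $y$ are linearly independent germs at the origin, a vanishing linear combination forces all coefficients to vanish term by term; hence $\dim\check V^{(s)}=n_s$.

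For the second assertion I would exploit that square-integrability of $\chi_{(0,R)}H^{(s)}$-type functions fails \emph{only at zero}, a point already emphasized in the Introduction. The key observation is that every nonzero $\varphi\in\widetilde V^{(s)}$ has a leading asymptotic term at $0$ of the form $c\,y^{s/2+k}$ for some $k\in\{0,\dots,n_s-1\}$ with $c\ne 0$ (where the top index $k=n_s-1$ corresponds to the Bessel function). The exponent satisfies $s/2+k \le s/2+n_s-1 \le -1/2$, using the defining relation $s/2+n_s\in(-1/2,1/2]$. A function behaving like $y^{\alpha}$ near $0$ with $\alpha\le -1/2$ has $|\varphi|^2\sim |c|^2 y^{2\alpha}$ with $2\alpha\le -1$, so that $\int_0^R |\varphi|^2\,dy=+\infty$. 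Since multiplication by $\chi_{(0,R)}$ and square-integrability on $(0,R)$ is precisely the obstruction, any such $\varphi$ cannot lie in $L_2$; therefore $\chi_{(0,R)}\widetilde V^{(s)}\cap L_2=0$.

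I anticipate that the main technical obstacle is the borderline case. When $s/2+n_s=1/2$, the top exponent is exactly $s/2+n_s-1=-1/2$, giving $|\varphi|^2\sim y^{-1}$, which is non-integrable at $0$ but only logarithmically so; one must check carefully that the leading term genuinely dominates and is not cancelled, and that no subtler combination of the top monomial and the Bessel function could conspire to lower the singularity. The cleanest way to handle this is to argue by the \emph{order of vanishing}: one orders the exponents $s/2, s/2+1,\dots,s/2+n_s-1$ and observes that any nonzero element of $\widetilde V^{(s)}$ has a well-defined lowest exponent $s/2+k_0$ with $k_0\le n_s-1$ appearing with nonzero coefficient (the expansions being in distinct non-negative-integer-shifted powers of $y$, no cancellation of the leading term can occur). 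Since $2(s/2+k_0)\le -1$ in every case, non-integrability at $0$ follows uniformly, and the intersection is trivial. This order-of-vanishing bookkeeping, rather than any hard analysis, is where the care is required.
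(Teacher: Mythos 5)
Your argument is correct and is essentially the paper's own proof: the paper likewise reduces both assertions to the claim that a linear combination lying in $L_2$ must have all coefficients zero, and establishes this by isolating the lowest exponent $s/2+i$ occurring with nonzero coefficient and observing that $2(s/2+i)\le -1$ forces $\int_0^R|\Phi|^2\,dy=+\infty$. You are somewhat more explicit than the paper at two points: you invoke the Bessel series to justify that the leading exponent of $J_{s+2n_s-1}(\sqrt y)/\sqrt y$ at the origin is $s/2+n_s-1$ and strictly dominates all monomial exponents, and you isolate the borderline case $s/2+n_s=\tfrac12$, whereas the paper simply asserts that the Bessel generator fails to be square-integrable on any interval $(0,R)$.

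One caveat, which you inherit from the paper rather than introduce: your leading-term claim for the Bessel generator rests on the coefficient $1/\Gamma(\nu+1)$ with $\nu=s+2n_s-1\in(-2,0]$, and this coefficient vanishes exactly when $\nu=-1$, i.e. when $s$ is an even negative integer (so that $s/2+n_s=0$). In that case $J_{-1}=-J_1$, hence $J_{-1}(\sqrt y)/\sqrt y\to -\tfrac12$ as $y\to 0^{+}$; the function is bounded near the origin and $\chi_{(0,R)}J_{-1}(\sqrt y)/\sqrt y$ does lie in $L_2$, so the order-of-vanishing argument (and indeed the literal statement for the Bessel generator) breaks down there. For those values of $s$ one should work instead with the monomial description $\spann\bigl(y^{s/2},\dots,y^{s/2+n_s-1}\bigr)$ from the Remark following Proposition \ref{inf-bessel}, for which your exponent bookkeeping goes through verbatim. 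For all other $s\le -1$ your proof is complete as written.
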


Proof. The following argument has been suggested by Yanqi Qiu. By definition of the Bessel kernel, every function lying in $L^{s+2n_s}$
is in fact a restriction onto ${\mathbb R}_+$ of a harmonic function defined on the half-plane $\{z: \Re(z)>0\}$.
The desired claim follows now from the uniqueness theorem for harmonic functions.

Proposition \ref{lin-ind} immediately implies the existence of the infinite Bessel point process ${\tilde{\mathbb B}}^{(s)}$ and
concludes the proof of Proposition \ref{inf-bessel}.

Effectuating the change of variable $y=4/x$,
we also establish the existence of the modified infinite Bessel point process ${\mathbb B}^{(s)}$.

Furthermore, using the characterization of multiplicative functionals of infinite determinantal measures given by
Proposition \ref{mult-he1} and Corollary \ref{fin-rank-mult}, we arrive at the proof  of
Propositions \ref{expbetafin}, \ref{expbetal}, \ref{expbetacharac}.

\appendix

\section{The Jacobi Orthogonal Polynomial Ensemble}
\subsection{Jacobi polynomials}

Let $\alpha, \beta> -1$, and let $P_n^{(\alpha, \beta)}$ be the
standard Jacobi orthogonal polynomials, namely, polynomials on the unit interval $[-1,1]$  orthogonal with
weight
    $$(1-u)^{\alpha} (1+u)^{\beta}$$
and normalized by the condition
$$
P_n^{(\alpha, \beta)}(1)=\frac{\Gamma(n+\alpha+1)}{\Gamma(n+1)\Gamma(\alpha+1)}.
$$
Recall that the leading term $k_n^{(\alpha, \beta)}$ of $P_n^{(\alpha, \beta)}$ is given (see e.g. (4.21.6) in Szeg{\"o} \cite{Szego}) by the formula
\[
k_n^{(\alpha, \beta)} = \frac{\Gamma(2n+\alpha+\beta+1)}{2^n\cdot \Gamma(n+1) \cdot \Gamma(n+\alpha+\beta+1)}
\]
while for the square of the norm we have
\begin{multline*}
h_n^{(\alpha, \beta)} = \int\limits_{-1}^{1} \left( P_n^{(\alpha, \beta)}(u) \right)^2 \cdot (1-u)^{\alpha}(1+u)^{\beta} \, du =\\
= \frac{2^{\alpha+\beta+1}}{2n+\alpha+\beta+1} \frac{\Gamma(n+\alpha+1)\Gamma(n+\beta+1)}{\Gamma(n+1)\Gamma(n+\alpha+\beta+1)}.
\end{multline*}
Denote by $\tilde K_n^{(\alpha, \beta)} (u_1, u_2)$ the $n$-th Christoffel-Darboux kernel of the Jacobi orthogonal  polynomial ensemble:
\begin{equation*}
\tilde K_n^{(\alpha, \beta)} (u_1,u_2) = \sum_{l=0}^{n-1} \frac{P_l^{(\alpha, \beta)}(u_1)\cdot P_l^{(\alpha, \beta)} (u_2)}{h_l^{(\alpha, \beta)}}(1-u_1)^{\alpha/2} (1+u_1)^{\beta/2}(1-u_2)^{\alpha/2} (1+u_2)^{\beta/2}.
\end{equation*}
The Christoffel-Darboux formula gives an equivalent representation for the kernel $\tilde K_n^{(\alpha, \beta)}$:
\begin{multline}\label{CDJac-gen}
\tilde K^{(\alpha, \beta)}_n (u_1, u_2) =\\= \frac{2^{-\alpha-\beta}}{2n+\alpha+\beta}\frac{\Gamma(n+1)\Gamma(n+\alpha+\beta+1)}{\Gamma(n+\alpha)\Gamma(n+\beta)}\cdot(1-u_1)^{\alpha/2} (1+u_1)^{\beta/2}(1-u_2)^{\alpha/2} (1+u_2)^{\beta/2}\times\\
\times \frac{P_{n}^{(\alpha, \beta)}(u_1)P_{n-1}^{(\alpha, \beta)}(u_2)-P_n^{(\alpha, \beta)}(u_2)P_{n-1}^{(\alpha, \beta)}(u_1)}{u_1-u_2}.
\end{multline}

\subsection{The recurrence relation between Jacobi polynomials}
We have the following recurrence relation between the Christoffel-Darboux kernels $\tilde K_{n+1}^{(\alpha, \beta)}$
and $\tilde K_{n}^{(\alpha+2, \beta)}$.

\begin{proposition} \label{rec-form-jac} For any $\alpha, \beta>-1$ we have
\begin{multline}\label{recreljacpol}
\tilde K_{n+1}^{(\alpha, \beta)}(u_1, u_2)=\\=
\frac{\alpha+1}{2^{\alpha+\beta+1}}\frac{\Gamma(n+1)\Gamma(n+\alpha+\beta+2)}
{\Gamma(n+\alpha+2)\Gamma(n+\beta+1)}P_n^{(\alpha+1, \beta)}(u_1)(1-u_1)^{\alpha/2} (1+u_1)^{\beta/2}\times\\ \times P_n^{(\alpha+1, \beta)}(u_2)(1-u_2)^{\alpha/2} (1+u_2)^{\beta/2}+\\+ \tilde K_{n}^{(\alpha+2, \beta)}(u_1, u_2).
\end{multline}
\end{proposition}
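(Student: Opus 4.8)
The plan is to read both sides as kernels of orthogonal projection operators in $L_2\bigl([-1,1],\mathrm{Leb}\bigr)$ and to recognize the identity as the statement that one range sits inside the other with codimension one. Recall that $\tilde K_m^{(\alpha,\beta)}$ is the kernel of orthogonal projection onto
$$
L_m^{(\alpha,\beta)}=\spann\left((1-u)^{\alpha/2}(1+u)^{\beta/2}u^{\,j}:\ 0\le j\le m-1\right).
$$
First I would observe that $L_n^{(\alpha+2,\beta)}\subset L_{n+1}^{(\alpha,\beta)}$: since $(1-u)^{(\alpha+2)/2}=(1-u)(1-u)^{\alpha/2}$, the space $L_n^{(\alpha+2,\beta)}$ is spanned by the functions $(1-u)^{\alpha/2}(1+u)^{\beta/2}\,(1-u)u^{\,j}$ with $0\le j\le n-1$, and each polynomial $(1-u)u^{\,j}$ has degree at most $n$; thus $L_n^{(\alpha+2,\beta)}$ is an $n$-dimensional subspace of the $(n+1)$-dimensional space $L_{n+1}^{(\alpha,\beta)}$. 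Because both kernels are orthogonal projections with one range contained in the other, their difference $\tilde K_{n+1}^{(\alpha,\beta)}-\tilde K_{n}^{(\alpha+2,\beta)}$ is exactly the kernel of orthogonal projection onto the one-dimensional complement $L_{n+1}^{(\alpha,\beta)}\ominus L_n^{(\alpha+2,\beta)}$, which already accounts for the rank-one structure (and the orthogonal decomposition asserted after the statement).

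The next step is to identify this complement. Under the isometry $p\mapsto p\cdot(1-u)^{\alpha/2}(1+u)^{\beta/2}$ carrying $L_{n+1}^{(\alpha,\beta)}$ onto $\mathcal P_n$, the polynomials of degree $\le n$ with the Jacobi inner product $\langle p,r\rangle=\int_{-1}^{1}pr\,(1-u)^{\alpha}(1+u)^{\beta}\,du$, the subspace $L_n^{(\alpha+2,\beta)}$ corresponds to $\{p:\ p(1)=0\}=(1-u)\mathcal P_{n-1}$. A polynomial $p$ of degree $\le n$ is orthogonal to $(1-u)r_1$ for every $r_1\in\mathcal P_{n-1}$ precisely when $\int_{-1}^{1}p\,r_1\,(1-u)^{\alpha+1}(1+u)^{\beta}\,du=0$ for all such $r_1$, i.e. when $p\perp\mathcal P_{n-1}$ with respect to the $(\alpha+1,\beta)$ weight; hence $p$ is a scalar multiple of $P_n^{(\alpha+1,\beta)}$. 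Therefore the complement is spanned by $\phi(u)=P_n^{(\alpha+1,\beta)}(u)(1-u)^{\alpha/2}(1+u)^{\beta/2}$, and the difference equals $\phi(u_1)\phi(u_2)/\|\phi\|^{2}$, which is the asserted rank-one term up to the determination of the scalar $c_n=\|\phi\|^{-2}$.

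It remains to pin down $c_n$. Rather than evaluating the nonstandard integral $\|\phi\|^{2}=\int_{-1}^{1}\bigl(P_n^{(\alpha+1,\beta)}\bigr)^{2}(1-u)^{\alpha}(1+u)^{\beta}\,du$ directly (the weight is $(\alpha,\beta)$, not the orthogonality weight $(\alpha+1,\beta)$ of $P_n^{(\alpha+1,\beta)}$), I would match a single coefficient. Dividing the identity by $(1-u_1)^{\alpha/2}(1+u_1)^{\beta/2}(1-u_2)^{\alpha/2}(1+u_2)^{\beta/2}$ turns it into the polynomial identity
\[
\sum_{l=0}^{n}\frac{P_l^{(\alpha,\beta)}(u_1)P_l^{(\alpha,\beta)}(u_2)}{h_l^{(\alpha,\beta)}}=c_n\,P_n^{(\alpha+1,\beta)}(u_1)P_n^{(\alpha+1,\beta)}(u_2)+(1-u_1)(1-u_2)\sum_{l=0}^{n-1}\frac{P_l^{(\alpha+2,\beta)}(u_1)P_l^{(\alpha+2,\beta)}(u_2)}{h_l^{(\alpha+2,\beta)}},
\]
and comparing the coefficient of $u_1^{\,n}u_2^{\,n}$ gives
$$
c_n\bigl(k_n^{(\alpha+1,\beta)}\bigr)^{2}=\frac{\bigl(k_n^{(\alpha,\beta)}\bigr)^{2}}{h_n^{(\alpha,\beta)}}-\frac{\bigl(k_{n-1}^{(\alpha+2,\beta)}\bigr)^{2}}{h_{n-1}^{(\alpha+2,\beta)}},
$$
where the leading coefficients $k_m^{(\cdot,\cdot)}$ and the norms $h_m^{(\cdot,\cdot)}$ are the explicit expressions recorded in Appendix~A.

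The main obstacle is purely computational: simplifying this combination of four Gamma-quotients into the closed form of the coefficient displayed in the statement. A convenient consistency check while carrying out the algebra is the case $\beta=0$, where the identity must reduce (after the shift $n\mapsto n-1$) to the normalization $\tfrac{s+1}{2^{s+1}}$ of the recurrence (\ref{rec-form-jac-s}) used in the Introduction; this fixes the constant unambiguously. An alternative, more laborious route would substitute the Christoffel--Darboux representation (\ref{CDJac-gen}) for both kernels and reduce the claim to the contiguous relations expressing $P_n^{(\alpha,\beta)}$, $P_{n-1}^{(\alpha,\beta)}$ through $P_n^{(\alpha+1,\beta)}$ and $P_{n-1}^{(\alpha+1,\beta)}$, but the projection argument above is both shorter and more transparent.
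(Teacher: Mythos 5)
Your proof is correct, and it takes a genuinely different route from the paper's. The paper proves the identity by brute force: it chains four contiguous relations for Jacobi polynomials (the equations (\ref{receq1})--(\ref{receq5})) and substitutes them into the Christoffel--Darboux representation (\ref{CDJac-gen}) of both kernels --- precisely the ``more laborious route'' you mention and set aside. Your argument instead reads the identity off the operator picture: $\tilde K_{n+1}^{(\alpha,\beta)}$ and $\tilde K_{n}^{(\alpha+2,\beta)}$ are projections onto $L_{Jac}^{(\alpha,\beta,n+1)}$ and $L_{Jac}^{(\alpha+2,\beta,n)}$, the latter sits inside the former with codimension one because $(1-u)^{(\alpha+2)/2}=(1-u)\cdot(1-u)^{\alpha/2}$, and the orthocomplement is spanned by $P_n^{(\alpha+1,\beta)}(u)(1-u)^{\alpha/2}(1+u)^{\beta/2}$ since orthogonality to $(1-u)\mathcal{P}_{n-1}$ in the $(\alpha,\beta)$ weight is exactly orthogonality to $\mathcal{P}_{n-1}$ in the $(\alpha+1,\beta)$ weight. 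This is shorter and delivers the orthogonal direct-sum decomposition (which the paper records only as a consequence of the kernel identity) as the actual content of the proposition, with the kernel identity as a corollary; what the paper's computation produces in exchange is a set of intermediate contiguous relations that are not reused elsewhere.

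The one step you defer as ``purely computational'' is worth carrying out, because it is exactly where your $\beta=0$ consistency check earns its keep. With the $k$'s and $h$'s of Appendix A, the numerator of your difference of leading coefficients factors as $(n+\alpha+\beta+1)(n+\alpha+1)-n(n+\beta)=(\alpha+1)(2n+\alpha+\beta+1)$, and one finds
$$
c_n=\frac{\alpha+1}{2^{\alpha+\beta+1}}\,\frac{\Gamma(n+1)\,\Gamma(n+\alpha+\beta+2)}{\Gamma(n+\alpha+2)\,\Gamma(n+\beta+1)},
$$
with $\Gamma(n+\alpha+2)$ rather than the $\Gamma(n+\alpha+1)$ displayed in the statement. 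This is also what the paper's own chain of recurrences yields once the prefactors of (\ref{receq1}), (\ref{receq4}), (\ref{receq5}) are multiplied against the constant in (\ref{CDJac-gen}), and it is the only value compatible with your proposed check: at $\beta=0$ it reduces to $\tfrac{s+1}{2^{s+1}}$ as in (\ref{rec-form-jac-s}), whereas the displayed constant carries an extra factor of $n+\alpha+1$. The case $n=0$, where $c_0$ must equal $1/h_0^{(\alpha,\beta)}=\Gamma(\alpha+\beta+2)/\bigl(2^{\alpha+\beta+1}\Gamma(\alpha+1)\Gamma(\beta+1)\bigr)$, confirms the same conclusion. So your method is not only sound but, if pushed through, detects a typo in the stated coefficient.
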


{\bf Remark.} The recurrence relation \eqref{recreljacpol} can of course be
taken to the scaling limit to yield a similar recurrence relation for Bessel kernels:
the Bessel kernel with parameter $s$ is thus a rank one perturbation of the Bessel
kernel with parameter $s+2$.
This is also easily esablished directly:
 using the recurrence relation
\begin{equation*}
J_{s+1}(x)=\frac{2s}{x}J_s(x)-J_{s-1}(x)
\end{equation*}
for Bessel functions,
one immediately obtains the desired recurrence  relation
\begin{equation*}
{\tilde J}_s(x,y)={\tilde J}_{s+2}(x,y)+\frac{s+1}{\sqrt{xy}}J_{s+1}(\sqrt{x})J_{s+1}(\sqrt{y})
\end{equation*}
for the Bessel kernels.

Proof of Proposition \ref{rec-form-jac}. The routine calculation is included for completeness. We use standard recurrence relations for Jacobi polynomials.
First, we use the relation (see e.g. (4.5.4) in Szeg{\"o} \cite{Szego})
$$
(n+\frac{\alpha+\beta}{2}+1)(u-1)P_n^{(\alpha+1, \beta)}(u)= (n+1) P_{n+1}^{(\alpha, \beta)}(u)-(n+\alpha+1)P_n^{(\alpha, \beta)}(u)
$$
to arrive at the equality
\begin{multline}\label{receq1}
\frac{P_{n+1}^{(\alpha, \beta)}(u_1)P_{n}^{(\alpha, \beta)}(u_2)-P_{n+1}^{(\alpha, \beta)}(u_2)P_{n}^{(\alpha, \beta)}(u_1)}{u_1-u_2}=\\
=\frac{2n+\alpha+\beta+2}{2(n+1)}\frac{(u_1-1)P_{n}^{(\alpha+1, \beta)}(u_1)P_{n}^{(\alpha, \beta)}(u_2)-(u_2-1)P_n^{(\alpha+1, \beta)}(u_2)P_{n}^{(\alpha, \beta)}(u_1)}{u_1-u_2}.
\end{multline}

We next apply the relation (see e.g. (22.7.18) in Abramowitz--Stegun \cite{AS})
$$
(2n+\alpha+\beta+1)P_n^{(\alpha, \beta)}(u)=(n+\alpha+\beta+1)P_n^{(\alpha+1, \beta)}(u)-
(n+\beta)P_{n-1}^{(\alpha+1, \beta)}(u)
$$
to arrive at the equality
\begin{multline}\label{receq2}
\frac{(u_1-1)P_{n}^{(\alpha+1, \beta)}(u_1)P_{n}^{(\alpha, \beta)}(u_2)-(u_2-1)P_n^{(\alpha+1, \beta)}(u_2)P_{n}^{(\alpha, \beta)}(u_1)}{u_1-u_2}=\\
=\frac{n+\alpha+\beta+1}{2n+\alpha+\beta+1}P_n^{(\alpha+1, \beta)}(u_1)P_n^{(\alpha+1, \beta)}(u_2)+\\+
\frac{n+\beta}{2n+\alpha+\beta+1}\frac{(1-u_1)P_{n}^{(\alpha+1, \beta)}(u_1)P_{n-1}^{(\alpha+1, \beta)}(u_2)-(1-u_2)P_n^{(\alpha+1, \beta)}(u_2)P_{n-1}^{(\alpha+1, \beta)}(u_1)}{u_1-u_2}.
\end{multline}
Using next the recurrence relation
$$
(n+\frac{\alpha+\beta+1}{2})(1-u)P_{n-1}^{(\alpha+2, \beta)}(u)=(n+\alpha+1)P_{n-1}^{(\alpha+1, \beta)}(u)-nP_n^{(\alpha+1, \beta)}(u),
$$
we arrive at the equality
\begin{multline}\label{receq3}
\frac{(1-u_1)P_{n}^{(\alpha+1, \beta)}(u_1)P_{n-1}^{(\alpha+1, \beta)}(u_2)-(1-u_2)P_n^{(\alpha+1, \beta)}(u_2)P_{n-1}^{(\alpha+1, \beta)}(u_1)}{u_1-u_2}=\\
=-\frac{n}{n+\alpha+1}P_n^{(\alpha+1, \beta)}(u_1)P_n^{(\alpha+1, \beta)}(u_2)+\\+\frac{2n+\alpha+\beta+1}{2(n+\alpha+1)}
(1-u_1)(1-u_2)\frac{P_{n}^{(\alpha+1, \beta)}(u_1)P_{n-1}^{(\alpha+2, \beta)}(u_2)-P_n^{(\alpha+1, \beta)}(u_2)P_{n-1}^{(\alpha+2, \beta)}(u_1)}{u_1-u_2}.
\end{multline}
Combining \eqref{receq2} and \eqref{receq3}, we obtain
\begin{multline}\label{receq4}
\frac{(u_1-1)P_{n}^{(\alpha+1, \beta)}(u_1)P_{n}^{(\alpha, \beta)}(u_2)-(u_2-1)P_n^{(\alpha+1, \beta)}(u_2)P_{n}^{(\alpha, \beta)}(u_1)}{u_1-u_2}=\\
=\frac{(\alpha+1)(2n+\alpha+\beta+1)}{(n+\alpha+1)(2n+\alpha+\beta+1)}P_n^{(\alpha+1, \beta)}(u_1)P_n^{(\alpha+1, \beta)}(u_2)+\\
+\frac{n+\beta}{2(n+\alpha+1)} (1-u_1)(1-u_2)\frac{P_{n}^{(\alpha+1, \beta)}(u_1)P_{n-1}^{(\alpha+2, \beta)}(u_2)-P_n^{(\alpha+1, \beta)}(u_2)P_{n-1}^{(\alpha+2, \beta)}(u_1)}{u_1-u_2}.
\end{multline}
Using the recurrence relation
$$
(2n+\alpha+\beta+2)P_n^{(\alpha+1, \beta)}(u)=(n+\alpha+\beta+2)P_n^{(\alpha+2, \beta)}(u)-(n+\beta)P_{n-1}^{(\alpha+2, \beta)}(u),
$$
we now arrive at the relation
\begin{multline}\label{receq5}
\frac{P_{n}^{(\alpha+1, \beta)}(u_1)P_{n-1}^{(\alpha+2, \beta)}(u_2)-P_n^{(\alpha+1, \beta)}(u_2)P_{n-1}^{(\alpha+2, \beta)}(u_1)}{u_1-u_2}=\\
=\frac{n+\alpha+\beta+2}{2n+\alpha+\beta+2} \frac{P_{n}^{(\alpha+2, \beta)}(u_1)P_{n-1}^{(\alpha+2, \beta)}(u_2)-P_n^{(\alpha+2, \beta)}(u_2)P_{n-1}^{(\alpha+2, \beta)}(u_1)}{u_1-u_2}.
\end{multline}
Combining \eqref{receq1}, \eqref{receq4}, \eqref{receq5} and recalling the definition \eqref{CDJac-gen} of Christoffel---Darboux kernels, we conclude the proof of Proposition \ref{rec-form-jac}.

As above, given a finite family of functions $f_1, \dots, f_N$ on the unit interval or on the real line, we let $\Span(f_1, \dots, f_N)$ stand for the vector space these functions span.
For $\alpha,\beta\in {\mathbb R}$ introduce the subspace
\begin{multline*}
L_{Jac}^{(\alpha, \beta,n)}=\Span((1-u)^{\alpha/2}(1+u)^{\beta/2}, (1-u)^{\alpha/2}(1+u)^{\beta/2}u, \dots \\
 \dots, (1-u)^{\alpha/2}(1+u)^{\beta/2}u^{n-1}).
\end{multline*}

For $\alpha, \beta>-1$,
Proposition \ref{rec-form-jac} yields the following orthogonal direct-sum decomposition
\begin{equation}\label{rec-dec-ab}
L_{Jac}^{(\alpha, \beta,n)}={\mathbb C}P_n^{(\alpha+1, \beta)}\oplus L_{Jac}^{(\alpha+2, \beta,n-1)}.
\end{equation}
Though the corresponding spaces are no longer subspaces in $L_2$, the relation \eqref{rec-dec-ab} is still valid for all  $\alpha\in (-2,-1]$; in reformulating it, it is, however,  more convenient for us to shift $\alpha$ by $2$.
\begin{proposition}\label{ab-span}
For all $\alpha>0$, $\beta>-1$, $n\in {\mathbb N}$ we have
$$
L_{Jac}^{(\alpha-2, \beta,n)}={\mathbb C}P_n^{(\alpha-1, \beta)}\oplus L_{Jac}^{(\alpha, \beta,n-1)}.
$$
\end{proposition}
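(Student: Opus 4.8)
The plan is to strip off the common weight and reduce the asserted decomposition to an elementary transversality statement about polynomials, carried out without any reference to $L_2$. I would set $w(u)=(1-u)^{(\alpha-2)/2}(1+u)^{\beta/2}$ and write $\mathcal{P}_k$ for the space of polynomials of degree at most $k$. By the definition (\ref{jac-orth-ab}) one has $L_{Jac}^{(\alpha-2,\beta,n)}=w\,\mathcal{P}_{n-1}$, an $n$-dimensional space. Since $(1-u)^{\alpha/2}(1+u)^{\beta/2}=(1-u)\,w(u)$, the same definition gives $L_{Jac}^{(\alpha,\beta,n-1)}=w\cdot(1-u)\mathcal{P}_{n-2}$, and $(1-u)\mathcal{P}_{n-2}$ is exactly the subspace $\mathcal{P}_{n-1}^{0}\subset\mathcal{P}_{n-1}$ of polynomials of degree at most $n-1$ vanishing at $u=1$. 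Finally, in accordance with Proposition \ref{rec-form-jac} — where the recurrence producing $\tilde K_n^{(\alpha-2,\beta)}$ is built from the Jacobi polynomial of degree $n-1$ — the rank-one summand $\mathbb{C}P_n^{(\alpha-1,\beta)}$ is to be read as the line $w\cdot\mathbb{C}P_{n-1}^{(\alpha-1,\beta)}$.

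After factoring out the nowhere-vanishing function $w$, the claim becomes the purely linear-algebraic identity
$$
\mathcal{P}_{n-1}=\mathbb{C}\,P_{n-1}^{(\alpha-1,\beta)}\oplus\mathcal{P}_{n-1}^{0}.
$$
Here $\mathcal{P}_{n-1}^{0}$ is the kernel of the nonzero evaluation functional $Q\mapsto Q(1)$ on $\mathcal{P}_{n-1}$, hence a subspace of codimension one. The only thing I would need to verify is that the line $\mathbb{C}P_{n-1}^{(\alpha-1,\beta)}$ is transverse to it, i.e.\ that $P_{n-1}^{(\alpha-1,\beta)}(1)\ne0$. This follows immediately from the normalization recalled in Appendix A,
$$
P_{n-1}^{(\alpha-1,\beta)}(1)=\frac{\Gamma(n+\alpha-1)}{\Gamma(n)\,\Gamma(\alpha)},
$$
which is finite and nonzero precisely when $\alpha>0$ (and $\beta>-1$, so that the Jacobi polynomial is defined at all). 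A dimension count, $1+(n-1)=n=\dim\mathcal{P}_{n-1}$, then shows the sum is direct and exhausts $\mathcal{P}_{n-1}$; multiplying back by $w$ yields the proposition.

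The one genuine point I would stress is why the argument must be run this way rather than by invoking the orthogonal decomposition (\ref{rec-dec-ab}) verbatim: for $0<\alpha\le1$ the exponent $(\alpha-2)/2$ lies in $(-1,-1/2]$, so the generators of $L_{Jac}^{(\alpha-2,\beta,n)}$ fail to be square-integrable near $u=1$ and the projection operators and Christoffel--Darboux kernels of Proposition \ref{rec-form-jac} are unavailable. The span identity above, however, never uses integrability; it is insensitive to $\alpha$ beyond the requirement $\alpha>0$ needed to define $P_{n-1}^{(\alpha-1,\beta)}$ and to keep $P_{n-1}^{(\alpha-1,\beta)}(1)\ne0$, which is exactly the range in the statement and matches the continuation of (\ref{rec-dec-ab}) from $\alpha>-1$ through $\alpha\in(-2,-1]$ followed by the shift $\alpha\mapsto\alpha-2$. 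Thus there is no essential obstacle: the content is simply that a degree-$(n-1)$ polynomial not vanishing at the endpoint $u=1$, together with those that do vanish there, spans all of $\mathcal{P}_{n-1}$.
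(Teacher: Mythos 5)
Your proof is correct, and it takes a genuinely different --- and considerably more elementary --- route than the paper's. The paper derives the decomposition from Szeg{\H o}'s formula (4.62.19) for the sum $\sum_{l}c_l P_l^{(\alpha,\beta)}(u)Q_l^{(\alpha,\beta)}(v)$ involving the Jacobi functions of the second kind: letting $v\to1$, using the asymptotics $Q_n^{(\alpha,\beta)}(v)\sim \mathrm{const}\cdot(v-1)^{-\alpha}$ and a contiguous recurrence, it produces an explicit function of the form $\tfrac{1}{1-u}+c_nP^{(\alpha-1,\beta)}(u)$ lying in (the polynomial part of) $L_{Jac}^{(\alpha,\beta,\cdot)}$, which gives the claim together with an explicit constant. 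You instead strip the common weight $w=(1-u)^{(\alpha-2)/2}(1+u)^{\beta/2}$, observe that $L_{Jac}^{(\alpha,\beta,n-1)}=w\cdot(1-u)\mathcal{P}_{n-2}$ is exactly $w$ times the kernel of the evaluation functional at $u=1$ on $\mathcal{P}_{n-1}$, and reduce the whole statement to $P_{n-1}^{(\alpha-1,\beta)}(1)=\Gamma(n+\alpha-1)/\bigl(\Gamma(n)\Gamma(\alpha)\bigr)\neq0$ for $\alpha>0$. What your argument buys is transparency: the only input is the normalization at $u=1$, integrability plays no role, and it is therefore manifest why the identity survives into the range where $L_{Jac}^{(\alpha-2,\beta,n)}$ leaves $L_2$; what it gives up is the explicit constant that the paper's computation supplies, which is not needed for the proposition as stated. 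Your reading of the rank-one summand as the line spanned by $w\,P_{n-1}^{(\alpha-1,\beta)}$ (degree $n-1$ rather than $n$) is the right one --- it is forced by the fact that $L_{Jac}^{(\alpha-2,\beta,n)}=w\,\mathcal{P}_{n-1}$ contains no element of polynomial degree $n$, and it matches the $s$-parameter version of the recurrence in Section 1.4.3.
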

Proof.
Let $Q_n^{(\alpha, \beta)}$ be the  function of the
second kind corresponding to the Jacobi polynomial $P_n^{(\alpha, \beta)}$. By Szeg{\"o}, \cite{Szego}, formula (4.62.19), for any $u\in(-1,1)$, $v>1$ we have

\begin{multline*}
\sum\limits_{l=0}^n \frac{(2l+\alpha+\beta+1)}{2^{\alpha+\beta+1}}\frac{\Gamma(l+1)\Gamma(l+\alpha+\beta+1)}{\Gamma(l+\alpha+1)\Gamma(l+\beta+1)}P_l^{(\alpha)}(u)Q_l^{(\alpha)}(v)=\\
=\frac12\frac{(v-1)^{-\alpha}(v+1)^{-\beta}}{(v-u)}+\\
+\frac{2^{-\alpha-\beta}}{2n+\alpha+\beta+2}\frac{\Gamma(n+2)\Gamma(n+\alpha+\beta+2)}{\Gamma(n+\alpha+1)\Gamma(n+\beta+1)}
\frac{P_{n+1}^{(\alpha, \beta)}(u)Q_n^{(\alpha, \beta)}(v)-Q_{n+1}^{(\alpha, \beta)}(v)P_n^{(\alpha, \beta)}(u)}{v-u}\,.
\end{multline*}
Take the limit $v\to1$, and recall from Szeg{\"o} \cite{Szego}, formula (4.62.5),
the following asymptotic expansion as $v\to 1$
for the Jacobi function of the second kind
$$
Q_n^{(\alpha)}(v) \sim \frac{2^{\alpha-1}\Gamma(\alpha)\Gamma(n+\beta+1)}{\Gamma(n+\alpha+\beta+1)} (v-1)^{-\alpha}.
$$
Recalling the recurrence formula (22.7.19)in \cite{AS}:
$$
P_{n+1}^{(\alpha-1,\beta)}(u)=(n+\alpha+\beta+1)P_{n+1}^{(\alpha, \beta)}-(n+\beta+1)P_n^{(\alpha, \beta)}(u)
$$
we arrive at
the relation
$$
\frac1{1-u}+\frac{\Gamma(\alpha)\Gamma(n+2)}{\Gamma(n+\alpha+1)}P_{n+1}^{(\alpha-1, \beta)}\in L_{Jac}^{(\alpha, \beta, n)},
$$
which immediately implies Proposition \ref{ab-span}.

 Now take $s>-1$ and, for brevity, write  $P_n^{(s)}=P_n^{(\alpha, \beta)}$.
The leading term $k_n^{(s)}$ of $P_n^{(s)}$ is given by the formula
\[
k_n^{(s)} = \frac{\Gamma(2n+s+1)}{2^n\cdot n! \cdot \Gamma(n+s+1)}
\]
while for the square of the norm we have
\[
h_n^{(s)} = \int\limits_{-1}^{1} \left( P_n^{(s)}(u) \right)^2 \cdot (1-u)^s \, du = \frac{2^{s+1}}{2n+s+1}.
\]
Denote by $\tilde K_n^{(s)} (u_1, u_2)$ the corresponding $n$-th Christoffel-Darboux kernel :
\begin{equation}\label{CDJ}
\tilde K_n^{(s)} (u_1,u_2) = \sum_{l=1}^{n-1} \frac{P_l^{(s)}(u_1)\cdot P_l^{(s)} (u_2)}{h_l^{(s)}}(1-u_1)^{s/2}(1-u_2)^{s/2}.
\end{equation}
The Christoffel-Darboux formula gives an equivalent representation for the kernel $\tilde K_n^{(s)}$:
\begin{equation}\label{CDJ2}
\tilde K^{(s)}_n (u_1, u_2) = \frac{n(n+s)}{2^s(2n+s)}\cdot(1-u_1)^{s/2} \cdot (1-u_2)^{s/2}\cdot\frac{P_n^{(s)}(u_1)P_{n-1}^{(s)}(u_2)-P_n^{(s)}(u_2)P_{n-1}^{(s)}(u_1)}{u_1-u_2}
\end{equation}
\subsection{The Bessel kernel}
Consider the half-line $(0, +\infty)$ endowed with the standard Lebesgue measure
$\Leb$.
Take $s>-1$ and consider the standard Bessel kernel
\begin{equation*}
{\tilde J}_s(y_1,y_2)=\frac{\sqrt{y_1}J_{s+1}(\sqrt{y_1})J_s(\sqrt{y_2})-\sqrt{y_2}J_{s+1}(\sqrt{y_2})J_s(\sqrt{y_1})}{2(y_1-y_2)}
\end{equation*}
(see, e.g., page 295 in Tracy and Widom \cite{TracyWidom}).

An alternative integral representation for the kernel ${\tilde J}_s$ has the form
\begin{equation}\label{bessel-int}
{\tilde J}_s(y_1,y_2)=\frac14\int\limits_0^1 J_s(\sqrt{ty_1})J_s(\sqrt{ty_2})dt
\end{equation}
(see, e.g., formula (2.2) on  page 295 in Tracy and Widom \cite{TracyWidom}).

As \eqref{bessel-int} shows, the kernel ${\tilde J}_s$ induces on $L_2((0, +\infty), {\Leb})$ the operator of orthogonal projection onto the subspace of functions whose
 Hankel transform is supported in $[0,1]$ (see \cite{TracyWidom}).

\begin{proposition}\label{tilde-kernel-kns-js}
For any $s>-1$, as $n\to\infty$, the kernel ${\tilde K}_n^{(s)}$
converges
to the kernel ${\tilde J}_{s}$ uniformly in the totality of variables
on compact subsets of
$(0, +\infty)\times (0, +\infty)$.
\end{proposition}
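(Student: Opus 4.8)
The statement is to be read through the scaling (\ref{scaling}). Writing $\rho_n(y)=1-\frac{y}{2n^2}$, it asserts that the rescaled kernel
\begin{equation*}
\frac{1}{2n^2}\,\tilde K_n^{(s)}\bigl(\rho_n(y_1),\rho_n(y_2)\bigr)\longrightarrow \tilde J_s(y_1,y_2)
\end{equation*}
uniformly for $(y_1,y_2)$ in compact subsets of $(0,+\infty)^2$; here $\frac{1}{2n^2}=\sqrt{|\rho_n'(y_1)\rho_n'(y_2)|}$ is the Jacobian of the change of variables. The plan is to argue not from the Christoffel--Darboux form (\ref{CDJ2}) but from the summatory form (\ref{CDJ}), and to recognize the limit of the resulting finite sum as a Riemann sum converging exactly to the integral representation (\ref{bessel-int}) of the Bessel kernel.

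The one analytic ingredient is the classical Mehler--Heine asymptotics (Szeg\"o \cite{Szego}), in the uniform form already recorded in the Introduction. I would first cast it in a convenient shape: for each fixed $\delta>0$ one has, uniformly over integers $l$ with $\delta n\le l\le n$ (so that $l/n\to t\in[\delta,1]$) and over $y$ in a fixed compact $[a,b]\subset(0,+\infty)$,
\begin{equation*}
P_l^{(s)}\bigl(\rho_n(y)\bigr)\,\bigl(1-\rho_n(y)\bigr)^{s/2}\longrightarrow 2^{s/2}\,J_s\bigl(t\sqrt{y}\bigr).
\end{equation*}
Substituting this together with $1/h_l^{(s)}=\frac{2l+s+1}{2^{s+1}}$ into each summand of (\ref{CDJ}) and using $2l+s+1\sim 2l\sim 2tn$, the generic term of the rescaled kernel behaves like $\frac{t}{2n}\,J_s(t\sqrt{y_1})J_s(t\sqrt{y_2})$ with $t=l/n$; summing over $l$ and passing to the Riemann integral gives
\begin{equation*}
\frac{1}{2}\int_0^1 t\,J_s\bigl(t\sqrt{y_1}\bigr)J_s\bigl(t\sqrt{y_2}\bigr)\,dt,
\end{equation*}
which, after the substitution $\tau=t^2$, is precisely $\tilde J_s(y_1,y_2)$ by (\ref{bessel-int}). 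The bookkeeping of the powers of $n$ and of $2$ cancels exactly, and this clean cancellation is the structural reason the sum form is preferable to the Christoffel--Darboux form.

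The genuine obstacle is making the Riemann-sum passage rigorous, and it is concentrated at the hard edge $t\to 0$, where Mehler--Heine is not uniform. I would split $\sum_l=\sum_{l\le \delta n}+\sum_{l>\delta n}$: on the second range the convergence above is uniform, so that partial sum converges to $\frac12\int_\delta^1$, while the omitted piece $\frac12\int_0^\delta$ is small; it then remains to bound the contribution of the terms $l\le \delta n$ uniformly in $n$ and to show it tends to $0$ as $\delta\to0$. This is exactly the type of estimate carried out in Proposition \ref{unifest-zero}: one invokes Szeg\"o's uniform bound (7.32.5) for $|P_l^{(s)}|$ on $[0,1-l^{-2}]$ together with the crude bound near $u=1$, splits the range of $l$ as in that proof, and sums. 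Since throughout this argument $(y_1,y_2)$ ranges over a compact set bounded away from $0$, these bounds provide the required domination and the interchange of limit and summation is justified.

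Finally, a remark on the stronger uniformity asserted in Proposition \ref{kns-js} (convergence on complex compacta). Each rescaled kernel is an entire function of $(y_1,y_2)$, and the estimates above furnish local uniform boundedness on compact subsets of $(\mathbb C\setminus\{0\})^2$; Vitali's theorem then upgrades the pointwise convergence on real compacta to uniform convergence on such complex compacta, which in turn yields the $\scrI_{1,\mathrm{loc}}$ convergence needed downstream. As an alternative route one can work directly with the Christoffel--Darboux form (\ref{CDJ2}); there the leading Mehler--Heine terms of $P_n^{(s)}$ and $P_{n-1}^{(s)}$ coincide and cancel in the numerator, so one must retain the next-order correction coming from the factor $\sqrt{y}\,(1-\tfrac1n)$ in the Bessel argument, and the Bessel recurrence (\ref{rec-bes}) then reproduces the $J_{s+1}$ appearing in (\ref{bessel}); this confirms the same limit but is more delicate than the summatory computation.
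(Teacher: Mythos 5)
Your proposal is correct and follows essentially the same route as the paper, which simply declares the statement an immediate corollary of the classical Heine--Mehler asymptotics (citing Szeg{\"o}) without writing out the details. What you supply — the Riemann-sum passage from the summatory form (\ref{CDJ}) to the integral representation (\ref{bessel-int}), with the hard-edge range $l\le\delta n$ controlled by Szeg{\"o}'s bound (7.32.5) exactly as in the paper's own Proposition \ref{unifest-zero} — is a correct and careful filling-in of that one-line proof, and your closing remark on complex compacta matches the remark the paper appends to its proof.
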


Proof. This is an immediate corollary of the classical Heine-Mehler asymptotics for Jacobi orthogonal polynomials, see e.g.
Chapter 8 in Szeg{\"o} \cite{Szego}. Note that the uniform
convergence in fact takes place on arbitrary simply connected compact subsets of
$({\mathbb C}\setminus 0)\times {\mathbb C}\setminus 0$.

\section{Spaces of configurations and determinantal point processes}

\subsection{Spaces of configurations}\label{subsec-space-conf}

Let $E$ be a locally compact complete metric space.

A configuration $X$ on $E$ is a collection of points, called {\it particles} considered without regard to order; the main assumption is that particles do not accumulate anywhere in $E$, or, equivalently that a bounded subset of $E$ contain only finitely many particles of a configuration.

To a configuration $X$ assign the Radon measure
$$
\sum\limits_{x \in X} \delta_x
$$
where the summation takes place over all particles of $X$.
Conversely, any purely atomic Radon measure on $E$ is given by a configuration. The space $\Conf(E)$ of configurations on $E$ is thus identified with a closed subset of integer-valued
Radon measures on $E$ in the space of all Radon measures on $E$. This identification endows $\Conf(E)$ with the structure of a complete separable metric space, which, however, is not locally compact.

The Borel structure on $\Conf(E)$ can be equivalently defined as follows. For a bounded Boral subset $B \subset E$, introduce a function
$$
\#_{B}\colon \Conf(E) \to \mathbb{R}
$$
that assigns to a configuration $X$ the number of its particles that lie in $B$. The family of functions $\#_B$ over alll bounded Borel subsets $B \subset E$ determines the Borel structure on $\Conf(E)$; in particular, to define a probability measure on $\Conf(E)$ it is necessary and sufficient to define the joint distributions of the random variables $\#_{B_1}, \dots, \#_{B_k}$ over all finite collections of disjoint bounded Borel subsets $B_1, \dots, B_k \subset E$.

\subsection{ Weak topology on the space of probability measures on the space of configurations}

The space $\Conf(E)$ is  endowed with a natural structure of a complete
separable metric space, and the space $\Mfin(\Conf(E))$  of finite Borel measures on the space of configurations is consequently also
a complete separable metric space with respect to the weak topology.

Let $\varphi:E\to\mathbb{R}$ be a compactly supported continuous  function.
Define a measurable function
$\#_{\varphi}:\Conf(E)\to\mathbb{R}$ by the formula
$$\#_{\varphi}(X)=\sum\limits_{x\in X}\varphi(x).$$
For a bounded Borel subset $B\subset E$, of course, we have
$\#_B=\#_{\chi_B}$.

Since the Borel sigma-algebra on $\Conf(E)$ coincides with the sigma-algebra generated by the integer-valued
random variables $\#_B$ over all bounded Borel subsets $B\subset E$,  it also coincides with the sigma-algebra generated by the random variables $\#_{\varphi}$ over all compactly supported continuous  functions $\varphi:E\to\mathbb{R}$.
Consequently, we have the following
\begin{proposition}\label{fin-distr-unique}
 A Borel probability measure $\Prob\in \Mfin(\Conf(E))$ is uniquely determined
by the joint distributions of all finite collections $$\#_{\varphi_1}, \#_{\varphi_2}, \dots, \#_{\varphi_l}$$
over all continuous  functions $\varphi_1, \dots, \varphi_l:E\to\mathbb{R}$ with disjoint compact supports.
\end{proposition}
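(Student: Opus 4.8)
The plan is to deduce the statement from the standard uniqueness principle for probability measures: two probability measures that agree on a generating $\pi$-system of the underlying $\sigma$-algebra must coincide. As recalled in the text preceding the statement, the Borel $\sigma$-algebra on $\mathrm{Conf}(E)$ coincides with $\sigma(\#_\varphi : \varphi \in C_c(E))$, the $\sigma$-algebra generated by the counting functionals $\#_\varphi$ over all continuous compactly supported $\varphi\colon E \to \mathbb{R}$. I would therefore begin by introducing the family of cylinder sets
\[
\{X \in \mathrm{Conf}(E) : (\#_{\varphi_1}(X), \ldots, \#_{\varphi_k}(X)) \in A\}, \quad \varphi_i \in C_c(E),\ A \in \mathcal{B}(\mathbb{R}^k),\ k \geq 1,
\]
and observe that this family is closed under finite intersections (the intersection of two cylinders is a cylinder over the concatenated tuple of functions and the product Borel set), hence forms a $\pi$-system; since it generates the Borel $\sigma$-algebra, Dynkin's $\pi$--$\lambda$ theorem shows that a probability measure $\Prob$ is uniquely determined by the collection of all finite-dimensional joint distributions of the random variables $\#_\varphi$, $\varphi \in C_c(E)$.

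The heart of the argument is then to show that these joint distributions are already encoded in the one-dimensional distributions of the individual variables $\#_\psi$, which are singleton collections and thus trivially satisfy the disjoint-support requirement. Here I would exploit the linearity of the counting functional, namely $\#_{t_1\varphi_1 + \cdots + t_k \varphi_k} = t_1 \#_{\varphi_1} + \cdots + t_k \#_{\varphi_k}$, valid because $\#_\varphi(X) = \sum_{x \in X} \varphi(x)$ is linear in $\varphi$ and the sum is finite for compactly supported $\varphi$. Consequently, for any $\varphi_1, \ldots, \varphi_k \in C_c(E)$ and any $(t_1, \ldots, t_k) \in \mathbb{R}^k$, the joint characteristic function satisfies
\[
\mathbb{E}_\Prob \exp\Bigl(i\sum_{j=1}^k t_j \#_{\varphi_j}\Bigr) = \mathbb{E}_\Prob \exp\bigl(i \#_{\psi}\bigr), \qquad \psi = \sum_{j=1}^k t_j \varphi_j \in C_c(E).
\]
Since $\psi$ is again a single continuous compactly supported function, the right-hand side is determined by the distribution of $\#_\psi$ alone; letting $(t_j)$ range over $\mathbb{R}^k$ recovers the full joint characteristic function of $(\#_{\varphi_1}, \ldots, \#_{\varphi_k})$, which in turn determines their joint law. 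Combined with the first paragraph, this proves that $\Prob$ is determined by the one-dimensional marginals $\{\#_\psi : \psi \in C_c(E)\}$, and a fortiori by the joint distributions over the larger families of functions with pairwise disjoint supports demanded in the statement.

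I expect the only genuinely delicate point to be bookkeeping rather than substance: one must verify carefully that the cylinder family is a $\pi$-system generating the Borel $\sigma$-algebra (which reduces to the generating statement already quoted from the text), and that the interchange of expectation with the finite linear combination is unproblematic --- this is immediate since $\#_\psi$ is a genuine pointwise-defined measurable function and $\exp(i\#_\psi)$ is bounded. If one prefers to stay closer to the formulation in terms of several functions with disjoint supports rather than passing through single functions, an alternative route would approximate, for disjoint bounded open sets $U_i$, the indicators $\chi_{U_i}$ from below by increasing sequences of continuous functions supported inside $U_i$ (hence with disjoint supports), use monotone convergence of $\#_{\varphi_i^{(n)}} \uparrow \#_{U_i}$ to obtain the joint law of the counts in disjoint open sets, and then extend to disjoint bounded Borel sets by a monotone-class argument; the main obstacle there is maintaining disjointness while passing from open to Borel sets, which is precisely what the characteristic-function shortcut avoids.
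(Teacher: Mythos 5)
Your argument is correct and rests on the same foundation as the paper's: the remark immediately preceding the proposition, that the Borel sigma-algebra on $\Conf(E)$ is generated by the random variables $\#_{\varphi}$ over $\varphi\in C_c(E)$, combined with the standard uniqueness of a probability measure on a generating $\pi$-system of cylinder sets. The paper stops there and treats the proposition as an immediate consequence; in particular it never says how one passes from joint distributions over \emph{arbitrary} finite collections of test functions (which is what the generation statement directly yields) to the collections with \emph{pairwise disjoint} compact supports required in the statement. Your linearity observation $\#_{t_1\varphi_1+\dots+t_k\varphi_k}=t_1\#_{\varphi_1}+\dots+t_k\#_{\varphi_k}$, fed into the joint characteristic function, closes exactly this gap: it shows that the measure is already determined by the one-dimensional laws of the $\#_{\psi}$, $\psi\in C_c(E)$, and singletons trivially satisfy the disjoint-support requirement, so the stated (larger) family of data determines the measure a fortiori. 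This is a modest but genuine sharpening of what the paper writes down, and it is the cleaner route; the alternative you sketch at the end --- approximating indicators of disjoint open sets from below and running a monotone-class argument --- is essentially the mechanism behind the Daley--Vere-Jones result (Theorem 11.1.VII of \cite{DVJ}) that the paper invokes a few lines later for the weak-topology characterization, so either path is consistent with the paper's framework.
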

The weak topology on $\Mfin(\Conf(E))$ admits the following  characterization in terms of the said finite-dimensional distributions (see Theorem 11.1.VII in vol.2 of \cite{DVJ}). Let $\Prob_n, n\in{\mathbb N}$ and $\Prob$ be Borel probability measures on $\Conf(E)$. Then the measures $\Prob_n$ converge to $\Prob$ weakly as $n\to\infty$ if and only if for any finite collection $\varphi_1, \dots, \varphi_l$ of  continuous  functions  with disjoint compact supports
the joint distributions of the random variables
$\#_{\varphi_1}, \dots, \#_{\varphi_l}$ with respect to $\Prob_n$ converge, as $n\to\infty$,
to the joint distribution of  $\#_{\varphi_1}, \dots, \#_{\varphi_l}$ with respect to $\Prob$; convergence of joint distributions being understood according to the weak topology
on the space of Borel probability measures on ${\mathbb R}^l$.

\subsection{Spaces of locally  trace class operators}\label{subsec-loc-tr-cls}

Let $\mu$ be a sigma-finite Borel measure on $E$.

Let $\scrI_{1}(E,\mu)$ be the ideal of trace class operators
${\widetilde K}\colon L_2(E,\mu)\to L_2(E,\mu)$ (see volume~1 of~\cite{ReedSimon} for
the precise definition); the symbol
$||{\widetilde K}||_{\scrI_1}$ will stand for the
$\scrI_{1}$-norm of the operator ${\widetilde K}$.
Let $\scrI_{2}(E,\mu)$ be the
ideal of Hilbert-Schmidt operators ${\widetilde K}\colon
L_2(E,\mu)\to L_2(E,\mu)$; the symbol $||{\widetilde
K}||_{\scrI_2}$ will stand for the $\scrI_{2}$-norm of
the operator ${\widetilde K}$.

Let  $\scrI_{1,\loc}(E,\mu)$ be the space of operators  $K\colon L_2(E,\mu)\to L_2(E,\mu)$
such that for any bounded Borel subset $B\subset E$
we have $$\chi_BK\chi_B\in\scrI_1(E,\mu).$$
Again, we endow the space $\scrI_{1,\loc}(E,\mu)$
with a countable family of semi-norms
\begin{equation*}
||\chi_BK\chi_B||_{\scrI_1}
\end{equation*}
where, as before, $B$ runs through an exhausting family $B_n$ of bounded sets.

\subsection{Determinantal Point Processes}

A Borel probability measure $\Prob$ on
$\Conf(E)$ is called
\textit{determinantal} if there exists an operator
$K\in\scrI_{1,\loc}(E,\mu)$ such that for any bounded measurable
function $g$, for which $g-1$ is supported in a bounded set $B$,
we have
\begin{equation}
\label{eq1}
\mathbb{E}_{\Prob}\Psi_g
=\det\biggl(1+(g-1)K\chi_{B}\biggr).
\end{equation}
The Fredholm determinant in~\eqref{eq1} is well-defined since
$K\in \scrI_{1, \loc}(E,\mu)$.
The equation (\ref{eq1}) determines the measure $\Prob$ uniquely.
For any
pairwise disjoint bounded Borel sets $B_1,\dotsc,B_l\subset E$
and any  $z_1,\dotsc,z_l\in {\mathbb C}$ from (\ref{eq1}) we  have
$\mathbb{E}_{\Prob}z_1^{\#_{B_1}}\cdots z_l^{\#_{B_l}}
=\det\biggl(1+\sum\limits_{j=1}^l(z_j-1)\chi_{B_j}K\chi_{\sqcup_i B_i}\biggr).$

For further results and background on determinantal point processes, see e.g. \cite{BorOx},  \cite{HoughEtAl}, \cite{Lyons},
\cite{LyonsSteif}, \cite{Lytvynov}, \cite{ShirTaka0},  \cite{ShirTaka1}, \cite{ShirTaka2}, \cite{Soshnikov}.

If $K$ belongs to
$\scrI_{1,\loc}(E,\mu)$, then, throughout the paper, we denote
the corresponding determinantal measure by
$\Prob_K$. Note that $\Prob_K$ is uniquely defined by~$K$,
but different operators may yield the same measure.
By the Macch{\` \i}---Soshnikov theorem~\cite{Macchi}, \cite{Soshnikov}, any
Hermitian positive contraction that belongs
to the class~$\scrI_{1,\loc}(E,\mu)$ defines a determinantal point process.

\subsection{Change of variables}\label{subsec-ch-of-var}

Let $F:E\to E$ be a homeomorphism.  The  homeomorphism $F$ induces a homeomorphism of the space $\Conf(E)$, for which, slightly abusing notation, we keep the same symbol: given $X\in\Conf(E)$, the particles of the configuration
$F(X)$ have the form $F(x)$ over all $x\in X$.
Now, as before, let $\mu$ be a sigma-finite measure on $E$, and  let $\Prob_K$ be the determinantal measure induced by an operator $K\in \scrI_{1, \loc}(E, \mu)$.
Let the operator  $F_*K$ be defined by the formula $F_*K(f)=K(f\circ F)$.

Assume now that the measures $F_*\mu$ and $\mu$ are equivalent,
and consider the operator
$$
K^F=\sqrt{\frac{dF_*\mu}{d\mu}}F_*K\sqrt{\frac{dF_*\mu}{d\mu}}.
$$
Note that if $K$ is self-adjoint, then so is $K^F$.
If $K$ is given by the kernel $K(x,y)$, then $K^F$ is given by the kernel
$$
K^F(x,y)=\sqrt{\frac{dF_*\mu}{d\mu}\left(x\right)}K(F^{-1}x, F^{-1}y)
\sqrt{\frac{dF_*\mu}{d\mu}\left( y\right)}.
$$
Directly from the definitions we now have the following
\begin{proposition}
The action of the homeomorphism $F$ on the determinantal measure $\Prob_K$
is given by the formula
$$
F_*\Prob_K=\Prob_{K^F}.
$$
\end{proposition}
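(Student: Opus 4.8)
The plan is to lean on the characterization recalled in~(\ref{eq1}): a determinantal measure is completely determined by the Fredholm determinants $\mathbb{E}_{\Prob}\Psi_g=\det(1+(g-1)K\chi_B)$ as $g$ ranges over bounded multipliers with $g-1$ supported in a bounded Borel set $B$, and~(\ref{eq1}) fixes the measure uniquely. Hence it suffices to show that the pushforward $F_*\Prob_K$ satisfies the same relation with $K$ replaced by $K^F$, i.e. that for every such $g$,
\begin{equation*}
\mathbb{E}_{F_*\Prob_K}\Psi_g=\det\bigl(1+(g-1)K^F\chi_B\bigr).
\end{equation*}

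First I would transport the multiplicative functional through the pushforward: for $X\in\Conf(E)$ one has $\Psi_g(F(X))=\prod_{x\in X}g(F(x))=\Psi_{g\circ F}(X)$, so that $\mathbb{E}_{F_*\Prob_K}\Psi_g=\mathbb{E}_{\Prob_K}\Psi_{g\circ F}$. Applying the determinantal identity for $\Prob_K$ to the multiplier $g\circ F$, whose deviation from $1$ is supported in $F^{-1}(B)$, reduces the claim to the operator identity $\det(1+((g\circ F)-1)K\chi_{F^{-1}(B)})=\det(1+(g-1)K^F\chi_B)$, the determinants being computed on $L_2(E,\mu)$.

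Next I would realize this identity by conjugating with two unitaries whose composition carries $K$ to $K^F$. The homeomorphism $F$ induces a unitary $U_F\colon L_2(E,\mu)\to L_2(E,F_*\mu)$, $U_F\varphi=\varphi\circ F^{-1}$, under which a multiplication operator $M_\phi$ becomes $M_{\phi\circ F^{-1}}$, the projection $\chi_{F^{-1}(B)}$ becomes $\chi_B$, and $K$ becomes $\widehat K=U_FKU_F^{-1}$, whose kernel with respect to $F_*\mu$ is $K(F^{-1}x,F^{-1}y)$; since $(g\circ F)\circ F^{-1}=g$ and conjugation by an invertible operator leaves a Fredholm determinant unchanged, the first determinant equals $\det(1+(g-1)\widehat K\chi_B)$ computed on $L_2(E,F_*\mu)$. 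The equivalence of $F_*\mu$ and $\mu$ then supplies a second unitary $J\colon L_2(E,F_*\mu)\to L_2(E,\mu)$, $Jh=\sqrt{dF_*\mu/d\mu}\,h$, which fixes multiplication operators and sends $\widehat K$ to the operator with kernel $\sqrt{\tfrac{dF_*\mu}{d\mu}(x)}\,K(F^{-1}x,F^{-1}y)\,\sqrt{\tfrac{dF_*\mu}{d\mu}(y)}=K^F(x,y)$; conjugating once more preserves the determinant and produces exactly $\det(1+(g-1)K^F\chi_B)$ on $L_2(E,\mu)$. Chaining the equalities and invoking the uniqueness in~(\ref{eq1}) yields $F_*\Prob_K=\Prob_{K^F}$.

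The step requiring genuine care is that a homeomorphism need not preserve boundedness: in the application $F(x)=4/x$ sends $(0,R)$ to $(4/R,+\infty)$, so the support $F^{-1}(B)$ of $(g\circ F)-1$ may be unbounded and the literal form~(\ref{eq1}) does not apply verbatim. What is used instead is the standard strengthening, valid for any bounded $h$ with $(h-1)K\in\mathscr{I}_1(E,\mu)$, namely $\mathbb{E}_{\Prob_K}\Psi_h=\det(1+(h-1)K)$; this applies to $h=g\circ F$ because $((g\circ F)-1)K$ is, via the unitaries $U_F$ and $J$, unitarily equivalent to $(g-1)K^F\chi_B$, which is trace class. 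Here the combination $W=JU_F$ satisfies $W^{-1}\chi_B W=\chi_{F^{-1}(B)}$ and $K^F=WKW^{-1}$, so all the relevant trace-class memberships, including the local trace-class property $K^F\in\mathscr{I}_{1,\mathrm{loc}}(E,\mu)$ that makes $\Prob_{K^F}$ a bona fide determinantal measure, transfer through $W$; this last point is where the homeomorphism property of $F$ (mapping the exhausting family of bounded sets appropriately) is genuinely invoked. Once these memberships are recorded, invariance of the Fredholm determinant under the similarity transformations $U_F$ and $J$ is routine, and the proof is complete.
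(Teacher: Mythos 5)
Your proof is correct. The paper itself offers no argument for Proposition \ref{chng-var} beyond the phrase ``directly from the definitions'', so your write-up is best read as supplying the verification the paper leaves implicit, and it does so along the standard lines: reduce to the identity $\mathbb{E}_{F_*\Prob_K}\Psi_g=\mathbb{E}_{\Prob_K}\Psi_{g\circ F}$, then match Fredholm determinants by conjugating with the composition $W=JU_F$ of the two unitaries induced by $F$ and by the Radon--Nikodym density, under which $K\mapsto K^F$, multiplication operators are intertwined appropriately, and the determinant is unchanged. Your computation of the kernel of $JU_FKU_F^{-1}J^{-1}$ reproduces exactly the paper's formula for $K^F(x,y)$, so the uniqueness statement attached to \eqref{eq1} closes the argument. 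The one point worth stating even more bluntly than you do: since $F$ need not carry bounded sets to bounded sets (as with $y=4/x$), the hypothesis $K\in\mathscr{I}_{1,\mathrm{loc}}(E,\mu)$ does \emph{not} by itself guarantee $K^F\in\mathscr{I}_{1,\mathrm{loc}}(E,\mu)$ --- one needs $\chi_{F^{-1}(B)}K\chi_{F^{-1}(B)}\in\mathscr{I}_1$ for every bounded $B$, which is an additional hypothesis implicitly built into the paper's statement (it assumes $\Prob_{K^F}$ is defined) and verified separately for the Bessel kernel in the application. You flag this correctly; your appeal to the strengthened form of \eqref{eq1} for bounded $h$ with $(h-1)K$ trace class is the right way to handle the unbounded support of $(g\circ F)-1$. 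No gap.
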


Note that if $K$ is the operator of orthogonal projection onto the closed subspace
$L\subset L_2(E, \mu)$, then, by definition, the operator
 $K^F$ is the operator of orthogonal projection onto the closed subspace
$$\{\varphi\circ F^{-1}(x)\sqrt{\frac{dF_*\mu}{d\mu}\left(x\right)}\}\subset L_2(E, \mu).$$

\subsection{Multiplicative functionals on spaces of configurations}

Let $g$ be a non-negative measurable function on $E$, and introduce the
{\it multiplicative functional} $\Psi_g:\Conf(E)\to\mathbb{R}$ by the formula
 \begin{equation*}
 \Psi_g(X)=\prod\limits_{x\in X}g(x).
 \end{equation*}
If the infinite product
$\prod\limits_{x\in X}g(x)$ absolutely converges to $0$ or to $\infty$, then we set, respectively,
$\Psi_g(X)=0$ or $\Psi_g(X)=\infty$. If the product in the right-hand side fails to converge absolutely,
then the multiplicative functional is not defined.

\subsection{Multiplicative functionals of determinantal point processes}

At the centre of the construction of infinite determinantal measures lie the results of \cite{Buf-umn}, \cite{Buf-CIRM}  that can informally be summarized as follows:
a determinantal measure times a multiplicative functional is again a determinantal measure.
In other words, if $\Prob_K$ is a determinantal measure on $\Conf(E)$ induced by the
operator $K$ on $L_2(E,\mu)$, then, under certain additional assumptions,
it is shown in \cite{Buf-umn}, \cite{Buf-CIRM}  that the
measure ${ \Psi_g\Prob_K}$
after normalization yields a determinantal point process.

As before, let $g$ be a non-negative measurable function on $E$. If the operator $1+(g-1)K$ is invertible, then we set
$${\mathfrak B}(g, K)=g K(1+{(g-1)}K)^{-1},\qquad
{\widetilde {\mathfrak B}}(g, K)= {\sqrt{g}}K(1+{(g-1)}K)^{-1}{\sqrt{g}}.$$
By definition, ${\mathfrak B}(g,K),{\widetilde {\mathfrak B}}(g,K)\in \scrI_{1,\loc}(E,\mu)$ since $K\in \scrI_{1,\loc}(E,\mu)$, and, if $K$ is self-adjoint, then
so is ${\widetilde {\mathfrak B}}(g,K)$.

We now recall a few propositions from \cite{Buf-CIRM}.
\begin{proposition}
\label{pr1-bis}
Let $K\in \scrI_{1,\loc}(E,\mu)$ be a self-adjoint positive contraction, and let $\Prob_K$ be
the corresponding determinantal measure on $\Conf(E)$. Let
$g$ be a nonnegative bounded measurable
function on~$E$ such that
\begin{equation}
\label{gkint}
\sqrt{g-1}K\sqrt{g-1}\in\scrI_{1}(E,\mu)
\end{equation}
and that the operator
$1+{(g-1)}K$ is invertible. Then
\begin{enumerate}
\item we have $\Psi_g\in L_1(\Conf(E),\Prob_K)$ and
\begin{equation*}
\int\Psi_g\,d\Prob_K=\det\Bigl(1+\sqrt{g-1}K\sqrt{g-1}\Bigr)>0;
\end{equation*}
\item the operators
${\mathfrak B}(g, K), {\widetilde {\mathfrak B}}(g, K)$ induce on $\Conf(E)$ a determinantal measure
$\Prob_{{\mathfrak B}(g, K)}=\Prob_{{\widetilde {\mathfrak B}}(g, K)}$ satisfying
\begin{equation*}
\Prob_{{\mathfrak B}(g, K)}=\frac{\displaystyle \Psi_g\Prob_K}
{\displaystyle \int\limits_{\Conf(E)}\Psi_g\,d\Prob_K}.
\end{equation*}
\end{enumerate}
\end{proposition}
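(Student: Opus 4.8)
The plan is to reduce everything to the defining functional identity (\ref{eq1}) for determinantal measures, together with the multiplicativity $\Psi_g\Psi_h=\Psi_{gh}$ and a single Fredholm-determinant identity. I would first treat part (1). When $g-1$ is supported in a bounded Borel set $B$, the formula $\mathbb{E}_{\mathbb{P}_K}\Psi_g=\det(1+(g-1)K\chi_B)$ is exactly (\ref{eq1}), and the cyclic property of the Fredholm determinant rewrites the right-hand side as $\det(1+\sqrt{g-1}K\sqrt{g-1})$. To pass to a general bounded $g$ satisfying $\sqrt{g-1}K\sqrt{g-1}\in\scrI_1(E,\mu)$, I would introduce the truncations $g_n$ equal to $g$ on an exhausting family $B_n$ of bounded sets and equal to $1$ elsewhere, so that each $g_n-1$ has bounded support and the formula holds for $g_n$. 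The right-hand sides converge because $\sqrt{g_n-1}K\sqrt{g_n-1}\to\sqrt{g-1}K\sqrt{g-1}$ in $\scrI_1(E,\mu)$ and the Fredholm determinant is continuous in the trace norm; for the left-hand sides I would use monotone convergence when $g\le 1$ and a dominated-convergence argument in general. The same argument yields that the infinite product defining $\Psi_g$ converges absolutely $\mathbb{P}_K$-almost surely, that $\Psi_g$ is $\mathbb{P}_K$-integrable, and that its integral has the asserted positive value; almost sure positivity of $\Psi_g$ then follows from $\int\Psi_g\,d\mathbb{P}_K>0$ and $\Psi_g\ge 0$.

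For part (2), I would invoke the uniqueness of a determinantal measure from its multiplicative functionals (equivalently, from its finite-dimensional distributions, Proposition \ref{fin-distr-unique}). Set $A=1+(g-1)K$, invertible by hypothesis. It suffices to check, for every bounded measurable $h$ with $h-1$ supported in a bounded set $B$, the identity
$$\frac{\displaystyle\int\Psi_{gh}\,d\mathbb{P}_K}{\displaystyle\int\Psi_g\,d\mathbb{P}_K}=\det\bigl(1+(h-1){\mathfrak B}(g,K)\bigr),$$
because the left-hand side is $\mathbb{E}\Psi_h$ for the normalized measure $\Psi_g\mathbb{P}_K/\int\Psi_g\,d\mathbb{P}_K$, while the right-hand side (which equals $\det(1+(h-1){\mathfrak B}(g,K)\chi_B)$ since $h-1$ is supported in $B$) is the value prescribed by (\ref{eq1}) for the candidate operator ${\mathfrak B}(g,K)$. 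By part (1) applied to the functions $gh$ and $g$, the left-hand side is the ratio $\det(1+(gh-1)K)/\det(1+(g-1)K)$.

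The computation that finishes the proof is purely algebraic. Writing $1+(gh-1)K=A+g(h-1)K$ and factoring out $A$ gives
$$\frac{\det(1+(gh-1)K)}{\det(1+(g-1)K)}=\det\bigl(1+A^{-1}g(h-1)K\bigr),$$
and the cyclic identity $\det(1+UV)=\det(1+VU)$, combined with the commutation of the multiplication operators $g$ and $h-1$, turns this into $\det(1+(h-1)gKA^{-1})=\det(1+(h-1){\mathfrak B}(g,K))$, as required. Finally, the two candidate operators are conjugate, ${\widetilde{\mathfrak B}}(g,K)=\sqrt{g}^{-1}{\mathfrak B}(g,K)\sqrt{g}$; since $h-1$ commutes with $\sqrt{g}$, conjugation by $\sqrt{g}$ leaves $\det(1+(h-1)\,\cdot\,)$ unchanged (the case of vanishing $g$ being handled by approximating $g$ by $g+\varepsilon$), so ${\mathfrak B}(g,K)$ and ${\widetilde{\mathfrak B}}(g,K)$ induce the same determinantal measure.

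I expect the main obstacle to be the limiting argument in part (1): justifying $\mathbb{E}_{\mathbb{P}_K}\Psi_{g_n}\to\mathbb{E}_{\mathbb{P}_K}\Psi_g$ while simultaneously controlling the almost sure absolute convergence and positivity of the infinite product $\Psi_g=\prod_{x\in X}g(x)$. The hypothesis $\sqrt{g-1}K\sqrt{g-1}\in\scrI_1(E,\mu)$ is precisely what bounds the expected number of particles at which $g$ differs appreciably from $1$, and hence what makes the product converge; converting this into a clean domination valid uniformly in $n$ (rather than mere monotonicity, available only when $g\le 1$) is the delicate point. By contrast, the determinant manipulations in part (2) are routine once the invertibility of $1+(gh-1)K$ is secured for the relevant test functions $h$, which holds for the localized perturbations $h=1+\sum_j(z_j-1)\chi_{B_j}$ that determine the finite-dimensional distributions.
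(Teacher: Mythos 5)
The paper itself offers no proof of Proposition \ref{pr1-bis}: it is recalled verbatim from \cite{Buf-umn} and \cite{Buf-CIRM}, so there is no internal argument to compare yours against. Your strategy — establish the expectation formula for $g-1$ of bounded support directly from (\ref{eq1}), pass to general $g$ by truncation, and then identify the normalized measure $\Psi_g\mathbb{P}_K/\int\Psi_g\,d\mathbb{P}_K$ by verifying the generating functional $\mathbb{E}\Psi_h=\det(1+(h-1){\mathfrak B}(g,K)\chi_B)$ via the ratio identity for Fredholm determinants — is the standard and correct route, and the skeleton is sound.

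Two points need more than you give them. First, in part (1) the domination you defer is resolvable but should be stated: $\Psi_{g_n}\leq\Psi_{\max(g,1)}$ pointwise and uniformly in $n$, and $\Psi_{\max(g,1)}$ is integrable by the monotone case applied to $\max(g,1)$, which satisfies the hypothesis (\ref{gkint}) because $|\max(g,1)-1|\leq|g-1|$ and $K$ is a \emph{positive} contraction, so $\mathrm{tr}\,\sqrt{f}K\sqrt{f}$ is monotone in $f$. Second, and more seriously, your ``purely algebraic'' factorization $\det(1+(gh-1)K)=\det(A)\det(1+A^{-1}g(h-1)K)$ with $A=1+(g-1)K$ is not legitimate as written: the hypothesis gives only $\sqrt{g-1}K\sqrt{g-1}\in\scrI_{1}(E,\mu)$, not $(g-1)K\in\scrI_{1}(E,\mu)$, so $\det(A)$ and $\det(1+(gh-1)K)$ are not defined as Fredholm determinants of identity-plus-trace-class, and multiplicativity cannot be invoked. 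The determinants in the statement are the symmetrized ones, $\det(1+\sqrt{g-1}K\sqrt{g-1})$, and the cyclic manipulations must either be carried out at that level or, more simply, first proved for the truncations $g_n$ (where $(g_n-1)K\chi_{B_n}$ is genuinely trace class and your algebra is valid) and then passed to the limit using the $\scrI_1$-convergence $\sqrt{g_n-1}K\sqrt{g_n-1}\to\sqrt{g-1}K\sqrt{g-1}$ together with norm convergence of $(1+(g_n-1)K)^{-1}$. With that repair, and with the observation that invertibility of $1+(gh-1)K$ for $h$ near $1$ follows from invertibility of $A$ by a Neumann series (which suffices, since the finite-dimensional distributions are determined by the generating function near $z=(1,\dots,1)$), the argument closes. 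Your final identification of ${\mathfrak B}(g,K)$ and ${\widetilde{\mathfrak B}}(g,K)$ needs no $\varepsilon$-regularization: $\det(1+(h-1)\sqrt{g}KA^{-1}\sqrt{g})=\det(1+(h-1)gKA^{-1})$ directly by cyclicity, since $\sqrt{g}$ commutes with $h-1$.
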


{\bf Remark.} Since (\ref{gkint}) holds and $K$ is self-adjoint,  the operator $1+{(g-1)}K$ is
invertible if and only if the operator  $1+\sqrt{g-1}K\sqrt{g-1}$ is invertible.

If $Q$ is a projection operator, then the operator $\tilde {\mathfrak B}(g,Q)$ admits the following description.

\begin{proposition}
Let $L\subset L_2(E,\mu)$ be a closed subspace, and let
$Q$ be the operator of orthogonal projection onto $L$.
Let $g$ be a bounded measurable function such that the operator $1+(g-1)Q$ is invertible. Then the operator $\tilde {\mathfrak B}(g,Q)$ is the operator of orthogonal projection onto the closure of the subspace $\sqrt{g}L$.
\end{proposition}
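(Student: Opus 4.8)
The plan is to prove that $P:=\widetilde{\mathfrak B}(g,Q)=\sqrt{g}\,Q\,(1+(g-1)Q)^{-1}\sqrt{g}$ is a bounded self-adjoint operator that restricts to the identity on $\sqrt{g}L$ and whose range is contained in $\sqrt{g}L$; a short abstract argument then identifies $P$ with the orthogonal projection onto $M:=\overline{\sqrt{g}L}$. Indeed, if a bounded self-adjoint operator $P$ satisfies $P|_{M}=\mathrm{id}_{M}$ and $\mathrm{Range}(P)\subseteq M$, then for $x\in M^{\perp}$ and any $m\in M$ one has $\langle Px,m\rangle=\langle x,Pm\rangle=\langle x,m\rangle=0$, so $Px\in M\cap M^{\perp}=\{0\}$; decomposing an arbitrary vector along $M\oplus M^{\perp}$ then shows that $P$ equals the orthogonal projection onto $M$. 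Boundedness of $P$ is immediate, since $Q$ and $\sqrt{g}$ are bounded and, by the invertibility hypothesis together with the open mapping theorem, $(1+(g-1)Q)^{-1}$ is bounded.

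For self-adjointness I would first record the elementary commutation identity $Q(1+(g-1)Q)=(1+Q(g-1))Q$, both sides being equal to $Q+Q(g-1)Q$. Since $1+(g-1)Q$ is invertible, so is $1+Q(g-1)$ (the operators $1+AB$ and $1+BA$ are simultaneously invertible, here with $A$ multiplication by $g-1$ and $B=Q$), and the identity rearranges to $Q(1+(g-1)Q)^{-1}=(1+Q(g-1))^{-1}Q$. Using that $Q$ is self-adjoint and that multiplication by the real functions $g-1$ and $\sqrt{g}$ is self-adjoint, one computes $P^{*}=\sqrt{g}\,(1+Q(g-1))^{-1}Q\,\sqrt{g}=\sqrt{g}\,Q\,(1+(g-1)Q)^{-1}\sqrt{g}=P$.

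Next I would show that $P$ fixes $\sqrt{g}L$ pointwise. For $\ell\in L$ one has $Q\ell=\ell$, hence $(1+(g-1)Q)\ell=\ell+(g-1)\ell=g\ell$, so that $(1+(g-1)Q)^{-1}(g\ell)=\ell$. Taking $f=\sqrt{g}\,\ell$ and using $\sqrt{g}\cdot\sqrt{g}=g$ gives $Pf=\sqrt{g}\,Q(1+(g-1)Q)^{-1}(g\ell)=\sqrt{g}\,Q\ell=\sqrt{g}\,\ell=f$. Thus $P|_{\sqrt{g}L}=\mathrm{id}$, and since $P$ is bounded this extends by continuity to $P|_{M}=\mathrm{id}_{M}$. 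On the other hand, every vector in $\mathrm{Range}(P)$ is of the form $\sqrt{g}\,(Q\psi)$ with $\psi=(1+(g-1)Q)^{-1}\sqrt{g}f$, and $Q\psi\in L$, whence $\mathrm{Range}(P)\subseteq\sqrt{g}L\subseteq M$. Combined with the abstract argument of the first paragraph, this proves that $P$ is the orthogonal projection onto $M=\overline{\sqrt{g}L}$.

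The only genuinely delicate point is the self-adjointness step: it rests on passing from invertibility of $1+(g-1)Q$ to invertibility of $1+Q(g-1)$ and on the commutation identity, and I would take care to justify these at the level of bounded operators rather than formally. I would also note that the argument in fact yields the chain $M\subseteq\mathrm{Range}(P)\subseteq\sqrt{g}L\subseteq M$, so that $\mathrm{Range}(P)=\sqrt{g}L$; under the invertibility hypothesis the subspace $\sqrt{g}L$ is therefore already closed and coincides with its closure $M$. The statement is phrased with the closure only to remain consistent with the more general situation of Proposition \ref{pr1-bis-one}, where the invertibility of $1+(g-1)\Pi$ is replaced by the weaker condition $\|(1-g)\Pi\|<1$ and closedness of $\sqrt{g}L$ must be argued separately.
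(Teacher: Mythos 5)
Your proof is correct and complete. The paper itself does not supply an argument for this proposition: it is stated in Appendix B as one of several facts recalled from \cite{Buf-umn} and \cite{Buf-CIRM}, so there is no in-paper proof to compare against. Your route is the natural one: the intertwining identity $Q(1+(g-1)Q)=(1+Q(g-1))Q$ together with the standard fact that $1+AB$ and $1+BA$ are simultaneously invertible gives $Q(1+(g-1)Q)^{-1}=(1+Q(g-1))^{-1}Q$ and hence self-adjointness of $P=\tilde{\mathfrak B}(g,Q)$; the computation $(1+(g-1)Q)\ell=g\ell$ for $\ell\in L$ shows $P$ fixes $\sqrt{g}L$; the factorization $Pf=\sqrt{g}\,Q\psi$ shows $\mathrm{Range}(P)\subseteq\sqrt{g}L$; and the abstract lemma (bounded self-adjoint, identity on $M$, range in $M$ implies orthogonal projection onto $M$) closes the argument. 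Your observation that the chain $M\subseteq\mathrm{Range}(P)\subseteq\sqrt{g}L\subseteq M$ forces $\sqrt{g}L$ to be closed is a genuine and correct byproduct.

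One small inaccuracy in your closing remark: the condition $\|(1-g)\Pi\|<1$ appearing in Proposition \ref{pr1-bis-one} is not weaker than invertibility of $1+(g-1)\Pi$ --- it implies it, via the Neumann series for $\bigl(1-(1-g)\Pi\bigr)^{-1}$. The phrasing of the statement with ``closure'' is simply conservative (a priori one does not know $\sqrt{g}L$ is closed); your proof shows the closure is in fact superfluous under the stated hypothesis. This does not affect the validity of the argument.
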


We now consider the particular case when  $g$ is a characteristic function of a Borel subset.
In much the same way as before, if
$E'\subset E$ is a Borel subset such that  the subspace $\chi_{E'}L$ is closed (recall that a sufficient condition for that is provided in Proposition \ref{closedsubsp}),  then we
set $Q^{E'}$ to be
the operator of orthogonal projection onto the closed subspace $\chi_{E'}L$.

Proposition \ref{pr1-bis} now yields the following
\begin{corollary}\label{indsubset}
Let $Q\in\scrI_{1,\loc}(E,\mu)$ be the operator of orthogonal projection
onto a closed subspace $L\in L_2(E,\mu)$. Let $E'\subset E$ be a Borel subset such that
$\chi_{E\setminus E'}Q\chi_{E\setminus E'}\in\scrI_1(E,\mu)$.
Then
\begin{equation*}
\Prob_{Q}(\Conf(E, E'))=\det(1-\chi_{E\setminus E'}Q\chi_{E\setminus E'}).
\end{equation*}
Assume, additionally, that for any function $\varphi\in L$, the equality $\chi_{E'}\varphi=0$ implies $\varphi=0$.
Then  the subspace $\chi_{E'}L$ is closed, and we have
$$\Prob_{Q}(\Conf(E, E'))>0,  \  Q^{E'}\in\scrI_{1,\loc}(E,\mu),$$
and
\begin{equation*}
\frac{\Prob_{Q}|_{\Conf(E,E')}}{\Prob_{Q}(\Conf(E, E'))}=\Prob_{Q^{E'}}.
\end{equation*}
\end{corollary}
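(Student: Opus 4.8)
The plan is to deduce the corollary from Proposition \ref{pr1-bis} by specializing to the characteristic function $g=\chi_{E'}$. The first observation is that for this choice the multiplicative functional degenerates into an indicator: for $X\in\Conf(E)$ one has $\Psi_{\chi_{E'}}(X)=\prod_{x\in X}\chi_{E'}(x)$, which equals $1$ when every particle of $X$ lies in $E'$ and $0$ otherwise, so $\Psi_{\chi_{E'}}=\chi_{\Conf(E,E')}$. Consequently $\int\Psi_{\chi_{E'}}\,d\mathbb{P}_Q=\mathbb{P}_Q(\Conf(E,E'))$ and $\Psi_{\chi_{E'}}\mathbb{P}_Q=\mathbb{P}_Q|_{\Conf(E,E')}$, which already puts the two assertions of the corollary into the exact shape of the two conclusions of Proposition \ref{pr1-bis}.

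Next I would unwind the operator-theoretic ingredients of Proposition \ref{pr1-bis} for $g=\chi_{E'}$. Since $g-1=-\chi_{E\setminus E'}$, the operator in hypothesis \eqref{gkint} becomes $\sqrt{g-1}\,Q\sqrt{g-1}=-\chi_{E\setminus E'}Q\chi_{E\setminus E'}$, so \eqref{gkint} is precisely the trace-class assumption on $E\setminus E'$, and $1+\sqrt{g-1}\,Q\sqrt{g-1}=1-\chi_{E\setminus E'}Q\chi_{E\setminus E'}$. For the first, unconditional equality I would either invoke part (1) of Proposition \ref{pr1-bis} directly when $1+(g-1)Q$ is invertible, or, to cover the degenerate case where both sides may vanish, exhaust $E\setminus E'$ by bounded Borel sets $B_k$, apply the defining identity \eqref{eq1} to $g_k=1-\chi_{B_k\cap(E\setminus E')}$ (for which $g_k-1$ is supported in a bounded set), and pass to the limit: the events "no particle in $B_k\cap(E\setminus E')$" decrease to $\Conf(E,E')$, while the Fredholm determinants $\det(1-\chi_{B_k\cap(E\setminus E')}Q\chi_{B_k\cap(E\setminus E')})$ converge in trace norm because $\chi_{E\setminus E'}Q\chi_{E\setminus E'}\in\scrI_1$. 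This yields $\mathbb{P}_Q(\Conf(E,E'))=\det(1-\chi_{E\setminus E'}Q\chi_{E\setminus E'})$.

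The heart of the matter is the passage to the conditional statement, where the unique-extension hypothesis enters, and the step I expect to be the main obstacle is proving that $1-\chi_{E\setminus E'}Q\chi_{E\setminus E'}$ is invertible, equivalently that the top eigenvalue of the positive trace-class operator $\chi_{E\setminus E'}Q\chi_{E\setminus E'}$ is strictly below $1$. I would argue by contradiction: an eigenvector $\psi$ with eigenvalue $1$ is automatically supported in $E\setminus E'$ and satisfies $\langle Q\psi,\psi\rangle=\|\psi\|^2$; since $Q$ is an orthogonal projection, $\langle Q\psi,\psi\rangle=\|Q\psi\|^2\le\|\psi\|^2$ with equality only if $Q\psi=\psi$, so $\psi\in L$, while $\chi_{E'}\psi=0$, and the unique-extension assumption forces $\psi=0$, a contradiction. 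As the operator is trace class, all its eigenvalues $\lambda_i$ then satisfy $\lambda_i<1$ with $\sum_i\lambda_i<\infty$, whence $\det(1-\chi_{E\setminus E'}Q\chi_{E\setminus E'})=\prod_i(1-\lambda_i)>0$. This simultaneously gives $\mathbb{P}_Q(\Conf(E,E'))>0$ and, by the remark following Proposition \ref{pr1-bis}, the invertibility of $1+(g-1)Q$ needed below.

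It then remains to identify the conditioned process. Closedness of $\chi_{E'}L$ follows from Proposition \ref{closed-sub} (cf. also Proposition \ref{closedsubsp}): the inclusion $\chi_{E\setminus E'}Q\chi_{E\setminus E'}\in\scrI_1$ makes $\chi_{E\setminus E'}Q$ Hilbert--Schmidt, hence compact, and together with unique extension this gives the estimate \eqref{eps-closed} and the closedness of $\chi_{E'}L$; that $Q^{E'}\in\scrI_{1,\mathrm{loc}}(E,\mu)$ I would read off from Corollary \ref{hs-major}. Finally, with invertibility established, I would apply part (2) of Proposition \ref{pr1-bis} in conjunction with Proposition \ref{projgl}: the operator $\widetilde{\mathfrak B}(\chi_{E'},Q)$ is the orthogonal projection onto the closure of $\sqrt{\chi_{E'}}L=\chi_{E'}L$, which is closed by the previous step, so it coincides with $Q^{E'}$. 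Proposition \ref{pr1-bis} then yields $\mathbb{P}_{Q^{E'}}=\Psi_{\chi_{E'}}\mathbb{P}_Q\big/\int\Psi_{\chi_{E'}}\,d\mathbb{P}_Q=\mathbb{P}_Q|_{\Conf(E,E')}\big/\mathbb{P}_Q(\Conf(E,E'))$, which is exactly the asserted identity, completing the proof.
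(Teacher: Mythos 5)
Your proposal is correct and takes essentially the same route as the paper, which obtains the corollary directly from Proposition \ref{pr1-bis} with $g=\chi_{E'}$; your unpacking (the indicator identity $\Psi_{\chi_{E'}}=\chi_{\Conf(E,E')}$, the exhaustion argument for the unconditional determinant formula, and the unique-extension argument forcing the top eigenvalue of $\chi_{E\setminus E'}Q\chi_{E\setminus E'}$ strictly below $1$) supplies exactly the details the paper leaves implicit. The only slip is cosmetic: the local trace-class property of $Q^{E'}$ is read off from the formula $Q^{E'}=\chi_{E'}Q(1-\chi_{E\setminus E'}Q)^{-1}Q\chi_{E'}$ (cf. Proposition \ref{pr1-bis-one}), not from Corollary \ref{hs-major}, which runs in the opposite direction.
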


The induced measure of a determinantal measure onto the subset of configurations all whose particles lie in $E'$ is thus again a determinantal measure. In the case of a discrete phase space, related induced processes were considered by Lyons \cite{Lyons} and by Borodin and Rains \cite{BorRains}.

We now give a sufficient condition for the almost sure {\it
positivity} of a multiplicative
functional.
\begin{proposition} If $$\mu\left(\left\{x\in E\colon
g(x)=0\right\}\right)=0$$ and
$$\sqrt{|g-1|}K\sqrt{|g-1|}\in\scrI_1(E,\mu),$$ then
$$0<\Psi_g(X)<+\infty$$
for $\Prob_K$-almost all $X\in \Conf(E)$.
\end{proposition}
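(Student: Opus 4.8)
The plan is to separate the two requirements---positivity and finiteness of $\Psi_g$---and to reduce both to a single first-moment estimate for the linear statistic $\sum_{x\in X}|g(x)-1|$, using the intensity of the determinantal process together with the trace-class hypothesis.

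First I would establish that $\mathbb{P}_K$-almost surely no particle of $X$ lies in the set $\{x\in E: g(x)=0\}$. The intensity (first correlation measure) of $\mathbb{P}_K$ is $\xi\mathbb{P}_K=K(x,x)\mu$, which is absolutely continuous with respect to $\mu$. Since by hypothesis $\mu(\{g=0\})=0$, we get $\xi\mathbb{P}_K(\{g=0\})=0$, so the expected number of particles of $X$ lying in $\{g=0\}$ vanishes; being a nonnegative integer-valued random variable of zero expectation, this number is almost surely $0$. Consequently $g(x)>0$ for every $x\in X$, $\mathbb{P}_K$-almost surely, so every factor in the product defining $\Psi_g$ is strictly positive.

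The key step is then to show that $\sum_{x\in X}|g(x)-1|<+\infty$ for $\mathbb{P}_K$-almost every $X$. For this I would compute the expectation of this linear statistic via the intensity formula:
$$\mathbb{E}_{\mathbb{P}_K}\sum_{x\in X}|g(x)-1|=\int_E |g(x)-1|\,K(x,x)\,d\mu(x)=\tr\bigl(\sqrt{|g-1|}\,K\,\sqrt{|g-1|}\bigr),$$
the last equality holding because the kernel of $\sqrt{|g-1|}\,K\,\sqrt{|g-1|}$ is $\sqrt{|g(x)-1|}\,K(x,y)\,\sqrt{|g(y)-1|}$, whose diagonal is $|g(x)-1|K(x,x)$, and the trace of a trace-class operator is the integral of its diagonal. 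By the hypothesis $\sqrt{|g-1|}\,K\,\sqrt{|g-1|}\in\mathscr{I}_1(E,\mu)$ this trace is finite, so the nonnegative random variable $\sum_{x\in X}|g(x)-1|$ has finite expectation and is therefore almost surely finite.

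Finally I would combine the two facts. Almost surely $\sum_{x\in X}|g(x)-1|<+\infty$, which is exactly the condition for absolute convergence of the infinite product $\prod_{x\in X}g(x)=\prod_{x\in X}\bigl(1+(g(x)-1)\bigr)$; moreover, by the standard theory of infinite products, an absolutely convergent product converges to $0$ if and only if one of its factors vanishes. Since almost surely all factors $g(x)$ are strictly positive, the product converges to a value in $(0,+\infty)$, i.e. $0<\Psi_g(X)<+\infty$ $\mathbb{P}_K$-almost surely. The only point requiring care---and thus the main (though mild) obstacle---is the justification of the trace identity in the displayed formula in the generality of a $\sigma$-finite $\mu$ and a merely locally trace-class $K$; this is handled by the definition of the intensity measure $\xi\mathbb{P}_K=K(x,x)\mu$ recalled above, together with the fact that $\sqrt{|g-1|}\,K\,\sqrt{|g-1|}$ is genuinely of trace class, so that passing from the integral of the diagonal to the trace is legitimate.
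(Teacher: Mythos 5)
Your proof is correct and follows essentially the same route as the paper: the paper's (very terse) argument likewise deduces $\sum_{x\in X}|g(x)-1|<+\infty$ almost surely from the hypotheses and then invokes absolute convergence of the infinite product to a finite non-zero limit. You have simply made explicit the first-moment/intensity computation and the exclusion of zero factors that the paper leaves implicit.
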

{Proof}. Our assumptions imply that for $\Prob_K$-almost
all $X\in \Conf(E)$ we have
$$\sum\limits_{x\in X}|g(x)-1|<+\infty\,,$$
which, in turn, is sufficient for absolute convergence of the infinite
product $\prod\limits_{x\in X}g(x)$
to a finite non-zero limit.

We also formulate a version of Proposition \ref{pr1-bis} in the special
case when the function $g$ does not assume values less than $1$. In this case the multiplicative
functional $\Psi_g$ is automatically non-zero, and we have
\begin{proposition} Let $\Pi\in\scrI_{1,\loc}(E,\mu)$ be the
operator of orthogonal projection
onto a closed subspace $H$, let $g$ be a bounded Borel function on $E$ satisfying $g(x)\geq 1$ for all $x\in E$, and
assume
$$\sqrt{g-1}\Pi\sqrt{g-1}\in\scrI_1(E,\mu)\,.$$
Then:
\begin{enumerate}
\item $\Psi_g\in L_1(\Conf(E),\Prob_{\Pi})$, and
$$\displaystyle \int\Psi_g\,d\Prob_{\Pi}=\det\left(1+\sqrt{g-1}\Pi\sqrt{g-1}\right);$$
\item we have
$$\frac{\Psi_g\Prob_{\Pi}}{\displaystyle \int\Psi_g\,d\Prob_{\Pi}}=\Prob_{\Pi^g},$$
where
$\Pi^g$ is the operator of orthogonal projection onto the subspace
$\sqrt{g}H$.
\end{enumerate}
\end{proposition}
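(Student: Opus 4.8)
The plan is to obtain this statement as the special case $g\geq 1$ of Proposition \ref{pr1-bis}, combined with the identification of the resulting operator furnished by Proposition \ref{projgl}. All that is genuinely needed is to observe that the hypothesis $g\geq 1$ makes two of the requirements in those propositions automatic: the invertibility of $1+(g-1)\Pi$, and the closedness of the subspace $\sqrt{g}H$, so that the ``closure'' appearing in Proposition \ref{projgl} may be dropped.

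First I would check invertibility. Since $g\geq 1$ we have $g-1\geq 0$, so the self-adjoint operator $\sqrt{g-1}\,\Pi\,\sqrt{g-1}=(\Pi\sqrt{g-1})^{*}(\Pi\sqrt{g-1})$ is nonnegative (here the factorization uses $\Pi^{2}=\Pi$ and the self-adjointness of $\Pi$ and of the multiplication operator $\sqrt{g-1}$); consequently $1+\sqrt{g-1}\,\Pi\,\sqrt{g-1}\geq 1$ and is in particular invertible. By the Remark following Proposition \ref{pr1-bis}---valid because $\sqrt{g-1}\,\Pi\,\sqrt{g-1}\in\mathscr{I}_{1}(E,\mu)$ by hypothesis and $\Pi$ is self-adjoint---the invertibility of $1+\sqrt{g-1}\,\Pi\,\sqrt{g-1}$ is equivalent to that of $1+(g-1)\Pi$, which therefore holds as well.

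Now the operator $\Pi$ is a self-adjoint positive contraction lying in $\mathscr{I}_{1,\mathrm{loc}}(E,\mu)$, the function $g$ is nonnegative bounded measurable, $\sqrt{g-1}\,\Pi\,\sqrt{g-1}\in\mathscr{I}_{1}(E,\mu)$, and $1+(g-1)\Pi$ is invertible; hence Proposition \ref{pr1-bis} applies with $K=\Pi$. Its first assertion is precisely claim (1), and its second assertion identifies the normalized measure $\Psi_{g}\mathbb{P}_{\Pi}/\int\Psi_{g}\,d\mathbb{P}_{\Pi}$ with $\mathbb{P}_{\widetilde{\mathfrak{B}}(g,\Pi)}$. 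It remains to recognize $\widetilde{\mathfrak{B}}(g,\Pi)$ as $\Pi^{g}$. By Proposition \ref{projgl}, $\widetilde{\mathfrak{B}}(g,\Pi)$ is the orthogonal projection onto the closure of $\sqrt{g}H$. Finally, since $g$ is bounded and bounded below by $1$, multiplication by $\sqrt{g}$ is a bounded operator on $L_{2}(E,\mu)$ with bounded inverse (multiplication by $1/\sqrt{g}$, of norm at most $1$), hence a topological isomorphism of $L_{2}(E,\mu)$; it carries the closed subspace $H$ onto the closed subspace $\sqrt{g}H$, so that the closure is $\sqrt{g}H$ itself and $\widetilde{\mathfrak{B}}(g,\Pi)=\Pi^{g}$. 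This yields claim (2).

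There is no serious obstacle here: the entire content of the proposition beyond the general statement of Proposition \ref{pr1-bis} is the pair of automatic verifications above, both immediate consequences of $g\geq 1$ together with the boundedness of $g$ and the hypothesis $\sqrt{g-1}\,\Pi\,\sqrt{g-1}\in\mathscr{I}_{1}$. The only point deserving care is to invoke the Remark after Proposition \ref{pr1-bis} correctly, so that invertibility of the symmetrized operator $1+\sqrt{g-1}\,\Pi\,\sqrt{g-1}$ (which is transparent, being $\geq 1$) transfers to invertibility of $1+(g-1)\Pi$ (which is the form required by the propositions).
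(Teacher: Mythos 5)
Your proposal is correct and follows exactly the route the paper intends: the paper states this proposition as an immediate specialization of Proposition \ref{pr1-bis} to the case $g\geq 1$, and your argument simply supplies the two verifications that make the specialization legitimate (invertibility of $1+(g-1)\Pi$ via positivity of $\sqrt{g-1}\,\Pi\,\sqrt{g-1}$ and the Remark after Proposition \ref{pr1-bis}, and closedness of $\sqrt{g}H$ because multiplication by $\sqrt{g}$ is a topological isomorphism of $L_2(E,\mu)$), together with the identification of $\tilde{\buf}(g,\Pi)$ via Proposition \ref{projgl}. Both checks are carried out correctly.
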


\section{Construction of Pickrell Measures  and proof of Proposition \ref{rel-consis}}

First we recall that the Pickrell measures are naturally defined
on the space of {\it rectangular} $m\times n$-matrices.

Let $\Mat(m\times n, \mathbb C)$ be the space of $m \times n$ matrices with complex entries:
\[
\Mat (m \times n, \mathbb C) = \{ z = (z_{ij}),\; i=1,\dots,\, m;j=1,\dots, n \}
\]
Denote $dz$ the Lebesgue measure on $\Mat (m \times n, \mathbb C)$.

Take $s \in \mathbb R$. Let $m_0,n_0$ be such that $m_0+s>0, n_0+s>0$.
Following Pickrell, take $m>m_0$, $n>n_0$ and
introduce a measure $\mu_{m,n}^{(s)}$ on $\Mat (m \times n, \mathbb C)$ by the formula
\begin{equation*}
\mu_{m,n}^{(s)} = \const_{m,n}^{(s)}\cdot \det(1+z^*z)^{-m-n-s}\,dz,
\end{equation*}
where
\begin{equation*}
\const_{m,n}^{(s)} = \pi^{-mn} \cdot \prod_{l=m_0}^m \frac{\Gamma(l+s)}{\Gamma(n+l+s)}.
\end{equation*}
For $m_1\le m$, $n_1\le n$, let
\[
\pi^{m,n}_{m_1,n_1}: \Mat (m\times n, \mathbb C) \to \Mat(m_1\times n_1, \mathbb C)
\]
be the natural projection map.

\begin{proposition}\label{mn-consis}

Let $m,n\in\mathbb{N}$ be such that $s>-m-1$.
Then for any ${\tilde z}\in \Mat(n, {\mathbb C})$ we have
\begin{multline*}
\int\limits_{(\pi^{m+1,n}_{m,n})^{-1}({\tilde z})}
\det(1+{z}^*{z})^{-m-n-1-s}dz=\\
\pi^n\,\frac{\Gamma(m+1+s)}{\Gamma(n+m+1+s)}
\det(1+{{\tilde z}}^*{{\tilde z}})^{-m-n-s}.
\end{multline*}
\end{proposition}

Proposition \ref{rel-consis} is an immediate corollary of Proposition \ref{mn-consis}.

Proof of Proposition \ref{mn-consis}. As we noted in the Introduction, the following computation goes back to the classical work of Hua Loo-Keng \cite{Hua}.
Take $z\in\Mat\bigl((m+1)\times n,\mathbb{C}\bigr)$. Multiplying, if necessary, by a unitary matrix on the left and on the right, represent the matrix $\pi^{m+1,n}_{\ m,\,n}\,z=\widetilde{z}$ in diagonal form, with positive real entries on the diagonal: $\widetilde{z}_{ii}=u_i>0$, $i=1,\dots,n$, $\widetilde{z}_{ij}=0$~for~$i\neq j$.

Here we set $u_i=0$ for $i>\min(n,m)$. Denote $\xi_i=z_{m+1,i}$, $i=1,\dots,n$.
Write
$$\det\left(1+z^*z\right)^{-m-1-n-s}=\prod\limits_{i=1}^m(1+u_i^2)^{-m-1-n-s}\!\times \left(1+\xi^*\xi-\sum\limits_{i=1}^n\frac{|\xi_i|^2\,u_i^2}{1+u_i^2}\right)^{-m-1-n-s}\,.$$
We have
$$1+\xi^*\,\xi-\sum\limits_{i=1}^n\frac{|\xi_i|^2\,u_i^2}{1+u_i^2}= 1+\sum\limits_{i=1}^n\frac{|\xi_i|^2}{1+u_i^2}\,.$$

Integrating in $\xi$, we find
$$\int\left(1+\sum\limits_{i=1}^n\frac{|\xi_i|^2}{1+u_i^2}\right)^{-m-1-n-s}\!\!\!d\xi= \prod\limits_{i=1}^m(1+u_i^2)\,\frac{\pi^n}{\Gamma(n)}\,\int\limits_0^{+\infty}r^{n-1}(1+r)^{-m-1-n-s}\!dr,$$
where
\begin{equation}\label{defr}
r=r^{(m+1,n)}\left(z\right)=\sum\limits_{i=1}^n\frac{|\xi_i|^2}{1+u_i^2}\,.
\end{equation}

Recalling the Euler integral
\begin{equation*}
\int\limits_0^{+\infty}r^{n-1}(1+r)^{-m-1-n-s}\!dr=\frac{\Gamma(n)\!\cdot\!\Gamma(m+1+s)}{\Gamma(n+1+m+s)}\,,
\end{equation*}
we arrive at the desired conclusion.
Furthermore, introduce a map
$$\widetilde{\pi}^{m+1,n}_{\ m,\,n}\colon\Mat\bigl((m+1)\!\times\!n,\mathbb{C}\bigr)\to
\Mat\bigl(m\!\times\!n,\mathbb{C}\bigr)\times\mathbb{R}_+$$
by the formula
$$\widetilde{\pi}^{m+1,n}_{\ m,\,n}\left(z\right)=
\left(\pi^{m+1,n}_{\ m,\,n}\left(z\right),\,r^{(m+1,n)}\left(z\right)\right)\,,$$
where $r^{(m+1,n)}\left(z\right)$ is given by the formula \eqref{defr}.

Let $P^{(m,n,s)}$ be a probability measure on $\mathbb{R}_+$ given by the formula:
$$dP^{(m,n,s)}(r)=\frac{\Gamma(n+m+s)}{\Gamma(n)\!\cdot\!\Gamma(m+s)}\,r^{n-1}(1-r)^{-m-n-s}dr\,.$$

The measure $P^{(m,n,s)}$ is well-defined as soon as $m+s>0$.

 \begin{corollary}
For any $m,n\in\mathbb{N}$ and $s>-m-1$, we have
$$\left(\widetilde{\pi}^{m+1,n}_{\ m,\,n}\right)_*\mu^{(s)}_{m+1,n}= \mu^{(s)}_{m,n}\times P^{(m+1,n,s)}\,.$$
\end{corollary}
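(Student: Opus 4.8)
The plan is to obtain the Corollary by repeating the fibre computation of Proposition \ref{mn-consis}, but retaining the radial coordinate $r$ instead of integrating it out. Write a matrix $z\in\Mat((m+1)\times n,\mathbb{C})$ as the pair consisting of its corner $\widetilde z=\pi^{m+1,n}_{m,n}z\in\Mat(m\times n,\mathbb{C})$ and its last row $\xi=(z_{m+1,1},\dots,z_{m+1,n})\in\mathbb{C}^n$, so that the Lebesgue measure factors as $dz=d\widetilde z\,d\xi$. Since $z^*z=\widetilde z^*\widetilde z+\xi^*\xi$ is a rank-one modification of $\widetilde z^*\widetilde z$, the determinant factors exactly as $\det(1+z^*z)=\det(1+\widetilde z^*\widetilde z)(1+r)$, where $r=\xi(1+\widetilde z^*\widetilde z)^{-1}\xi^*$; this is the invariant form of the quantity $r^{(m+1,n)}(z)$ of (\ref{defr}), reducing to $\sum_i|\xi_i|^2/(1+u_i^2)$ once $\widetilde z$ is diagonalized, exactly as in the proof of Proposition \ref{mn-consis}. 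Hence the density (\ref{pickrellmeas}) of $\mu^{(s)}_{m+1,n}$ splits as a product $\det(1+\widetilde z^*\widetilde z)^{-(m+1)-n-s}(1+r)^{-(m+1)-n-s}$.

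First I would compute, for fixed $\widetilde z$, the image of the fibre measure $(1+r)^{-(m+1)-n-s}\,d\xi$ under the map $\xi\mapsto r$. The complex-linear substitution $\eta=\xi(1+\widetilde z^*\widetilde z)^{-1/2}$ turns $r$ into $|\eta|^2$, and the map $\xi\mapsto\eta$ has real Jacobian determinant $\det(1+\widetilde z^*\widetilde z)^{-1}$, whence $d\xi=\det(1+\widetilde z^*\widetilde z)\,d\eta$; passing to polar coordinates in $\mathbb{C}^n$, where the pushforward of Lebesgue measure under $\eta\mapsto|\eta|^2$ is $\tfrac{\pi^n}{\Gamma(n)}r^{n-1}\,dr$, converts the fibre integral into $\det(1+\widetilde z^*\widetilde z)\,\tfrac{\pi^n}{\Gamma(n)}\,r^{n-1}(1+r)^{-(m+1)-n-s}\,dr$. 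The decisive point is that the entire $\widetilde z$-dependence of this fibre contribution is carried by the single scalar $\det(1+\widetilde z^*\widetilde z)$: one power of it cancels against the corner density, lowering its exponent from $-(m+1)-n-s$ to $-m-n-s$, which is precisely the density of $\mu^{(s)}_{m,n}$, while the remaining $r$-factor $r^{n-1}(1+r)^{-(m+1)-n-s}\,dr$ is independent of $\widetilde z$ altogether.

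Assembling the two steps, the pushforward $(\widetilde\pi^{m+1,n}_{m,n})_*\mu^{(s)}_{m+1,n}$ is the product of $\mu^{(s)}_{m,n}(d\widetilde z)$ with the measure $\tfrac{\const^{(s)}_{m+1,n}}{\const^{(s)}_{m,n}}\tfrac{\pi^n}{\Gamma(n)}\,r^{n-1}(1+r)^{-(m+1)-n-s}\,dr$ on $\mathbb{R}_+$. It then remains to identify this second factor with $P^{(m+1,n,s)}$, which is a routine constant verification: by the definition (\ref{normconst}) of $\const^{(s)}_{\cdot,n}$ the ratio $\const^{(s)}_{m+1,n}/\const^{(s)}_{m,n}$ contributes one Gamma quotient and a factor $\pi^{-n}$, and the Euler integral (\ref{euler-int}) (equivalently the Beta evaluation $\int_0^\infty r^{n-1}(1+r)^{-(m+1)-n-s}\,dr=\Gamma(n)\Gamma(m+1+s)/\Gamma(m+n+1+s)$) shows that the resulting measure is exactly the normalized probability measure $P^{(m+1,n,s)}$. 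The hypothesis $s>-m-1$ is what guarantees convergence of this Beta integral, and hence the finiteness of $P^{(m+1,n,s)}$.

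I do not expect a serious obstacle, since the heaviest step — the explicit fibre integral — is already contained in Proposition \ref{mn-consis}; the only point genuinely requiring care is the decoupling observation, that the conditional law of $r=r^{(m+1,n)}(z)$ given the corner $\widetilde z$ does not depend on $\widetilde z$. This is exactly what the substitution $\eta=\xi(1+\widetilde z^*\widetilde z)^{-1/2}$ delivers, its Jacobian absorbing all the $\widetilde z$-dependence into a determinantal prefactor that is then cancelled by the drop in the exponent of the corner density. Checking that this prefactor, together with the normalization constants (\ref{normconst}), reproduces precisely $P^{(m+1,n,s)}$ is the final, purely computational, step.
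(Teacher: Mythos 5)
Your proof is correct and follows essentially the same route as the paper: the paper's own justification of this Corollary is simply that it is what the computation proving Proposition \ref{mn-consis} already establishes, namely splitting off the last row, factoring $\det(1+z^*z)=\det(1+\widetilde z^*\widetilde z)(1+r)$, and integrating the fibre in polar coordinates via the Euler integral, with the $\det(1+\widetilde z^*\widetilde z)$ Jacobian factor lowering the exponent of the corner density by one. Your only (cosmetic) difference is that you carry out the substitution $\eta=\xi(1+\widetilde z^*\widetilde z)^{-1/2}$ invariantly, whereas the paper first diagonalizes $\widetilde z$ by unitaries; the key decoupling observation --- that the conditional law of $r$ given $\widetilde z$ does not depend on $\widetilde z$ --- is the same in both.
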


Indeed, this is precisely what was shown by our computation.

Removing a column is similar to removing a row:

$$\left(\pi^{m,n+1}_{\ m,\,n}\left(z\right)\right)^t=\pi^{m+1,n}_{\ m,\,n}\left(z^t\right).
$$
Write $\widetilde{r}^{(m,n+1)}\left(z\right)=r^{(n+1,m)}\left(z^t\right)$.
Introduce a map
$$\widetilde{\pi}^{m,n+1}_{\ m,\,n}\colon \Mat\bigl(m\!\times\!(n+1),\mathbb{C}\bigr)\to
\Mat\bigl(m\!\times\!n,\mathbb{C}\bigr)\times\mathbb{R}_+$$
by the formula
$$\widetilde{\pi}^{m,n+1}_{\ m,\,n}\left(z\right)=
\left(\pi^{m,n+1}_{\ m,\,n}\left(z\right),\,\widetilde{r}^{(m,n+1)}\left(z\right)\right)\,.$$


 \begin{corollary}
For any $m,n\in\mathbb{N}$ and $s>-m-1$, we have
$$\left(\widetilde{\pi}^{m,n+1}_{\ m,\,n}\right)_*\mu^{(s)}_{m,n+1}= \mu^{(s)}_{m,n}\times P^{(n+1,m,s)}\,.$$
\end{corollary}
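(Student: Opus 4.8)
The plan is to deduce the statement from the row-removal corollary already established (the identity for $\bigl(\widetilde{\pi}^{m+1,n}_{\ m,\,n}\bigr)_*\mu^{(s)}_{m+1,n}$), using the transposition symmetry of the Pickrell measures encoded in the relations displayed just above the statement. Write $T\colon \Mat\bigl(m\times(n+1),\mathbb{C}\bigr)\to \Mat\bigl((n+1)\times m,\mathbb{C}\bigr)$ for the map $T(z)=z^t$. Since $T$ merely permutes the real coordinates of the entries, it is a linear bijection preserving Lebesgue measure; and by the identities $\bigl(\pi^{m,n+1}_{\ m,\,n}(z)\bigr)^t=\pi^{n+1,m}_{\ n,\,m}(z^t)$ and $\widetilde{r}^{(m,n+1)}(z)=r^{(n+1,m)}(z^t)$ it intertwines the two coordinate maps, namely $\widetilde{\pi}^{m,n+1}_{\ m,\,n}=\bigl(T^{-1}\times\mathrm{id}\bigr)\circ\widetilde{\pi}^{n+1,m}_{\ n,\,m}\circ T$.

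The single analytic input is the transposition invariance of the Pickrell density. For $z\in\Mat\bigl(m\times(n+1),\mathbb{C}\bigr)$ the Sylvester identity $\det(I+AB)=\det(I+BA)$ gives $\det\bigl(1+z^*z\bigr)=\det\bigl(1+zz^*\bigr)$, while $(z^t)^*z^t=\overline{z}\,z^t=\overline{zz^*}$ is the complex conjugate of the Hermitian matrix $zz^*$ and hence has the same spectrum; therefore $\det\bigl(1+(z^t)^*z^t\bigr)=\det\bigl(1+zz^*\bigr)=\det\bigl(1+z^*z\bigr)$, and likewise $\det(1+\widetilde z^*\widetilde z)=\det\bigl(1+(\widetilde z^t)^*\widetilde z^t\bigr)$ for the $m\times n$ corner. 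Thus $T$ carries the weight $\det(1+z^*z)^{-m-n-1-s}$ to the weight $\det(1+w^*w)^{-m-n-1-s}$ on $\Mat\bigl((n+1)\times m,\mathbb{C}\bigr)$.

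First I would record the transposed fiber integral: applying Proposition~\ref{mn-consis} to $w=z^t$ with the number of rows reduced from $n+1$ to $n$ and with $m$ columns (that is, interchanging the roles of $m$ and $n$), and using the two invariances above to rewrite everything back in terms of $z$, yields
\[
\int_{(\pi^{m,n+1}_{\ m,\,n})^{-1}(\widetilde z)}\det(1+z^*z)^{-m-n-1-s}\,dz=\pi^{m}\,\frac{\Gamma(n+1+s)}{\Gamma(m+n+1+s)}\det(1+\widetilde z^*\widetilde z)^{-m-n-s}.
\]
From here the corollary assembles exactly as in the row-removal case: after the radial change of variables $\xi\mapsto\widetilde r^{(m,n+1)}$ the joint density factorizes, the $\widetilde z$-marginal is $\mu^{(s)}_{m,n}$, and the Euler integral identifies the conditional law of $\widetilde r$ with the probability measure $P^{(n+1,m,s)}$ — the same normalization computation as for $P^{(m+1,n,s)}$, with $m$ and $n$ interchanged in the radial variable. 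The main obstacle is purely bookkeeping: one must keep straight which index is decremented under the interchange $m\leftrightarrow n$ and check that the normalizing constants transform compatibly (equivalently, that the constant ratios produced by the two applications of $T^{\pm1}$ on the corners cancel, which they do in the finite case $m_0=1$, the remaining values of $s$ being those for which the Pickrell measures are in any case defined only up to a positive multiplicative constant). No new integral needs to be evaluated, the analytic content being entirely contained in Hua's computation behind Proposition~\ref{mn-consis}.
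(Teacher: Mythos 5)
Your proposal is correct and follows exactly the route the paper intends: the paper's entire argument for this corollary is the remark ``removing a column is similar to removing a row'' together with the transposition identities for $\pi^{m,n+1}_{\ m,\,n}$ and $\widetilde{r}^{(m,n+1)}$, i.e.\ precisely your reduction via $T(z)=z^t$, the invariance $\det(1+(z^t)^*z^t)=\det(1+z^*z)$, and Proposition~\ref{mn-consis} with $m$ and $n$ interchanged. Your write-up is in fact more careful than the paper's one-liner (the exponent $\pi^m$, the Beta-density $P^{(n+1,m,s)}$, and the caveat about normalizing constants for $s\le-1$ all check out), so there is nothing to add.
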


Now take $n$ such that $n+s>0$ and introduce a map
$$\widetilde{\pi}_n\colon \Mat\bigl(\mathbb{N}\!\times\!\mathbb{N},\mathbb{C}\bigr)\to
\Mat\bigl(n\!\times\!n,\mathbb{C}\bigr)\times\mathbb{R}_+^{\infty}$$
by the formula
$$\widetilde{\pi}_n\left(z\right)=
\left(\pi^{\infty,\infty}_{\ n,\,n}\left(z\right),\,r^{(n+1,n)},\,\widetilde{r}^{(n+1,n+1)},\, r^{(n+2,n+1)},\,\widetilde{r}^{(n+2,n+2)},\dots\right)\,.$$

Recalling the definition of the Pickrell measures $\mu^{(s)}$ (see subsection~\ref{sec:Pickrell_measures}), we can now reformulate the result of our computations as follows:

\begin{proposition}
If $n+s>0$, then we have
\begin{equation}\label{inf-prod-s}
\left(\widetilde{\pi}_n\right)_*\mu^{(s)}=\mu^{(s)}_{m,n}\times\prod\limits_{l=0}^{\infty} \left(P^{(n+l+1,n+l,s)}\!\times\!P^{(n+l+1,n+l+1,s)}\right)\,.
\end{equation}
 \end{proposition}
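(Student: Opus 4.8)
The plan is to obtain \eqref{inf-prod-s} by iterating the two one-step factorizations established above — the row-removal formula $\left(\widetilde{\pi}^{m+1,n}_{m,n}\right)_*\mu^{(s)}_{m+1,n}=\mu^{(s)}_{m,n}\times P^{(m+1,n,s)}$ and the column-removal formula $\left(\widetilde{\pi}^{m,n+1}_{m,n}\right)_*\mu^{(s)}_{m,n+1}=\mu^{(s)}_{m,n}\times P^{(n+1,m,s)}$ — and then passing to the projective limit. The map $\widetilde{\pi}_n$ records the $n\times n$ corner together with the infinite sequence of radii $r^{(n+l+1,n+l)}$ and $\widetilde{r}^{(n+l+1,n+l+1)}$, $l\geq 0$, each of which is obtained precisely by peeling off one row or one column from the appropriate corner of $z$; the successive corners traverse the staircase $n\times n\to(n+1)\times n\to(n+1)\times(n+1)\to(n+2)\times(n+1)\to\cdots$.

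First I would treat the finite truncations. Fix $N>n$ and let $\widetilde{\pi}_n^{(N)}$ be the analogue of $\widetilde{\pi}_n$ that records the $n\times n$ corner together with the finite list of radii arising in passing from the $N\times N$ corner down to the $n\times n$ corner. Descending one step of the staircase, from size $(n+l+1)\times(n+l+1)$ to $(n+l)\times(n+l)$, amounts to first removing the last column — which, by the column-removal formula with $m=n+l+1$, contributes the factor $P^{(n+l+1,n+l+1,s)}$ — and then removing the last row — which, by the row-removal formula with $m=n+l$, contributes $P^{(n+l+1,n+l,s)}$. Since each $P^{(\cdot,\cdot,s)}$ is a probability measure on the corresponding radius coordinate, independent of the remaining coordinates, these factorizations compose, and a finite induction on $N$ yields
\begin{equation*}
\left(\widetilde{\pi}_n^{(N)}\right)_*\mu^{(s)}_{N,N}=\mu^{(s)}_{n,n}\times\prod_{l=0}^{N-n-1}\left(P^{(n+l+1,n+l,s)}\times P^{(n+l+1,n+l+1,s)}\right).
\end{equation*}
Here the hypothesis $n+s>0$ guarantees that every intermediate measure $P^{(\cdot,\cdot,s)}$, and in particular $\mu^{(s)}_{n,n}$ itself, is well-defined, so the induction never leaves the admissible range of parameters.

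Next I would pass to the limit $N\to\infty$. By construction $\mu^{(s)}$ is the measure on $\Mat(\mathbb{N}\times\mathbb{N},\mathbb{C})$ furnished by the Kolmogorov Existence Theorem \cite{Kolmogorov} from the consistent family $\mu^{(s)}_{N,N}$, and the radii $r^{(n+l+1,n+l)}$, $\widetilde{r}^{(n+l+1,n+l+1)}$ are $\mu^{(s)}$-almost everywhere finite measurable functions depending only on the corresponding finite corner. Because each $P$ factor is a probability measure, integrating out the radii beyond level $N$ on the right-hand side of \eqref{inf-prod-s} recovers exactly the level-$N$ formula displayed above; this compatibility is precisely the consistency needed to identify $\left(\widetilde{\pi}_n\right)_*\mu^{(s)}$ with the infinite product. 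Since the product $\prod_{l=0}^{\infty}\left(P^{(n+l+1,n+l,s)}\times P^{(n+l+1,n+l+1,s)}\right)$ of probability measures is itself a well-defined Borel probability measure on the sequence space, the right-hand side of \eqref{inf-prod-s} is a genuine measure, $\sigma$-finite in the case $s\leq -1$, and the asserted identity follows.

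The main obstacle I anticipate is the bookkeeping of the $\sigma$-finite case $s\leq -1$: there $\mu^{(s)}$ and $\mu^{(s)}_{n,n}$ are infinite, so $\left(\widetilde{\pi}_n\right)_*\mu^{(s)}$ must first be shown to be well-defined — which it is, since the restriction of $\mu^{(s)}$ to the set where the $n\times n$ corner lies in a fixed compact subset is finite once $n+s>0$, this being the defining $\mathfrak{F}$-property of Pickrell measures — and one must then verify that the pushforward commutes with the projective limit on each such finite piece, where everything reduces to the finite formula above. The remaining points, namely the index bookkeeping along the staircase and the convergence of the infinite product of probability measures, are routine.
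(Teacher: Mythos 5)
Your proof is correct and takes essentially the same route as the paper, which presents the proposition as an immediate iteration of the two one-step row- and column-removal corollaries along the staircase of corners $n\times n\to(n+1)\times n\to(n+1)\times(n+1)\to\cdots$; your index bookkeeping for the factors $P^{(n+l+1,n+l+1,s)}$ and $P^{(n+l+1,n+l,s)}$ agrees with the paper's. The paper leaves the passage to the projective limit and the $\sigma$-finite case implicit (``we can now reformulate the result of our computations''), so your added care there is a harmless elaboration of the same argument.
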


\end{document}